\renewcommand\subsection{\@startsection{subsection}{2}%
  \z@{-.5\linespacing\@plus-.7\linespacing}{.5\linespacing}%
  {\normalfont\bfseries}}
\renewcommand\subsubsection{\@startsection{subsubsection}{3}%
  \z@{.5\linespacing\@plus.7\linespacing}{-.5em}%
  {\normalfont\bfseries}}
\tikzstyle{stuff_fill}=[rectangle,fill=white,minimum size=1em]
\newcommand \R {\mathbf R}
\newcommand \Z {\mathbf Z}
\newcommand \N {\mathbf N}
\newcommand \calp {\mathcal{P}}
\newcommand \lesap{\lesssim_{\alpha,p}}
\newtheorem{theorem}{Theorem}[section]
\newtheorem{theoremintro}{Theorem}
\newtheorem{corollaryintro}[theoremintro]{Corollary}
\newtheorem{lemma}[theorem]{Lemma}
\newtheorem{proposition}[theorem]{Proposition}
\newtheorem{corollary}[theorem]{Corollary}
\newtheorem{conjecture}[theorem]{Conjecture}
\newtheorem{addendum}[theorem]{Addendum}
\newtheorem*{theorem*}{Theorem}
\newtheorem*{ques*}{Question}
\newtheorem*{prop*}{Proposition}
\theoremstyle{definition}
\newtheorem{definition}[theorem]{Definition}
\newtheorem{example}[theorem]{Example}
\newtheorem{ques}[theorem]{Question}
\newtheorem{remark}[theorem]{Remark}
\newtheorem*{definition*}{Definition}
\numberwithin{equation}{section}
\title[Dehn functions of central products]{On the Dehn functions of central products of nilpotent groups}
\author{Jer\'onimo Garc\'ia-Mej\'ia}
\address{Facutly of Mathematics, Karlsruhe Institute of Technology, Englerstra\ss e 2, 76131 Karlsruhe, Germany}
\email{jeronimo.garcia@kit.edu}
\author{Claudio Llosa Isenrich}
\address{Faculty of Mathematics, Karlsruhe Institute of Technology, Englerstra\ss e 2, 76131 Karlsruhe, Germany}
\email{claudio.llosa@kit.edu}
\author{Gabriel Pallier}
\address{Univ. Lille, CNRS, UMR 8524 - Laboratoire Paul Painlevé, F-59000 Lille, France}
\email{gabriel.pallier@univ-lille.fr}
\thanks{The authors gratefully acknowledge funding by the DFG 281869850 (RTG 2229).}
\keywords{Dehn functions, filling invariants, asymptotic cones, nilpotent groups, Lie groups and Lie algebras, central extensions, quasiisometries, Carnot gradings,  group cohomology, sublinear bilipschitz equivalence, Heisenberg group}
\subjclass[2020]{Primary: 20F69, 20F18. Secondary: 20F65, 20F05, 51F30, 22E25,
57T10.}
\date{\today}
\dedicatory{}
\renewcommand{\thesubsection}{\arabic{section}.\Alph{subsection}}
\begin{document}

\begin{abstract}
    We determine the Dehn functions of central products of two families of filiform nilpotent Lie groups of arbitrary dimension with all simply connected nilpotent Lie groups with cyclic centre and strictly lower nilpotency class. We also determine the Dehn functions of all central products of nilpotent Lie groups of dimension at most $5$ with one-dimensional centre. This confirms a conjecture of Llosa Isenrich, Pallier and Tessera for these cases, providing further evidence that the Dehn functions of central products are often strictly lower than those of the factors. Our work generalises previous results of Llosa Isenrich, Pallier and Tessera and produces an uncountable family of nilpotent Lie groups without lattices whose Dehn functions are strictly lower than the ones of the associated Carnot-graded groups.
    A consequence of our main result is the existence of an infinite family of groups such that Cornulier's bounds on the $e$ for which there is an $O(r^e)$-bilipschitz equivalence 
    between them and their Carnot-graded groups
    are asymptotically optimal, as the nilpotency class goes to infinity.
\end{abstract}
\maketitle

\section{Introduction}

The Dehn function is a natural and powerful quasiisometry invariant of compactly presented groups, which has attracted much interest in Geometric Group Theory over the last decades.\footnote{Strictly speaking Dehn functions are only quasiisometry invariants up to asymptotic equivalence. We make this precise later.} For a given compactly presented group $G$, its Dehn function $\delta_G$ provides a quantitative measure for the complexity of its word problem. Dehn functions can also be interpreted geometrically as optimal isoperimetric functions for loops in simply connected spaces on which the group acts geometrically. 

An important class of compactly presented groups in Geometric Group Theory are finitely generated nilpotent groups. This can at least be partially motivated by Gromov's Polynomial Growth Theorem, which says that (up to finite index)  finitely generated nilpotent groups are precisely the finitely generated groups of polynomial growth, making them a natural class of groups to study from a geometric point of view. While a lot of work has been done in this area and many interesting results have been proved, maybe somewhat surprisingly basic questions about their geometry remain open. In particular, this is the case for their conjectural quasiisometry classification, which asserts that two simply connected nilpotent Lie groups are quasiisometric if and only if they are isomorphic \cite[Conjecture 19.114]{CornulierQIHLC}. 
In view of Pansu's theorems it reduces to classifying nilpotent groups with bilipschitz homeomorphic asymptotic cones up to quasiisometry \cite{PanCBN,PansuCCqi}.  One reason that further progress has been very limited is that we still only know very few quasiisometry invariants of nilpotent groups that distinguish between groups with bilipschitz homeomorphic asymptotic cones. For a long time the only such invariant was the real cohomology algebra \cite{ShalomHarmonic,SauerHom,GotfredsenKyed}. In \cite{lipt} Llosa Isenrich, Pallier and Tessera proved that Dehn functions provide such an invariant by producing a family of $k$-nilpotent pairs $(G_k,G_k')$ for every $k\geqslant 3$ such that $\delta_{G_k}(n)\asymp n^k\prec n^{k+1}\asymp \delta_{G_k'}(n)$.

The result of the second and third authors with Tessera showed that Dehn functions are of interest in the context of the quasiisometry classification of nilpotent groups. It also highlighted that despite extensive work and many fundamental results, we are still far from understanding them completely. By work of Gersten--Holt--Riley \cite{GerstenRileyHolt} we know that the Dehn function of a finitely generated $k$-nilpotent group is at most $n^{k+1}$. It is also known that Dehn functions can take any polynomial value with integral exponent \cite{BaumslagMillerShort,BridsonPittet} and that they can even be non-polynomial \cite{WengerNonPoly}. This brief overview is far from exhaustive, rather than trying to give a full survey, we  now proceed to highlight some open problems that this work addresses.

The focus of this work is on Dehn functions of central products. Let $K$ and $L$ be groups, and let $\theta:Z(K)\to Z(L)$ be an isomorphism between their centres. The \emph{central product} of $K$ and $L$ is $K\times_\theta L = (K \times L)/R$, where 
\[ R = \{ (g,h) \in Z(K) \times Z(L): h = \theta(g) \}. \]
When there is no ambiguity on $\theta$ we simply write $G = K\times_ZL$. Central products can be taken in the category of topological groups, Lie groups and real Lie algebras.

For central products of nilpotent groups, there is at present growing evidence that the Dehn function is often smaller than the Gersten--Holt--Riley upper bound \cite{Allcock, OlsSapCombDehn,YoungFillingNil,lipt}. 
The first result in this direction was due to Allcock \cite{Allcock} and Olshanskii--Sapir \cite{OlsSapCombDehn} who proved that the $5$-Heisenberg group, which is the central product of two $3$-Heisenberg groups (which have cubic Dehn function) has quadratic Dehn function. In Olshanskii--Sapir's proof the decomposition as a central product played a key role and indeed they showed more generally.

\begin{theorem}[{Allcock \cite{Allcock}, Olshanskii--Sapir \cite{OlsSapCombDehn}, Young \cite{YoungFillingNil}\footnote{The statement about Heisenberg groups is due to Allcock and Olshanskii--Sapir. The second part was stated without proof by Olshanskii--Sapir in \cite{OlsSapCombDehn} and a proof was given by Young \cite{YoungFillingNil}.}}]\label{thm:Allcock-OS}
    The Dehn function of the $(2m+1)$-Heisenberg group is $n^2$ if $m \geqslant 2$. If, more generally, $G=K\times_Z K$ is a central product of a non-trivial finitely generated torsion-free 2-nilpotent group $K$ with itself, then $\delta_G(n)\preccurlyeq n^2 \log(n)$.
\end{theorem}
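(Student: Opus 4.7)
The bound $\delta_G(n) \succeq n^2$ is immediate in both cases: $G$ is a finitely generated non-abelian nilpotent group, hence not word-hyperbolic, so its Dehn function is at least quadratic by a theorem of Gromov.

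\textbf{Upper bound for $H_{2m+1}$ with $m \geqslant 2$.} I would proceed in two steps. First, I would reduce to filling central powers: bubble-sort brings any null-homotopic word $w$ of length $n$ to one of the form $z^N$ with $N = O(n^2)$, at area cost $O(n^2)$. This uses the commutator relations $[x_i, y_j] = z^{\delta_{ij}}$: each of the $O(n^2)$ commutator swaps produces a $z^{\pm 1}$ which, being central, commutes past the remaining letters at unit cost per step. Second, I would fill $z^N$ with area $O(N) = O(n^2)$. This is the crux, and is where $m \geqslant 2$ is used crucially: one writes $z^N = [X,Y]$ for short words $X,Y$ of length $O(\sqrt N)$ involving two distinct Heisenberg subgroups, and shows that the loop $z^N \cdot [X, Y]^{-1}$ admits a van Kampen diagram of area $O(N)$.

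\textbf{Upper bound for $G = K \times_Z K$.} For the general statement I would adapt the preceding strategy. The reduction step goes through almost verbatim: a null-homotopic word $w$ of length $n$ is brought to $z^N$ with $N \lesssim n^2$ at area $O(n^2)$, by first sorting letters within each factor copy of $K$ (at cost $\delta_K(n) \preccurlyeq n^3$ per factor, but absorbed into a single quadratic contribution when restricted to sub-words that ultimately cancel), then collecting centres. The second step, however, must be modified: without an explicit Heisenberg-type commutator structure, no analogue of the short-representative trick is immediately available. Instead, one exploits the two commuting images of $K$ inside $K \times_Z K$ via a dyadic splitting: write $z^N = z^{N/2} \cdot z^{N/2}$, fill each half in a distinct copy of $K$ to avoid interference, and iterate. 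There are $O(\log N) = O(\log n)$ levels of recursion, each contributing area $O(N)$, for a total of $O(N \log N) = O(n^2 \log n)$.

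\textbf{Main obstacle.} The hardest point is the second step in the Heisenberg case: constructing an explicit van Kampen diagram of area $O(N)$ and perimeter $O(\sqrt N)$ realising $z^N$. Any such construction must exploit the fact that in $H_{2m+1}$ the crossed commutators $[x_i, y_j]$ for $i \neq j$ \emph{vanish} — rather than producing non-trivial central elements, as happens in $H_3 \times H_3$ — and this is precisely what allows short words $X,Y$ drawn from several Heisenberg pairs to represent $z^N$ without accumulating spurious central contributions. For the general $K \times_Z K$ statement, the analogous difficulty is the absence of any algebraic identity linking commutators across the two factors, forcing the less efficient dyadic argument and the resulting logarithmic overhead; improving on this to remove the $\log$ would require a more conceptual filling argument, perhaps along the integration-of-forms lines used by Young.
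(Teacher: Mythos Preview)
The paper does not prove this theorem. Theorem~\ref{thm:Allcock-OS} is stated in the introduction as a known result and attributed entirely to the cited works of Allcock, Olshanskii--Sapir, and Young; no argument for it appears anywhere in the paper. There is thus no ``paper's own proof'' to compare your sketch against.

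As a sketch of the literature proofs, your outline is broadly in the right spirit but imprecise in a couple of places. In the Heisenberg case, your step~2 is not quite how the argument runs: one does not simply write $z^N=[X,Y]$ for short $X,Y$ and fill the resulting loop directly. The Olshanskii--Sapir combinatorial proof instead shows that the relation $[x_1^a,y_1^b]=[x_2^a,y_2^b]$ holds with area $O(ab)$ by an explicit rewriting that interleaves the two Heisenberg pairs; this is the substantive step, and your description of it as ``$[x_i,y_j]=1$ for $i\neq j$ allowing short words without spurious central contributions'' misses the mechanism. For the general $K\times_Z K$ statement, your reduction step contains a confused parenthetical (the cost $\delta_K(n)\preccurlyeq n^3$ is irrelevant here; the sorting uses only commutator relations and costs $O(n^2)$), and your description of the dyadic filling is vague --- the actual argument in Young's paper is more delicate than ``fill each half in a distinct copy,'' since the halves must themselves be rewritten as words in the other factor before recursion. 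Finally, Young's proof is combinatorial, not via integration of forms as your last sentence suggests.
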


The main result proved by  Llosa Isenrich, Pallier and Tessera in \cite{lipt} is a version of Theorem \ref{thm:Allcock-OS} for  a family of groups of higher nilpotency class. As a consequence they proposed the following version of Theorem \ref{thm:Allcock-OS} for groups of higher nilpotency class.

\begin{conjecture}[{\cite[Conjecture 11.3]{lipt}}]
\label{conj:main}
Consider a central product 
\[G =K \times_Z L \]
where $K$ and $L$ are simply connected nilpotent Lie groups with one-dimensional centres, and class $k$ and $\ell$ respectively, where $2 \leqslant \ell \leqslant k$.
Then $n^k \preccurlyeq \delta_G(n) \prec n^{k+1}$.
\end{conjecture}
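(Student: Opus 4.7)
Since $K$ embeds in $G$ (the natural map $K \to G$ is injective) and has nilpotency class exactly $k$, I choose generators $x_1,\dots,x_m$ of $K$ together with a $k$-fold nested commutator $c(x_{i_1},\dots,x_{i_k})$ whose value is a primitive element $z$ of the cyclic centre $Z$. Via Hall--Petresco, the substitution $x_j \mapsto x_j^n$ produces a word of length $O(n)$ representing $z^{\alpha n^k}$ in $G$ for some $\alpha \neq 0$. Concatenating with $z^{-\alpha n^k}$ yields a null-homotopic word in $G$ of length $O(n)$ whose filling area I expect to bound below by $\succcurlyeq n^k$ using the standard central cocycle argument: the central extension $1 \to Z \to G \to G/Z \to 1$ yields a discrete cocycle $\omega$ where every 2-cell of a van Kampen diagram contributes a bounded amount, while the chosen boundary word carries a central height $\asymp n^k$, forcing $\asymp n^k$ cells.

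\textbf{Upper bound.} The key identity in $G$ is $[K,L]=1$. Given a null-homotopic word $w$ of length $n$, I proceed in three steps. First, I shuffle $w$ into the form $w_K \cdot w_L$ where $w_K$ uses only $K$-generators and $w_L$ only $L$-generators; each swap of a $K$-letter past an $L$-letter uses one commutator relator, costing $O(n^2)$ overall. Second, since $G \to G/L \cong K/Z$ kills $w$, the word $w_K$ projects to the identity in $K/Z$. The quotient $K/Z$ is $(k-1)$-nilpotent, so by the Gersten--Holt--Riley bound $\delta_{K/Z}(n) \preccurlyeq n^k$. I fill $w_K$ in $K/Z$ at cost $O(n^k)$ and lift to $K$: each occurrence of a relator ``$z=1$'' in the $K/Z$-filling fails to lift to a relator of $K$, contributing one unit to a central exponent $a$, so $w_K = z^a$ in $K$ with $|a|\preccurlyeq n^k$ via a diagram of area $O(n^k)$, which transfers to $G$ through the embedding $K\hookrightarrow G$. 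Symmetrically, $w_L = z^{-a}$ in $G$ at cost $O(n^\ell)\leqslant O(n^k)$. The residual word $z^a z^{-a}$ reduces freely, so the total area is $O(n^2 + n^k) = O(n^k)$, in particular $\prec n^{k+1}$ as required.

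\textbf{Main obstacle.} The sketch above in fact yields $\delta_G(n)\preccurlyeq n^k$, strictly stronger than the conjectured $\prec n^{k+1}$ and suggesting $\delta_G \asymp n^k$. The delicate technical points in the upper bound are: (i) choosing mutually compatible finite presentations of $K$, $L$, and $G$ so the lifting step is valid with tight constants; (ii) separating the $Z$-killing relators from the rest so that $|a|$ can be bounded uniformly against the $K/Z$-filling area; and verifying that the embedding $K\hookrightarrow G$ really does transfer fillings without loss. The more substantive difficulty is actually the lower bound: the factor $L$ introduces its own commutator structure of depth $\ell$, and a priori this could cheaply produce central elements, shortening the filling of my test words below $n^k$. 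Ruling this out requires showing that no $\ell$-fold commutator in $L$ accelerates the creation of $z^{\alpha n^k}$ beyond what the $k$-fold commutator in $K$ provides. This is exactly where specific structural hypotheses on $K$ and $L$ (filiform structure, Carnot gradings, small dimension) enter, and is plausibly why the conjecture is stated with the safer upper bound $\prec n^{k+1}$ rather than $\asymp n^k$.
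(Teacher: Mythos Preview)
This statement is a \emph{conjecture} that remains open in general; the paper does not prove it but only establishes the sharper conclusion $\delta_G \asymp n^k$ for the special classes of factors in Theorems~\ref{th:general-factor}, \ref{th:model-filiform} and \ref{th:lowdim}. So the question is whether your sketch actually closes the conjecture. Both halves have genuine gaps.

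\textbf{Upper bound.} The lifting step fails. You fill $\overline{w_K}$ in $K/Z$ with area $O(n^k)$ and assert this yields $w_K = z^a$ in $K$ with area $O(n^k)$. But excising the $z$-cells from the $K/Z$ diagram leaves a disk with up to $\asymp n^k$ holes, each bounded by $z^{\pm 1}$; to obtain a genuine disk for $w_K z^{-a}$ in $K$ you must carry every such $z$ to the outer boundary, and each transport costs on the order of the filling radius. Even granting the Gersten--Holt--Riley filling length $O(n)$ for $K/Z$, this totals $O(n^{k+1})$, which is the trivial bound. If your argument worked it would prove $\delta_G \preccurlyeq n^k$ for \emph{all} central products, which is strictly stronger than the conjecture and is exactly what the paper can only achieve under strong hypotheses on $K$. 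The paper's upper bounds (\S\ref{sec:upper-bounds}) never pass through $K/Z$; instead they build a hierarchy of commuting lemmas, specific to $K=L_p$ or $L_p^\lrcorner$, that let central words be rewritten efficiently inside $G$ by routing through the $L$-factor.

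\textbf{Lower bound.} Your test word $c(x_i^n)\cdot z^{-\alpha n^k}$ has length $\asymp n^k$, not $O(n)$: the letter $z$ is a generator, so $z^{-\alpha n^k}$ is already that long. More fundamentally, the $2$-cocycle of $1\to Z\to G\to G/Z\to 1$ lives on $G/Z$; its pullback to $G$ is the exact form $-d\zeta$, and integrating the primitive $\zeta$ over any null-homotopic loop in $G$ gives zero by Stokes (concretely: the $c(x_i^n)$ portion contributes $+\alpha n^k$ and the $z^{-\alpha n^k}$ portion contributes $-\alpha n^k$). So this cocycle cannot detect area in $G$. The paper's lower bound (Proposition~\ref{prop:lower-bound-general}) instead integrates a \emph{non-left-invariant} $1$-form $\beta_1$ whose differential is only the $K$-piece $\pm\xi_1\wedge\xi_{p-1}$ of $d\zeta$, deliberately omitting the $L$-contribution; this is what prevents the $L$-factor from short-circuiting the filling, and its construction uses the filiform structure of $K$.

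Your closing paragraph rightly senses that structural hypotheses are needed, but misplaces the obstruction: both directions require them, and on the upper-bound side the issue is not bounding $|a|$ (that is easy: $|a|\preccurlyeq n^{\ell}$ via $w_L$) but bounding the \emph{area} of $w_K z^{-a}$.
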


In this work we provide evidence for this conjecture, by proving it for large classes of groups. In particular, we prove it for certain $L$ and all $K$ provided that the nilpotency class $k$ of $K$ is lower than the nilpotency class $\ell$ of $L$, and for all central products with both factors of dimension at most five.

In order to give precise statements of our main results, we recall that a simply connected nilpotent Lie group is called \emph{filiform of class $k$} if it is of minimal possible dimension, namely $k+1$, among all nilpotent groups of class $k$. 
One can easily prove that the Lie algebra of such a group is spanned by a particular basis, called an adapted basis, $X_1, \ldots, X_{k+1}$, such that $[X_1,X_i] = X_{i+1}$ for $2 \leqslant i \leqslant k$, with possibly other brackets being nonzero. If the Lie algebra has an adapted basis where the brackets above are the only nonzero brackets, we say that the group is \emph{model filiform}, and denote it $L_{k+1}$. When $k \geqslant 4$, if the only nonzero brackets are the ones above and $[X_2,X_3] = X_{k+1}$, we denote the group $L^\lrcorner_{k+1}$. Note that the group $L_{3}$ is the $3$-Heisenberg group, making our results below natural generalisations of Theorem \ref{thm:Allcock-OS} and of \cite[Theorem A]{lipt}.

\begin{theoremintro}[One filiform factor of highest nilpotency class]\label{th:general-factor}
    Let $k> \ell\geqslant 2$ be integers.
    Let $K$ be either the group $L_{k+1}$ or $L_{k+1}^\lrcorner$.\footnote{The group $L_{k+1}^{\lrcorner}$ is only defined for $k\geqslant 4$. So for $k=3$ (resp. $\ell\leqslant 3$) the only allowed choice for $K$ (resp. $K$ or $L$) in Theorem \ref{th:general-factor} (resp. Theorem \ref{th:model-filiform}) is $L_{4}$ (resp. $L_3$ or $L_4$).} Let $L$ be a simply connected nilpotent Lie group with one-dimensional centre of nilpotency class $\ell$.
    Let $G = K\times_ZL$.
    Then $\delta_G(n) \asymp n^k$.
\end{theoremintro}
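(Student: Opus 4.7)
To prove $\delta_G(n) \preccurlyeq n^k$, I would exploit the central product structure. In $G$, every element of $K$ commutes with every element of $L$, and $K \cap L = Z(G) \cong \R$, so any loop $\gamma$ of length $n$ can be sorted via the commutation relations $[\kappa,\lambda]=e$ (for $\kappa \in K$, $\lambda \in L$) into the form $\gamma_K \gamma_L$ at cost $O(n^2) \leq O(n^k)$. Since $\gamma = e$ in $G$, the subwords $\gamma_K$ and $\gamma_L^{-1}$ represent a common central element $z = c^m$, with $|m| \lesssim n^\ell$ coming from $|\gamma_L| \leq n$ and the depth-$\ell$ distortion of $Z(L) = C^\ell L$ in $L$. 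Next, fill $\gamma_K$ paired with a short $K$-bracket word $w_K$ for $z^{-1}$ (of length $\lesssim m^{1/k} \leq n$ by the filiform structure of $K$) as a loop in $K$: area $\preccurlyeq \delta_K(n) \asymp n^k$, using that both $L_{k+1}$ and $L_{k+1}^\lrcorner$ have Dehn function $n^k$. Similarly, fill $\gamma_L$ paired with a short $L$-bracket word $w_L$ for $z$ inside $L$: area $\preccurlyeq n^{\ell+1} \leq n^k$ by Gersten--Holt--Riley. Finally, reconcile the remaining mixed word $w_K^{-1} w_L^{-1}$ by expanding $w_K$ into the straight central power $c_K^m$ inside $K$ (area $\lesssim m$ via iterated commutator expansion), applying the amalgamation relation $c_K = c_L$ exactly $m$ times, and contracting $c_L^m$ back to $w_L^{-1}$ inside $L$ (area $\lesssim m$). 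The reconciliation contributes $\lesssim m \leq n^\ell \leq n^k$, and summing all steps gives $\delta_G(n) \preccurlyeq n^k$.

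\textbf{Lower bound.} For $\delta_G(n) \succcurlyeq n^k$, which is the subtler direction since undistorted Lie subgroups do not transfer Dehn-function lower bounds, I would use a cohomological argument via the asymptotic cone. The group $G$ has nilpotency class exactly $k$, with centre in $C^k G$ inherited from $K$'s filiform structure. Iterated-commutator loops of the form $\gamma_n = [X_1^n,[X_1^n,\ldots,[X_1^n, X_2]\ldots]]$ of length $O(n)$, supported inside $K \leq G$, represent central elements of magnitude $\asymp n^{k-1}$ in $Z(G)$. A filling-norm argument using a nontrivial $k$-th-degree class in the Lie algebra cohomology of $\mathrm{gr}(\mathfrak{g})$ derived from the filiform structure of $K$, in the spirit of \cite{lipt}, should then force area $\asymp n^k$ to fill such loops in $G$.

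\textbf{Main obstacle.} The most delicate technical point is verifying the $O(m)$ expansion area in the reconciliation step of the upper bound: one needs an explicit van Kampen diagram realizing $w_K = c_K^m$ inside $K$, with area bounded linearly in $m$ despite the iterated bracket structure. The case $K = L_{k+1}^\lrcorner$ requires separate attention because the additional bracket $[X_2, X_3] = X_{k+1}$ introduces a second short expression for the centre generator, which must be accommodated both in the choice of $w_K$ and in its expansion. A secondary obstacle is confirming on the lower-bound side that the cohomological obstruction detected inside $K$ survives in $G$, i.e.~that no new cocycles introduced by the $L$-factor cancel it.
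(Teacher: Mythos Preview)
Your upper-bound argument contains a fatal error: you assert that $\delta_K(n) \asymp n^k$ for $K = L_{k+1}$ or $L_{k+1}^\lrcorner$, but in fact $\delta_K(n) \asymp n^{k+1}$. Both groups are $k$-nilpotent and the Gersten--Holt--Riley bound is sharp for them (indeed $L_{k+1}$ is Carnot-graded, and for such groups the centre is $(k+1)$-distorted, forcing $\delta \succcurlyeq n^{k+1}$; see e.g.\ \cite[Example~7.8]{lipt}). The entire content of the theorem is precisely that the central product has Dehn function \emph{strictly smaller} than that of the factor $K$. Consequently, filling $\gamma_K w_K$ inside $K$ alone costs $n^{k+1}$, not $n^k$, and your argument only recovers the trivial Gersten--Holt--Riley bound.

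The paper's upper bound works quite differently. One first reduces to the case $L = L_3$ (the $3$-Heisenberg group): after separating $w = u(x_1,x_2)\,v_L$, the central word $v_L$ is rewritten as an $\Omega$-word in the $K$-generators using that $|m| \lesssim n^{\ell} \leqslant n^{k-1}$, at cost $\lesssim n^{k}$ via Gersten--Holt--Riley applied to the smaller group $L_{k} \times_Z L$. The remaining null-homotopic word then lives in $K \times_Z L_3$, and the bulk of the work (all of \S4.1) is an intricate induction on $k$ establishing $\delta_{K \times_Z L_3}(n) \preccurlyeq n^{k}$. The crucial mechanism, absent from your sketch, is that $\langle x_1, x_{k}, y_1, y_2\rangle$ generates a $5$-Heisenberg subgroup with quadratic Dehn function; this lets one transfer central words between the two factors cheaply and is what ultimately beats the $n^{k+1}$ bound.

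Your lower-bound sketch is in the right spirit but too vague to assess. The paper does not use a cohomology class directly; it constructs an explicit continuous (non-left-invariant) $1$-form $\beta_1$ on $G$ whose exterior derivative is bounded, and integrates it against a carefully designed loop $\Lambda$ to obtain $\int_\Lambda \beta_1 \asymp n^{k}$ while $|\Lambda| \asymp n$. A left-invariant form (i.e.\ a genuine $2$-cocycle) cannot produce this bound here, since $G$ itself has class $k$, not $k-1$.
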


Theorem~\ref{th:general-factor} shares a large part of its proof with the following Theorem~\ref{th:model-filiform}, which is more general in that it allows equality of the nilpotency classes, but less general in that it makes some assumptions on both factors of the central product.

\begingroup

\begin{theoremintro}[Two filiform factors]\label{th:model-filiform}
    Let $k\geqslant \ell \geqslant 2$ be integers.
    Let $K$ be either the group $L_{k+1}$ or $L_{k+1}^\lrcorner$ . Let $L$ be either the group $L_{\ell+1}$ or $L_{\ell+1}^\lrcorner$. Let $G = K\times_ZL$.
    Then $\delta_G(n) \asymp n^k$.
\end{theoremintro}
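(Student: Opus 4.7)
The plan is to prove matching bounds $\delta_G(n) \succcurlyeq n^k$ and $\delta_G(n) \preccurlyeq n^k$.

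For the lower bound, note that since $k \geqslant \ell$ the central product $G$ has nilpotency class exactly $k$, with one-dimensional centre $Z$ generated by the common image of $X_{k+1}\in K$ and $Y_{\ell+1}\in L$. This central element is a $k$-fold iterated commutator on the $K$-side, and the subgroup $K$ is undistorted in $G$; the standard centralised isoperimetric machinery for $k$-nilpotent groups (as used, e.g., in \cite{lipt}) then yields $\delta_G(n) \succcurlyeq n^k$, by exhibiting loops of length $\asymp n$ representing central elements of height $\asymp n^k$ whose filling is obstructed by a non-trivial class in $H^2(G/Z;\R)$.

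For the upper bound I would follow the blueprint of \cite{Allcock,OlsSapCombDehn,YoungFillingNil,lipt}. Take a finite presentation of $G$ whose generators are adapted bases $X_1,\ldots,X_{k+1}$ of $K$ and $Y_1,\ldots,Y_{\ell+1}$ of $L$, with the defining brackets on each side, commutation relations $[X_i,Y_j]=1$, and the central identification $X_{k+1}=Y_{\ell+1}$. Given a null-homotopic word $w$ of length $n$, the strategy is: (i) use $[X_i,Y_j]=1$ to sort $w$ as $uv$ with $u$ a word in the $X_i$ and $v$ a word in the $Y_j$, at cost $\preccurlyeq n^2$; (ii) observe that $uv=1$ together with $K\cap L=Z$ forces $u=X_{k+1}^h$ and $v=X_{k+1}^{-h}$ for some $h\in\Z$ with $|h|\preccurlyeq n^k$ via the distortion of $Z$ in $K$; (iii) fill the resulting central loop by a hybrid $K$/$L$ argument.

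The main obstacle is step (iii): a direct $K$-filling of $u=X_{k+1}^h$ using Gersten--Holt--Riley \cite{GerstenRileyHolt} would cost $\preccurlyeq n^{k+1}$, which is too large. The idea, inherited from \cite{lipt}, is to reduce $u$ only part-way through the lower central series of $K$, and at the top level substitute the $L$-side expression $X_{k+1}=Y_{\ell+1}$, which is a commutator of depth $\ell \leqslant k$. When $\ell<k$ the saving is immediate; when $\ell=k$, which is the main new case beyond Theorem \ref{th:general-factor}, one must instead exploit the extra filiform bracket $[X_2,X_3]=X_{k+1}$ available in $L_{k+1}^\lrcorner$, or the mirror symmetry between $K$ and $L$ when both factors are of model filiform type, to reorganise the top-level commutators cheaply. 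A careful inductive area accounting along the lower central series, in the spirit of \cite{lipt}, then yields the desired $\delta_G(n) \preccurlyeq n^k$.
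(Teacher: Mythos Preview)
Your lower bound argument has a genuine gap. The ``standard centralised isoperimetric machinery'' you invoke works in the opposite direction: a nontrivial class in $H^2(G/Z;\mathbf R)$ together with degree-$k$ distortion of $Z$ in $G$ gives $\delta_{G/Z}(n)\succcurlyeq n^k$, not $\delta_G(n)\succcurlyeq n^k$. To get a lower bound on $\delta_G$ itself by this route you would need a central extension $\widetilde G\to G$ of nilpotency class $k+1$, i.e.\ a $2$-cocycle on $\mathfrak g$ that pairs nontrivially with $C^k(\mathfrak g)=Z$. No such cocycle exists here: for instance $d(\xi_1\wedge\zeta)=\xi_1\wedge\eta_1\wedge\eta_\ell\neq 0$, and one checks that every closed $2$-form on $\mathfrak g$ vanishes on $\mathfrak g\wedge Z$. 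Its absence is precisely why the central product beats the Gersten--Holt--Riley bound. The paper's lower bound (Proposition~\ref{prop:lower-bound-general}) instead integrates a \emph{non-left-invariant} continuous $1$-form $\beta_1$, with $d\beta_1=\operatorname{sign}(u_1)\,\xi_1\wedge\xi_k$ piecewise, against a specially constructed loop $\Lambda$ of length $\asymp n$; Lemma~\ref{lem:main-ingredient-lower-lie} evaluates the integral as $\asymp n^k$, and boundedness of $\int_{g_\ast\overline r}\beta_1$ over relators gives the area bound via Lemma~\ref{combinatorial-stokes}.

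Your upper bound sketch is in the right spirit for steps (i)--(ii), but step (iii) is where essentially all the work lies and your description does not identify the mechanism the paper uses. The paper does \emph{not} proceed by ``reducing $u$ part-way down the lower central series'' or by mirror symmetry. Instead it first establishes $\delta_{G_{p,3}}\preccurlyeq n^{p-1}$ for $G_{p,3}=L_p\times_Z L_3$ and $G_{p,3}^\lrcorner=L_p^\lrcorner\times_Z L_3$ (Propositions~\ref{prop:upperbound-dehn-Gp3} and~\ref{prop:upperbound-dehn-Gcornerp3}) via an elaborate induction on $p$ built from the First and Second Commuting $(k,j)$-Lemmas, the Reduction Lemma, and the Cancelling $k$-Lemma. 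Only then does the general case follow: for $k>\ell$ one rewrites the central word $v$ as an $\Omega_{p-2}^3$-word inside $L_{p-1}\times_Z L\hookrightarrow G$ and invokes $G_{p,3}$ (Proposition~\ref{prop:upper-LpcentralH}); for $k=\ell$ one uses the embedding $K\times_Z L_3\hookrightarrow K\times_Z L$ (via $\langle y_1,y_{p-1}\rangle$) together with an induction on $p$ to match $\Omega_{p-1}(\hat n)$ with $\widetilde\Omega_{p-1}(\hat n)$ (Proposition~\ref{prop:upper-LpcornerLpcorner}). Your suggestion to ``exploit $[X_2,X_3]=X_{k+1}$'' is only available in $L^\lrcorner$, so it cannot handle $L_p\times_Z L_p$, and ``mirror symmetry'' alone does not produce the required $n^k$ area bound.
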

\endgroup

For a comparison, \cite[Theorem A]{lipt} is the special case of Theorem~\ref{th:model-filiform} where $k-1 \leqslant \ell \leqslant k$ and both groups $K$ and $L$ are assumed to be model filiform; the case $k=\ell=2$ follows from Allcock and Olshanskii-Sapir's theorems. The fact that we obtain $n^k$ as the Dehn function also answers the question raised in \cite[Remark 8.13]{lipt} if the Dehn functions of the central products considered in Theorem \ref{th:model-filiform} could have non-integer exponents for certain combinations of $k$ and $\ell$. We emphasize that this only removes these groups as candidates for such examples, leaving open the general question if there exists a nilpotent group whose Dehn function is polynomial (or non-polynomial) with non-integer exponent (in the polynomial part).

\begin{example}[See Figure \ref{fig:exm-intro}] \label{exm:L5L3}
    The $7$-dimensional central products $L_5 \times_Z L_3$ and $L^\lrcorner_5 \times_Z L_3$, whose Lie algebras are the only real forms of the Lie algebras denoted $\mathcal G_{7,3.17}$ and $\mathcal G_{7,2.30}$ respectively in \cite{Magnin}, have a Dehn function $n^4$ as a consequence of Theorem~\ref{th:model-filiform}. This does not follow from \cite{lipt}, since $k - \ell >1$ in these cases, and $L^\lrcorner_5$ is not model filiform.
\end{example}

Although Theorems~\ref{th:general-factor} and \ref{th:model-filiform} support Conjecture~\ref{conj:main}, they do not provide a complete picture in low dimensions. We are able to treat low-dimensional groups separately, with tailored methods, which turn out to be often more elementary than those employed for these two Theorems. Namely, we prove the following.

\begin{theoremintro}[Low dimensional case]
\label{th:lowdim}
    Let $k\geqslant \ell\geqslant 2$ be integers and let $K$ and $L$ be simply connected nilpotent Lie groups with one-dimensional centres, and class $k$ and $\ell$ respectively. Assume that
    $
    \max\{\dim K, \dim L\} \leqslant 5 
    $.
    Let $G = K \times_ZL$.
    Then $\delta_G(n) \asymp n^k$.
\end{theoremintro}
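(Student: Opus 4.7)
The plan is an exhaustive case analysis. The classification of simply connected nilpotent Lie groups of dimension at most $5$ with one-dimensional centre produces a very short list: the Heisenberg group $L_3$ of class $2$ in dimension $3$; the model filiform $L_4$ of class $3$ in dimension $4$; and, in dimension $5$, the Heisenberg group $H_5$ of class $2$, the class-$4$ filiform groups $L_5$ and $L_5^\lrcorner$, and a handful of class-$3$ groups with one-dimensional centre that are not filiform because their dimension strictly exceeds $k+1=4$. Whenever the factor of larger nilpotency class in $(K,L)$ is of filiform type $L_{k+1}$ or $L^\lrcorner_{k+1}$, the conclusion $\delta_G(n)\asymp n^k$ already follows from Theorem~\ref{th:general-factor} or Theorem~\ref{th:model-filiform}. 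This eliminates all combinations except (i) the $k=\ell=2$ pairs and (ii) the pairs in which one or both factors is an exotic class-$3$ group of dimension $5$.

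For (i), $G$ is itself a $2$-step simply connected nilpotent Lie group with one-dimensional centre, and Theorem~\ref{thm:Allcock-OS} provides the upper bound $\delta_G(n)\preccurlyeq n^2\log n$. The sharper bound $\delta_G(n)\preccurlyeq n^2$ is obtained by adapting the argument for $H_5$ of Allcock and Olshanskii--Sapir, which in fact covers all non-abelian $2$-step simply connected nilpotent Lie groups with one-dimensional centre; the matching lower bound is immediate. For (ii), I would handle each exotic factor $N$ by writing down an explicit Malcev presentation exhibiting $N$ as a central extension $\R\to N\to L_4$, so that $G = N\times_Z L$ becomes an iterated central extension amenable to the filling methodology of \cite{lipt}. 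A word $w$ of length $n$ representing the identity in $G$ is filled by first projecting to the quotient modulo the shared centre, filling the projection at cost $n^k$ using the filiform structure of the higher-class factor present in the quotient, and then absorbing the residual central contribution at cost $O(n^k)$ via relators tailored to $N$. The lower bound $\delta_G(n)\succcurlyeq n^k$ follows by a standard centre-distortion argument: an iterated commutator of length $O(n)$ built from the factor of class $k$ evaluates to the $n^k$-th power of the central generator, and its filling cost is $\succcurlyeq n^k$.

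The main obstacle is the presence in each exotic factor $N$ of a ``cross bracket'' (typically of the form $[X_2,X_4]=X_5$) which is absent in $L^\lrcorner_{k+1}$ and so breaks the inductive filling scheme of \cite{lipt}. For each admissible partner $L$, one must check by hand that the additional relators originating from the cross bracket admit area-$O(n^k)$ van Kampen diagrams rather than the Gersten--Holt--Riley default $O(n^{k+1})$. Producing these diagrams, together with a normal form on $N$ well-adapted to the cross bracket, I expect to be the technical heart of the proof.
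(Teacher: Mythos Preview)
Your overall strategy---exhaustive case analysis, with most pairs handled by Theorems~\ref{th:general-factor} and~\ref{th:model-filiform} and only the ``exotic'' class-$3$ factor in dimension~$5$ requiring separate treatment---matches the paper's. The $k=\ell=2$ case is even easier than you suggest: all such $G$ are higher Heisenberg groups, so Allcock's quadratic bound applies directly. But two genuine problems remain.

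\textbf{The exotic factor is misidentified.} There is exactly one non-filiform class-$3$ group of dimension~$5$ with one-dimensional centre, namely $L_{5,5}$, with brackets $[X_1,X_2]=X_3$, $[X_1,X_3]=X_4$, $[X_2,X_5]=X_4$. Its centre is $\langle X_4\rangle$ and the quotient $L_{5,5}/Z$ is $L_3\times\R$, \emph{not} $L_4$; there is no presentation of $L_{5,5}$ as a central extension $\R\to L_{5,5}\to L_4$. The extra generator $X_5$ lies outside the derived subalgebra, and the cross bracket is $[X_2,X_5]=X_4$, not $[X_2,X_4]=X_5$. Your proposed reduction ``project to the quotient modulo the shared centre, fill the projection using the filiform structure'' therefore does not apply: the quotient has class~$2$, not~$3$, and there is no filiform structure to exploit there. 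The paper instead uses that $L_{5,5}$ contains an $L_4$ subgroup (generated by $x_1,x_2,x_3,x_4$) containing the centre, invokes Lemma~\ref{lem:back-first-factor} to reduce to filling words $w(x_1,x_2,x_5)$ in the first factor, and then fills these by an explicit algorithm that shuffles $x_1$'s leftward while absorbing the resulting central debris via $5$-Heisenberg subgroups such as $\langle x_2,x_5,y_2,y_5\rangle$ and $\langle x_1,x_3,y_1,y_3\rangle$. The three cases $L_{5,5}\times_Z L$ for $L\in\{L_{3,2},L_{4,3},L_{5,5}\}$ are then treated by successive reduction to one another; the case $L=L_{5,4}$ uses a separate lemma on higher Heisenberg factors.

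\textbf{The lower bound argument is incomplete.} The fact that an iterated commutator of length $O(n)$ in $K$ equals $z^{n^k}$ shows that $Z(G)$ is $n^k$-distorted in $G$; by the standard argument this yields $\delta_{G/Z}(n)\succcurlyeq n^k$, not $\delta_G(n)\succcurlyeq n^k$. What is needed is a central extension \emph{of} $G$ whose new centre is $n^k$-distorted. For the filiform-factor cases this is covered by Proposition~\ref{prop:lower-bound-general}. For the $L_{5,5}$ cases ($k=3$) the paper constructs such extensions explicitly by adding a generator $\widetilde z$ with $[\widetilde x_2,\widetilde x_3]=\widetilde z$, checks that $\widetilde z$ is cubically distorted, and invokes \cite[Proposition~7.2]{lipt}.
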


As a consequence of Theorems \ref{th:general-factor}, \ref{th:model-filiform} and \ref{th:lowdim}, we significantly extend the class of pairs of groups with bilipschitz homeomorphic asymptotic cones and different Dehn functions. Since the Dehn function is a quasiisometry invariant we distinguish such groups from each other up to quasiisometry. 

\begin{corollaryintro}\label{cor:QI}
Let $K$ and $L$ be simply connected nilpotent Lie groups of classes $k$ and $\ell$ with $k>\ell$ satisfying the assumption in Theorem~\ref{th:general-factor}, \ref{th:model-filiform} or \ref{th:lowdim}.
Then the group $G = K \times_ZL$ is not quasiisometric to its asymptotic cone.
\end{corollaryintro}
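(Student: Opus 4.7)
The plan is to leverage the fact that the Dehn function is a quasiisometry invariant up to asymptotic equivalence and to exhibit a discrepancy between $\delta_G$ and the Dehn function of the asymptotic cone of $G$.

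By Pansu's theorems \cite{PanCBN,PansuCCqi}, the asymptotic cone of $G$ is bilipschitz homeomorphic to $G_\infty$, the simply connected Carnot-graded Lie group associated with $G$, equipped with a Carnot--Carathéodory distance; this distance is quasiisometric to any left-invariant Riemannian distance on $G_\infty$, so for the purposes of distinguishing $G$ from its asymptotic cone up to quasiisometry we may replace the asymptotic cone with $G_\infty$ viewed as a Lie group. By Theorems~\ref{th:general-factor}, \ref{th:model-filiform}, or \ref{th:lowdim}, $\delta_G(n) \asymp n^k$. It then suffices to prove $\delta_{G_\infty}(n) \not\asymp n^k$; in fact, we aim to show $\delta_{G_\infty}(n) \asymp n^{k+1}$.

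The upper bound $\delta_{G_\infty}(n) \preccurlyeq n^{k+1}$ is Gersten--Holt--Riley \cite{GerstenRileyHolt} applied to the $k$-nilpotent group $G_\infty$. For the lower bound, the key observation is that, because the one-dimensional centre of $L$ is identified via $\theta$ with the centre of $K$, it lies at depth $k > \ell$ in the lower central series of $\mathfrak{g}$. Consequently, any bracket of $L$-generators of length $\ell$ yielding a generator of $Z(\mathfrak{l})$ represents an element of $C^k(\mathfrak{g}) \subseteq C^{\ell+1}(\mathfrak{g})$ and vanishes in $C^\ell(\mathfrak{g})/C^{\ell+1}(\mathfrak{g})$. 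Combined with $[\mathfrak{k},\mathfrak{l}] = 0$ (which holds in any central product), this yields a Lie algebra direct-sum decomposition $\mathrm{gr}(\mathfrak{g}) = \mathrm{gr}(\mathfrak{k}) \oplus \mathrm{gr}(\mathfrak{l}/Z(\mathfrak{l}))$, so $G_\infty$ factors as a direct product of Lie groups having the Carnot-graded group of $K$ as a factor. In the settings of Theorems~\ref{th:general-factor} and~\ref{th:model-filiform}, $K \in \{L_{k+1}, L^\lrcorner_{k+1}\}$ and the Carnot-graded group of $K$ is $L_{k+1}$: for $K = L^\lrcorner_{k+1}$ with $k \geqslant 4$, the extra bracket $[X_2,X_3] = X_{k+1}$ lies in $C^k(\mathfrak{k}) \subsetneq C^3(\mathfrak{k})$ and vanishes in degree $3$ of the associated graded. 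Since $\delta_{L_{k+1}}(n) \asymp n^{k+1}$ is well-known and the Dehn function decreases under retracts, the lower bound follows.

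The low-dimensional setting of Theorem~\ref{th:lowdim} is handled by a finite case analysis based on the classification of simply connected nilpotent Lie groups of dimension at most $5$ with one-dimensional centre, identifying the Carnot-graded group of $K$ in each case and verifying that its Dehn function is $\asymp n^{k+1}$. Combining the three cases gives $\delta_G(n) \asymp n^k \prec n^{k+1} \asymp \delta_{G_\infty}(n)$, so $G$ is not quasiisometric to its asymptotic cone. The main technical step is the lower bound on $\delta_{G_\infty}$, namely the direct-sum decomposition of $\mathrm{gr}(\mathfrak{g})$ and the identification of the $K$-factor; for the low-dimensional case this requires inspecting a short list of candidates one by one.
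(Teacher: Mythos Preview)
Your proposal is correct and follows essentially the same approach as the paper: compute $\delta_G \asymp n^k$ from the main theorems, identify the asymptotic cone with $\operatorname{gr}(G)$ via Pansu, establish the direct-product decomposition $\operatorname{gr}(G) \cong \operatorname{gr}(K) \times \operatorname{gr}(L/Z(L))$ using $k>\ell$, and then show $\delta_{\operatorname{gr}(K)} \asymp n^{k+1}$ by identifying $\operatorname{gr}(K)$ explicitly. The paper's proof (Proposition~\ref{prop:preparing-cor-QI}) is more terse on the decomposition but makes the low-dimensional case fully explicit: the only $K$ in Theorem~\ref{th:lowdim} not already of the form $L_{k+1}$ or $L_{k+1}^\lrcorner$ is $L_{5,5}$, whose Carnot-graded group is $L_4 \times \mathbf R$, again with Dehn function $n^{k+1}$.
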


 In Appendix~\ref{sec:QI} we explain how one can also deduce Corollary~\ref{cor:QI} in special cases from Shalom's Theorem \cite{ShalomHarmonic}. We are not aware of a way to derive the general case of Corollary \ref{cor:QI} from \cite{ShalomHarmonic,SauerHom,GotfredsenKyed} and it would be interesting to know if such a proof exists. We note that for some groups covered by Corollary~\ref{cor:QI}, the first, second and (in one case) the third Betti numbers are the same as the ones of the groups underlying the asymptotic cones, and thus if one uses \cite{ShalomHarmonic} or \cite{GotfredsenKyed} to reach the same conclusion, one needs to consider third or fourth Betti numbers (See Figure \ref{fig:fourth-betti}, and Table \ref{tab:betti} in Appendix~\ref{sec:QI}).

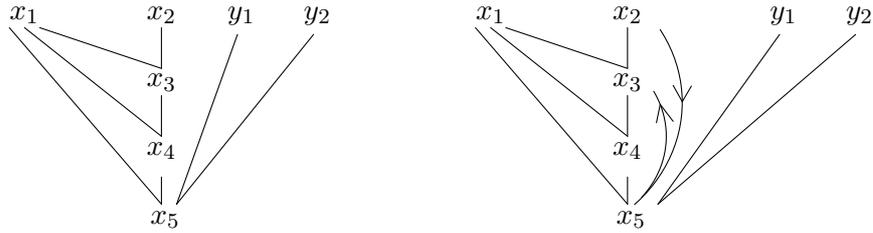
\begin{figure}[t]
    \begin{tikzpicture}[line cap=round,line join=round,>=triangle 45,x=1.0cm,y=0.9cm]
\clip(-1.7,2.5) rectangle (4.3,6);
\draw (-0.8,6.06) node[anchor=north] {$x_1 $};
\draw (1,6.06) node[anchor=north] {$ x_2 $};
\draw (1,5.07) node[anchor=north ] {$ x_3 $};
\draw (1,4.06) node[anchor=north ] {$ x_4 $};
\draw (1.05,3.05) node[anchor=north] {$ x_5 $};
\draw (2.05,6.06) node[anchor=north ] {$ y_1 $};
\draw (3.05,6.06) node[anchor=north ] {$ y_2 $};
\draw (-0.6,5.6)-- (1,5);
\draw (1,5.6)-- (1,5);
\draw (-0.8,5.6)-- (1,4);
\draw (1,4.6)-- (1,4);
\draw (-1,5.6)-- (1,3);
\draw (1,3.4)-- (1,3);
\draw (2,5.5)-- (1.2,3);
\draw (3,5.5)-- (1.2,3);
\end{tikzpicture}
\begin{tikzpicture}[line cap=round,line join=round,>=triangle 45,x=1.0cm,y=0.9cm]
\clip(-1.7,2.5) rectangle (4.3,6.4);
\draw (-0.8,6.06) node[anchor=north] {$x_1 $};
\draw (1,6.06) node[anchor=north] {$ x_2 $};
\draw (1,5.07) node[anchor=north ] {$ x_3 $};
\draw (1,4.06) node[anchor=north ] {$ x_4 $};
\draw (1.05,3.05) node[anchor=north] {$ x_5 $};
\draw (3.05,6.06) node[anchor=north ] {$ y_1 $};
\draw (4.05,6.06) node[anchor=north ] {$ y_2 $};
\draw (-0.6,5.6)-- (1,5);
\draw (1,5.6)-- (1,5);
\draw (-0.8,5.6)-- (1,4);
\draw (1,4.6)-- (1,4);
\draw (-1,5.6)-- (1,3);
\draw (1,3.4)-- (1,3);
\draw (3,5.5)-- (1.4,3);
\draw (4,5.5)-- (1.4,3);
\draw [shift={(0.1,4)}] plot[domain=-0.69:0.49,variable=\t]({1*1.41*cos(\t r)},{1*1.41*sin(\t r)});
\draw [shift={(-0.4,4.5)}] plot[domain=-0.79:0.53,variable=\t]({1*2.12*cos(\t r)},{1*2.12*sin(\t r)});
\draw [shift={(1.1,0)}] (0.62,4.51)-- (0.5,4.75);
\draw [shift={(1.1,0)}]  (0.62,4.51)-- (0.74,4.74);
\draw [shift={(1.1,0)}]  (0.33,4.47)-- (0.28,4.2);
\draw [shift={(1.1,0)}] (0.33,4.47)-- (0.5,4.22);
\end{tikzpicture}
    \caption{The groups in Example~\ref{exm:L5L3} have lattices whose presentation we may draw using diagrams where the letters represent the generators and there is an oriented curve with one corner that goes from $x$ to $z$ and then to $y$
    whenever $[x,y] = z$. When the broken curve can be read from left to right we omit the orientation.}
    \label{fig:exm-intro}
\end{figure}

 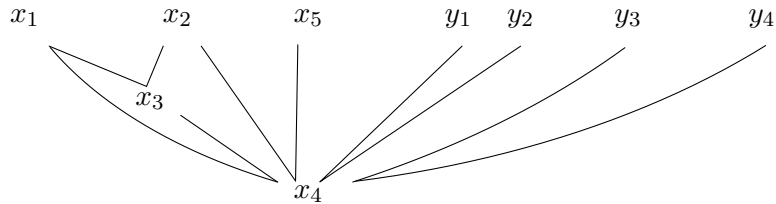
\begin{figure}
     \centering
       \begin{tikzpicture}[line cap=round,line join=round,>=triangle 45,x=1cm,y=0.6cm]
\clip(-5,-2.5) rectangle (7,1.9);
\draw (-3.65,2.04) node[anchor=north west] {$x_1 $};
\draw (-1.64,2.04) node[anchor=north west] {$ x_2 $};
\draw (0.08,2.04) node[anchor=north west] {$ x_5 $};
\draw (2.07,2.04) node[anchor=north west] {$ y_1 $};
\draw (4.3,2.04) node[anchor=north west] {$y_3$};
\draw (-2,0.21) node[anchor=north west] {$ x_3 $};
\draw (0.08,-1.85) node[anchor=north west] {$ x_4 $};
\draw (-3,1)-- (-1.72,0.11);
\draw (-1.5,1)-- (-1.72,0.11);
\draw (-1,1)-- (0.24,-1.98);
\draw (0.27,1.03)-- (0.24,-1.98);
\draw (-1.27,-0.53)-- (0,-2);
\draw (2.43,1)-- (0.56,-2) -- (3.2,1);
\draw (3.2,2.04) node[anchor=north] {$ y_2$};
\draw [shift={(3,4)}] plot[domain=3.61:4.25,variable=\t]({1*6.71*cos(\t r)+0*6.71*sin(\t r)},{0*6.71*cos(\t r)+1*6.71*sin(\t r)});
\draw [shift={(-4.74,8.55)}] plot[domain=5.21:5.6,variable=\t]({1*12.01*cos(\t r)},{1*12.01*sin(\t r)});
\draw [shift={(-1.12,8.23)}] plot[domain=4.92:5.52,variable=\t]({1*10.44*cos(\t r)},{1*10.44*sin(\t r)});
\draw (6.06,2.04) node[anchor=north west] {$y_4 $};
\end{tikzpicture}
     \caption{ The group above, pictured with the graphical convention of Figure \ref{fig:exm-intro}, has the same first, second, and third Betti numbers as the group underlying its asymptotic cone, namely $7$, $21$, and $34$, but these groups are not quasiisometric by Corollary~\ref{cor:QI}.}
     \label{fig:fourth-betti}
 \end{figure}

 One can quantify the failure of existence of a quasiisometry in Corollary~\ref{cor:QI} using \emph{sublinear bilipschitz equivalences}. Recall that two metric spaces $X$ and $Y$ are called \emph{$O(r^e)$-bilipschitz equivalent},  for $e \in [0,1)$, if there exist $L>1$, $c \geqslant 0$, a pair of maps $(f\colon X \to Y,~g\colon Y \to X)$, and some $x_0 \in X$ and $y_0 \in Y$, such that for all $r>0$ the map $f$ (resp. $g$) is a $(L,cr^e)$-quasiisometric embedding when restricted to $B(x_0,r)$ (resp. to $B(y_0,r)$), and for all $x\in X,~ y\in Y$
 \[ d(g(f(x)), x) \leqslant c(1+d(x,x_0)^e), \quad  d(f(g(y)),y)  \leqslant c(1+d(y,y_0)^e).
 \]
 This notion does not depend on the choice of $x_0$ and $y_0$, up to changing $c$. In \cite[\S 6]{cornulier2017sublinear} Cornulier introduced a computable algebraic invariant $e_G \in \left[0,1\right)$ for every simply connected nilpotent Lie group such that $G$ is $O(r^{e_G})$-bilipschitz equivalent to its asymptotic cone. He asked in \cite[Question 1.23]{cornulier2017sublinear} whether this invariant is optimal (for a given fixed group). Here we show that for the groups $L_{k+1}\times_Z L_3$ Cornulier's invariant is asymptotically optimal as the nilpotency class $k$ goes to infinity.

 \begin{theoremintro}
 \label{th:sbe}
    Let $k \geqslant 2$.
     Let $\underline e$ be the infimum of the exponents $e$ such that there exists a $O(r^e)$-sublinear bilipschitz equivalence between $G = L_{k+1} \times_Z L_3$ and its asymptotic cone. Then 
     \begin{equation*}
         \frac{1}{2k+2} \leqslant \underline e \leqslant \frac{2}{k} = e_G.
     \end{equation*}
 \end{theoremintro}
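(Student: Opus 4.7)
The upper bound $\underline{e} \leqslant 2/k = e_G$ is a direct consequence of Cornulier's theorem \cite[\S 6]{cornulier2017sublinear}, which provides an $O(r^{e_G})$-sublinear bilipschitz equivalence from any simply connected nilpotent Lie group to its asymptotic cone for the explicit algebraic invariant $e_G$. For the group $G = L_{k+1} \times_{Z} L_3$ a direct computation yields $e_G = 2/k$: the Carnot-graded Lie algebra is $\mathrm{gr}(\mathfrak{g}) \cong \mathfrak{L}_{k+1} \oplus \mathbb{R}^2$, and the only bracket in $\mathfrak{g}$ that is not homogeneous for this grading is $[Y_1,Y_2] = X_{k+1}$, lying in weight~$k$ instead of the Carnot weight~$2$.

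For the lower bound, the strategy is to use the Dehn function as an obstruction to sublinear bilipschitz equivalence. On one hand, Theorem~\ref{th:general-factor} applied with $L = L_3$ (so $\ell = 2$) gives $\delta_G(n) \asymp n^k$. On the other hand, the asymptotic cone of $G$ is bilipschitz homeomorphic to the Carnot-graded group $\mathrm{gr}(G) \cong L_{k+1} \times \mathbb{R}^2$, whose Dehn function is $\asymp n^{k+1}$: by Pittet's matching lower bound to Gersten--Holt--Riley, $\delta_{L_{k+1}}(n) \asymp n^{k+1}$, and the direct product with a Euclidean factor leaves this exponent unchanged.

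The conclusion then follows from a quantitative SBE-versus-Dehn-function inequality: if two simply connected nilpotent Lie groups $X, Y$ are $O(r^e)$-SBE and have polynomial Dehn functions of degrees $a < b$, then $e \geqslant (b-a)/(2b)$. One establishes this by transporting a loop of length $n$ realising $\delta_Y(n) \asymp n^b$ through the quasi-inverse $g$ to an $O(n)$-loop in $X$, filling it with area $\lesssim n^a$, subdividing the resulting two-disc at a carefully chosen intermediate scale, pushing the subdivided disc forward through $f$ to $Y$, and closing the $O(n^e)$-boundary gap between the transported boundary and the original loop with a thin annular region. Optimising the subdivision scale produces the exponent $(b-a)/(2b)$; specialising to $a = k$ and $b = k+1$ yields $\underline{e} \geqslant 1/(2(k+1)) = 1/(2k+2)$.

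The main technical obstacle lies in the last step: one must carefully balance a bulk contribution of order $n^{a+(b-2)e}$ from the subdivided filling against a boundary annular contribution of order $n^{1+be}$, keeping precise track of the accumulation of the sublinear defect $O(n^e)$ through the two-dimensional filling in order to extract the sharp exponent $(b-a)/(2b)$ rather than a cruder bound.
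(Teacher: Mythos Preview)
Your approach is the same in outline as the paper's: Cornulier for the upper bound, and the Dehn function gap between $G$ and $\operatorname{gr}(G)$ as an obstruction to small-$e$ SBE for the lower bound, with the quantitative transfer of fillings through the SBE carried out as in \cite[\S 9]{lipt}.

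There is, however, a real gap. The lower bound does \emph{not} follow from the Dehn function exponents $a=k$ and $b=k+1$ alone: the transfer argument needs to control the SBE defect at every vertex of the filling disc, and that defect is $O(R^e)$ where $R$ is the distance of the vertex to the basepoint. If all you know is $\delta_G(n)\asymp n^k$, the filling disc could have diameter as large as $n^k$, and the error blows up. What is actually required is the \emph{filling pair} $(n^{k},n)$ for $G$, i.e.\ that the area-$n^k$ fillings can be realised with all intermediate words of length $\lesssim n$. The paper notes explicitly that this is a by-product of the proof of Theorem~\ref{th:general-factor}: one checks that every prefix word in the chain of identities represents a group element at distance $\lesssim n$ from the identity. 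Your ``fill with area $\lesssim n^a$'' step silently assumes this linear filling length, and your stated general inequality ``$e\geqslant (b-a)/(2b)$ whenever the Dehn exponents are $a<b$'' is false without it.

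A secondary point: your claimed bulk and boundary exponents $n^{a+(b-2)e}$ and $n^{1+be}$ do not balance to give $(b-a)/(2b)$; taking the minimum of the thresholds $(b-a)/(b-2)$ and $(b-1)/b$ yields $1/(k-1)$ for $a=k$, $b=k+1$, not $1/(2k+2)$. The correct accounting from \cite[\S 9]{lipt} uses the filling pair and a different bookkeeping; you should invoke that result directly once you have established $(n^k,n)$ as a filling pair for $G$, rather than rederive the numerics.
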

The upper bound is due to Cornulier \cite{cornulier2017sublinear} and the lower bound follows from our work. Note that asymptotically as $k\to \infty$ the two bounds coincide up to a ratio of $4$, showing that Cornulier's general upper bound is optimal in this asymptotic sense. To our knowledge these are the first examples of this kind. In \cite{lipt} the authors produced non-zero lower bounds on $\underline{e}$ for all groups $L_{k+1}\times_Z L_{k}$ with $k\geqslant 3$, but in this case the asymptotics of their lower bound did not coincide with those of Cornulier's upper bound as $k\to \infty$.

We finish by observing that the family of pairwise non-isomorphic groups covered by Theorem \ref{th:general-factor} and Corollary \ref{cor:QI} is uncountable in the strongest possible sense, even if we restrict to groups of nilpotency class 4. 

\begin{theoremintro}\label{thm:main-uncountable}
        There exists an uncountable family $\left\{(G_i,H_i)\right\}_{i\in I}$ of pairs of 4-nilpotent groups such that the following hold:
    \begin{enumerate}
        \item $H_i$ equipped with a Carnot-Carathéodory metric is bilipschitz homeomorphic to the asymptotic cone of $G_i$ for every $i\in I$;
        \item $\delta_{G_i}(n)\asymp n^4 \prec n^5 \asymp \delta_{H_i}(n)$,  for every $i\in I$; and
        \item $H_i\not \cong H_j$ for all $i,j\in I$ with $i\neq j$ (and thus also $G_i\not \cong G_j$).
    \end{enumerate}
\end{theoremintro}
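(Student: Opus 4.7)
The plan is to produce the family by fixing $K=L_5$ and varying the second factor of a central product over an uncountable family of class-$3$ simply connected nilpotent Lie groups with one-dimensional centre. First, I would exhibit an uncountable family $\{L_i\}_{i\in I}$ of pairwise non-isomorphic simply connected Carnot Lie groups of class $3$ with one-dimensional centre, arranged so that the quotients $L_i/Z(L_i)$ are also pairwise non-isomorphic $2$-nilpotent Lie groups. Such families should follow from the continuous moduli known to exist in the variety of $2$-nilpotent Lie algebras of sufficiently high dimension, paired with a compatible choice of a one-dimensional class-$3$ central extension for each.

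Having fixed this family, for each $i \in I$ I would set $G_i := L_5 \times_Z L_i$ and apply Theorem~\ref{th:general-factor} with $k=4$, $\ell=3$ and $K=L_5$ to get $\delta_{G_i}(n) \asymp n^4$. The group $H_i$ will be taken as the simply connected Lie group of the Carnot-graded Lie algebra $\mathrm{gr}(\mathfrak{g}_i)$, so that by Pansu's theorem $H_i$ equipped with a Carnot-Carathéodory metric is bilipschitz homeomorphic to the asymptotic cone of $G_i$, giving (1). The key structural observation is that in $\mathfrak{g}_i$ the common central generator $z$ belongs to $\gamma_4(\mathfrak{g}_i)$ through the $\mathfrak{l}_5$-factor, so the $3$-fold brackets in $\mathfrak{l}_i$ that equal $z$ collapse in the graded algebra; since the cross-brackets between the two factors are already zero in the central product, the graded Lie algebra splits as
\[
\mathrm{gr}(\mathfrak{g}_i) \;\cong\; \mathfrak{l}_5 \oplus \bigl(\mathfrak{l}_i/Z(\mathfrak{l}_i)\bigr),
\]
whence $H_i \cong L_5 \times (L_i/Z(L_i))$. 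The Gersten--Holt--Riley bound then gives $\delta_{H_i}(n) \preccurlyeq n^5$, and since $L_5$ appears as a direct factor with $\delta_{L_5}(n) \asymp n^5$ the matching lower bound follows, establishing (2).

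For (3), because $L_5$ has class $4$ while $L_i/Z(L_i)$ has class $2$, the $L_5$-factor is characterised intrinsically as the (unique) class-$4$ direct summand in the Krull--Schmidt decomposition of $\mathfrak{h}_i$, so that $L_i/Z(L_i)$ is recovered from $H_i$ up to isomorphism. Hence $H_i \cong H_j$ forces $L_i/Z(L_i) \cong L_j/Z(L_j)$, which by the choice of family implies $i=j$. Since the Carnot-graded construction is an isomorphism invariant, $G_i \cong G_j$ would imply $H_i \cong H_j$, so the $G_i$'s are also pairwise non-isomorphic. The main obstacle will be the very first step, namely simultaneously producing the uncountable family $\{L_i\}$ with pairwise non-isomorphic quotients $\{L_i/Z(L_i)\}$; this requires a careful exploitation of the moduli of nilpotent Lie algebras, and would likely be spelled out by constructing an explicit parametric family of $1$-dimensional central extensions over a continuous family of $2$-nilpotent Lie algebras of appropriate dimension.
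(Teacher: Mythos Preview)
Your proposal is correct and follows essentially the same approach as the paper: set $G_i = L_5 \times_Z L_i$ with $L_i$ a class-$3$ group with one-dimensional centre, identify $H_i=\operatorname{gr}(G_i)\cong L_5\times(L_i/Z(L_i))$, and obtain the uncountable family from moduli of $2$-nilpotent Lie algebras admitting class-$3$ central extensions. The paper resolves your ``main obstacle'' exactly as you anticipate, via a Raghunathan-style dimension count showing that for suitable $(m,k)$ the space $\mathcal L(m,k)$ of $2$-nilpotent laws on $\mathbf R^m\oplus\mathbf R^k$ has uncountably many orbits and that every such law admits a nonzero weight-$3$ cocycle (hence a cubically distorted central extension).
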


As the proof will show, the groups satisfying the conclusion of Theorem \ref{thm:main-uncountable} are not very explicit. A weaker version of Theorem \ref{thm:main-uncountable}, where the $H_i$ are all isomorphic and only the $G_i$ form an uncountable pairwise non-isomorphic family, is true for a very explicit family of 14 dimensional nilpotent Lie groups. This also provides the smallest dimensional family of this kind that we are aware of, while the smallest dimension in which we know Theorem \ref{thm:main-uncountable} is satisfied is $15$.

\begin{example} \label{exm:nolattices}
    For $\lambda \neq 0$, we denote by $K_{7,\lambda}$ the simply connected Lie group whose Lie algebra is the unique real form of $\mathcal G_{7,1.1(i_\lambda)}$ in \cite{Magnin}. The $14$-dimensional Lie group $L_8 \times_Z K_{7,\lambda}$ has Dehn function $n^7$ as a consequence of Theorem \ref{th:general-factor}. For uncountably many values of $\lambda$, this group has no lattices. Since $ L_8 \times_Z K_{7,\lambda_1} \cong  L_8 \times_Z K_{7,\lambda_2}$ if and only if $\lambda_1= \lambda_2$, Corollary \ref{cor:QI} thus implies that there is an uncountable family of 14-dimensional nilpotent groups whose Dehn function differs from the one of the associated Carnot graded group (See Section \ref{subsec:central-product-wo-lattice} for details). 
\end{example}

The presence of a lattice in a nilpotent group is sometimes required in order to be able to evaluate its filling invariants. 
It is the case, for instance, in Pittet's proof of the Gersten--Holt--Riley upper bound for Carnot-gradable groups in \cite{PittetIsopNilHom} and in Gruber's thesis on the higher filling invariants \cite{GruberFilling}.

As noted above, going from finitely presented groups to compactly presented ones, one is led to consider uncountably many groups. This raises the following question, with which we end this introduction.

\begin{ques}
Are there uncountably many possible growth types for the Dehn functions of compactly presented groups?
\end{ques}

\subsection*{Structure of the paper}

We start with some preliminary material in \S\ref{sec:prelim}, where we recall the basic definitions, our approach to Dehn functions using compact presentations and efficient words, and the specificities imposed by the groups we study in this paper. 
For the lower bound, we shall make use of the same argument as in \cite[Section 8]{lipt}, but reframe it in a more qualitative and general way in \S\ref{sec:lowerbounds}.
The upper bounds required for Theorems~\ref{th:general-factor} and \ref{th:model-filiform} are dealt with in \S\ref{sec:upper-bounds}, where we also prove Theorem \ref{th:sbe}. To obtain them we generalize the methods from \cite[Section 6]{lipt}. This is achieved by replacing the first part of the proof given there by a simpler more direct argument, which enables us to cover the more general class of groups considered here. In particular, we replace the proof in \cite{lipt} of the First and Second Commuting lemmas \cite[Lemma 6.5 \& 6.6]{lipt} involving the highly technical Fractal Form Lemma \cite[Lemma 6.17]{lipt} by a much simpler induction argument. To showcase the strength of this new approach we cover the additional case of the groups $L_{k+1}^{\lrcorner}$ and we expect that a refinement of this approach will also enable an extension to other factors in the future.

The upper bound in Theorem~\ref{th:lowdim} is obtained by a case-by-case study. Quite a few cases were either already known or are consequences of Theorems~\ref{th:general-factor} and ~\ref{th:model-filiform}. The remaining cases can be dealt with using relatively elementary arguments (without using compact presentations); we collect them in \S\ref{subsec:lowdim-case-by-case}.

In Section \ref{sec:uncountable} we first prove Theorem \ref{thm:main-uncountable} and then explain Example \ref{exm:nolattices} in detail.

Our paper ends with two appendices. Appendix \ref{sec:QI} concerns other quasiisometry invariants of nilpotent groups. In Appendix \ref{sec:GHR} we check that the Gersten--Holt--Riley upper bound extends to Lie groups irrespective of whether they have lattices or not.

\subsection*{Notations and conventions}
We fix some notation and conventions that we use throughout this work.

When manipulating words with respect to a presentation $\calp:=\langle S|R \rangle$ of a group $G$ it is useful to distinguish between identities that hold in the free group $F_S$, called \emph{free identities} and denoted by $=_F$, and identities that hold in $\calp$, which we refer to as \emph{identities holding in $G$} and denote them by $=_\calp$ or $=_G$. When an equality follows from using a definition we simply write $=_\text{def}$. For a word $w$ in the generating set $S$ of a group $G$ with respect to a presentation $\calp$ we denote by $|w|_S$ the \emph{word length of $w$}. If there is no need to specify the generating set $S$ we simply write $|w|$.

We use the convention 
$[g,h] := g^{-1}h^{-1}gh$
and the notation \[ \left[x_1,~x_2,\dots,x_{k-1},~x_k\right]:=\left[x_1,\left[x_2,\dots,\left[x_{k-1},x_k\right]\right]\right] \] to denote a simple $k$-fold commutator of elements $x_1,\dots,x_k$ in a group or Lie algebra. Moreover, for $g\in G$ an element of a simply connected nilpotent Lie group $G$ and $a\in\R$, we denote $g^a:=\exp(a\cdot \log(g))$.

Given two functions $f,~g:\mathbf{R}_{\geqslant 0}\to \mathbf{R}_{\geqslant 0}$, we use the notation $f \lesssim_a g$ to mean that there exists some constant $C < \infty$ that only depends on $a$ such that $f \leqslant C \cdot g$. And $f \simeq_a g$ if $f \lesssim_a$ and $g \lesssim_a f$ hold. We also say that $f$ is in $O_a (g)$ if $f \lesssim_a g$ and $f = O_a (g)$ if $f \simeq_a g$. Moreover, we write $f\preccurlyeq g$ (or rather, by a convenient abuse of notation, $f(n) \preccurlyeq g(n)$) if there is a $C\geqslant 1$ such that $f(n)\leqslant Cg(Cn+C) + Cn+C$ for all $n$, and $f\asymp g$, if $f\preccurlyeq g\preccurlyeq f$. 

Unless otherwise stated, by {\em factor} of a central product $G = K\times_Z L$ we mean $K$ or $L$. When we consider direct factors of groups splitting as direct products instead, this will be explicitly specified.

\subsection*{Acknowledgements}

We would like to thank Yves Cornulier, Pierre Pansu and Xiangdong Xie for helpful comments.

\tableofcontents

\section{Preliminaries}

\label{sec:prelim}

In this section we first summarise some background on Dehn functions, nilpotent groups and compact presentations. We then introduce explicit compact presentations for some of the groups we consider later in the paper.

\subsection{Dehn functions of compactly presented groups}

Generalising finite generation and finite presentability for discrete groups, a locally compact topological group $G$ is \emph{compactly generated} if it admits a compact generating set $S$ and \emph{compactly presented} if, in addition, it admits a presentation with a bounded set of relations $R$ with respect to $S$. An important class of examples of compactly presented groups are simply connected Lie groups, see \cite[Theorem 2.6]{TesseraMetricLC}. This allows us to study the large-scale geometry of simply connected Lie groups via their compact presentations.

Let $G $ be a group defined by a compact presentation $\mathcal{P}:=\left\langle S\mid R\right\rangle$. We call a word $w \in F_S$ \emph{null-homotopic in} $G$ if it represents the trivial element in $G$. The \emph{(combinatorial) area} of a null-homotopic word $w$ in $G$ with respect to $\calp$ is defined by
\[
    {\rm Area}_{\mathcal P}(w):= {\rm min}\left\{k\in \mathbb{N}\mid w=_{F} \prod_{i=1}^k u_i\cdot r_i^{\epsilon_i}u_i^{-1}, \mbox{ with } u_i\in F_S,~ r_i\in R,~ \epsilon_i\in \left\{\pm 1\right\}\right\}.
\]

The Dehn function of $G$ is then defined as the function
\[
    \delta_G(n):={\rm max}\left\{{\rm Area}_{\calp}(w)\mid w=_G 1,~ \vert w \vert \leqslant n\right\}\in\mathbb{N}.
\]

Similar as in the case of finite presentations, one can show that the Dehn function is a well-defined quasiisometry invariant of compactly presented groups up to the asymptotic equivalence $\asymp$ of functions.
We refer to \cite[\S 2.B]{CoTesDehn} for further details.

Assume now that $G$ is a Lie group equipped with a left-invariant proper geodesic metric. To translate between words in $S$ and paths in $G$, we fix for every element $s\in S$ a continuous path $\overline{s}$ from $1_G$ to $s$ such that the set $\left\{  \text{length}(\overline{s}) \mid s\in S\right\}$ is bounded. For a word $w$ in $S$ we denote by $\overline{w}$ the path in $G$, obtained by concatenating paths corresponding to the generators. Moreover, for $g\in G$ and a path $\gamma:\left[a,b\right]\to G$ we denote by $g_\ast \gamma$ the path obtained from $\gamma$ by left translation by $g$ in $G$. 

To compute lower bounds on Dehn functions we use the following version of Stokes' formula for simply connected Lie groups equipped with compact presentations. 

\begin{lemma}
\label{combinatorial-stokes}
    Let $G$ be a simply connected Lie group and let $\calp:=\langle S \mid R \rangle$ be a compact presentation for $G$.
    Let $\alpha \in \Omega^1(TG)$ be a continuous 1-form on $G$.
    Assume that there exists a constant $C$ such that $\left\vert \int_{g_{\ast}\overline{r}} \alpha \right\vert \leqslant C$ for all $r\in R$.
    Then for any null-homotopic word $w$ over the presentation
    \[ \operatorname{Area}_\calp(w) \geqslant \frac{1}{C} \left\vert \int_{\overline{w}} \alpha \right\vert. \]
\end{lemma}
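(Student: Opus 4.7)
The plan is to mirror the classical Stokes' theorem argument: take a minimum-area expression of $w$ as a product of conjugates of relators, rewrite the line integral of $\alpha$ over $\overline{w}$ as a sum of line integrals over $g$-translates of relator loops, and apply the hypothesis together with the triangle inequality.

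First, set $k := \operatorname{Area}_\calp(w)$, so that there is a free identity
\[ w =_F \prod_{i=1}^k u_i r_i^{\epsilon_i} u_i^{-1} \]
with $u_i \in F_S$, $r_i \in R$, $\epsilon_i \in \{\pm 1\}$. The first step would be to establish the following auxiliary fact: whenever two words $v_1, v_2 \in F_S$ satisfy $v_1 =_F v_2$, then $\int_{\overline{v_1}} \alpha = \int_{\overline{v_2}} \alpha$. Iterating free reductions, this reduces to showing that inserting or removing a pair $s s^{-1}$ does not alter the integral; with the natural convention that $\overline{s^{-1}}$ is the reverse of $\overline{s}$ (translated to start at $1_G$), the inserted piece traces a path and then its reverse, so its line integral vanishes. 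Applying this to the identity above yields
\[ \int_{\overline{w}} \alpha = \sum_{i=1}^k \int_{\overline{u_i r_i^{\epsilon_i} u_i^{-1}}} \alpha. \]

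Next, for each $i$, the word $u_i r_i^{\epsilon_i} u_i^{-1}$ traces a loop based at $1_G$: one follows $\overline{u_i}$ to its endpoint $g_i := \overline{u_i}(1) \in G$, then the $g_i$-translate of $\overline{r_i^{\epsilon_i}}$, and finally the reverse of $\overline{u_i}$. By the same cancellation principle, the contributions of $\overline{u_i}$ and its reverse annihilate, so
\[ \int_{\overline{u_i r_i^{\epsilon_i} u_i^{-1}}} \alpha = \epsilon_i \int_{(g_i)_\ast \overline{r_i}} \alpha. \]
Combining and applying the triangle inequality together with the hypothesis,
\[ \left\vert \int_{\overline{w}} \alpha \right\vert \leqslant \sum_{i=1}^k \left\vert \int_{(g_i)_\ast \overline{r_i}} \alpha \right\vert \leqslant kC, \]
and dividing through by $C$ gives the claimed inequality.

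The only delicate point is the auxiliary fact about free identities, which hinges on fixing once and for all the convention $\overline{s^{-1}} = (\overline{s})^{-1}$ (up to left translation) so that inverse letters produce reverse paths. Once this convention is in place, the argument is purely formal bookkeeping and no genuine obstacle arises.
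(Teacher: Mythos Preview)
Your proof is correct and follows the standard combinatorial Stokes argument. The paper itself does not give a direct proof but simply cites \cite[Proposition~8.2]{lipt}, so your explicit write-up is in fact more detailed than what appears here; the underlying idea is the same.
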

\begin{proof}
    This is, up to reformulation, \cite[Proposition 8.2]{lipt}. Let us mention that although a Riemannian metric is mentionned in \cite[Proposition 8.2]{lipt}, the area is the combinatorial area and not the Riemannian area of a filling disk of $\overline w$.
\end{proof}

\subsection{Nilpotent groups}

The \emph{lower central series} of a group $G$ is the descending sequence of subgroups which is defined inductively by $C^1(G):= G$ and $C^{i}(G):= \left[G,C^{i-1}(G)\right]$ for $i>1$. We call $G$ \emph{nilpotent of class $k$} if $C^{k}(G)\neq \left\{1\right\}$ and $C^{k+1}(G)=\left\{1\right\}$.

Among Lie groups, nilpotent Lie groups of class $k$ can also be characterised as those Lie groups $G$ which have a \emph{nilpotent} Lie  algebra $\mathfrak{g}$, that is, a Lie algebra whose lower central series becomes trivial precisely in the $(k+1)$-th term. Here the \emph{lower central series} of a Lie algebra is defined analogously to the lower central series of a group: $C^1(\mathfrak{g}):=\mathfrak{g}$ and $C^{i}(\mathfrak{g}):=\left[\mathfrak{g},C^{i-1}(\mathfrak{g})\right]$ for $i>1$. Conversely, we can completely characterise a simply connected nilpotent Lie group by giving the Lie bracket structure with respect to a basis of its Lie algebra. We often use this fact here without further mention.

In this work we study Dehn functions of  simply connected finite-dimensional nilpotent Lie groups and their lattices. Every finitely generated nilpotent group is a co-compact lattice in a simply connected finite-dimensional real nilpotent Lie group, uniquely defined up to isomorphism, called its real Malcev completion \cite{MalcevNilvarietes}. Their Dehn functions coincide up to asymptotic equivalence and it turns out to be technically not harder to manipulate words over compact presentations than over finite ones, while it offers some additional flexibility. In addition, the Lie group structure makes some tools from differential geometry readily available. So in this work, we mainly approach the problem of computing the Dehn function of nilpotent groups by considering directly that of simply connected nilpotent Lie groups. 
A notable exception to this standpoint is in \S\ref{subsec:lowdim-case-by-case} where no tool from the theory of Lie groups and Lie algebras is required in the upper bounds, since we can argue more directly using a suitable finite presentation in this case.

\subsection{Compact presentations for central products of nilpotent groups}

\begin{lemma}\label{lem:central-prod-is-cglc}
    Let $K$ and $L$ be compactly presented nilpotent groups of class $k$ and $\ell$ respectively, such that $Z(K)={C^{k}(K)}$ and $Z(L) = {C^{\ell}}(L)$. Let $\theta$ be a continuous isomorphism between $Z(K)$ and $Z(L)$.
    Then the group $G = K \times_\theta L$ is compactly presented.
    {Moreover, given two compact presentations of $K$ and $L$, there is a compact presentation of $G = K\times_Z L$, whose generating system and set of relators contain those of $K$ and $L$, when viewed as closed subgroups of $G$.}
\end{lemma}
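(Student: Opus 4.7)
The plan is to construct the desired compact presentation in two stages: first assemble a compact presentation of $K\times L$, then enlarge the relator set in order to quotient by the closed central subgroup
$R = \{(g,\theta(g)) : g\in Z(K)\} \subset K\times L$.

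Given the compact presentations $\langle S_K \mid R_K\rangle$ and $\langle S_L \mid R_L\rangle$, the direct product $K\times L$ is presented by generators $S = S_K\sqcup S_L$ and relators $R_K\cup R_L\cup \{[s,t]:s\in S_K,\,t\in S_L\}$, the new commutator relators having length $4$. To further quotient by $R$, it suffices to adjoin relators that kill a generating set of $R$ as a subgroup, since $R$ is central in $K\times L$. Using the hypothesis $Z(K)=C^k(K)$, the topological group $Z(K)$ is generated by the compact set $C_K$ of $k$-fold iterated commutators of elements of $S_K\cup S_K^{-1}$, so via $R\cong Z(K)$ the family $\{(c,\theta(c)):c\in C_K\}$ generates $R$. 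For each $c\in C_K$ we pick a word $u_c$ over $S_L^{\pm 1}$ representing $\theta(c)\in Z(L)$. Uniform boundedness of $|u_c|$ comes from a compactness argument: $\theta(C_K)$ is compact in $L$, and $L=\bigcup_{N\geq 1} B_N^{S_L}$ is the increasing exhaustion by the compact sets $B_N^{S_L}$ of elements representable by $S_L$-words of length at most $N$, so $\theta(C_K)\subseteq B_N^{S_L}$ for some fixed $N$, allowing $|u_c|\leq N$ uniformly in $c$.

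We claim that
\[
\bigl\langle S \bigm| R_K \cup R_L \cup \{[s,t]\}_{s\in S_K,\,t\in S_L} \cup \{c\cdot u_c\}_{c\in C_K} \bigr\rangle
\]
is a compact presentation of $G$ (its relators form a bounded-length set), and by construction it extends the given presentations of $K$ and $L$. All relators map to the identity in $G$: $R_K$ and $R_L$ by construction, $[s,t]$ because $K$ and $L$ commute in any central product, and $c\cdot u_c$ because it maps to $(c,\theta(c))\in R$. Conversely, any null-homotopic word $w\in F_S$ reduces modulo the $[s,t]$ relators to $w_K\cdot w_L$ with $w_K\in F_{S_K}$ and $w_L\in F_{S_L}$. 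Since $K$ and $L$ embed in $G$ and meet there only along their identified centres, the equality $w_K\cdot w_L=1$ in $G$ forces $w_K$ to represent some $z\in Z(K)$ and $w_L$ to represent $\theta(z)\in Z(L)$. Writing $z=\prod_i c_i^{\epsilon_i}$ with $c_i\in C_K$, the relators in $R_K$ reduce $w_K$ to $\prod_i c_i^{\epsilon_i}$, those in $R_L$ reduce $w_L$ to $\prod_i u_{c_i}^{\epsilon_i}$, and the identification relators $c_i\cdot u_{c_i}$ together with the commutation relators then cancel the two halves pair by pair.

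The main technical subtlety is the uniform length bound on the $u_c$, which uses local compactness and compact generation of $L$ so that each $B_N^{S_L}$ is compact. A secondary point is that $C_K$ must generate $Z(K)$ as an abstract group: this is automatic in the discrete setting, and in the Lie setting it holds once $S_K$ is chosen to contain a compact neighbourhood of $1_K$, ensuring that $C_K$ contains a neighbourhood of $1$ in $Z(K)$.
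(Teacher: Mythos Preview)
Your construction matches the paper's: take $S=S_K\sqcup S_L$, keep $R_K\cup R_L$, add the commutators $[s,t]$ for $s\in S_K$, $t\in S_L$, and adjoin bounded-length identification relators killing the diagonal $R$. The only difference lies in how the identification relators are produced. The paper invokes an abstract compact generating set $\mathcal{S}_{Z(K)}$ of $Z(K)$ (using that closed subgroups of compactly generated nilpotent locally compact groups are compactly generated) together with $\mathcal{S}_{Z(K)}\subset\mathcal{S}_K^N$ for some $N$, and similarly on the $L$ side; you instead use the explicit set $C_K$ of $k$-fold commutator words and bound the $S_L$-word-length of $\theta(C_K)$ via compactness and the exhaustion $L=\bigcup_N B_N^{S_L}$.

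One correction to your final remark: the claim that $C_K$ \emph{contains} a neighbourhood of $1$ in $Z(K)$ once $S_K$ contains a neighbourhood of $1_K$ is false in general. In the free $2$-step nilpotent Lie group on four generators, the image of the commutator map in $C^2(K)\cong\bigwedge^2\mathbf{R}^4$ is the set of decomposable bivectors---the affine cone over $\mathrm{Gr}(2,4)$---a $5$-dimensional subvariety of a $6$-dimensional space. What your argument actually needs, and what does hold, is that $C_K$ \emph{generates} $Z(K)$ as an abstract group: multilinearity of the $k$-fold commutator modulo $C^{k+1}=1$ shows that the image contains a full segment through the identity in each of a spanning set of directions, and finite sums of such segments already cover $Z(K)\cong\mathbf{R}^d$.
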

\begin{proof}
    Let $\mathcal P_{K} = \langle \mathcal S_K \mid \mathcal R_K \rangle$ and $\mathcal P_L = \langle \mathcal S_L \mid \mathcal R_L \rangle$ be compact presentations of $K$ and $L$ respectively.
    $Z(K)$ is a closed subgroup of $K$, and $K$ is nilpotent, so $Z(K)$ is compactly generated by \cite[Proposition 5.A.7]{CoHarpe}.
    Let $\mathcal S_{Z(K)}$ be a compact generating set of $Z$. By the assumption that  $Z(K) = C^{k} (K)$, there is $N < +\infty$ such that $\mathcal S_{Z(K)} \subset \mathcal S_K^N$ \cite[Lemma 5.A.5]{CoHarpe}. Similarly, if $\mathcal S_{Z(L)}$ is a compact generating set of the centre $Z(L)$ of $L$, then there is $M < +\infty$ such that $\mathcal S_{Z(L)} \subset \mathcal S_L^M$. Now set 
    \[ \mathcal S = \mathcal S_K \sqcup \mathcal S_L, \quad \mathcal R = \mathcal R_K \sqcup \mathcal R_L \sqcup \mathcal R_+  \sqcup \{ [a,b]: a \in \mathcal S_K, b \in \mathcal S_L \} \]
    where $\mathcal R_{+}$ is constructed as follow: for every $z \in \mathcal S_{Z(K)}$, write $z$ as a product of at most $N$ elements of $\mathcal S_K$, write $\theta(z) \in Z(L)$ as a product of at most $M$ elements of $\mathcal S_L$; let $v$ and $w$ be the resulting words, and add $vw^{-1}$ in $\mathcal R_+$. Clearly these additional relations are of length bounded by $N+M$.
    Hence, $\mathcal S$ is a compact subset of $G$, $\mathcal R$ is a bounded subset of $F_S$ and all the relation that hold true in $G$ are products of conjugates of relations in $\mathcal R$. This shows that $\left\langle S \mid R \right\rangle$ is a compact presentation for $G$.
\end{proof}

\begin{definition}\label{dfn:adapted}
   We call the presentation of the central product $K \times_Z L$ constructed in the proof of Lemma~\ref{lem:central-prod-is-cglc} {\em adapted} to the central product decomposition. 
\end{definition}

\begin{lemma}\label{lem:back-first-factor}
    Let $K$ and $L$ be compactly presented nilpotent groups with isomorphic centres and let $G$ be their central product.
    Assume that $K$ has a closed subgroup $K_0$ containing the centre which is isomorphic to $L$, and that $\delta_{G_0}(n) \preccurlyeq n^p$ where $G_0 = L \times_Z L$ for some $p\geqslant 2$.
    If every word of length $n$ over the generators of $K<G$ representing the trivial element has area at most $n^p$ in $G$, then
    $\delta_G(n) \preccurlyeq n^p.$
\end{lemma}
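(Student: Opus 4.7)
Work with the adapted compact presentation of $G$ from Definition~\ref{dfn:adapted}, whose generating set is $S_K\sqcup S_L$ and whose relator set includes the commutators $[s,t]=_G 1$ for $(s,t)\in S_K\times S_L$. Given a null-homotopic word $w$ in $G$ with $|w|\leqslant n$, the strategy is to reduce $w$, at total cost $\preccurlyeq n^p$, to a null-homotopic word over $S_K$ only; the hypothesis on $K$-words in $G$ will then dispatch the remainder.

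The first step is a \emph{shuffling} step. Applying the commutation relations $[s,t]=1$ at most $O(n^2)$ times, bring $w$ into the form $w_Kw_L$, where $w_K$ (resp.\ $w_L$) is a word of length at most $n$ over $S_K$ (resp.\ $S_L$). The cost is $O(n^2)\preccurlyeq n^p$ since $p\geqslant 2$. Because $w_Kw_L=_G1$ and $w_K\in K$, $w_L\in L$, the element represented by $w_L$ lies in $K\cap L=Z$, the common centre.

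The second step \emph{transfers} the syllable $w_L$ from $L$ into the subgroup $K_0\leqslant K$. Fix an isomorphism $\phi\colon L\to K_0$, chosen so that its restriction to the centre matches the central identification $\theta^{-1}\colon Z(L)\to Z(K)$ used to form $G$ (the freedom to rescale $\phi$ by a central automorphism of $L$ allows this adjustment in the one-dimensional-centre setting we are in). For each generator $t\in S_L$, write $\phi(t)\in K_0$ as a bounded-length word $\widetilde{t}$ over $S_K$, and let $\widetilde{w}_L$ denote the word obtained from $w_L$ by substituting each $t$ with $\widetilde{t}$; it has length $\leqslant Cn$ and represents $\phi(w_L)\in G$. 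The choice of $\phi$ and the fact that $w_L\in Z$ ensure $\phi(w_L)=w_L$ in $G$, so $w_L\widetilde{w}_L^{-1}$ is a null-homotopic word inside the subgroup $H=\langle K_0\cup L\rangle$ of $G$, and $H$ is canonically isomorphic to the central product $K_0\times_Z L\cong L\times_Z L=G_0$. Since $|w_L\widetilde{w}_L^{-1}|\leqslant (C+1)n$, the hypothesis $\delta_{G_0}(n)\preccurlyeq n^p$ produces an $H$-filling of area $\preccurlyeq n^p$, and this area bound transfers to $G$ because the adapted presentation already contains a compatible presentation of $H$ at bounded overhead.

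Combining these two steps, $w$ is rewritten at cost $\preccurlyeq n^p$ into $w_K\widetilde{w}_L$, a null-homotopic word over $S_K$ alone, of length $\leqslant (C+1)n$. The hypothesis on $K$-words in $G$ then bounds the remaining area by $\preccurlyeq n^p$, which yields $\operatorname{Area}_\mathcal{P}(w)\preccurlyeq n^p$ and hence $\delta_G(n)\preccurlyeq n^p$. The main obstacle is the second step: one has to pin down the isomorphism $\phi\colon L\to K_0$ so that its restriction to centres matches $\theta$, and verify that the natural presentation of $H$ inherited from the adapted presentation of $G$ is efficient enough for the $G_0$-Dehn-function bound to be used with only a multiplicative loss. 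Both are routine but require care with the bookkeeping of Lemma~\ref{lem:central-prod-is-cglc}.
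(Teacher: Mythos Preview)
Your proposal is correct and follows essentially the same approach as the paper: shuffle into $w_Kw_L$ at quadratic cost, observe $w_L$ is central, transfer $w_L$ into $K_0$ using the $G_0$-Dehn-function bound, and finish with the $K$-words hypothesis. The paper's proof is terser---it absorbs your bookkeeping about $\phi$ and the presentation of $H$ into a single ``without loss of generality, $\mathcal S_K$ contains the image of $\mathcal S_L$'' assumption---but the underlying argument is identical.
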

\begin{proof}
Let $\mathcal P$ be an adapted compact presentation for $G$.
Without loss of generality, assume that there is an isomorphism $L \to K_0$ such that $\mathcal S_K$ contains the image of $\mathcal S_L$.
Start from a null homotopic word $u$ of length $n$ over the generating set $\mathcal S$. Using $\leqslant Cn^2$ commutator relations, we obtain that\[ u =_G w v \]
where the length of $w$ and $v$ is at most $n$. Since $u$ is null-homotopic, $w=_G v^{-1}$ and, in particular, $w$ and $v$ represent central elements.
Using that $\delta_{G_0}(n) \preccurlyeq n^p$ we may rewrite $v$ as $v'$, where $v'$ is a word in the generating set $\mathcal S_K$ of the same length as $v$. Since $\delta_K(n)\preccurlyeq n^p$, this completes the proof.
\end{proof}

\subsection{Some explicit compact presentations}
\label{subsec:explicit-compact-presentations}

At various points of our proofs, we work with explicit compact presentations of the nilpotent groups under consideration. For this reason we provide them here. They are of two kinds: filiform and low-dimensional. Adapted compact presentation for the central products can then be deduced using Lemma~\ref{lem:central-prod-is-cglc}.

\subsubsection{Two families of filiform groups}

Recall that a nilpotent Lie group of dimension $p$ is called filiform if it has maximal possible (finite) nilpotency class, namely $p-1$, among all Lie groups of dimension $p$.
We consider two infinite families of filiform Lie groups.
Each family has one member in every dimension $p \geqslant 3$ ($p \geqslant 5$ for the second one), that we denote $L_p$ and $L^{\lrcorner}_p$ respectively.

$L_p$ is the model filiform group, whose Lie algebra $\mathfrak l_p$ is generated by $X_1,X_2,\ldots, X_{p}$ subject to the nonzero brackets
\begin{equation*}
     [X_1,X_i] = X_{i+1} \ \text{for} \ 2 \leqslant i \leqslant p-1.
\end{equation*}

The next Proposition supports the definition of $L^{\lrcorner}_p$:

\begin{proposition}
\label{prop:presentation-lattice}
For every $p \geqslant 5$, there exists a Lie algebra with basis $X_1,\ldots ,X_p$ and nonzero brackets
\begin{equation}
\label{eq:brackets-in-filiform-non-model}
[X_1,X_i] = X_{i+1} \ \text{for} \ 2 \leqslant i \leqslant n-1, \ [X_2,X_3] =X_p.
\end{equation}
This Lie algebra is not isomorphic to $\mathfrak l_p$, and the associated simply connected Lie group contains a lattice,  with the following presentation:
\[ \left\langle x_1, \ldots, x_p \mid [x_1,x_i] = x_{i+1}, \ \text{for} \ 2 \leqslant i \leqslant p-1, \ [x_2,x_3] =x_p \right\rangle. \]
\end{proposition}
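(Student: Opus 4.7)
The statement comprises three claims: existence of the Lie algebra, non-isomorphism with $\mathfrak l_p$, and existence of a lattice with the displayed presentation. My plan addresses them in sequence.

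\emph{Jacobi identity.} First I check that the displayed brackets define a Lie algebra. For any triple of basis vectors $\{X_i,X_j,X_k\}$ with all indices at least $3$, every inner bracket vanishes since the listed nonzero brackets all involve $X_1$ or $X_2$. Among the remaining triples, the one requiring actual computation is $\{X_1,X_2,X_3\}$, for which
\[
[X_1,[X_2,X_3]]+[X_2,[X_3,X_1]]+[X_3,[X_1,X_2]] = [X_1,X_p]-[X_2,X_4]+[X_3,X_3] = 0,
\]
using centrality of $X_p$ and the vanishing of $[X_2,X_4]$. All other triples involving $X_1$ or $X_2$ reduce to $0$ thanks to centrality of $X_p$ and the fact that all unlisted brackets vanish.

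\emph{Non-isomorphism with $\mathfrak l_p$.} Let $\mathfrak g$ denote the resulting Lie algebra. As an isomorphism invariant I propose to use the existence of a codimension-one abelian ideal. The subspace $\operatorname{span}(X_2,\ldots,X_p)\subset \mathfrak l_p$ is such an ideal. In $\mathfrak g$, by contrast, every codimension-one ideal $\mathfrak a$ contains $[\mathfrak g,\mathfrak g]=\operatorname{span}(X_3,\ldots,X_p)$ together with some element $Y=\alpha X_1+\beta X_2+u$ where $u\in[\mathfrak g,\mathfrak g]$ and $(\alpha,\beta)\neq(0,0)$. Since $[\mathfrak g,\mathfrak g]$ is abelian,
\[
[Y,X_3]=\alpha[X_1,X_3]+\beta[X_2,X_3]+[u,X_3]=\alpha X_4+\beta X_p\neq 0,
\]
contradicting the abelianity of $\mathfrak a$. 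Hence $\mathfrak g\not\cong \mathfrak l_p$.

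\emph{Existence of a lattice.} The structure constants in $(X_1,\ldots,X_p)$ are integers, and each $\operatorname{span}(X_i,\ldots,X_p)$ with $2\leqslant i\leqslant p$ is an ideal, so $(X_i)_{i=1}^p$ is a strong Mal'cev basis; Mal'cev's theorem therefore yields a lattice inside the simply connected Lie group $G$ with Lie algebra $\mathfrak g$. To identify a lattice with the displayed presentation, I would construct candidate generators directly in $G$: set $x_1=\exp(X_1)$, $x_2=\exp(X_2)$ and iteratively $x_{i+1}:=[x_1,x_i]$ for $2\leqslant i\leqslant p-1$. The first family of relations then holds by construction, and the only non-trivial step is to check $[x_2,x_3]=x_p$ via the Baker--Campbell--Hausdorff formula. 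This BCH verification is the main technical obstacle, but it is greatly simplified by the abelianity of $[\mathfrak g,\mathfrak g]$ and the centrality of $X_p$, which cause the higher-order corrections to collapse so that only the leading term $[X_2,X_3]=X_p$ survives. The discrete cocompact subgroup generated by $x_1,\ldots,x_p$ is then the desired lattice; its presentation matches the displayed one since the abstract group defined by the presentation already has nilpotency class at most $p-1$ and is two-generated, so any further relation would force its Hirsch length below $p$, contradicting the embedding into $G$.
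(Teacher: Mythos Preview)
Your proposal is correct and, for the lattice part, follows the same route as the paper: define $x_1=\exp X_1$, $x_2=\exp X_2$, $x_{i+1}=[x_1,x_i]$, and verify via BCH that $[x_2,x_3]=x_p$. The paper carries this out explicitly by first proving, by induction on $i$, that $\log x_i \equiv X_i \pmod{\operatorname{span}(X_{i+1},\ldots,X_p)}$; you implicitly rely on this when identifying $x_p$ with $\exp X_p$, and it would be worth making that step explicit. Your treatments of the first two claims differ from the paper's: for the Jacobi identity you check triples directly, whereas the paper exhibits $\mathfrak g$ as the central extension of $\mathfrak l_{p-1}$ by the cocycle $\theta_2\wedge\theta_3$ (more conceptual, since it explains where the Lie algebra comes from, but no shorter); for non-isomorphism you use the invariant ``existence of a codimension-one abelian ideal'', while the paper uses ``existence of an element outside the derived subalgebra with $\operatorname{ad}$ of rank one''. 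Both invariants are valid and roughly equally easy to check here.
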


\begin{remark}
    Although we are not aware of publications considering $\mathfrak l^{\lrcorner}_p$ for all $p$ simultaneously, this Lie algebra can be found in the classifications of nilpotent Lie algebras (up to dimension 7) and of filiform nilpotent Lie algebras (up to dimension $11$).
    The correspondences are as follows. In \cite{deGraafclass}, $\mathfrak l^{\lrcorner}_5$ and $\mathfrak l^\lrcorner_6$ are $L_{5,6}$ and $L_{6,17}$ respectively.
    In \cite{Magnin}, $\mathfrak l^{\lrcorner}_5, \mathfrak l^{\lrcorner}_6$ and $\mathfrak l^{\lrcorner}_7$ are the unique real forms of $\mathcal G_{5,3}$, $\mathcal G_{6,17}$ and $\mathcal G_{7,1.6}$ respectively.
    Finally, in \cite{GomezJimenezFiliform} where filiform Lie algebras of dimension less or equal $11$ are classified, $\mathfrak l^{\lrcorner}_5, \ldots , \mathfrak l^{\lrcorner}_{10}$ and $\mathfrak l_{11}^{\lrcorner}$ are $\mu_{5}^{2}, \mu_{6}^{3}, \mu_{7}^{3}, \mu_{8}^{18}, \mu_9^{37}, \mu_{10}^{50}$ and $\mu_{11}^{105}$ respectively.
\end{remark}

\begin{remark}
    The Lie algebra $\mathfrak l^{\lrcorner}_p$ does admit a grading for all $p \geqslant 5$. It is as follows: $X_1$ has weight $1$, $X_2$ has weight $p-3$, and the weight of $X_k$ is $p-5+k$ for $3 \leqslant k \leqslant p$. 
\end{remark}

\begin{proof}[Proof of Proposition~\ref{prop:presentation-lattice}]
    The fact that $\mathfrak l_p^\lrcorner$ defines a Lie algebra can be checked by observing that it is a central extension of $\mathfrak l_{p-1}$. Namely, let $(\theta_i)_{1 \leqslant i \leqslant p-1}$ be the basis of $\mathfrak l_{p-1}^\star$ dual to $(X_i)$. Then the two-form $\theta_2 \wedge \theta_3$ is a cocycle, since 
    \[ d(\theta_2 \wedge \theta_3) = d\theta_2 \wedge \theta_3 - \theta_2 \wedge d\theta_3 =  \theta_2 \wedge \theta_1 \wedge \theta_2 = 0. \]
    The Lie algebra $\mathfrak{l}_p^{\lrcorner}$ is then the central extension corresponding to the cocycle $\theta_1\wedge \theta_{p-1} + \theta_2\wedge \theta_3$. 
    
    To see that $\mathfrak l^\lrcorner _p$ is not isomorphic to $\mathfrak l_p$, observe that $\mathfrak l_p$ has an element, namely $X_2$, which is not in the derived subalgebra, and whose adjoint $\operatorname{ad}_{X_2}$ has rank one, while $\mathfrak l_p^\lrcorner$ has no such element.
    Now, consider the exponential mapping $\exp\colon \mathfrak l^\lrcorner _p \to L^\lrcorner _p$, and define
    \[ x_i = \begin{cases} \exp(X_i) & 1\leqslant i \leqslant 2 \\
    [x_1,x_{i-1}]& 3 \leqslant i \leqslant p
    \end{cases} \]
    It remains to explain why $[x_2,x_3] = x_p$. 
    We first show by induction on $i$ that for all $i \in \{1, \ldots, p \}$, 
    \begin{equation}
    \label{eq:log-xi}
        \log x_i = X_i + \sum_{j>i} c_{i,j} X_j
    \end{equation}
    for some real coefficients $c_{i,j}$. It is clearly true by definition for $1 \leqslant i \leqslant 2$, with vanishing $c_{1,j}$ and $c_{2,j}$.
    Assume that \eqref{eq:log-xi} holds for some $2\leqslant i \leqslant p-1$.
    Using the Baker--Campbell--Hausdorff formula and \eqref{eq:brackets-in-filiform-non-model}, one gets that    
    {\small\begin{align}
        \log (x_1 x_i) & = X_1 + X_i + c_{i,i+1} X_{i+1} +  \frac{1}{2}X_{i+1} + \frac{1}{12}X_{i+2} - \frac{\delta_{i,2}}{12} X_p + \sum_{j=i+2}^p d_{i,{j-1}} X_j; \label{eq:first-BCH-induction} \\
        \log (x_1^{-1} x_i^{-1}) & = -X_1 -X_i -c_{i,i+1} X_{i+1} + \frac{1}{2}X_{i+1} - \frac{1}{12}X_{i+2} + \frac{\delta_{i,2}}{12} X_p + \sum_{j=i+2}^p d'_{i,{j-1}} X_j \label{eq:second-BCH-induction}
    \end{align}}%
    where $d_{i,j}$ and $d'_{i,j}$ are real constants that depend on $c_{i,j}$ and the higher  coefficients in the Baker--Campbell--Hausdorff formula.
    Now, using \eqref{eq:first-BCH-induction}, \eqref{eq:second-BCH-induction} and the Baker-Campbell-Hausdorff formula once again,
    {\small \begin{align*}
    \log x_{i+1} & = \log \left((x_1^{-1} x_i^{-1}) \cdot ( x_1 x_i) \right) \\
    & = -X_1 -X_i -c_{i,i+1} X_{i+1} + \frac{1}{2}X_{i+1} - \frac{1}{12}X_{i+2} + \frac{\delta_{i,2}}{12} X_p + \sum_{j=i+2}^p d'_{i,{j-1}} X_j \\
    & \quad + X_1 + X_i + c_{i,i+1} X_{i+1} +  \frac{1}{2}X_{i+1} + \frac{1}{12}X_{i+2} - \frac{\delta_{i,2}}{12} X_p + \sum_{j=i+2}^p d_{i,{j-1}} X_j \\
    & \quad + \frac{1}{2} \left[ -X_1, X_i + c_{i,i+1} X_{i+1} +  \frac{1}{2}X_{i+1} + \frac{1}{12}X_{i+2} - \frac{\delta_{i,2}}{12} X_p + \sum_{j=i+2}^p d_{i,{j-1}} X_j \right] \\
    & \quad - \frac{1}{2} \left[ X_1, -X_i -c_{i,i+1} X_{i+1} +  \frac{1}{2}X_{i+1} - \frac{1}{12}X_{i+2} + \frac{\delta_{i,2}}{12} X_p + \sum_{j=i+2}^p d'_{i,{j-1}} X_j \right] \\
    & \quad + \sum_{j=i+2}^p e_{i,j} X_j
    \end{align*}
    }%
where the $e_{i,j}$ depend on the higher coefficients in the Baker--Campbell--Hausdorff formula and the $d_{i,j}$ and $d'_{i,j}$ (we used that the derived subalgebra is abelian in order to omit the commutators $[X_{i+a}, X_{i+b}]$ for $a,b \ge 0$ in the above).
    Summing this yields \eqref{eq:log-xi} for some coefficients $c_{i,j}$ depending on $e_{i,j}$, $d_{i,j-1}$ and $d'_{i,j-1}$, ending the proof of \eqref{eq:log-xi} by induction on $i$.
    Finally, we successively check that
    {\small
    \begin{align*}
        \log(x_2x_3) & = X_2 + X_3 + \frac{1}{2} X_p + \sum_{j=4}^p c_{3,j} X_j; \\
        \log(x_2^{-1}x_3^{-1}) & = -X_2 -X_3 + \frac{1}{2} X_p -  \sum_{j=4}^p c_{3,j} X_j; \\
        \log [x_2,x_3] & = X_p \underset{\eqref{eq:log-xi}}{=} \log(x_p).
    \end{align*}
    }%
    This concludes the proof that the presentation of the lattice in $L^{\lrcorner}_p$ is the one given.
\end{proof}

\begin{proposition}\label{prop:compactpresentations}
    The group $L_p$ (for $p \geqslant 3$), respectively $L^{\lrcorner}_p$ (for $p \geqslant 5$) admits the compact presentation $\mathcal P_p = \langle \widehat S \mid R \rangle$, resp. $\mathcal P^\lrcorner_p = \langle \widehat S \mid R^\lrcorner \rangle$, over the generating set 
    $\widehat S = \{ x_i^a: 1 \leqslant i \leqslant p, -1 \leqslant a \leqslant 1 \}$, where
    \begin{align*}
        R & = \biggl\{ x_i^{a}x_i^{b} x_{i}^{-(a+b)}, [x_1^a,x_i^b] = x_{i+1}^{ab} x_{i+2}^{-\binom{a}{2}b}\cdots x_p^{(-1)^{p+i+1} \binom{a}{p-i}b},  \\
        & \qquad  i \in \{2, \ldots, p-1\}, a,b \in [-1,1] \biggr\} \text{ and} \\
     R^\lrcorner & = \left\{ x_i^{a}x_i^{b} x_{i}^{-(a+b)}, [x_1^a,x_i^b] = x_{i+1}^{ab} x_{i+2}^{-\binom{a}{2}b}\cdots x_p^{(-1)^{p+i+1} \binom{a}{p-i}b}, i \in \{3, \ldots, p-1\},  \right. \\
        & \qquad \left. [x_2^a,x_3^b] = x_p^{ab} , [x_1^a,x_2^b]= x_{3}^{ab} x_{4}^{-\binom{a}{2}b}\cdots x_p^{(-1)^{p+1} \binom{a}{p-2}b-a \binom{b}{2}}, a,b \in [-1,1] \right\}.
    \end{align*}
\end{proposition}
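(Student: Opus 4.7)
The plan is to verify the three ingredients of a compact presentation: compactness of $\widehat{S}$, boundedness of the relator set, and sufficiency of the relators. Compactness of $\widehat{S}$ is immediate as it is the continuous image of $\{1,\ldots,p\}\times[-1,1]$ under $(i,a)\mapsto x_i^a$, and $\widehat{S}$ generates the Lie group since it contains a Mal'cev basis $x_1,\ldots, x_p$. Each relator has length bounded uniformly in $a,b$, so the relator set is bounded in $F_{\widehat{S}}$.

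The bulk of the work is verifying that the relators hold in the Lie group. The power-law relators $x_i^a x_i^b = x_i^{a+b}$ follow immediately since $a\mapsto x_i^a$ is a one-parameter subgroup by definition. For the commutator relators I would compute
\[
x_1^a x_i^b x_1^{-a} = \exp\bigl(b\,\mathrm{Ad}(x_1^a)(\log x_i)\bigr) = \exp\bigl(b\, e^{a\,\mathrm{ad}_{X_1}}(\log x_i)\bigr).
\]
In $L_p$, $\mathrm{ad}_{X_1}$ acts as the raising operator $X_j\mapsto X_{j+1}$ on the abelian ideal $\langle X_2,\ldots,X_p\rangle$; combined with a Hall--Petresco-type expansion relating Mal'cev coordinates of the first and second kind, this yields the stated formula with binomial-coefficient exponents in the claimed closed form. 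In $L_p^\lrcorner$ the same strategy applies: the additional bracket $[X_2,X_3]=X_p$ produces the extra $-a\binom{b}{2}$ correction to the coefficient of $x_p$ in $[x_1^a,x_2^b]$ (coming from the iterated bracket $[X_2,[X_1,X_2]]=X_p$ via Baker--Campbell--Hausdorff), while the new relator $[x_2^a,x_3^b]=x_p^{ab}$ follows directly from the Baker--Campbell--Hausdorff formula since $X_p$ is central.

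For sufficiency, my plan is to show that every word over $\widehat{S}$ can be reduced, using only the given relators, to a Mal'cev normal form $x_1^{a_1}x_2^{a_2}\cdots x_p^{a_p}$. I would argue by induction on the nilpotency class: first apply the commutator relators to move all $x_1$-factors to the left of the word, then inductively sort the remaining $x_j^{\cdot}$-factors into increasing order in $j$ (using $[x_2^a,x_3^b]=x_p^{ab}$ in the $L_p^\lrcorner$ case to swap $x_2$-powers past $x_3$-powers), and finally consolidate repeated letters using the power-law relators. Termination follows from nilpotency, and the uniqueness of Mal'cev coordinates in the simply connected nilpotent Lie group then identifies the trivial normal form with the identity, completing the proof. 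The main obstacle will be the bookkeeping of the binomial-coefficient expansions in the commutator computations, and showing that commutators $[x_i^a,x_j^b]$ for $i,j\geq 2$---which are trivial in $L_p$ and vanish except for $[x_2,x_3]$ in $L_p^\lrcorner$---are derivable from the listed relators rather than requiring separate inclusion; this will be handled by the normal-form reduction itself applied to the commutator word.
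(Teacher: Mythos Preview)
Your outline is plausible but diverges from the paper's proof in both main steps, and each divergence hides a real difficulty.

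For the commutator identities, the paper does \emph{not} compute directly via $\mathrm{Ad}$ and BCH. Instead it observes that both sides of each relator are polynomial in $(a,b)$ when written in Mal'cev coordinates, so by Zariski density it suffices to verify them for non-negative integers $a$ and integers $b$; this is then done by induction on $a$ (after first establishing $[x_1,x_2^b]=x_3^b x_p^{-\binom{b}{2}}$ in $L_p^\lrcorner$ by induction on $b$), using only the lattice relations from Proposition~\ref{prop:presentation-lattice}. Your $\mathrm{Ad}$-based approach runs into a subtlety you have not addressed: the generators $x_i$ for $i\geqslant 3$ are defined as iterated \emph{group} commutators, so $\log x_i = X_i + \sum_{j>i} c_{i,j}X_j$ with generally nonzero $c_{i,j}$ (this is exactly \eqref{eq:log-xi} in the paper). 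Hence $e^{a\,\mathrm{ad}_{X_1}}(\log x_i)$ is not simply the raising-operator series in the $X_j$, and converting back to the $x_j$-coordinates is not a one-line Hall--Petresco invocation; you would effectively have to redo the inductive computation the paper carries out anyway.

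For sufficiency, your normal-form reduction has a circularity. Sorting the tail $x_2^{\cdot}\cdots x_p^{\cdot}$ into increasing index order requires the commutation relations $[x_i^a,x_j^b]$ for $i,j\geqslant 2$, but these are \emph{not} in $R$ (for $L_p$) or $R^\lrcorner$ (except $[x_2^a,x_3^b]$); you propose to derive them by applying the normal-form reduction to the commutator word, which presupposes what you are proving. The paper avoids this by appealing to the general principle (see the footnote and the reference to \cite[\S5.1]{lipt}) that once $x_1,\ldots,x_p$ is a Mal'cev basis and the listed relations are verified on the Zariski-dense lattice, the resulting compact presentation is automatic. If you want to argue sufficiency directly, the clean route is induction on $p$ via the central quotient $L_p\to L_{p-1}$, not a single-pass sorting algorithm.
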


\begin{proof}
In both cases, $x_1, \ldots, x_p$ forms a Malcev basis of the simply connected Lie group (See e.g. \cite[\S 5.1]{lipt} for the definition of a Malcev basis). To obtain a compact presentation it suffices to check that the same relations hold between the generators $x_i$ for the Zariski dense subset of integers $a,~b$ with $a\geqslant 0$.{\footnote{The reader familiar with algebraic groups may derive the argument in the following way. From the rationality of the structure constants in $\mathfrak l_p$ and $\mathfrak l_p^\lrcorner$ one can deduce that the elements $x_1$ and $x_2$ together generate subgroups of finite index in the groups of integer points of $L_n = \operatorname{L}_p(\mathbf R)$ or $L_p^\lrcorner  = \operatorname L_p^\lrcorner(\mathbf R)$, where $\operatorname{L}_p$ and $\operatorname{L}^\lrcorner_p$ are certain unipotent algebraic groups defined over $\mathbf Q$. These lattices are Zariski-dense, and so a polynomial identity is valid over the group as soon as it is valid over the lattice. Concretely a polynomial identity is one that involves products of polynomial powers of elements taken in a fixed Malcev basis, or equivalently which is polynomial in the exponential charts \cite[14.32]{MilneAlgebraicGroups}.}} For $L_p$ this was proved in \cite[Proposition 5.2]{lipt}. For $L_p^{\lrcorner}$ we use Proposition \ref{prop:presentation-lattice} above to check that the identities
\begin{equation}
\label{eq:commutator-x1xi-corner}
    [x_1^a,x_i^b] = x_{i+1}^{ab} x_{i+2}^{-\binom{a}{2}b}\cdots x_p^{(-1)^{p+i+1} \binom{a}{p-i}b}
\end{equation}
for $3\leqslant i \leqslant p-1$
and 
\begin{equation}
\label{eq:commutator-x1x2-corner}
    [x_1^a,x_2^b] = x_{3}^{ab} x_{4}^{-\binom{a}{2}b}\cdots x_p^{(-1)^{p+i+1} \binom{a}{p-i}b-a\binom{b}{2}}
\end{equation}
hold for all $a,b\in \Z$.

Consider first \eqref{eq:commutator-x1xi-corner}. Since real powers of $x_1$ and $x_i$ together generate a group isomorphic to $L_{p-i-2}$, \eqref{eq:commutator-x1xi-corner} is a consequence of \cite[Proposition 5.1]{lipt}.
Turning to \eqref{eq:commutator-x1x2-corner}, let us first prove that
\begin{equation}
\label{eq:fix-x1-x2}
    [x_1,x_2^b] = x_3^b x_p^{-\binom{b}{2}},
\end{equation}
by induction on $b \in \mathbf Z_{\geqslant 0}$. For $b=1$ it holds by definition of $x_3$; assuming it holds for some $b$, we have that
{\small
\begin{align*}
    x_1 x_2^{b+1} = x_1 x_2^b x_2 = x_2^b x_1 x_3^b x_p^{-\binom{b}{2}} x_2  = x_2^b x_1 x_3^b  x_2  x_p^{-\binom{b}{2}} & =  x_2^b x_1   x_2 x_3^b  x_p^{-b-\binom{b}{2}} \\ & =   x_2^{b+1} x_1   x_3^{b+1}  x_p^{-\binom{b+1}{2}}.
\end{align*}}%
From this point, we can proceed as in the proof of \cite[Proposition 5.2]{lipt}, proving the formula by induction on $a$, replacing $[x_1,x_2^b] = x_3^b$ by \eqref{eq:fix-x1-x2}. We provide the details below; in the computation, ``I.H.'' means that we use the induction hypothesis. 
{\small
\begin{align*}
x_1^{a+1}x_2^b  = x_1^a x_1x_2^b 
& \stackrel{\eqref{eq:fix-x1-x2}}{=} x_1^a x_2^b x_1 x_3^b x_p^{-\binom{b}{2}}  
\\
& \stackrel{\mathrm{I.H.}}= x_2^b x_1^a x_{3}^{ab} x_{4}^{-\binom{a}{2}b} x_{5}^{\binom{a}{3}b} \cdots x_p^{(-1)^{p+1}\binom{a}{p-2}b  -a \binom{b}{2}} x_1 x_3^b x_p^{-\binom{b}{2}}  
\\
& = x_2^b x_1^{a+1} x_{3}^{ab} x_{4}^{-\binom{a+1}{2}b} x_{5}^{\binom{a+1}{3}b} \cdots x_p^{(-1)^{p+1}\binom{a+1}{p-2}b -a \binom{b}{2}} x_3^b x_p^{-\binom{b}{2}} 
\\
& = x_2^b x_1^{a+1} x_{3}^{(a+1)b} x_{4}^{-\binom{a+1}{2}b} x_{5}^{\binom{a+1}{3}b} \cdots x_p^{(-1)^{p+1}\binom{a+1}{p-2}b -(a+1) \binom{b}{2} }. \qedhere 
\end{align*}}%
\end{proof}

From now on, whenever a compact presentation has been provided for the factors, we equip their central product with an adapted presentation as produced by Lemma~\ref{lem:central-prod-is-cglc}. We denote by $y_i$ the generators of the right-hand side factor. As an example, the group $L_p^\lrcorner \times_Z L_3$ has as generating set
\[ \{ x_i^a, y_j^b : 1 \leqslant i \leqslant p, 1 \leqslant j \leqslant 3, \vert a \vert \leqslant 1, \vert b \vert \leqslant 1 \}, \]
and a set of relators including $R^{\lrcorner}$, $[y_1^a, y_{p-1}^b] = y_3^{ab}$ for all $\vert a \vert, \vert b \vert \leqslant 1$ and the relation $x_p=y_3$.

The following consequences of Proposition \ref{prop:compactpresentations} is used in \S\ref{sec:upper-bounds} when we give upper bounds on the Dehn functions of $L_p \times_Z L_3$ and $L^\lrcorner_p \times_Z L_{3}$.

\begin{remark}\label{rem:identitis-lpl32} 
    The results are stated for $L_p^\lrcorner \times_Z L_{3}$ for all $p \geqslant 5$ but the reader should note that the same results hold for $L_{p} \times_Z L_{3}$  for all $p \geqslant 3$ with the same area estimates, the only minor change is that there is no error term of the form $[y_1, y_{p-1}]^{\pm 1}$.
\end{remark}

\begin{corollary}\label{cor:commutingxkxl}
    Let $p \geqslant 5$.
    \begin{enumerate}
        \item For all $b \in \R$ the identities $$[x_1, x_j ^b ] =_\calp \begin{cases}
        x_{j+1}^b, \ & \text{if} \ j>2, \\
        x_3^b \cdot [y_1^{\tilde{b}}, y_{p-1}^{\tilde{b}}]^{\pm 1}, \ & \text{if} \ j=2.
        \end{cases}$$ hold in $L^\lrcorner_p \times_Z L_{3}$ with area $\lesssim_p |b|^2$, where $\tilde{b} \in \R$ with $|\tilde{b}| \lesssim_p |b|$.
        \item For all $b \in \R$ the identities 
        $$
        [x_1^{-1}, x_j^b] =_\calp \begin{cases}
            x_{j+1}^{-b} \ldots x_p^{-b}, \ & \text{if} \ j>2,\\
            x_3^{-b} \ldots x_p^{-b} \cdot [y_{1}^{\tilde{b}}, y_{p-1}^{\tilde{b}}]^{\pm 1}, \ & \text{if} \ j=2.
        \end{cases} 
        $$
        hold in $L^\lrcorner_p \times_Z L_{3}$ with area $\lesssim_p |b|^2$ where $\tilde{b} \in \R$ with $|\tilde{b}|\lesssim_p |b|$.
    \end{enumerate}
\end{corollary}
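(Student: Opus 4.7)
The plan is to derive both identities directly from the adapted compact presentation $\mathcal P^\lrcorner$ of $L^\lrcorner_p \times_Z L_3$ produced by Lemma~\ref{lem:central-prod-is-cglc} from the presentation in Proposition~\ref{prop:compactpresentations}, keeping track of the number of relator applications in order to bound the area by $O_p(|b|^2)$. Parts (1) and (2) follow the same scheme by setting $a = 1$ or $a = -1$ in the commutator relations of $R^\lrcorner$; let me describe part~(1) first. For $|b|\leqslant 1$, the binomial coefficients $\binom{1}{k}$ vanish for $k\geqslant 2$, so the relation for $[x_1^a, x_j^b]$ with $a=1$ immediately yields $[x_1, x_j^b] = x_{j+1}^b$ when $j>2$, and $[x_1, x_2^b] = x_3^b \cdot x_p^{-\binom{b}{2}}$ when $j=2$, each at area $1$. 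For part~(2), the identity $\binom{-1}{k}=(-1)^k$, combined with the alternating sign $(-1)^{k+1}$ in the formula from Proposition~\ref{prop:compactpresentations}, collapses every exponent to $-b$, giving the stated formula $[x_1^{-1}, x_j^b] = x_{j+1}^{-b}\cdots x_p^{-b}$ for $|b|\leqslant 1$.

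I would next extend the identities to arbitrary $b\in \R$ by induction on $\lceil |b| \rceil$. Writing $x_j^b = x_j^{b-1} x_j^1$ via a power relator (area $1$), applying the free commutator expansion $[x_1, uv] =_F [x_1, v]\cdot v^{-1}[x_1, u] v$, and invoking the induction hypothesis for $x_j^{b-1}$ together with the base case for $x_j^1$, one obtains the desired identity up to a residual factor of the form $x_j^{-(b-1)} x_{j+1} x_j^{b-1}$ (for $j > 2$), with further central correction terms when $j=2$. Because the derived subalgebra of $\mathfrak l_p^\lrcorner$ is abelian, $x_j$ and $x_{j+1}$ commute in $G$; this commutation can be realized at constant area by applying the $[x_1^a, x_i^b]$-relators of $R^\lrcorner$ twice and cancelling $x_1$-letters. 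Hence the residual factor collapses to $x_{j+1}$ at area $O(|b|)$, and the recurrence $\operatorname{Area}(b)\leqslant \operatorname{Area}(b-1) + O_p(|b|)$ telescopes to the asserted $O_p(|b|^2)$ bound.

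Finally, for the $j=2$ case, the accumulated central term $x_p^{-\binom{b}{2}}$ (respectively $x_p^{+\binom{b}{2}}$ in part~(2)) has to be rewritten as a commutator of generators of the $L_3$ factor. Using the central product identification $x_p = y_p$ and the relation $[y_1^{\tilde b}, y_{p-1}^{\tilde b}] = y_p^{\tilde b^2}$ in $L_3$ (in the naming convention for generators fixed in $\mathcal P^\lrcorner$), one chooses $\tilde b\in \R$ with $\tilde b^2 = |\binom{b}{2}|$ together with the sign $\pm 1$ matching the sign of $\binom{b}{2}$; then $|\tilde b|\lesssim_p |b|$, and the substitution is performed at constant area using finitely many relators from the $L_3$-part of $\mathcal P^\lrcorner$, together with the fact that $x_p$ is central in $G$ (so it can be moved freely past the other letters).

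I expect the main obstacle to be the auxiliary commutation fact used in the inductive step, namely that powers of $x_j$ and $x_{j+1}$ can be swapped at area linear in the exponents, since this commutation is not an explicit relator of $\mathcal P^\lrcorner$ for $j\geqslant 3$. Once verified by a direct computation within $R^\lrcorner$, the quadratic area bound follows mechanically from the inductive telescoping, and the conversion to $[y_1^{\tilde b}, y_{p-1}^{\tilde b}]^{\pm 1}$ in the $j=2$ case is then a purely algebraic bookkeeping matter.
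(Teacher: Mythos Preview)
Your approach for $j>2$ is essentially the same as the paper's (which just says ``direct consequence of Proposition~\ref{prop:compactpresentations}''), and your concern about whether $[x_j,x_{j+1}]=1$ is available as a relator is legitimate but minor.

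The genuine gap is in the $j=2$ case. Your scheme first derives $[x_1,x_2^b]=_\calp x_3^b\cdot x_p^{-\binom{b}{2}}$ by induction and then proposes to convert the central factor into $[y_1^{\tilde b},y_{p-1}^{\tilde b}]^{\pm1}$ at ``constant area''. Both halves overshoot the $O_p(|b|^2)$ budget. In the induction, the hypothesis already contains a subword $x_p^{-\binom{b-1}{2}}$ of word length $\asymp |b|^2$; conjugating this by $x_2$ in the recursive step costs $\asymp |b|^2$ relator applications, so the recurrence $\operatorname{Area}(b)\leqslant \operatorname{Area}(b-1)+O(|b|^2)$ telescopes to $O(|b|^3)$, not $O(|b|^2)$. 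And the final substitution $x_p^{-\binom{b}{2}}=_\calp [y_1^{\tilde b},y_{p-1}^{\tilde b}]^{\pm1}$ cannot be constant area: the left side has word length $\asymp |b|^2$ while the right side has length $\asymp |b|$, so at least $\asymp |b|^2$ relator applications are required, and working only inside the $3$-Heisenberg factor $\langle y_1,y_{p-1}\rangle$ (which has \emph{cubic} Dehn function) would cost even more.

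The paper avoids both problems by never writing out a long $x_p$-power. It iterates the single relation $x_2^{-1}x_1=x_1x_3x_2^{-1}$, which produces no $x_p$ term, to obtain $x_2^{-b}x_1=_\calp x_2^{\beta}x_1(x_3x_2^{-1})^{\lfloor b\rfloor}$ at linear cost with all intermediate words of length $O(|b|)$. The final rearrangement into $x_1x_3^{b}[y_1^{\tilde b},y_{p-1}^{\tilde b}]^{\pm1}x_2^{-b}$ is then performed in one shot inside the $5$-Heisenberg subgroup $\langle x_2,x_3,y_1,y_{p-1}\rangle$, whose \emph{quadratic} Dehn function delivers the $O(|b|^2)$ bound on a loop of length $O(|b|)$. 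This use of the $5$-Heisenberg subgroup is the key ingredient your proposal is missing.
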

\begin{proof}
    In case (1) the identity for $j>2$ is a direct consequence of Proposition \ref{prop:compactpresentations} since $\binom{1}{m}= 0$ for all $m \in \mathbf Z_{\geqslant 2}$. Now suppose $j=2$. For $\beta := \lfloor b \rfloor - b$ we have that the identities  {\small$$x_2^{-b}x_1 =_F x_2 ^{\beta} x_2^{-\lfloor b \rfloor} x_1 =_\calp x_2 ^{\beta} x_1 (x_2 x_3^{-1})^{-\lfloor b \rfloor}$$}%
    hold in $L^\lrcorner_p \times_Z L_{3}$  with area $\lesssim_p |b|$. By Proposition \ref{prop:compactpresentations} it follows that 
    {\small$$
    x_2 ^{\beta} x_1 (x_2 x_3^{-1})^{-\lfloor b \rfloor} =_\calp x_1 x_3^{-\beta} x_p ^{\binom{\beta}{2}} x_2^\beta (x_2 x_3^{-1})^{-\lfloor b \rfloor}
    $$}%
    holds in $L^\lrcorner_p \times_Z L_{3}$ with area $\lesssim_p |b|$.
    Finally, since  $\langle x_2, x_3, y_1, y_{p-1} \rangle$ generate a subgroup isomorphic to the integral 5-Heisenberg group, it follows that 
    {\small$$
    x_1 x_3^{-\beta} x_p ^{\binom{\beta}{2}} x_2^\beta (x_2 x_3^{-1})^{-\lfloor b \rfloor} =_F  x_1 x_3^{-\beta} x_2^\beta (x_2 x_3^{-1})^{-\lfloor b \rfloor} x_p ^{\binom{\beta}{2}} =_\calp x_1 x_3^{b} \cdot [y_1 ^{\tilde{b}}, y_{p-1}^{\tilde{b}}] \cdot x_2^{-b}
    $$}%
    holds in $L^\lrcorner_p \times_Z L_{3}$ with area $\lesssim_p |b|^2$ and $|\tilde{b}|\lesssim_p |b|$.

    The proof of Case (2) is similar, only that now $\binom{-1}{m}=\pm 1$ leading to the slightly modified formulas. We briefly explain the case $j=2$, the case $j>2$ being similar but without the error term $[y_{p-1}^{\tilde{b}}, y_1^{\tilde{b}}]$. It follows from Proposition \ref{prop:compactpresentations} that {\small$$x_2^bx_1^{-1}= x_1^{-1} x_2^b x_3^b \ldots x_p^{b -\binom{b}{2}}$$}%
    holds in $L_p^\lrcorner \times_Z L_{3}$. Finally by applying the identity {\small$x_p^{\binom{b}{2}} =_\calp [y_{p-1}^{\tilde{b}}, y_1^{\tilde{b}}]$}, which has area $\lesssim_p |b|^2$ for suitable $|\tilde{b}| \lesssim |b|$, we obtain the desired statement.
    \end{proof}

\begin{corollary}\label{cor:oneandjnotequalsoneplusj}
    Let $p \geqslant 5$. Let $a,b \in \R$. 
    
    The identities
    {\small\begin{equation}
        [x_{1}^{a},x_{j}^{b}] =_{\calp} \begin{cases}
            x_{j+1}^{- b} \cdot [x_{j+1}^{-b},x_{1}^{a-1}] \cdot [x_{1}^{a-1},x_{j}^{b}],\\[10pt]
            \left(\prod\limits_{i=j+1}^{p} x_i^{-b} \cdot [x_i^{-b}, x_1^{a+1}] \right) \cdot [x_1^{a+1},x_j^{b}].
        \end{cases}
    \end{equation}}%
    hold in $L^\lrcorner_p \times_Z L_{3}$ for $2 < j \leqslant p-1$ with area $\lesssim_p |b|^2$. 
    
    Moreover, there exists $\tilde{b} \in \R$ with $|\tilde{b}| \lesssim_p |b|$, such that the identities 
    {\small\begin{equation}\label{eq:omega2}
    [x_{1}^{a},x_{2}^{b}] =_\calp 
    \begin{cases}
        x_{3}^{-b} \cdot [x_{3}^{- b},x_{1}^{a-1}] \cdot [x_{1}^{a-1},x_{2}^{b}] \cdot [y_{1}^{\tilde{b}},y_{p-1}^{\tilde{b}}]^{\pm 1}, \\[10pt]
        \left( \prod\limits_{i=3}^{p} x_i^{-b}[x_i^{-b}, x_1^{a+1}] \right) \cdot [x_1^{a+1}, x_2^{b}] \cdot [y_{1}^{\tilde{b}}, y_{p-1}^{\tilde{b}}]^{\pm 1}.
    \end{cases}
    \end{equation}}%
    hold in $L^\lrcorner_p \times_Z L_{3}$ with area $\lesssim_p |b|^2$.
\end{corollary}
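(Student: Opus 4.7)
The plan is to derive both identities from the free commutator identity $[uv, w] =_F v^{-1}[u,w]v \cdot [v,w]$, specialised to $u = x_1^{\pm 1}$ and $v = x_1^{a \mp 1}$, followed by a single invocation of Corollary~\ref{cor:commutingxkxl} to rewrite the simple commutators $[x_1^{\pm 1}, x_j^b]$. The point is that peeling off one factor $x_1^{\pm 1}$ from $x_1^a$ transfers all of the area cost onto this substitution, and every other manipulation is a free identity.

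For the first identity, I would write $x_1^a =_F x_1 \cdot x_1^{a-1}$ and apply the commutator identity with $u = x_1$, $v = x_1^{a-1}$, $w = x_j^b$, obtaining at zero area cost
\[ [x_1^a, x_j^b] =_F x_1^{-(a-1)} \cdot [x_1, x_j^b] \cdot x_1^{a-1} \cdot [x_1^{a-1}, x_j^b]. \]
For $j > 2$, I would then use Corollary~\ref{cor:commutingxkxl}(1) to replace $[x_1, x_j^b]$ by a power $x_{j+1}^{\pm b}$ at area $\lesssim_p |b|^2$; the conjugate $x_1^{-(a-1)} x_{j+1}^{\pm b} x_1^{a-1}$ is then, by unfolding the definition of the commutator, literally the word $x_{j+1}^{\pm b} \cdot [x_{j+1}^{\pm b}, x_1^{a-1}]$, so no further area cost is incurred. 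For $j = 2$, Corollary~\ref{cor:commutingxkxl}(1) additionally produces a central factor $[y_1^{\tilde b}, y_{p-1}^{\tilde b}]^{\pm 1}$, which lies in the centre of $G$ and must be moved to the end of the word, as discussed below.

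The second identity is obtained in parallel, now splitting $x_1^a =_F x_1^{-1} \cdot x_1^{a+1}$. Corollary~\ref{cor:commutingxkxl}(2) expands $[x_1^{-1}, x_j^b]$ as $x_{j+1}^{-b} x_{j+2}^{-b} \cdots x_p^{-b}$, with an analogous central correction when $j = 2$, at area $\lesssim_p |b|^2$. The resulting conjugation $x_1^{-(a+1)} \bigl( x_{j+1}^{-b} \cdots x_p^{-b} \bigr) x_1^{a+1}$ telescopes freely as $\prod_{i=j+1}^p \bigl( x_1^{-(a+1)} x_i^{-b} x_1^{a+1} \bigr)$ by inserting cancelling pairs $x_1^{a+1} x_1^{-(a+1)}$, and each factor is exactly $x_i^{-b} \cdot [x_i^{-b}, x_1^{a+1}]$, as required.

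The main subtle point concerns the $j = 2$ cases: the central factor $[y_1^{\tilde b}, y_{p-1}^{\tilde b}]^{\pm 1}$ has to be relocated past the rest of the word without breaking the area budget $\lesssim_p |b|^2$. This will be done by first using the relations within the right-hand Heisenberg factor to rewrite the correction as a power of the shared central generator $x_p$, after which it commutes with every generator of the left-hand factor via the relators $[x_i, y_k]$ already present in the adapted presentation produced by Lemma~\ref{lem:central-prod-is-cglc}. Once this is granted, every non-free step in the argument reduces to a single application of Corollary~\ref{cor:commutingxkxl}, and the area bound follows at once.
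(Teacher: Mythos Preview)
Your overall strategy is exactly the paper's: peel off a single $x_1^{\pm 1}$ from $x_1^a$ via the free identity $[uv,w]=_F v^{-1}[u,w]v\,[v,w]$, replace the short commutator using Corollary~\ref{cor:commutingxkxl}, and unfold the remaining conjugates freely. The paper phrases this as directly manipulating $x_1^{-a}x_j^{-b}x_1^a x_j^b$ and then ``shuffling $x_1^{-(a+1)}$ to the right'' via $x_1^{-(a+1)}x_i^{-b}=_F x_i^{-b}[x_i^{-b},x_1^{a+1}]x_1^{-(a+1)}$, but this is the same computation.

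Your handling of the central correction $[y_1^{\tilde b},y_{p-1}^{\tilde b}]^{\pm 1}$, however, is off. Rewriting it as a power of $x_p$ produces a word of length $\sim|\tilde b|^2\sim|b|^2$; commuting \emph{that} past the remaining $x$-word (which contains $x_1^{a\pm 1}$) then costs $\sim|b|^2\cdot(|a|+|b|)$, destroying the bound. Moreover, the conversion itself already costs at least the Heisenberg Dehn function on a word of length $\sim|b|$, which is $\sim|b|^3$. The right move---and what the paper does implicitly---is not to rewrite anything: the word $[y_1^{\tilde b},y_{p-1}^{\tilde b}]$ has length $\lesssim_p|b|$ in the $y$-generators, and each of its letters commutes with every $x$-generator by the relators $[a,b]$, $a\in\mathcal S_K$, $b\in\mathcal S_L$, of the adapted presentation. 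So you simply push the $y$-word past the intervening $x$-letters one relator at a time; the cost is $|b|$ times the length of what it must cross.

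One caveat that applies equally to the paper's argument: with this direct commutation the cost of relocating the $y$-word is $\lesssim_p |b|\,(|a|+|b|)$, not $|b|^2$ on the nose. The stated bound $\lesssim_p|b|^2$ therefore tacitly uses $|a|\lesssim|b|$, which holds in every application of the corollary later in the paper (where both exponents are entries of the same vector $\underline n$).
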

\begin{proof}
    We prove the second identity in \eqref{eq:omega2}. The other cases are analogous, but slightly simpler. It follows from Corollary \ref{cor:commutingxkxl} (2) that the identity 
    {\small$$
    x_1^{-a}x_2^{-b}x_1^{a}x_2^{b} =_\calp x_1^{-(a+1)} x_3^{-b} \ldots x_p^{-b}\cdot [y_{1}^{\tilde{b}}, y_{p-1}^{\tilde{b}}]^{\pm 1}x_2^{-b} x_1^{a+1}x_2^{b}
    $$}%
    holds in $L_p^\lrcorner \times_Z L_{3}$ with area $\lesssim_p |b|^2$ where $|\tilde{b}| \lesssim_p |b|$. Therefore, since the free identity 
    {\small
    $$
    x_1^{-(a+1)}x_i^{-b} =_F x_i^{-b} \cdot [x_i^{-b}, x_1^{a+1}] \cdot x_1^{-(a+1)}
    $$
    }%
    holds for all $i\geqslant 3$, we can shuffle $x_1^{-(a+1)}$ to the right in front of the suffix-word $x_2^{-b}x_1^{a+1}x_2^b$ to obtain the desired identity with area $\lesssim_p |b|^2$.
\end{proof}

We record a rewriting of the identities of Corollary \ref{cor:commutingxkxl} and Corollary \ref{cor:oneandjnotequalsoneplusj} that are used in \S\ref{sec:upper-bounds}.

\begin{addendum}\label{add:omega3s}
     Let $p\geqslant 5$ and let $3 \leqslant i \leqslant p$. Since for all $i \geqslant j$ and for all $b \in \R$ the identity $$x_i^{-b} =_\calp \Omega_{i-j}^{j+1}(1,\ldots, 1, -b)$$ holds in $L^\lrcorner_p \times_Z L_{3}$  with area $\lesssim_p |b|^2$, we can rewrite the identities from Corollary \ref{cor:commutingxkxl} (2) as
   {\small$$
    [x_1^{-1}, x_j^b] =_\calp \begin{cases}
            \Omega_{1}^{j+1}(-b) \cdot \ldots \cdot \Omega_{p-j}^{j+1}(1, \ldots, 1, -b), \ & \text{if} \ j>2,\\[8pt]
            \Omega_{1}^{3}(-b) \cdot \ldots \cdot \Omega_{p-2}^{3}(1, \dots, 1, -b) \cdot [y_{p-1}^{\tilde{b}}, y_1^{\tilde{b}}]^{\pm 1}, \ & \text{if} \ j=2,
    \end{cases} 
    $$}%
    with area $\lesssim_p |b|^2$. 
    
    Similarly, we can rewrite the two identities corresponding to increasing the $x_1$-exponent by one in Corollary \ref{cor:oneandjnotequalsoneplusj} as
    {\small\begin{align*}
    [x_1^a, x_j^b] &=_\calp\left(\prod\limits_{i=j+1}^{p} \Omega_{i-j}^{j+1}(1, \ldots, 1, -b) \cdot \Omega_{i-j+1}^{j+1}(a+1,1, \ldots, 1, -b)^{-1} \right) \cdot [x_1^{a+1},x_j^{b}],
    \intertext{for $j >2$, and}
    [x_1^a, x_2^b] &=_\calp \left( \prod\limits_{i=3}^{p} \Omega_{i-2}^3(1, \ldots, 1, -b)\cdot\Omega_{i-1}^{3}(a+1,1, \ldots, 1, -b)^{-1} \right) \cdot [x_1^{a+1}, x_2^{b}] \cdot [y_{p-1}^{\tilde{b}}, y_{1}^{\tilde{b}}]^{\pm 1},
    \end{align*}}%
    each with area $\lesssim_p |b|^2$.
\end{addendum}

We finish this section by recalling the following well-known result that is used in \S\ref{sec:upper-bounds}.

\begin{lemma}[{\cite[Lemma 5.7]{lipt}}]\label{lem:freeidentities}
    Let $G$ be a group, and let $u,v,w$ be words in some generating set of $G$. The following free identities hold in $G$
    \begin{enumerate}
        \item $[u\cdot v, w] =_F [u,v]^w \cdot [v,w]$.
        \item $[u, v \cdot w] =_F [u, w] \cdot [u,v]^w$.
        \item $u^w =_F u \cdot [u,w]$.
    \end{enumerate}
\end{lemma}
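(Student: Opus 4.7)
My plan is to prove each of the three identities by direct expansion in the free group on the letters involved, using the defining formulas $[a,b] := a^{-1}b^{-1}ab$ and $a^b := b^{-1}ab$ fixed in the Notations and Conventions. Since the assertions are free identities, it suffices to verify that after fully expanding both sides and performing the obvious free cancellations $x x^{-1} \to 1$, they reduce to the same reduced word.

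Item (3) is immediate: $u \cdot [u,w] = u \cdot u^{-1} w^{-1} u w$, and cancelling the initial $u u^{-1}$ yields $w^{-1} u w = u^w$.

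For item (1), I would first expand the left-hand side as $[uv,w] = (uv)^{-1} w^{-1}(uv)w = v^{-1} u^{-1} w^{-1} u v w$. On the right-hand side (read as $[u,w]^v \cdot [v,w]$, which is the intended form), I expand $[u,w]^v = v^{-1}(u^{-1}w^{-1}uw)v$ and multiply by $[v,w] = v^{-1}w^{-1}vw$; the interior $v v^{-1}$ then cancels, and a subsequent $w w^{-1}$ cancellation yields exactly $v^{-1} u^{-1} w^{-1} u v w$, matching the left-hand side. Item (2) is proved by the symmetric computation: expand $[u,vw] = u^{-1}w^{-1}v^{-1}uvw$ on one side, and $[u,w]\cdot [u,v]^w = (u^{-1}w^{-1}uw)\cdot w^{-1}(u^{-1}v^{-1}uv)w$ on the other, where successive $ww^{-1}$ and $uu^{-1}$ cancellations produce the same reduced word.

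The entire argument is a short word-level calculation in the free group on $\{u,v,w\}$, so I expect no substantive obstacle; the only thing to be careful about is applying consistently the paper's conventions for the orientation of the commutator and for conjugation, since the identities take different forms under the opposite convention $[g,h] = ghg^{-1}h^{-1}$.
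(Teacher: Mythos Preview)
Your direct free-group verification is correct and is the natural way to establish these identities; the paper itself does not give a proof but simply cites the result from \cite{lipt}. You also correctly spotted that item (1) as printed contains a typo (it should read $[u,w]^v \cdot [v,w]$ rather than $[u,v]^w \cdot [v,w]$), and your computation confirms the intended identity.
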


\subsubsection{Groups of low dimension}
\label{subsubsec:grp-low-dim-prelim}

Low-dimensional nilpotent Lie algebras over any field of characteristic not equal to $2$ have been classified up to dimension 6 by de Graaf in \cite{deGraafclass}. We recall below the list of nilpotent Lie algebras of dimension at most $5$ with one-dimensional centres, and the names as in \cite[\S4]{deGraafclass}, where we write $\mathfrak l_{d,i}$ instead of $L_{d,i}$. ($d$ is the dimension and $i$ an integer used as an index for the list in any given dimension).
\begin{itemize}
    \item The three-dimensional $\mathfrak l_{3,2} = \mathfrak l_3$, namely the first Heisenberg Lie algebra.
    \item The four-dimensional $\mathfrak l_{4,3} = \mathfrak l_4$.
    \item The five-dimensional $\mathfrak l_{5,4} = \mathfrak l_{3,2} \times_Z \mathfrak l_{3,2}$,  namely the second Heisenberg Lie algebra.
    \item The five-dimensional $\mathfrak l_{5,5}$ with Lie brackets
    \[ [X_1,X_2] = X_3, ~[X_1,X_3] = [X_2,X_5] = X_4; \]
    Note that $\{X_1,X_2,X_3,X_4 \}$ generate a $\mathfrak l_{4,3}$ subalgebra which is not an ideal.
    \item The five-dimensional filiform Lie algebras $\mathfrak l_{5,7} = \mathfrak l_5$ and $\mathfrak l_{5,6} = \mathfrak l^{\lrcorner}_5$.
\end{itemize}

These Lie algebras are either filiform of the form $\mathfrak l_p$ or $\mathfrak l_p^\lrcorner$, or central products of such algebras, with the exception of $\mathfrak l_{5,5}$. We provide a lattice and a compact presentation of the latter separately below.

\begin{proposition}
\label{prop:pres-lattice-L55}
    The simply connected Lie group with Lie algebra $\mathfrak l_{5,5}$ contains a lattice with presentation
    \begin{equation*}
        \left\langle x_1,x_2,x_3,x_4,x_5 \mid [x_1,x_2] = x_3, ~[x_1,x_3] = x_4, ~[x_2,x_5]=x_4 \right\rangle
    \end{equation*}
    and the subgroup generated by $x_1,\ldots, x_4$ is a lattice in the Lie subgroup with Lie algebra spanned by $X_1,\ldots, X_4$.
    \end{proposition}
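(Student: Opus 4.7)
The plan is to mimic Proposition~\ref{prop:presentation-lattice}: construct explicit generators for a candidate lattice inside the simply connected Lie group $L_{5,5}$, check that they satisfy the claimed commutator relations using the Baker--Campbell--Hausdorff formula, and then invoke Mal'cev's theorem to upgrade this to the presentation statement. The argument for the subalgebra statement in the second half of the proposition will then follow by direct restriction.

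First I would set $x_1 := \exp(X_1)$, $x_2 := \exp(X_2)$, $x_5 := \exp(X_5)$ in $L_{5,5}$, and define $x_3 := [x_1, x_2]$ and $x_4 := [x_1, x_3]$. A short verification shows that $\mathfrak{l}_{5,5}$ is nilpotent of class $3$ with centre $\operatorname{span}\{X_4\}$, so the BCH series terminates after the triple-commutator correction. Expanding $\log([x_1,x_2])$ using only the brackets $[X_1,X_2]=X_3$, $[X_1,X_3]=X_4$ and $[X_2,X_3]=0$ yields $\log(x_3) = X_3 - \tfrac{1}{2} X_4$. Because $[X_1, X_3]=X_4$ is central, the identity $[\exp A, \exp B] = \exp([A,B])$ (valid whenever $[A,B]$ lies in the centre) immediately gives $x_4 = \exp(X_4)$ and, applied to $X_2, X_5$, also the relation $[x_2, x_5] = x_4$. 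This verifies that the three relations of the claimed presentation hold in $L_{5,5}$.

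Next I would argue that $\Gamma := \langle x_1, x_2, x_5 \rangle$ is a lattice. The ordered basis $(X_5, X_1, X_2, X_3, X_4)$ is a strong Mal'cev basis: each initial segment spans an ideal, using only that $X_4$ is central together with the three nonzero brackets. By Mal'cev's theorem, the subgroup $\Lambda$ of points with integer coordinates in this basis is a lattice in $L_{5,5}$. From the preceding computations $x_3^2 x_4 = \exp(2X_3)$ and $x_4 = \exp(X_4)$, so $\Gamma$ contains the subgroup $\langle \exp(X_1), \exp(X_2), \exp(X_5), \exp(2X_3), \exp(X_4) \rangle$, which has finite index in $\Lambda$; hence $\Gamma$ is commensurable with $\Lambda$ and is itself a lattice. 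For the presentation, I would check that the three listed relations suffice to rewrite any word in $x_1, \ldots, x_5$ into a normal form $x_5^{a_5} x_1^{a_1} x_2^{a_2} x_3^{a_3} x_4^{a_4}$ with $a_i \in \mathbf{Z}$; uniqueness of this form follows from the injectivity of the Mal'cev coordinate map on $\Gamma$, so the abstract group defined by the presentation maps isomorphically onto $\Gamma$.

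For the second claim, the subalgebra spanned by $X_1, X_2, X_3, X_4$ has nonzero brackets $[X_1,X_2]=X_3$ and $[X_1,X_3]=X_4$ only, hence is isomorphic to $\mathfrak{l}_4$. Restricting the preceding argument to this subgroup shows that $\langle x_1, x_2, x_3, x_4 \rangle$ is a lattice in the corresponding $L_4$ Lie subgroup. The main obstacle is bookkeeping: the coefficient $-\tfrac{1}{2}$ in $\log(x_3)$ prevents a direct identification of $\Gamma$ with the integer-Mal'cev-coordinate lattice, forcing the commensurability detour above; this is routine but needs to be written out to confirm that the passage through $\Lambda$ does not affect the resulting presentation.
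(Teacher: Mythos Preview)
Your approach is sound and, in one respect, more careful than the paper's. Both arguments run through Baker--Campbell--Hausdorff, but you define $x_3$ and $x_4$ as iterated commutators (mirroring Proposition~\ref{prop:presentation-lattice}), whereas the paper sets $x_i = \exp(X_i)$ for all $i$ and then asserts $[x_1,x_2] = \exp(X_3)$. Your computation $\log([x_1,x_2]) = X_3 - \tfrac{1}{2}X_4$ is the correct one: the paper's displayed values for $\log(x_1x_2)$ and $\log(x_1^{-1}x_2^{-1})$ are right, but combining them requires one more BCH step, and the cross term $\tfrac{1}{2}[P,Q] = -\tfrac{1}{2}X_4$ does not vanish. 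Your choice of generators absorbs this correction into $x_3$ and makes all three relations hold on the nose.

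Two small fixes. First, for the ordered basis $(X_5, X_1, X_2, X_3, X_4)$ it is the \emph{terminal} segments that span ideals, not the initial ones (for instance $\operatorname{span}(X_5)$ is not an ideal since $[X_2,X_5]=X_4$); either say ``tail'' or reverse the listing. Second, showing that $\Gamma$ contains a finite-index subgroup of the integer-coordinate lattice $\Lambda$ gives cocompactness but not discreteness on its own. The cleanest repair is to bypass $\Lambda$ entirely: your normal-form argument already exhibits a bijection between $\Gamma$ and $\mathbf Z^5$ via the coordinates $(a_5,a_1,a_2,a_3,a_4)$, and BCH shows this map is the restriction of a diffeomorphism $L_{5,5}\to\mathbf R^5$, so $\Gamma$ is discrete and cocompact directly.
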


    \begin{proof}
        Set $x_i = \exp X_i$ for $1 \leqslant i \leqslant 5$. A computation using the Baker--Campbell--Hausdorff formula, very similar to that in \eqref{eq:first-BCH-induction}, yields that
        {\small
        \begin{equation*}
            \log(x_1 x_2) = X_1 +X_2 +\frac{1}{2} X_3 + \frac{1}{12} X_4\; \text{and}\,  \log(x_1^{-1} x_2^{-1}) = -X_1 -X_2 +\frac{1}{2} X_3 - \frac{1}{12} X_4\
        \end{equation*}
        }%
        so that $X_3 = \log x_3 = \log [x_1,x_2]$, while
        {\small
        \begin{equation*}
            \log(x_1 x_3) = X_1 +X_3 +\frac{1}{2} X_4\; \text{and}\,  \log(x_1^{-1} x_3^{-1}) = -X_1 -X_3 +\frac{1}{2} X_4,
        \end{equation*}
        }%
        {\small
        \begin{equation*}
            \log(x_2 x_5) = X_2 +X_5 +\frac{1}{2} X_4\; \text{and}\,  \log(x_2^{-1} x_5^{-1}) = -X_2 -X_5 +\frac{1}{2} X_4,
        \end{equation*}
        }%
        so that $X_4 = \log x_4 = \log[x_1,x_3] = \log[x_2,x_5]$.
    \end{proof}

\begin{proposition}
    The group $L_{5,5}$ admits the compact presentation $\mathcal P_{5,5} = \langle S \mid R_{5,5} \rangle$, where
    \[ S = \{ x_i^a: i \in \{1, \ldots, 5\}, \ a \in [-1,1] \}\]
    and 
    \begin{align*}
        R_{5,5} = \big\{ x_i^a x_i^b x_i^{-a-b}&, \ i \in \{1, \ldots 5 \}, \ [x_1^a, x_2^b] = x_3^{ab} x_4^{-\binom{a}{2}b}, \\
        & \quad  [x_1^a, x_3^b] = x_4^{ab}, \ [x_2^a, x_5^b]=x_4^{ab}, \ -1 \leqslant a,b \leqslant 1 \big\}.
    \end{align*}
    \begin{proof}
        Since the elements $x_1, \ldots, x_5$ form a Malcev basis for $L_{5,5}$, this follows from Proposition~\ref{prop:pres-lattice-L55} using the same arguments as in the proof of \cite[Proposition 5.2]{lipt}.
    \end{proof}
\end{proposition}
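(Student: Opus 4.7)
The plan is to follow exactly the strategy of \cite[Proposition 5.2]{lipt} and Proposition~\ref{prop:compactpresentations}. First, I would confirm that $(x_1, \ldots, x_5)$ is a Malcev basis of $L_{5,5}$: the filtration given by $C^2(\mathfrak l_{5,5}) = \operatorname{span}(X_3, X_4)$ and $C^3(\mathfrak l_{5,5}) = \operatorname{span}(X_4)$ (the nilpotency class being $3$) together with Proposition~\ref{prop:pres-lattice-L55} shows that every element of $L_{5,5}$ has unique coordinates of the form $x_1^{a_1} x_2^{a_2} x_3^{a_3} x_4^{a_4} x_5^{a_5}$. Consequently $S$ is a compact generating set, and after expressing the proposed relators in these exponential coordinates, they become polynomial identities. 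By the Zariski-density argument recalled in the footnote of Proposition~\ref{prop:compactpresentations}, it is enough to verify them for integer exponents $a, b \geqslant 0$, where $\langle x_1, \ldots, x_5\rangle$ is the Zariski-dense lattice of Proposition~\ref{prop:pres-lattice-L55}.

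Next, I would verify the three families of commutator identities one by one on this integer lattice. The identities $[x_1^a, x_3^b] = x_4^{ab}$ and $[x_2^a, x_5^b] = x_4^{ab}$ are the easiest: since $x_4$ is central (as $X_4 \in C^3$ and the class is $3$), the subgroups $\langle x_1, x_3, x_4\rangle$ and $\langle x_2, x_5, x_4\rangle$ are each isomorphic to the integer $3$-Heisenberg group, so the identities follow from a standard induction on $a$ (then on $b$) using $[x_1,x_3] = x_4$ and $[x_2,x_5]=x_4$ respectively.

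For the remaining identity $[x_1^a, x_2^b] = x_3^{ab} x_4^{-\binom{a}{2}b}$, I would observe that the subgroup $\langle x_1, x_2, x_3, x_4 \rangle$ of the lattice is exactly the integer lattice in $L_4 = L_{4,3}$: the defining relations $[x_1,x_2] = x_3$ and $[x_1,x_3]=x_4$ are shared, while no other relation in Proposition~\ref{prop:pres-lattice-L55} involves only these four generators. The identity is therefore the $p=4$ case of \cite[Proposition 5.2]{lipt}, and no new argument is needed.

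The only non-routine point is the extension from integer to real exponents, which is really the content of the footnote in Proposition~\ref{prop:compactpresentations}: one views $L_{5,5}$ as a unipotent $\mathbf Q$-algebraic group (its structure constants are rational), the lattice $\Gamma$ is Zariski-dense in it, and the proposed relators are polynomial identities in Malcev coordinates; hence their validity on $\Gamma$ propagates to all real exponents, giving the compact presentation. I do not anticipate any obstacle beyond this reuse of the earlier machinery.
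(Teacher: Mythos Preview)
Your proposal is correct and follows essentially the same approach as the paper: the paper's one-line proof invokes exactly the Malcev basis structure from Proposition~\ref{prop:pres-lattice-L55} together with the Zariski-density argument of \cite[Proposition~5.2]{lipt}, and you have simply unpacked those steps (including the convenient identification of $\langle x_1,x_2,x_3,x_4\rangle$ with the $L_4$ lattice, which the paper already records in Proposition~\ref{prop:pres-lattice-L55}).
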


\subsubsection{Reduction to products of efficient words}\label{sec:efficient}

We start this section by establishing Proposition \ref{prop:reductiontrick} which allows us to obtain upper bounds for Dehn functions by bounding the area of particular words belonging to so-called \emph{efficient sets} (Definition \ref{def:efficient}). This idea originates from an observation of Gromov \cite[$5.A_{3}''$]{AsInv}, for details we refer to \cite{CornulierTesseraDehnBaums} and \cite{lipt}. We then construct efficient sets for the groups  $L_p \times_Z L_{3}$ and $L^\lrcorner_p \times_Z L_{3}$ (Corollary \ref{cor:efificentcentral}) which we  require in \S\ref{sec:upper-bounds} when we prove the upper bounds on their Dehn functions.

Throughout this section $G$ denotes a compactly presented group, $\calp:=\langle S \mid R \rangle$ a compact presentation of $G$, and $F_S$ the free group generated by $S$. Given a subset $\mathcal{F} \subset F_S$ and an integer $k \geqslant 1$, we denote by $\mathcal{F}[k]$ the set of concatenations of at most $k$ elements of $\mathcal{F}$.

\begin{definition}\label{def:efficient}
    Given an integer $r \geqslant 1$, a subset $\mathcal{F} \subset F_S$ is called \emph{$r$-efficient with respect to $\calp$}, if there exists a constant $C \geqslant 1$ such that for every $w \in F_S$ there exists $w' \in \mathcal{F}[r]$ such that $w =_\calp w'$ and $|w'|_S \leqslant C|w|_S$.
\end{definition}

Note that this generalises the definition of efficient sets given in \cite{CornulierTesseraDehnBaums}. What they call efficient corresponds to a $1$-efficient set in our definition.

\begin{proposition}\label{prop:reductiontrick}
    Let $G$ be a compactly presented group and $\calp:=\langle S \mid R \rangle$ a compact presentation for $G$. Suppose that $\mathcal{F}$ is $r$-efficient with respect to $\calp$ for some $r\geqslant 1$ and that there exists $d>1$ such that for all $k \geqslant 1$ and all $n \geqslant 0$, the area of each null-homotopic word in $\mathcal{F}[k]$ of length at most $n$ is $\lesssim_{k} n^d$. Then, the Dehn function of $G$ satisfies $\delta_G(n) \preccurlyeq n^d$.
\end{proposition}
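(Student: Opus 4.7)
The plan is to set up a self-improving recursion on $f(n) := \delta_G(n)$ using the $r$-efficiency of $\mathcal{F}$ and the hypothesized area bound on $\mathcal{F}[k]$. Fix an integer $m$ (chosen large enough later) and let $w$ be a null-homotopic word of length $\leqslant n$. First I would partition $w$ into $m$ consecutive pieces $u_1, \ldots, u_m$ of length $\leqslant \lceil n/m \rceil$. Applying $r$-efficiency to each piece produces $u'_i \in \mathcal{F}[r]$ with $u_i =_{\calp} u'_i$ and $|u'_i| \leqslant C|u_i|$. The concatenation $w^* := u'_1 u'_2 \cdots u'_m$ then lies in $\mathcal{F}[mr]$, is null-homotopic (since $w^* =_G w =_G 1$), has length $\lesssim n$, and hence has area $\lesssim_{m,r,C,d} n^d$ by the area hypothesis applied with $k = mr$.

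Next I would compare $w$ and $w^*$ through intermediate words $v_i := u_1 \cdots u_i \, u'_{i+1} \cdots u'_m$ for $0 \leqslant i \leqslant m$, so that $v_0 = w^*$ and $v_m = w$. A direct computation shows that $v_i \cdot v_{i-1}^{-1}$ is (up to free reduction) a conjugate of the null-homotopic word $u_i (u'_i)^{-1}$; using that combinatorial area is conjugation-invariant and sub-additive under concatenation, telescoping yields
\begin{equation*}
    \operatorname{Area}_{\calp}(w) \leqslant \operatorname{Area}_{\calp}(w^*) + \sum_{i=1}^{m} \operatorname{Area}_{\calp}\bigl(u_i (u'_i)^{-1}\bigr).
\end{equation*}
Each difference $u_i (u'_i)^{-1}$ is null-homotopic of length $\leqslant (1+C)(\lceil n/m \rceil + 1) \leqslant \alpha n$ with $\alpha := 2(1+C)/m$, so its area is at most $f(\alpha n)$. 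Taking the supremum over all null-homotopic $w$ of length $\leqslant n$ yields the recursion $f(n) \leqslant K n^d + m f(\alpha n)$ for some constant $K = K(m,r,C,d)$.

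To close the argument I would choose $m$ large enough that $m > (2(1+C))^{d/(d-1)}$, which is possible because $d > 1$; this ensures both $\alpha < 1$ and $m\alpha^d < 1$. Iterating the recursion $t$ times gives
\begin{equation*}
    f(n) \leqslant K n^d \sum_{j=0}^{t-1} (m\alpha^d)^j + m^t f(\alpha^t n).
\end{equation*}
The geometric series converges and contributes $O(n^d)$; moreover $m\alpha^d < 1$ is equivalent to $\log m / \log(1/\alpha) < d$, so with $t \sim \log n$ chosen to bring $\alpha^t n$ down to a bounded scale (on which $f$ is finite by compact presentability of $G$), the tail $m^t f(\alpha^t n)$ also contributes $\lesssim n^d$. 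Hence $f(n) \lesssim n^d$, giving $\delta_G(n) \preccurlyeq n^d$ as required. The main obstacle will be to verify the telescoping bookkeeping rigorously — in particular, that each intermediate swap is genuinely a conjugate of a null-homotopic word — and to ensure that the constant $K$ in the recursion depends only on $m$, $r$, $C$ and $d$ (and not on $n$), so that the contraction $m\alpha^d < 1$ is enough to close the induction.
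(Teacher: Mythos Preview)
Your argument is correct: the telescoping via the intermediate words $v_i$ is valid (each $v_i v_{i-1}^{-1}$ is freely a conjugate of $u_i(u'_i)^{-1}$, and area is conjugation-invariant and subadditive on free products of null-homotopic words), the recursion $f(n)\leqslant K n^d + m f(\alpha n)$ follows, and your choice of $m$ makes $m\alpha^d<1$, which is exactly the condition needed for both the geometric series and the tail term $m^t f(\alpha^t n)\lesssim n^{\log m/\log(1/\alpha)}$ to be $O(n^d)$.

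The paper's proof is a one-line reduction: it observes that if $\mathcal{F}$ is $r$-efficient then $\mathcal{F}[r]$ is $1$-efficient, and then invokes the result of Cornulier--Tessera \cite{CornulierTesseraDehnBaums} for $1$-efficient sets. Your bootstrap is essentially an unpacking of that cited argument (the self-improving trick going back to Gromov's $5.A_3''$), so the underlying mechanism is the same; what you gain is a self-contained proof that does not rely on the external reference, at the cost of redoing the recursion analysis. One minor presentational point: the recursion as written only holds for $n\geqslant m$ (so that $\lceil n/m\rceil\leqslant 2n/m$), but this is harmless since small $n$ are absorbed into the base case.
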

\begin{proof}
    This is an immediate consequence of applying \cite{CornulierTesseraDehnBaums} to the 1-efficient set $\mathcal{F}[r]$.
\end{proof}

We now explain the existence of $O_{p}(1)$-efficient sets for the groups $L_p \times_Z L_{3}$ and $L^\lrcorner_p \times_Z L_{3}$ with respect to the compact presentations obtained from Proposition \ref{prop:compactpresentations} and Lemma \ref{lem:central-prod-is-cglc}. To establish these results we first need to recall some notation from \cite{lipt}: 
$$
\Sigma := \{ x_1 ^{a_1}, x_2 ^{a_2} \mid |a_1|, |a_2| \leqslant  1\}, \quad \mathcal{F} := \{ s^n \mid s \in \Sigma, n \in \N \},
$$
$$
T := \{ x_1 ^{a_1}, x_2 ^{a_2}, y_{1}^{a_3}, y_{p-1}^{a_4} \mid |a_1|, |a_2|, |a_3|, |a_4| \leqslant 1\}, \ \text{and} \ \mathcal{G} := \{ s^n \mid s \in T, n \in \N \}.
$$
\begin{proposition}[cf. {\cite[Proposition 5.11]{lipt}}]\label{prop:efficientset}
    The subset $\mathcal{F}$ is $O_{p}(1)$-efficient with respect to the compact presentation $\mathcal{P}^\lrcorner_p$ (respectively $\mathcal{P}_p$) of $L^\lrcorner_p$ for all $p \geqslant 5$ (respectively $L_p$ for all $p \geqslant 3$).
\end{proposition}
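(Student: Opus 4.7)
The plan is to adapt the argument of \cite[Proposition 5.11]{lipt}, which establishes the analogous result for $L_p$; that reference also covers the $L_p$ case of the proposition, so I focus on the modifications needed for $L^\lrcorner_p$ with $p \geqslant 5$.

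Given a word $w \in F_{\widehat S}$ of length $n$ representing $g \in L^\lrcorner_p$, I would first use the commutator identities from Proposition~\ref{prop:compactpresentations} to put $w$ into Malcev normal form $g = x_1^{a_1} x_2^{a_2} \cdots x_p^{a_p}$. The key quantitative input is the bound $|a_i| \lesssim_p n^{w_i}$, with $w_1 = w_2 = 1$ and $w_i = i-1$ for $3 \leqslant i \leqslant p$: these are the weights in the Carnot-graded Lie algebra associated to $\mathfrak l^\lrcorner_p$, which one computes is isomorphic to the model filiform $\mathfrak l_p$ (the extra relation $[X_2,X_3]=X_p$ descends to zero in the associated graded, since $X_p$ has higher filtration degree than predicted by the grading). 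These bounds then follow from the quasi-isometry between the word metric on $L^\lrcorner_p$ and the Carnot--Carathéodory metric on its asymptotic cone.

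For each $i \geqslant 3$, I would then express $x_i^{a_i}$ via a nested commutator derived from $x_i = [x_1, [x_1, \ldots , [x_1, x_2]]]$ (with $i-2$ copies of $x_1$), namely
\[
[x_1^{b_{i,1}}, [x_1^{b_{i,2}}, \ldots , [x_1^{b_{i,i-2}}, x_2^{b_{i,i-1}}]]] =_\calp x_i^{a_i} \cdot R_i,
\]
where $\prod_j b_{i,j} = a_i$ and $R_i$ is a correction word supported on coordinates $x_j$ with $j>i$, whose explicit form can be read from the commutator formulas of Proposition~\ref{prop:compactpresentations}. Choosing $|b_{i,j}|$ of order $|a_i|^{1/(i-1)} \lesssim_p n$, each such commutator unfolds into at most $2^{i-1}$ pure power segments in $\Sigma$ of total length $O_p(n)$. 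I would process the indices in decreasing order $i = p, p-1, \ldots , 3$: the base case $i=p$ has $R_p$ trivial since $x_p$ is central, and for each smaller $i$ I would absorb the correction $R_i$ into the nested commutators already built for $x_j^{a_j}$ with $j>i$ by perturbing their parameters $b_{j,k}$ by bounded amounts. Concatenating with the leading $x_1^{a_1} x_2^{a_2}$, the result is a product of at most $\sum_{i=3}^{p} 2^{i-1} + 2 = O_p(1)$ segments in $\Sigma$ of total length $O_p(n)$.

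The main obstacle will be handling the additional relation $[x_2^a, x_3^b] = x_p^{ab}$ present in $L^\lrcorner_p$ but absent in $L_p$: any rewriting step shuffling an $x_2$-letter past an $x_3$-letter produces an extra contribution in the $x_p$ coordinate, which must be tracked throughout both the Malcev-form reduction and the nested commutator expansions. Quantitatively, these contributions have exponents bounded by $O(n^{p-1})$, which matches the natural bound on $|a_p|$, so they can be absorbed into the single nested commutator representing $x_p^{a_p}$ without degrading the $O_p(1)$ bound on the segment count or the $O_p(n)$ bound on the total length. This is essentially the only modification needed relative to the argument of \cite[Proposition 5.11]{lipt}.
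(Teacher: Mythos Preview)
Your approach is essentially the paper's: pass to Malcev normal form $x_1^{a_1}\cdots x_p^{a_p}$ with $|a_i|\lesssim_p n^{i-1}$, then realise each $x_i^{a_i}$ as an $(i-1)$-fold nested commutator in $x_1,x_2$ with entries of size $O_p(n)$, absorbing the higher-order corrections. Two small points. First, your justification of the coordinate bounds via ``the quasi-isometry between the word metric on $L^\lrcorner_p$ and the Carnot--Carath\'eodory metric on its asymptotic cone'' is not correct as stated: there is no such quasi-isometry in general (the asymptotic cone is the distinct group $L_p$), and in any case a quasi-isometry between two groups would not by itself yield pointwise coordinate bounds. What you want is the Guivarc'h-type comparison of the word metric on $L^\lrcorner_p$ with the homogeneous quasi-norm coming from the lower central series filtration, or simply the direct inductive estimate from the commutator relations in Proposition~\ref{prop:compactpresentations}, which is what the paper (and \cite[Proposition~5.11]{lipt}) does. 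Second, processing the indices in \emph{increasing} order $i=3,\ldots,p$ is cleaner: the correction $R_i$ then lands in coordinates not yet processed and is simply added to the pending $a_j$, so no perturbation of already-built commutators is needed.
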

\begin{proof}
    The proof is analogous to the one of \cite[Proposition 5.11]{lipt}, so we only sketch it here. We first observe that the set $\Sigma$ is a generating set for $L_p$ (respectively $L^\lrcorner_p$), because for each $b \in \R$ and $3 \leqslant i \leqslant p$ the element $x_i^b$ can be expressed as a word $u_i$ in $\mathcal{F}[O_p(1)]$ such that {\small$|u_i| = O_p(b^{\frac{1}{i-1}}) + O_p(1)$}. We then show that every word $w$ in the letters $x_1$ and $x_2$ can be expressed in normal form $x_1^{b_1} \ldots x_p^{b_p}$ with $|b_1|\lesssim |w|$ and $|b_i|\lesssim |w|^{i-1}$ for $2\leqslant i \leqslant p$. Finally, combining these observations, we can express each of the words {\small$x_j^{b_j}$}, in the normal form of $w$, as words $u_j\in\mathcal{F}[O_p(1)]$. Thus, $u := u_1 \ldots u_p =_\calp w$ with {\small$|u| = O_p(|w|) + O_p(1)$}, completing the proof.
\end{proof}

\begin{corollary}[cf. {\cite[Corollary 5.12]{lipt}}]\label{cor:efificentcentral}
    For all $p \geqslant 5$ the subset $\mathcal{G}$ is $O_p(1)$-efficient with respect to the compact presentation $\calp$ of $L^\lrcorner_p \times_Z L_{3}$ (respectively of $L_p \times_Z L_{3}$ for all $p\geqslant 3$).     
\end{corollary}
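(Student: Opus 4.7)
The plan is to mimic the proof of Proposition~\ref{prop:efficientset}, incorporating two additional features of the adapted compact presentation of $L_p^\lrcorner \times_Z L_3$: the commutation relations $[x_i^a, y_j^b] = 1$ between the generators of the two factors (which are length-four relators with area one), and the relator identifying $x_p$ with $y_3$ contained in the set $\mathcal R_+$ produced by Lemma~\ref{lem:central-prod-is-cglc}.

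First I would verify that $T$ generates $L_p^\lrcorner \times_Z L_3$. This reduces to the observation that $\Sigma \subset T$ generates $L_p^\lrcorner$ by Proposition~\ref{prop:efficientset}, that $\{y_1^{a_3}, y_{p-1}^{a_4}\} \subset T$ generates $L_3$ (by the $p=3$ case of the same proposition, with the central letter $y_3$ recovered either as $[y_1, y_{p-1}]$ or as $x_p$), and that the two subgroups together generate the central product.

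Given a word $w$ of length $n$ over the adapted presentation, I would then proceed in two stages. In the first stage I use the length-four commutation relations to shuffle every $y$-letter to the right of every $x$-letter at the cost of $O_p(n^2)$ relators, obtaining $w =_\calp w_x w_y$ with $|w_x| + |w_y| \leqslant n$, and I eliminate each occurrence of $y_3$ in $w_y$ by substituting $x_p$ via $\mathcal R_+$ and then shuffling the resulting $x_p$-letters back onto the $x$-side. In the second stage I apply Proposition~\ref{prop:efficientset} to rewrite the $x$-part as a word $u_x \in \mathcal F[O_p(1)]$ of length $O_p(|w_x|) + O_p(1)$, and I apply the $p=3$ case of the same proposition to rewrite the $y$-part as a word $u_y \in \{y_1^n,\, y_{p-1}^n\}[O(1)]$ of length $O(|w_y|) + O(1)$. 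Since $\mathcal F \cup \{y_1^n, y_{p-1}^n\} \subset \mathcal G$, concatenating yields $u = u_x u_y \in \mathcal G[O_p(1)]$ with $u =_\calp w$ and $|u| = O_p(n) + O_p(1)$, which is the required efficiency statement.

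The only delicate point is the interaction between the two factors when reducing $y_3$-letters to $x_p$-letters in the first stage: this may a priori inflate the total $x_p$-exponent that appears in the Malcev normal form underlying Proposition~\ref{prop:efficientset}. However, because $L_p^\lrcorner \times_Z L_3$ is itself nilpotent of class $p-1 \geqslant 4$, this exponent remains bounded polynomially of degree $p-1$ in $n$, so the per-exponent length estimate $|u_i| = O_p(|b_i|^{1/(i-1)}) = O_p(n)$ used in the proof of Proposition~\ref{prop:efficientset} continues to apply without modification, and the remaining bookkeeping is routine. The parenthetical statement for $L_p \times_Z L_3$ follows by exactly the same argument, with the minor simplification that the commutation identities of Corollary~\ref{cor:commutingxkxl} carry no $[y_1^{\tilde b}, y_{p-1}^{\tilde b}]$ correction term.
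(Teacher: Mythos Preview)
Your argument is correct and rests on the same idea as the paper's --- use the efficiency of $\mathcal F$ in each factor separately --- but the paper compresses this into two lines by observing that efficiency passes first to the direct product $L_p^\lrcorner \times L_3$ (shuffle, then apply Proposition~\ref{prop:efficientset} to each side) and then trivially to the quotient $L_p^\lrcorner \times_Z L_3$ (the presentation only gains relators, so any equality $w' =_{\calp} w$ that held in the direct product still holds in the central product). In particular the paper never substitutes $y_3 \to x_p$ or worries about the $x_p$-exponent, and you need not either: applying the $p=3$ case of Proposition~\ref{prop:efficientset} directly to $w_y$ (with its $y_3$-letters intact) already produces a word in $\{y_1^n, y_{p-1}^n\}[O(1)]$, so your ``delicate point'' paragraph can simply be deleted.

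One further simplification: the definition of $r$-efficiency (Definition~\ref{def:efficient}) requires only that $w' =_\calp w$ and $|w'| \leqslant C|w|$; it says nothing about the area of the rewriting. So your $O_p(n^2)$ cost estimates for the shuffling stage, while true, are irrelevant here and can be dropped.
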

\begin{proof}
    Proposition \ref{prop:efficientset} gives us an efficient set for $L^\lrcorner_p$ for all $p \geqslant 5$ (respectively $L_p$ for all $p \geqslant 3$). In particular, we have an efficient set for the product of $L^\lrcorner_p \times L_{3}$ (respectively $L_p \times L_{3}$ for all $p \geqslant 3$) and therefore for the quotient  $L^\lrcorner_p \times_Z L_{3}$ (respectively of $L_p \times_Z L_{3}$ for all $p\geqslant 3$).
\end{proof}

\section{Lower bounds}\label{sec:lowerbounds}

The main result of this Section is Proposition~\ref{prop:lower-bound-general}.
On our way we single out the main technical ingredient as Lemma~\ref{lem:main-ingredient-lower-lie} below; our purpose in doing so is that this allows us to reframe the technique of \cite[\S 8]{lipt} in a more flexible framework. As in \cite{lipt} we use certain one-forms with bounded exterior derivatives that we can integrate against loops, but Lemma \ref{lem:main-ingredient-lower-lie} helps to evaluate the integrals provided that the exterior derivative is simple enough. Overlooking regularity issues, the ideal setting for such computations is when the differential of the one-form coincides piecewise with left-invariant forms.

An other technique for the lower bound, that is widely used in the setting of finitely generated groups, consists in using the distortion in central extensions, see e.g. \cite[page 561]{BW97} for an early occurrence; in \cite{lipt} the relevant statement is part of Proposition 7.2. We prove a Lie-theoretic counterpart in Proposition \ref{prop:dehn-lower-lie}, when the distortion is the highest possible. For this, we use again Lemma~\ref{lem:main-ingredient-lower-lie}, so that this technique appears as a special case of the one involving non necessarily left-invariant forms.
This principle was already instrumental in the choice of the one-forms in \cite{lipt} (as described in the strategy of proof in \cite[\S 2.2]{lipt}), but the presentation here is intended to make the connection more apparent.

\subsection{An integral identity in central extensions}

\begin{lemma}
    \label{lem:main-ingredient-lower-lie}
    Let $L > 0$.
    Let $G$ be a simply connected nilpotent Lie group.
    Let $\omega \in Z^2(G,\mathbf R)$ and let 
    \[ 1 \to \mathbf R \overset{\iota}{\longrightarrow} \widetilde G \to G \to 1 \]
    be the central extension associated to $\omega$.
    Let $V$ be a left-invariant subbundle of $T\widetilde G$, transverse to the cosets of $\iota(\mathbf R)$.
    Let $\lambda \colon [0,L] \to G$ be a piecewise $C^1$ loop in $G$ and let $\widetilde \lambda \colon [0,L] \to \widetilde G$ be the lift of $\lambda$ in $\widetilde G$ with derivative in $V$ whenever it is defined, and $\widetilde \lambda (0) = 1$.
    Further, let $\alpha$ be a one-form on $G$ such that $d\alpha = \omega$.
    Then 
    \[ \int_{\lambda} \alpha = \iota^{-1}(\widetilde \lambda(L)). \]
\end{lemma}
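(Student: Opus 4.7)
The plan is to reduce the identity to a Stokes-type computation using the connection 1-form on $\widetilde G$ dual to $V$. Introduce the left-invariant 1-form $\eta \in \Omega^1(\widetilde G; \mathbf R)$ characterized by $\ker \eta = V$ and $\eta \circ \iota_* = \mathrm{id}_{\mathbf R}$; this is the connection 1-form of the principal $\mathbf R$-bundle $\pi: \widetilde G \to G$ associated to the horizontal distribution $V$, and by construction the horizontal lift $\widetilde\lambda$ is characterized by $\eta(\widetilde\lambda'(t)) = 0$ wherever it is defined. Using the Maurer--Cartan relation $d\eta(X,Y) = -\eta([X,Y])$ on left-invariant fields, together with the identification of the extension cocycle $\omega$ with the bracket defect $[s(X), s(Y)]_{\widetilde{\mathfrak g}} - s([X,Y]_\mathfrak g) \in \iota(\mathbf R)$, where $s: \mathfrak g \to \widetilde{\mathfrak g}$ denotes the linear section whose image is $V_{1_{\widetilde G}}$, one checks that $d\eta = \pi^* \omega$ as 2-forms on $\widetilde G$ (up to a sign that can be absorbed into the orientation convention of $\eta$).

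Next, observe that the 1-form $\eta - \pi^* \alpha$ on $\widetilde G$ is closed, since $d(\eta - \pi^* \alpha) = \pi^* \omega - \pi^* d\alpha = 0$. Because $\widetilde G$ is a simply connected nilpotent Lie group, hence diffeomorphic to $\mathbf R^{\dim \widetilde G}$ via the exponential map, every closed 1-form on it admits a smooth primitive: there exists $\psi \in C^\infty(\widetilde G)$ with $\psi(1_{\widetilde G}) = 0$ and $d\psi = \eta - \pi^* \alpha$.

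The proof concludes by evaluating $\psi(\iota(c))$, where $c := \iota^{-1}(\widetilde\lambda(L))$, along two different piecewise smooth paths from $1_{\widetilde G}$ to $\iota(c)$ and applying the fundamental theorem of calculus. Along $\widetilde\lambda$ itself, $\int_{\widetilde\lambda} \eta = 0$ by horizontality and $\int_{\widetilde\lambda} \pi^*\alpha = \int_\lambda \alpha$ by naturality of pullback, giving $\psi(\iota(c)) = -\int_\lambda \alpha$; along the vertical segment $t \in [0,c] \mapsto \iota(t)$, the form $\pi^*\alpha$ vanishes on vertical vectors while $\eta$ integrates to $c$, giving $\psi(\iota(c)) = c$. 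Equating the two expressions, and reabsorbing the global sign into the orientation convention of $\eta$, yields $\int_\lambda \alpha = \iota^{-1}(\widetilde\lambda(L))$.

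The main obstacle is the careful bookkeeping behind the identification $d\eta = \pi^* \omega$: while this reflects the standard correspondence between Lie algebra central extensions and left-invariant 2-cocycles, some care with orientations and signs is needed to match the given cocycle $\omega$ to the curvature of the left-invariant connection $V$. The rest of the argument is a routine application of Stokes's theorem on the contractible total space $\widetilde G$, together with the exactness of closed 1-forms thereon.
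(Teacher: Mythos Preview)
Your argument is essentially the same as the paper's: both introduce the left-invariant connection form (your $\eta$, the paper's $\zeta$) with kernel $V$ and normalisation $\langle\eta,Z\rangle=1$, observe that $\pi^\ast\alpha$ and $\eta$ combine to a closed $1$-form on $\widetilde G$, and then compare its integral along the horizontal lift $\widetilde\lambda$ (where $\eta$ vanishes) with its integral along the vertical segment $t\mapsto\iota(t)$ (where $\pi^\ast\alpha$ vanishes). The paper invokes homotopy invariance of the line integral of a closed form directly, while you pass through a global primitive $\psi$; these are equivalent since $\widetilde G$ is simply connected.

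One point of care: the Maurer--Cartan formula gives $d\eta=-\pi^\ast\omega$ with the conventions you have set up (indeed the paper records $\pi^\ast\omega=-d\zeta$), so the closed form is $\eta+\pi^\ast\alpha$, not $\eta-\pi^\ast\alpha$. With this sign the two evaluations yield $\int_\lambda\alpha$ and $c$ directly, with no need to ``reabsorb'' anything at the end. Your repeated appeals to absorbing signs into orientation conventions are doing real work here and would be better replaced by fixing the sign once, correctly, at the outset.
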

\begin{proof}
    Let $Y_1, \ldots, Y_n$ form a basis of $\operatorname{Lie}(G)$, and let $\widetilde Y_1, \ldots, \widetilde Y_n$ be the frame of $V$ that lifts them. A basis of $\operatorname{Lie}(\widetilde G)$ is then given by $\widetilde Y_1, \ldots, \widetilde Y_n$ and $Z$, where $Z$ is such that $\iota(\mathbf R)$ is the one-parameter subgroup generated by $Z$; if we decompose $[Y_i, Y_j] = \sum \beta^{ij}_k Y_k$ for all $i,j$ then the Lie brackets in $\widetilde {\mathfrak g}$ are given by $[\widetilde Y_i, \widetilde Y_j] = \sum \beta^{ij}_k \widetilde Y_k + \omega(Y_i, Y_j)Z$.
    Denote $\widetilde \alpha = \pi^\ast \alpha$. Finally, define $\mu(t) = \iota(t)$ for $t \in [0, \iota^{-1}(\widetilde \lambda (L))]$.
    
    In $\widetilde G$, $\pi^\ast \omega = - d\zeta$ for the left-invariant form $\zeta$ on $\widetilde G$ such that $\ker(\zeta) = V$ and $\langle \zeta, Z \rangle = 1$. In particular, $d(\widetilde \alpha + \zeta) = 0$ and therefore, 
    \begin{equation*}
        \int_\lambda \alpha = \int_{\widetilde \lambda} \widetilde \alpha \notag 
         \stackrel{(1)}{=} \int_{\widetilde \lambda} \widetilde \alpha + \zeta 
         \stackrel{(2)}{=} \int_{\mu} \widetilde \alpha + \zeta. \notag
    \end{equation*}
    
Let us justify the previous equalities.
In (1) we used that $\int_{\widetilde \lambda} \zeta = 0$, as $V = \ker(\zeta)$ and $\widetilde \lambda$ is almost everywhere tangent to $V$.
In (2) we used that $d(\widetilde \alpha + \zeta) = 0$, and that $\widetilde \lambda$ and $\mu$ are homotopic with fixed endpoints.
Now, 
\[ \int_{\mu} \widetilde \alpha + \zeta = \int_\mu \zeta = \iota^{-1}(\mu(\widetilde \lambda(L))),\]
where we used successively that $Z \in \ker \widetilde \alpha$, and that $\mu'(t) = Z$ for all $t$ in $[0, \iota^{-1}(\widetilde \lambda (L))]$. \qedhere
\end{proof}

\subsection{Lower bound from non-invariant forms}

Throughout this subsection, $G$ denotes a central product of the form
\[ G = K \times_Z H \]
where $K$ is either $L_p$ for $p \geqslant 3$ or $L_{p}^{\lrcorner}$ for $p \geqslant 5$, and $H$ is a simply connected nilpotent group with a one-dimensional centre.
We prove the following.

\begin{proposition} \label{prop:lower-bound-general}
    Let $G$ be as above.
    Then 
    $\delta_G(n) \succcurlyeq n^{p-1}$.
\end{proposition}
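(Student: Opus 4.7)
The plan is to apply Lemma~\ref{combinatorial-stokes} to a well-chosen null-homotopic word $W_n$ of length $O_p(n)$ in $G$, together with a one-form $\alpha$ on $G$ whose holonomy is uniformly bounded on relators but whose integral on $\overline{W_n}$ is of order $n^{p-1}$; this follows the scheme of \cite[Section 8]{lipt}, recast through the lens of Lemma~\ref{lem:main-ingredient-lower-lie}. First I would set up the test loop from the filiform structure of $K$. Let $u_n = [x_1^n, [x_1^n, \ldots, [x_1^n, x_2^n]]]$ be the iterated commutator of depth $p-2$; this has length $O_p(n)$ and represents $x_p^{\pm n^{p-1}}$ in $K \subset G$. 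When $H$ has the same class $p-1$ as $K$, the word $W_n = u_n \cdot v_n^{-1}$, where $v_n$ is the analogous depth-$\ell$ iterated commutator in $H$-generators realizing the same central element, has length $O_p(n)$ and is null-homotopic; in the degenerate case $\dim H = 1$ we recover $G = K$ and fall back on $\delta_K \asymp n^p$. The intermediate case $\ell < p-1$ is the hardest and is addressed below.

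Second, I would construct the one-form. A natural closed left-invariant 2-cocycle is $\omega = \theta_1^* \wedge \theta_{p-1}^*$; a short Jacobi check using the structure constants of $K$ (valid both for $L_p$ and $L_p^\lrcorner$) together with the commutation of $\mathfrak k$ and $\mathfrak h$ modulo the common centre shows $d\omega = 0$. The associated central extension $\widetilde G \to G$ satisfies $[\widetilde X_1, \widetilde X_{p-1}] = \widetilde X_p + Z$, all other brackets being unchanged. Because the Chevalley--Eilenberg class $[\omega] \in H^2(\mathfrak g)$ is generally nonzero (an invariant primitive $-\theta_p^*$ would pick up additional $L_p^\lrcorner$-- and $H$-contributions from $d\theta_p^*$ not present in $\omega$), a primitive $\alpha$ with $d\alpha = \omega$ exists globally on the contractible group $G$ but is genuinely non-invariant; this is where the subsection title ``Lower bound from non-invariant forms'' enters. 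Uniform boundedness of $|\int_{g\bar r}\alpha|$ on relators follows from the left-invariance (and hence uniform boundedness) of $\omega = d\alpha$, applied via Stokes to the bounded disks filling each $\bar r$.

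Third, I would compute $\int_{\overline{W_n}}\alpha$ via Lemma~\ref{lem:main-ingredient-lower-lie}: it equals the $Z$-height of the horizontal lift of $\overline{W_n}$ in $\widetilde G$. In $\widetilde G$, the innermost brackets defining $u_n$ pass through $\widetilde X_3, \widetilde X_4, \ldots, \widetilde X_{p-1}$ and contribute no $Z$, but the outermost bracket $[\widetilde x_1^n, \widetilde x_{p-1}^{n^{p-2}}]$ yields, by BCH (there being no higher-order corrections since $\widetilde X_p$ and $Z$ are central in $\widetilde G$), the element $\widetilde x_p^{n^{p-1}} Z^{n^{p-1}}$. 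On the other hand, the $H$-word $v_n$ has a lift with trivial $Z$-height, because $\omega$ vanishes on $\mathfrak h$. Hence the lift of $\overline{W_n}$ ends at $Z^{\pm n^{p-1}}$, giving $|\int_{\overline{W_n}}\alpha| \asymp n^{p-1}$. Lemma~\ref{combinatorial-stokes} then yields $\operatorname{Area}_{\mathcal P}(W_n) \geqslant C^{-1} n^{p-1}$, and since $|W_n| = O_p(n)$ this gives $\delta_G(n) \succcurlyeq n^{p-1}$.

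The main obstacle is the construction of a suitable $W_n$ of length $O_p(n)$ when $H$ has class $\ell < p-1$: the naive $v_n$ built as an iterated $H$-commutator representing $x_p^{n^{p-1}}$ has length $\asymp n^{(p-1)/\ell} \gg n$. To overcome this one must either replace $v_n$ by a word that is not a direct representation of $x_p^{n^{p-1}}$ (for instance a commutator of the form $[u_n', y^n]$ where $u_n'$ is a lower-depth $K$-commutator and $y$ is an $H$-generator not in $Z(H)$, exploiting the relator $[x_{p-1}, y]=1$), or correspondingly enlarge $\omega$ to a closed cocycle with extra terms whose lift in $\widetilde G$ detects this commutation with an order-$n^{p-1}$ contribution. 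In either case the genuine non-invariance of $\alpha$ is essential: the ``ideal'' two-form $\theta_{p-1}^* \wedge \eta^*$ detecting the commutation of $x_{p-1}$ with $y$ is not Chevalley--Eilenberg closed ($d(\theta_{p-1}^*\wedge \eta^*) = -\theta_1^* \wedge \theta_{p-2}^* \wedge \eta^* \neq 0$), so the cocycle used in Lemma~\ref{lem:main-ingredient-lower-lie} must be modified by explicit correction terms and the corresponding non-invariant primitive constructed by integration over $G$. Controlling that these corrections preserve both the bounded holonomy on relators and the $n^{p-1}$ order of the integral over $\overline{W_n}$ is the technical heart of the argument.
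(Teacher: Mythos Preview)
Your general strategy (combinatorial Stokes against a non-invariant primitive) is the right one, and when the second factor $H$ has nilpotency class $\ell = p-1$ your outline is essentially the paper's Proposition~\ref{prop:dehn-lower-lie}. The genuine gap is exactly the one you flag: for $\ell < p-1$ you have no null-homotopic test word of length $O(n)$, and neither of your proposed repairs is viable as stated. The commutator $[u_n', y^n]$ with $u_n'$ representing (roughly) $x_{p-1}^{n^{p-2}}$ lifts with $Z$-height zero under your cocycle $\xi_1 \wedge \xi_{p-1}$, since $[\widetilde X_{p-1}, \widetilde Y] = 0$ in the extension; and the alternative cocycle $\xi_{p-1} \wedge \eta$ you mention is, as you note, not closed, and there is no evident correction that makes it closed while preserving both the bounded-holonomy and the $n^{p-1}$-detection properties.

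The paper sidesteps this entirely by abandoning closed (left-invariant) $2$-forms. It takes the continuous, piecewise-smooth form $\beta_1 = \operatorname{sign}(u_1)\,\beta_0$, where $u_1$ is a primitive of $\xi_1$ and $\beta_0$ is an explicit polynomial-in-$u_1$ combination of the $\xi_i$'s with $d\beta_0 = \xi_1 \wedge \xi_{p-1}$. Thus $d\beta_1 = \pm\,\xi_1 \wedge \xi_{p-1}$ away from $\{u_1=0\}$: not closed, not left-invariant, but bounded, which is all Lemma~\ref{combinatorial-stokes} requires (boundedness on relators is checked by Stokes when the translated relator avoids $\{u_1=0\}$, and by a direct sup-norm estimate when it meets that hypersurface). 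The test word lives entirely in $K$: with $\lambda_p$ the obvious iterated-commutator loop of length $O(\ell)$ representing $x_p^{\ell^{p-1}}$, one takes
\[
w = x_1^{L}\,\lambda_p\,x_1^{-2L}\,\lambda_p^{-1}\,x_1^{L},
\]
null-homotopic in $K$ because $\lambda_p$ is central. For $L \gg \ell$ the first copy of $\lambda_p$ lies in $\{u_1>0\}$ and the second in $\{u_1<0\}$; the sign flip makes the two contributions add rather than cancel, and Lemma~\ref{lem:main-ingredient-lower-lie} (applied to $\beta_0$ on each half) gives $\int_{\overline w}\beta_1 = 2\ell^{p-1}$. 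The second factor $H$ never enters the argument at all, which is why no hypothesis on its class is needed.
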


Let $\mathfrak g$ be the Lie algebra of $G$.
A basis for $\mathfrak g$ is given by
\[ X_1, \ldots, X_{p-1}, X_p = Z = Y_q, Y_1, \ldots, Y_{q-1} \]
where $X_1,\ldots, X_{p-1}$ is an adapted basis of $L_p$ or $L_p^\lrcorner$ and $Y_1, \ldots, Y_q$ is a (copy of) a basis of $H$.
Let $\xi_1, \ldots \xi_{p-1},  \zeta, \eta_1,\ldots, \eta_{q-1},$ be the basis of $\operatorname{Hom}(\mathfrak g,\mathbf R)$ dual to the former one. Then
{\small\[
\begin{cases}
d \xi_1 = 0 \vspace{0.5ex}\\
d \xi_2 = 0\vspace{0.5ex} \\
d\xi_i = - \xi_1 \wedge \xi_{i-1} & 2 \leqslant i \leqslant p-1 \vspace{0.5ex}\\
d \eta_j = - \sum e_{ik}^j \eta_i \wedge \eta_k & 1 \leqslant j \leqslant q -1 \vspace{0.5ex}\\
d \zeta = - \xi_1 \wedge \xi_{p-1} - \sum f_{ik} \eta_i \wedge \eta_{k}, & \text{or} \ - \xi_1 \wedge \xi_{p-1} - \sum f_{ik} \eta_i \wedge \eta_{k} - \xi_2 \wedge \xi_3
\end{cases}
\]}%

The left invariant form $\xi_1$ is closed, hence it has a (non-left-invariant) primitive in $G$, that we denote $u_1$, with the convention that $u_1(1) = 0$.

Consider the following pair of continuous one-forms on $G$:
\begin{equation}\label{eq:definition-of-beta0}
    \beta_0 =    u_1 \xi_{p-1} + \frac{u_1^2}{2} \xi_{p-2} + \frac{u_1^3}{6} \xi_{p-3} + \cdots  + 
    \frac{u_1^{p-2}}{(p-2)!} \xi_2 
\end{equation}
and 
\begin{equation}\label{eq:definition-of-beta1}
    \beta_1 = \operatorname{sign} (u_1)\beta_0.
\end{equation}
Note that $\beta_1$ is continuous because $\beta_0$ vanishes on $\{ u_1 = 0 \}$.
Moreover, one computes that
{\small\begin{align*}
d \beta_0 & = \xi_1 \wedge \xi_{p-1} - u_1 \xi_1 \wedge \xi_{p-2}   + u_1 \xi_1 \wedge \xi_{p-2} - \frac{u_1^2}{2} \xi_1 \wedge \xi_{p-3} \notag \\
& \quad + \cdots - \frac{u_1^{p-2}}{(p-2)!} \xi_1 \wedge \xi_2 + \frac{u_1^{p-2}}{(p-2)!} \xi_1 \wedge \xi_2 \\
& = \xi_1 \wedge \xi_{p-1},
\end{align*}}%
and then, outside of the subvariety $\{u_1 = 0 \}$ $\beta_1$ is smooth and its exterior derivative can be expressed as
\begin{equation*}
    d \beta_1 = \operatorname{sign}(u_1) \xi_1 \wedge \xi_{p-1}.
\end{equation*}

\begin{remark}
    $\beta_1$ is going to play the role of the form that was named $\beta_0$ in \cite[\S8.3]{lipt}; although we avoid using a linear representation of $L_p$ or $L_p^\lrcorner$.
\end{remark}

Next, we are going to construct a loop $\Lambda$ in $G$ (actually, in the first factor of the central product). This loop is the concatenation of four paths. Two of those paths are defined below, and the remaining two paths follow integral curves of $X_1$.

Let $\ell > 0$ be a first parameter. For $3 \leqslant k \leqslant p$ we construct inductively a number $\ell_k$ and a curve $\lambda_k\colon [0,\ell_k] \to L_p$ as follows:
Set $\ell_3:=4\ell$, and define the following curve $\lambda_3\colon[0,\ell_3] \to L_p$:
{\small
\[ \lambda_3(t) = \begin{cases}
    \exp(tX_1) & 0 \leqslant t \leqslant \ell \vspace{0.5ex}\\
    \exp(\ell X_1)\exp( (t-\ell) X_2) & \ell \leqslant t \leqslant 2 \ell \vspace{0.5ex}\\
    \exp(\ell X_1) \exp(\ell X_2) \exp(-(t-2\ell) X_1) & 2 \ell \leqslant t \leqslant 3\ell \vspace{0.5ex}\\
    \exp(\ell X_1) \exp(\ell X_2) \exp(-\ell X_1) \exp(-(t-3\ell) X_2) & 3 \ell \leqslant t \leqslant 4\ell.
\end{cases} \]
}
Assuming that $\ell_k$ and $\lambda_k:[0,\ell_k] \to L_p$ have been constructed, define 
{\small
\begin{align*}
\ell_{k+1}& = 2(\ell_k+\ell),\text{ and} \\
\lambda_{k+1}(t) & = \begin{cases}
    \exp(tX_1) & 0 \leqslant t \leqslant \ell \vspace{0.5ex}\\
    \exp(\ell X_1) \lambda_k(t-\ell) & \ell \leqslant t \leqslant \ell + \ell_k \vspace{0.5ex}\\
    \exp(\ell X_1) \lambda_k(\ell_k) \exp(-(t-\ell - \ell_k) X_1) & \ell + \ell_k \leqslant t \leqslant 2\ell + \ell_k \vspace{0.5ex}\\
    \exp(\ell X_1) \lambda_k(\ell_k) \exp(-\ell X_1) \lambda_k(t - 2\ell - \ell_k)^{-1} & 2 \ell + \ell_k \leqslant t \leqslant 2\ell + 2\ell_k.
\end{cases}     
\end{align*}
}%
\begin{remark}
    In \S\ref{sec:upper-bounds} we consider certain families of words in $L_p$ and $L^{\lrcorner}_p$, and the loop $\lambda_p$ is closely related to the word that we denote $\Omega^2_{p-1}(\ell, \ldots, \ell)$. 
    More precisely, one can embed the Cayley graph of the lattice generated by $x_1^\ell, x_2^\ell$ in $L_p$; then, $\lambda_p$ is the loop in $L_p$ corresponding to the word $\Omega^2_{p-1}(\ell, \ldots, \ell)$ seen as a loop in the Cayley graph.
\end{remark}

It follows from the construction, by induction in $k$ (compare to \eqref{eq:log-xi}), that we have
\begin{equation*}
    \log \lambda_k (\ell_k) \equiv \ell^{k-1} X_k \; \operatorname{mod} \; \oplus_{j=k+1}^p \langle X_j \rangle
\end{equation*}
for all $k \in \{ 3, \ldots, p \}$. In particular, we record for further use
\begin{equation}
\label{eq:-lambda-p-ell-p}
    \lambda_p(\ell_p) = \exp(\ell^{p-1} X_p).
\end{equation}
Moreover, by induction on $k$, we get that 
\begin{equation}\label{eq:exact-lp}
    \ell_p = (2^{p-1}  + 2^{p-2} -2)\ell.
\end{equation}
Now, let $L > 0$ be a second parameter. Consider the following loop in $G$:
{\small
\[
\Lambda(t) =
\begin{cases}
    \exp(tX_1) & 0 \leqslant t \leqslant L, \vspace{0.5ex}\\
    \exp(L X_1) \lambda_p(t-L) & L \leqslant t \leqslant L + \ell_p, \vspace{0.5ex}\\
    \exp(L X_1+ \ell^{p-1} X_p) \exp(-(t-L-\ell_p)X_1) & L+\ell_p \leqslant t \leqslant 3L+\ell_p, \\
    \exp(-L X_1) \lambda_p(3L+2\ell_p-t) & 3L+\ell_p \leqslant t \leqslant 3L + 2\ell_p, \vspace{0.5ex}\\
    \exp(-L X_1) \exp((t-3L-2\ell_p)X_1) & 3L+2\ell_p \leqslant t \leqslant 4L + 2\ell_p, \vspace{0.5ex}\\
\end{cases}
\]
}%
Here we used \eqref{eq:-lambda-p-ell-p} and the fact that $[X_1,X_p] = 0$ so that 
\[ \exp(LX_1) \exp(\ell^{p-1}X_p)=\exp(LX_1 + \ell^{p-1} X_p) \] 
in order to simplify the expression of $\Lambda$.

\begin{lemma}
\label{lem:evaluate-integral-beta1}
    Let $\ell$, $L$ and $\Lambda\colon [0,4L+2\ell_p] \to G$ be as above. There is a constant $C_p$ only depending on $p$ such that if $L \geqslant C_p \ell$, then
    \begin{equation}
        \int_\Lambda \beta_1 = 2 \ell^{p-1}.
    \end{equation}
\end{lemma}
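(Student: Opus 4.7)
The plan is to decompose $\Lambda$ into its five pieces and identify the contribution of each to $\int_\Lambda \beta_1$, writing $K \in \{L_p, L_p^\lrcorner\}$ for the first factor of $G$ and $\pi\colon K \to K/\langle X_p\rangle = L_{p-1}$ for the quotient projection.

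First I would observe that $u_1$ is a continuous homomorphism $G \to \mathbf R$: left-invariance of $\xi_1$ gives $L_g^\ast\, du_1 = du_1$, so $u_1 \circ L_g - u_1$ is constant equal to $u_1(g)$. This makes $u_1(\Lambda(t))$ easy to read off from the product formula defining $\Lambda$, and a straightforward induction on the recursive construction of $\lambda_k$ shows that $M_p := \max_{s \in [0,\ell_p]}|u_1(\lambda_p(s))| \lesssim_p \ell$. For $L \geq C_p\ell$ with $C_p > M_p/\ell$, we have $u_1 > 0$ throughout segment 2 and $u_1 < 0$ throughout segment 4, so $\beta_1 = \beta_0$ and $\beta_1 = -\beta_0$ respectively there. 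Moreover on segments 1, 3, and 5 the tangent vector is $\pm X_1$ in the left-invariant frame, and $\beta_0$ — being a sum of terms of type $\frac{u_1^k}{k!}\xi_{p-k}$ with $p-k \geq 2$ — vanishes on $\pm X_1$; those three segments therefore contribute nothing.

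The core of the proof is the evaluation of the integral on segment 2. A change of variables using the left-invariance of $\xi_2,\ldots,\xi_{p-1}$ gives $\int_{\text{seg}~2}\beta_0 = \int_{\lambda_p} L_{\exp(LX_1)}^\ast \beta_0$. Since $d\beta_0 = \xi_1 \wedge \xi_{p-1}$ is left-invariant, the difference $L_{\exp(LX_1)}^\ast \beta_0 - \beta_0$ is closed; and since $u_1$ and the involved $\xi_j$ all factor through $\pi$, so does $\beta_0$. The difference is then pulled back from a closed $1$-form on the simply connected group $L_{p-1}$, hence exact; its integral along the loop $\bar\lambda_p := \pi \circ \lambda_p$ (a loop because $\lambda_p(\ell_p) = \exp(\ell^{p-1}X_p) \in \ker \pi$) therefore vanishes, yielding $\int_{\text{seg}~2}\beta_0 = \int_{\bar\lambda_p}\beta_0$. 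I then apply Lemma~\ref{lem:main-ingredient-lower-lie} to the model central extension $L_p \to L_{p-1}$ with cocycle $\omega = \xi_1 \wedge \xi_{p-1} = d\beta_0$, left-invariant horizontal subbundle $V = \mathrm{span}(X_1,\ldots,X_{p-1})$, and primitive $\alpha = \beta_0$; the horizontal lift of $\bar\lambda_p$ in $L_p$ coincides with $\lambda_p$ itself since its recursive construction only moves in the $X_1,X_2$-directions, all contained in $V$, and it ends at $\exp(\ell^{p-1}X_p)$. Thus $\int_{\bar\lambda_p}\beta_0 = \iota^{-1}(\exp(\ell^{p-1}X_p)) = \ell^{p-1}$.

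Two points warrant attention. First, in the $L_p^\lrcorner$ case the cocycle of the extension $L_p^\lrcorner \to L_{p-1}$ is $\xi_1 \wedge \xi_{p-1} + \xi_2 \wedge \xi_3$, of which $\beta_0$ is not a primitive; this does not cause trouble, because $\bar\lambda_p$ is the same loop in $L_{p-1}$ whether $\lambda_p$ was built in $L_p$ or in $L_p^\lrcorner$ (the recursion only uses $X_1$ and $X_2$), so $\int_{\bar\lambda_p}\beta_0$ is an intrinsic quantity of $L_{p-1}$ and can always be computed via the model extension as above. Second, segment 4 is a reverse-oriented translate of segment 2 by $\exp(-LX_1)$, so the same translation-invariance argument yields $\int_{\text{seg}~4}\beta_0 = -\ell^{p-1}$; after the sign from $\beta_1 = -\beta_0$, this contributes $+\ell^{p-1}$. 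Summing the five pieces gives $\int_\Lambda \beta_1 = 0 + \ell^{p-1} + 0 + \ell^{p-1} + 0 = 2\ell^{p-1}$. The main obstacle is the translation-invariance step: it crucially uses that $\beta_0$ descends to $L_{p-1}$ in order to play the difference $L_g^\ast\beta_0 - \beta_0$ against a loop rather than a mere path.
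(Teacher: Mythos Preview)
Your proof is correct and follows essentially the same approach as the paper: decompose $\Lambda$ into five pieces, use $\beta_1(X_1)=0$ to kill segments 1, 3, 5, control the sign of $u_1$ on segments 2 and 4 via $L \geqslant C_p\ell$, and evaluate $\int_{\lambda_p}\beta_0 = \ell^{p-1}$ via Lemma~\ref{lem:main-ingredient-lower-lie} applied to the extension $L_p \to L_{p-1}$. You are in fact more explicit than the paper on two points it leaves implicit: the translation-invariance step (why $\int_{\exp(\pm LX_1)\lambda_p}\beta_0 = \int_{\lambda_p}\beta_0$, which you justify by exactness of $L_g^\ast\beta_0 - \beta_0$ on $L_{p-1}$ against the loop $\bar\lambda_p$), and the observation that the computation goes through the model extension $L_p \to L_{p-1}$ regardless of whether $K = L_p$ or $K = L_p^\lrcorner$.
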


\begin{figure}
    \begin{tikzpicture}[line cap=round,line join=round,>=angle 45,x=1.0cm,y=1.0cm]
\clip(-4.5,-3.4) rectangle (5,4);
\draw [dash pattern=on 2pt off 2pt,domain=3.5:-4] plot(\x,{(-0-1*\x)/-2}) node[below]{$\langle X_1 \rangle$};
\draw [dash pattern=on 2pt off 2pt,domain=-3:2.8] plot(\x,{(-0-1*\x)/1}) node[below right]{$\langle X_2 \rangle$};
\draw [dash pattern=on 2pt off 2pt] (0,-2) -- (0,3.3) node[above]{$\langle Z \rangle$};
\draw [->,line width=1.0pt] (-3,-1.5)-- (-2.5,0);
\draw [->,line width=1.0pt] (-2.5,0)-- (-1.5,0.5);
\draw [->,line width=1.0pt] (-1.5,0.5)-- (-2,1);
\draw [->,line width=1.0pt](-2,1)-- (2,3);
\draw [->,line width=1.0pt] (0,0)-- (-2,-1);
\draw [->,line width=1.0pt] (-2,-1)-- (-3,-1.5);
\draw [->,line width=1.0pt] (2,3)-- (2.5,2.5);
\draw [->,line width=1.0pt] (2.5,2.5)-- (1.5,2);
\draw [->,line width=1.0pt] (1.5,2)-- (1,0.7);
\draw [->,line width=1.0pt] (1,0.7)-- (2,1.2);
\draw [->,line width=1.0pt](2,1)-- (0,0);
\draw [->,line width=1.0pt] (2,1)-- (0,0);
\begin{scriptsize}
\fill [color=black] (0,0) circle (1.5pt);
\fill [color=black] (-2,-1) circle (2.5pt) node[below right]{$LX_1$};
\fill [color=black] (-2,1) circle (2.5pt) node[above left]{$LX_1+\ell^{p-1}Z$};
\fill [color=black] (2,3) circle (2.5pt) node[above right]{$-LX_1+\ell^{p-1}Z$};
\fill [color=black] (2,1) circle (2.5pt)node[below right]{$-LX_1$};

\draw (2.7,-1.5) node[stuff_fill] {$\{ u_1 < 0 \}$};
\draw (0.2,-1.5) node[stuff_fill] {$\{ u_1 > 0 \}$};

\end{scriptsize}

\draw (1,0)-- (1,-2);
\draw (2,-3)-- (1,-2);
\draw (1,0)-- (2,-1);
\draw (2,-3)-- (2,-1);

\end{tikzpicture}
    \caption{The loop $\log \Lambda$ in $\mathfrak l_p$ for $p=3$. (For readability, the line segment that is the part of the loop ending at $-LX_1$ is pictured with a slight shift.) }
    \label{fig:enter-label}
\end{figure}
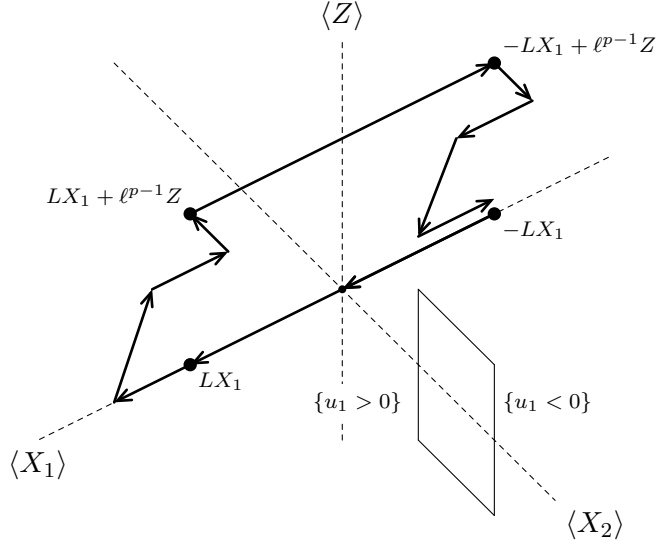

\begin{proof}
Let $\pi\colon L_p \to L_{p-1}$ be the quotient by the centre of $L_p$.
Note that $\beta_0 = \pi^\ast \overline \beta_0$\footnote{Here $\overline{\beta}_0$ is defined as in \eqref{eq:definition-of-beta0} with respect to the analogous choices of primitives and dual basis for $L_{p-1}$ viewed as a quotient of $L_p$.}, where $d \overline \beta_0$ is the closed one-form $\xi_1 \wedge \xi_{p-1}$ on $L_{p-1}$ determining the $(p-1)$-central extension $L_{p} \to L_{p-1}$. It then follows from Lemma~\ref{lem:main-ingredient-lower-lie} that
\begin{equation}
    \int_{\lambda_p} \beta_0 = \ell^{p-1}.
\end{equation}
For $s,e \in \mathbf R$ denote by $\sigma(s,e)$ the (oriented) portion of the one-parameter subgroup $\{ \exp(tX_1) \}$ between $s$ and $e$.
In view of the definition of $\Lambda$,
{\small\begin{align}
    \int_\Lambda \beta_1 & = \int_{\sigma(0,L)} \beta_1 + \int_{\exp (LX_1) \lambda_p} \beta_1 + \int_{\exp(\ell^{p-1}X_p)\sigma(L,-L)} \beta_1  \notag \\
    & - \int_{\exp(-LX_1)\lambda_p} \beta_1 + \int_{\sigma(-L,0)} \beta_1 \notag \\
    & = \int_{\exp (LX_1) \lambda_p} \beta_1 - \int_{\exp(-LX_1)\lambda_p} \beta_1, \label{eq:remaining-portions}  
\end{align}}%
where we used that $\beta_1(X_1)=0$ to deduce that
{\small\[ \int_{\sigma(0,L)} \beta_1 + \int_{\exp(\ell^{p-1}X_p)\sigma(L,-L)} \beta_1 + \int_{\sigma(-L,0)} \beta_1 =0. \]}%

In order to evaluate the integral of $\beta_1$ along $\Lambda$, we must determine the sign of $u_1$ on the portions of the loop that remain in \eqref{eq:remaining-portions}. Start by observing that $\vert(u_1 \circ \Lambda)'(t)\vert \leqslant 1$ for all $t\in[0,4L + 2 \ell_p]$ while $u_1(\Lambda(L))=L$ and $u_1(3L+2\ell_p) = -L$. So if $L>\ell_p$ then 
{\small$$
(u_1\circ \Lambda)(t) \geqslant L - \int_{L}^t \vert (u_1\circ \Lambda)'(s) \vert ds \geqslant L - \ell_p > 0
$$}%
for all $t \in [L,L+\ell_p]$ and $(u_1\circ \Lambda)(t)<0$ for all $t \in [3L+\ell_p,3L+2\ell_p]$, ensuring that 
\begin{equation*}
    \int_{\exp (LX_1) \lambda_p} \beta_1 = \int_{\exp (LX_1) \lambda_p} \beta_0 = \ell^{p-1}
    \end{equation*}
and
\begin{equation*}    
    \int_{\exp (-LX_1) \lambda_p} \beta_1 = \int_{\exp (-LX_1) \lambda_p} (-\beta_0) = - \ell^{p-1}.
\end{equation*}
Plugging these two equalities into \eqref{eq:remaining-portions} yields the conclusion, still under the assumption $L>\ell_p$. Finally, by \eqref{eq:exact-lp}, $\ell_p$ is bounded by a multiple of $\ell$.
So the conclusion holds under the assumption that $L \geqslant C_p \ell$, for some $C_p >0$.
\end{proof}

To complete the proof of Proposition \ref{prop:lower-bound-general}, we need the following auxiliary Lemma.

\begin{lemma}\label{lem:u1-is-lipschitz}
    The function $u_1$ is lipschitz on $G$.
\end{lemma}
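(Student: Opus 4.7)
The plan is to deduce the Lipschitz property of $u_1$ directly from the fact that its exterior derivative $du_1 = \xi_1$ is a bounded one-form on $G$. More precisely, equip $G$ with a left-invariant Riemannian (or sub-Riemannian) metric, which is quasi-isometric to the word metric coming from any compact generating set. Since $\xi_1$ is left-invariant, its pointwise operator norm with respect to this metric is constant, say equal to some $C < \infty$. In particular, for any piecewise $C^1$ path $\gamma \colon [0,L] \to G$ with length $\ell = \operatorname{length}(\gamma)$ one has
\[
\left| \int_\gamma \xi_1 \right| \;\leq\; \int_0^L |\xi_1(\gamma'(t))|\, dt \;\leq\; C \cdot \ell.
\]

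Now, given $g, h \in G$, use that $u_1$ was defined as a primitive of $\xi_1$ with $u_1(1)=0$ on the simply connected manifold $G$, so by the fundamental theorem of calculus applied along any path from $g$ to $h$,
\[
u_1(h) - u_1(g) \;=\; \int_\gamma du_1 \;=\; \int_\gamma \xi_1.
\]
Combining with the previous displayed bound and taking the infimum over all rectifiable paths $\gamma$ from $g$ to $h$, one obtains $|u_1(h)-u_1(g)| \leq C \cdot d(g,h)$, which is precisely the Lipschitz property. I do not expect any real obstacle here; the only point worth taking care of is the choice of background metric, but since the statement is qualitative and all reasonable left-invariant metrics on $G$ are bilipschitz-equivalent to the word metric used to measure Dehn functions, the Lipschitz constant can be absorbed into $C$.
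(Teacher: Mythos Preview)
Your argument is correct: since $du_1 = \xi_1$ is left-invariant, its pointwise norm with respect to any left-invariant Riemannian metric is constant, and integrating along paths gives the Lipschitz bound. This is a clean, self-contained differential argument.

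The paper takes a more structural route: it observes that $u_1$ factors through the quotient homomorphism $\pi\colon G \to \mathbf R^2$ obtained by killing $[K,K]$ and $H$, so that $u_1 = \pi^\ast \underline u_1$ for a linear form $\underline u_1$ on $\mathbf R^2$. The Lipschitz property then follows because $\pi$ is a Lie group homomorphism (hence Lipschitz for left-invariant metrics) and linear forms on $\mathbf R^2$ are Lipschitz. Your approach is arguably more direct and works verbatim for any primitive of a left-invariant closed one-form on a connected Lie group; the paper's approach has the small advantage of making explicit what $u_1$ actually is, namely the $X_1$-coordinate in a suitable abelian quotient, which can be conceptually helpful elsewhere.
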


\begin{proof}
    Consider the projection $\pi\colon G \to \mathbf R^2$ given by quotienting by the normal subgroup generated by $[K,K]$ and $H$. Then $u_1 = \pi^\ast \underline u_1$ where $\underline u_1$ is a linear form on $\mathbf R^2$.
\end{proof}

\begin{proof}[Proof of Proposition~\ref{prop:lower-bound-general}]
Equip $K$ with the adapted compact presentation given in Proposition~\ref{prop:compactpresentations}.
After choosing any compact presentation for the group $H$, we can produce an adapted presentation of $G$ (Definition \ref{dfn:adapted}) using Lemma~\ref{lem:central-prod-is-cglc}. This is the presentation $\langle S \mid R \rangle$ we use.
In particular, we denote two subsets of the generating system,
$S_1 = \{ \exp(tX_1): 0 \leqslant t \leqslant 1 \}$ and $S_2 = \{ \exp(tX_2): 0 \leqslant t \leqslant 1 \}$.
According to the setting of Lemma~\ref{combinatorial-stokes}, for every $s \in S_i$ we let 
\[ \overline s: [0,1] \to G, \sigma \mapsto \exp(\sigma t X_i), \]
where $t$ is such that $s = \exp(tX_i)$. Note that they have bounded length. We do not specify the other $\overline s$, aside from asking that they are smooth of bounded length, as the rest of the proof does not depend on them.
    For integers $L$ and $\ell$ the loop $\Lambda$ in $G$ is then the path $\overline w$, where
    \[ w = x_1^L \Omega_{p-1}(\ell, \ldots, \ell) x_1^{-2L} \Omega_{p-1}(\ell, \ldots, \ell)^{-1} x_1^L \]
    is a word of word length $4L+2\ell_p$ in $S$.
    Set $L = \lceil C_p \ell \rceil$, where $C_p$ is the constant from Lemma \ref{lem:evaluate-integral-beta1}.
    From this point, fix the left-invariant Riemannian metric on $G$ for which our chosen basis of $\mathfrak{g}$ is orthonormal (this is not strictly necessary, but simplifies some of the following estimates).
    Since $S$ is compact and every relator in $R$ has bounded length in $S$, there is a uniform upper bound on the length and diameter of the $\overline{r}$ with $r\in R$. By left-invariance, the same is true (with the same bounds) for the left translates $g_\ast \overline r$ with $g\in G$.

    We use the following case distinction to deduce a uniform upper bound on $\int_{g_\ast \overline r} \beta_1$ with $g\in G$ and $r\in R$:
    \begin{itemize}
        \item If $g_\ast \overline r$ does not intersect $\{u_1 = 0 \}$, then $d\beta_1$ coincides with a form with left-invariant derivative (namely, $\pm d\beta_0$) on $g_\ast \overline r$. By the usual Stokes formula (for surfaces with piecewise smooth boundary), for every disk $\Delta$ filling $\overline r$ we have
        \[ \int_{g_\ast \overline r} \beta_1 = \int_{g_\ast \Delta} \pm d\beta_0, \]
        and the latter is bounded by a constant times the Riemannian area of $\Delta$. This area is itself bounded by a constant that does not depend on $r$.
        \item If $g_\ast \overline r$ intersects $\{u_1 = 0 \}$, by Lemma~\ref{lem:u1-is-lipschitz}, there is a constant $M$ independent of $r$ such that $\sup_{g_\ast \overline r} \vert u_1 \vert \leqslant M$. In view of the definition of $\beta_1$ given in \eqref{eq:definition-of-beta1}, 
        {\small\[ \left\vert \int_{g_\ast \overline r} \beta_1 \right\vert \leqslant \operatorname{length}(\overline r) \cdot \left( M+ \frac{M^2}{2} + \cdots + \frac{M^{p-2}}{(p-2)!} \right) \leqslant \operatorname{length}(\overline r) \cdot (e^M-1), \]}%
    \end{itemize}
    where by length, we mean the Riemannian length.
    We may now set 
    \[ C:= \sup \left\{ \left\vert \int_{g_\ast \overline r} \beta_1 \right\vert \colon r \in R \right\}<+\infty \] and apply Lemma~\ref{combinatorial-stokes} with the word $w$ defined above. We conclude that $\operatorname{Area}(w)$ is at least of the order $\ell^{p-1}$. Since the word length of $w$ is of the order of $\ell$, this finishes the proof.
\end{proof}

\subsection{Lower bound using maximal distortion: Lie-group version}

\begin{proposition}
\label{prop:dehn-lower-lie}
Let $k \geqslant 2$.
    Let $G$ be a simply connected nilpotent Lie group {of nilpotency class $k-1$}.
    Assume that for some $\omega \in Z^2(G,\mathbf R)$  the central extension 
    \[ 1 \to \mathbf R \overset{\iota}{\longrightarrow} \widetilde G \overset{\pi}{\to} G \to 1 \]
    has nilpotency class $k$.
    Then the Dehn function of $G$ satisfies $\delta_G(n) \succcurlyeq n^k$.
\end{proposition}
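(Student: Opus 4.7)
The plan is to apply Lemma~\ref{combinatorial-stokes} with a primitive $\alpha$ of $\omega$ as the test $1$-form, to a family of null-homotopic words $w_n$ of length $O(n)$ whose area will be bounded below by $n^k$ thanks to the maximal distortion of $\iota(\mathbf{R})$ in $\widetilde{G}$. The distortion comes from the hypothesis $C^k(\widetilde{G}) \neq \{1\}$ combined with $\pi(C^k(\widetilde G)) \subset C^k(G) = \{1\}$, which forces $C^k(\widetilde G) \subset \iota(\mathbf R)$: there exist $\widetilde X_1, \ldots, \widetilde X_k \in \widetilde{\mathfrak g}$ with $[\widetilde X_1, [\widetilde X_2, \ldots, \widetilde X_k]\ldots] = cZ$ for some $c\neq 0$, where $Z$ spans $\ker d\pi$.

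First I would replace $\omega$ by a cohomologous left-invariant representative (possible since the de Rham cohomology of a simply connected nilpotent Lie group is computed by left-invariant forms, and the isomorphism class of the extension $\widetilde G$ only depends on $[\omega] \in H^2(G;\mathbf R)$), and fix a primitive $\alpha \in \Omega^1(G)$ with $d\alpha = \omega$. Set $X_i = d\pi(\widetilde X_i)$, $g_i = \exp X_i \in G$ and $\widetilde g_i = \exp \widetilde X_i \in \widetilde G$. Choose a compact presentation $\langle S \mid R\rangle$ of $G$ containing the one-parameter subgroup arcs $\{\exp(tX_i) : |t| \leqslant 1\}$ for $1 \leqslant i \leqslant k$, with $\overline s$ the corresponding arc paths.

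For each $n \geqslant 1$, let $w_n$ be the word spelling out the iterated commutator $[g_1^n, [g_2^n, \ldots, g_k^n]\ldots]$; it has length $O(n)$ and is null-homotopic in $G$ (class $k - 1$). The path $\overline{w_n}$ is piecewise smooth with each smooth segment tangent to one of the left-invariant fields $X_i$, so its horizontal lift to $\widetilde G$ through the left-invariant distribution $V$ spanned by the $\widetilde X_i$ (and arbitrary horizontal lifts of a complementary basis) follows the corresponding $\widetilde X_i$-segments in $\widetilde G$ and therefore terminates at $[\widetilde g_1^n, [\widetilde g_2^n, \ldots, \widetilde g_k^n]\ldots]$. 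A short Baker--Campbell--Hausdorff induction on $k$, using centrality of $C^k(\widetilde{\mathfrak g})$ and the vanishing of Lie brackets of length exceeding $k$, establishes that $[\exp Y_1, [\exp Y_2, \ldots, \exp Y_k]\ldots] = \exp([Y_1, [Y_2, \ldots, Y_k]\ldots])$ in any class-$k$ nilpotent Lie group; applied to $Y_i = n\widetilde X_i$, this evaluates the endpoint to $\exp(n^k cZ) = \iota(cn^k)$. Lemma~\ref{lem:main-ingredient-lower-lie} then yields $\int_{\overline{w_n}} \alpha = cn^k$.

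Finally, the constant $C_0 := \sup\{|\int_{g_*\overline r}\alpha| : g\in G, r \in R\}$ is finite: each $\overline r$ is a loop in $G$ of uniformly bounded length, which bounds a smooth disk $\Delta_r$ of uniformly bounded area (by a local isoperimetric inequality and simple connectedness of $G$), and Stokes gives $\int_{g_* \overline r} \alpha = \int_{g_* \Delta_r} \omega$, bounded independently of $g$ thanks to left-invariance of $\omega$. Lemma~\ref{combinatorial-stokes} therefore yields $\operatorname{Area}_{\calp}(w_n) \geqslant |c| n^k / C_0$, and since $|w_n|_S = O(n)$ this gives $\delta_G(n) \succcurlyeq n^k$. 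The main subtle point is the BCH identity producing the endpoint $\iota(cn^k)$, which encapsulates both the centrality of top-degree commutators and the fact that no lower-order terms survive, and is where the class-$k$ hypothesis on $\widetilde G$ is essential.
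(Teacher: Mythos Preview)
Your proof is correct and follows essentially the same route as the paper: find a nontrivial $k$-fold commutator landing in $\iota(\mathbf R)$, use its $n^k$-scaling (your BCH identity is the content of the paper's Lemma~\ref{lem:second-lemma-simple-k-fold}), invoke Lemma~\ref{lem:main-ingredient-lower-lie} to evaluate $\int_{\overline{w_n}}\alpha$, and conclude via Lemma~\ref{combinatorial-stokes} using left-invariance of $\omega$ to bound the relator integrals. The only cosmetic differences are that the paper isolates the existence of the nontrivial $k$-fold group commutator as a separate lemma (Lemma~\ref{lem:first-lemma-simple-k-fold}) rather than passing through the Lie algebra, and that you make explicit the step of replacing $\omega$ by a left-invariant representative, which the paper leaves implicit.
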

To prove Proposition \ref{prop:dehn-lower-lie} we require two preliminary lemmas.

\begin{lemma}\label{lem:first-lemma-simple-k-fold}
Let $\widetilde G$ be a simply connected nilpotent Lie group of class $k$ with a one-dimensional centre.
There exists $x_1,\ldots, x_k \in G$ such that 
$[x_1,\ldots,x_k] \in Z(\widetilde G) \setminus 1$.
\end{lemma}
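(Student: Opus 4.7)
The plan is to argue in the Lie algebra $\widetilde{\mathfrak g}$ and then transfer back to the group via the exponential map.

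First, I would identify the centre with the top step of the lower central series. Since $\widetilde G$ has class $k$, we have $C^{k}(\widetilde G) \neq 1$ while $[\widetilde G, C^k(\widetilde G)] = C^{k+1}(\widetilde G) = 1$, so $C^k(\widetilde G) \subseteq Z(\widetilde G)$. On the other hand $C^k(\widetilde G)$ is a nontrivial connected closed subgroup of the one-dimensional centre $Z(\widetilde G)$, and therefore $C^k(\widetilde G) = Z(\widetilde G)$.

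Next, the map
\[ \Phi\colon \widetilde{\mathfrak g}^k \longrightarrow \widetilde{\mathfrak g}, \quad (X_1, \dots, X_k) \longmapsto [X_1, X_2, \dots, X_{k-1}, X_k] \]
is $k$-linear, and its image spans $C^k(\widetilde{\mathfrak g})$ by definition of the lower central series. Since $C^k(\widetilde{\mathfrak g}) \neq 0$, I can choose $X_1, \dots, X_k \in \widetilde{\mathfrak g}$ such that $\Phi(X_1,\dots,X_k) \neq 0$. Set $x_i = \exp(X_i)$.

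Finally, I would show by induction on $j = k, k-1, \dots, 1$ that
\[ \log[x_j, x_{j+1}, \dots, x_k] = [X_j, X_{j+1}, \dots, X_k], \]
working inside the nilpotent Lie algebra $\widetilde{\mathfrak g}$. The Baker--Campbell--Hausdorff formula gives
\[ \log[\exp U, \exp V] = [U,V] + \text{(iterated brackets of $U$ and $V$ of length $\geqslant 3$)} \]
for any $U,V \in \widetilde{\mathfrak g}$. Applying this with $U = X_j$ and $V = \log[x_{j+1},\dots,x_k]$, the induction hypothesis tells us that $V$ lies in $[X_{j+1},\dots,X_k] + C^{k-j+2}(\widetilde{\mathfrak g})$; but this is irrelevant because any further bracket in the BCH series produces an element of $C^{k-j+2}(\widetilde{\mathfrak g})$, and one more bracket with $X_j$ (from the leading $[U,V]$) or additional iterated brackets put the contribution into $C^{k+1}(\widetilde{\mathfrak g}) = 0$. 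Taking $j=1$ yields $\log[x_1,\dots,x_k] = [X_1,\dots,X_k] \neq 0$, so $[x_1,\dots,x_k]$ is a nontrivial element of $C^k(\widetilde G) = Z(\widetilde G)$, as required.

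The only delicate point is the bookkeeping in the induction step: one must check that every term in BCH beyond the leading bracket lands in $C^{k+1}(\widetilde{\mathfrak g})$ and hence vanishes. This is straightforward because the BCH corrections are Lie polynomials of degree $\geqslant 2$ in $U$ and $V$, and multiplying the weight of $V$ (which is $\geqslant k-j+1$) by at least $2$ already overshoots $k$ when $j = 1$, and inductively suffices at each step.
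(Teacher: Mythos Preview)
Your Lie-algebraic approach is sound and genuinely different from the paper's. The paper argues purely group-theoretically (following an exercise in Magnus--Karrass--Solitar): using $[u,vw]=[u,w][u,v]^w$ and the fact that a conjugate of a commutator is a commutator of conjugates, one shows by induction on $c$ that every element of $C^c(\widetilde G)$ is a product of simple $c$-fold commutators; since $C^k(\widetilde G)=Z(\widetilde G)\neq 1$, one of these must be nontrivial. Your route has the pleasant feature that it produces the identity $\log[x_1,\dots,x_k]=[X_1,\dots,X_k]$ directly, which the paper in any case invokes (without details) in the very next lemma.

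However, your induction hypothesis is misstated and, as written, false. The exact equality $\log[x_j,\dots,x_k]=[X_j,\dots,X_k]$ does not hold for intermediate $j$: already at $j=k-1$ the BCH corrections contain terms such as $[X_{k-1},[X_{k-1},X_k]]\in C^3(\widetilde{\mathfrak g})$, which need not vanish when $k\geqslant 3$. What does propagate is the congruence
\[
\log[x_j,\dots,x_k]\equiv[X_j,\dots,X_k]\pmod{C^{k-j+2}(\widetilde{\mathfrak g})}.
\]
Indeed, by the previous step $V:=\log[x_{j+1},\dots,x_k]=[X_{j+1},\dots,X_k]+W$ with $W\in C^{k-j+1}$, so $V\in C^{k-j}$ (a $(k-j)$-fold bracket; your stated weight $k-j+1$ is off by one). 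Then $[X_j,V]=[X_j,\dots,X_k]+[X_j,W]$ with $[X_j,W]\in C^{k-j+2}$, and every further BCH term involves $a$ copies of $X_j$ and $b$ copies of $V$ with $a,b\geqslant 1$, $a+b\geqslant 3$, hence has weight at least $\min\bigl(2+(k-j),\,1+2(k-j)\bigr)=k-j+2$. At $j=1$ the modulus is $C^{k+1}=0$ and you recover the exact equality you need. Your sketch gestures toward this (you mention $C^{k-j+2}$) but never corrects the induction statement; once you do, the argument is complete.
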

\begin{proof}
The proof is inspired by \cite[Exercise 3 page 291]{MKS66}.
Using Lemma \ref{lem:freeidentities}(2) and the fact that a conjugate of a commutator is a commutator of conjugates, we prove by induction on $c$ that every element of $C^cG$ is a product of simple $c$-fold commutators. This is especially true for $c=k$.
The assumptions imply that $C^k \widetilde G = Z(\widetilde G)$. Consider $z \in C^k \widetilde G$, not equal to $1$ and write it as a product of $k$-fold simple commutators.
Assume for a contradiction that all $k$-fold commutators in this product are trivial; then $\widetilde G$ is of class less or equal $k-1$. So one of them, say $[x_1,\ldots , x_k]$, is non-trivial.
\end{proof}

\begin{lemma}\label{lem:second-lemma-simple-k-fold}
Let $\widetilde G$ be a simply connected nilpotent Lie group of class $k$, and $x_1,\ldots, x_k \in \widetilde G$ such that 
$x:=[x_1,\ldots,x_k] \in Z(\widetilde G) \setminus 1$.
Then for every $s \in \mathbf R$,
\[ [x_1^s, x_2^s,\ldots ,x_k^s] = x^{s^k}. \]
\end{lemma}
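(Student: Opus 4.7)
The plan is to translate the statement to the Lie algebra via the exponential map, where multilinearity of the iterated Lie bracket makes the scaling by $s^k$ transparent. Write $x_i = \exp X_i$ for $X_i \in \widetilde{\mathfrak g}$, and $x = \exp X$ with $X \in Z(\widetilde{\mathfrak g})$.

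The first main step is to show the identity
\[
[\exp X_1, \exp X_2, \ldots, \exp X_c] \equiv \exp\bigl([X_1, X_2, \ldots, X_c]\bigr) \pmod{C^{c+1}\widetilde G}
\]
for all $1 \leqslant c \leqslant k$, by induction on $c$. The base case $c=2$ is the classical Baker--Campbell--Hausdorff computation $[\exp X_1, \exp X_2] = \exp([X_1,X_2] + \text{terms in } C^3\mathfrak{g})$. For the inductive step, assume the formula for $c-1$, so that the element $u := [\exp X_2, \ldots, \exp X_c]$ can be written as $\exp([X_2,\ldots,X_c] + R)$ with $R \in C^{c}\widetilde{\mathfrak g}$. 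Then $[\exp X_1, u]$ is computed by BCH, and all brackets $[X_1, R]$ lie in $C^{c+1}\widetilde{\mathfrak g}$, as do all higher BCH terms. Taking $c = k$ and using that $\widetilde G$ has nilpotency class $k$, so $C^{k+1}\widetilde G = \{1\}$, this congruence becomes the equality
\[
[x_1, x_2, \ldots, x_k] = \exp\bigl([X_1, X_2, \ldots, X_k]\bigr),
\]
and in particular $X = [X_1, \ldots, X_k]$.

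For the second step, apply this exact same identity with $X_i$ replaced by $sX_i$, noting that $x_i^s = \exp(sX_i)$ by definition. Multilinearity of the iterated Lie bracket gives $[sX_1, sX_2, \ldots, sX_k] = s^k [X_1,\ldots,X_k] = s^k X$, hence
\[
[x_1^s, x_2^s, \ldots, x_k^s] = \exp(s^k X) = x^{s^k},
\]
where the last equality uses the convention $x^{s^k} = \exp(s^k \log x)$ from the Notations and conventions.

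The only nontrivial part is the inductive congruence above, and its difficulty is limited since $\widetilde G$ has class $k$, so the error terms we need to discard live in $C^{c+1}\widetilde{\mathfrak g}$ and are killed by taking enough brackets; BCH is clean here because every correction term is itself a bracket, hence lies one step deeper in the lower central series.
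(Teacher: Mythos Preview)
Your proof is correct and takes essentially the same approach as the paper: both reduce to the Lie algebra via the exponential map, use BCH to identify the group $k$-fold commutator with $\exp$ of the Lie-algebra $k$-fold bracket (exactly, since $C^{k+1}=0$), and then invoke multilinearity. The paper compresses your inductive congruence into the single phrase ``by repeated application of the Baker--Campbell--Hausdorff formula,'' whereas you spell out the induction on $c$ and the handling of the error terms in $C^{c+1}\widetilde{\mathfrak g}$.
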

\begin{proof}
    By repeated application of the Baker-Campbell-Hausdorff formula,
    \[ \log[x_1^s, \ldots, x_k^s] = [\log x_1^s, \ldots , \log x_k^s] = s^k \log [x_1, \ldots, , x_k]. \qedhere \]
\end{proof}

\begin{proof}[Proof of Proposition~\ref{prop:dehn-lower-lie}]
Let $V$ be as in the statement of Lemma~\ref{lem:main-ingredient-lower-lie}.
Fix a compact generating set $S$ of $G$; let $x_1,\ldots, x_k$ be as provided by Lemma~\ref{lem:first-lemma-simple-k-fold}. Moreover, assume without loss of generality in view of Lemma~\ref{lem:second-lemma-simple-k-fold} that $[x_1, \ldots ,x_k] = \iota(1)$.
We also consider a compact presentation of $\widetilde G$ in which all the $x_i$ and their powers with exponents between $0$ and $1$ are in the generating system.
Set $X_i = \log (\pi(x_i))$ for all $i$ and denote by $\widetilde X_i$ the lift of $X_i$ to $V$. Note that $X_i \neq 0$ for all $i$, otherwise $[x_1,\ldots , x_k]$ would be trivial.

Let $\ell \geqslant 0$ be a parameter, and consider the loop $\lambda = \overline w$, where
$w = [\pi(x_1)^\ell,\ldots, \pi(x_k)^\ell]$ and $ \overline {\pi(x_i)}  $
is the segment $\{ \exp(sX_i) \}$ for $ 0 \leqslant s \leqslant 1$.
A lift of $\lambda$ in $\widetilde G$ that is everywhere parallel to $V$ is $\widetilde \lambda = \overline \Omega$,
where $\Omega$ is the word $[x_1^\ell, \ldots, x_k^\ell]$ in $\widetilde G$ and $ \overline {x_i}  $
is the segment $\{ \exp(s\widetilde X_i) \}$ for $ 0 \leqslant s \leqslant 1$.

Let $\alpha$ be a primitive for $\omega$.
By Lemmas \ref{lem:main-ingredient-lower-lie} and \ref{lem:second-lemma-simple-k-fold} we have that
\begin{equation*}
    \left\vert \int_{\lambda} \alpha \right\vert = \ell^k. 
\end{equation*}
Since $\omega={\rm d}\alpha$ is a left-invariant form, there is a uniform bound on the absolute values of its integrals over $g_{\ast}\overline r$ for $r \in R$ and $g\in G$. It thus follows from Lemma~\ref{combinatorial-stokes} that the combinatorial area of $w$ is at least of the order of $\ell^k$. Since its word length is of the order of $\ell$, this concludes the proof.
\end{proof}

\begin{corollary}
\label{cor:dehn-lower-lie}
Let $k \geqslant 2$.
    Let $G$ be a simply connected nilpotent Lie group {of nilpotency class $k-1$}.
    Assume that for some $\omega \in Z^2(G,\mathbf R)$  the central extension 
    \[ 1 \to \mathbf R \overset{\iota}{\longrightarrow} \widetilde G \overset{\pi}{\to} G \to 1 \]
    has nilpotency class $k$.
    Then $\delta_G(n) \asymp n^k$.
\end{corollary}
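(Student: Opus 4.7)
The corollary combines the lower bound just established with a matching upper bound coming from the Gersten--Holt--Riley theorem.

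The plan is as follows. First, the lower bound $\delta_G(n) \succcurlyeq n^k$ is exactly the content of Proposition~\ref{prop:dehn-lower-lie}, so there is nothing to add on this side. For the upper bound, the key observation is that $G$ has nilpotency class $k-1$ by hypothesis, so that the Gersten--Holt--Riley upper bound \cite{GerstenRileyHolt}, as mentioned in the introduction, yields $\delta_G(n) \preccurlyeq n^{(k-1)+1} = n^k$.

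The only subtlety is that $G$ is a simply connected Lie group, which need not admit a lattice, whereas Gersten--Holt--Riley is originally stated for finitely generated nilpotent groups. This is precisely the point addressed in Appendix~\ref{sec:GHR}, where it is verified that the Gersten--Holt--Riley upper bound extends to all simply connected nilpotent Lie groups (with respect to any compact presentation) irrespective of whether they admit a lattice or not. Applying this extended version to $G$ completes the proof of the upper bound and hence of the corollary.
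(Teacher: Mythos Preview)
Your proof is correct and matches the paper's own argument exactly: the lower bound is Proposition~\ref{prop:dehn-lower-lie}, and the upper bound is the Lie-group version of the Gersten--Holt--Riley bound from Appendix~\ref{sec:GHR} (Theorem~\ref{th:GHR-lie}). You have also correctly flagged the one subtlety, namely that the classical Gersten--Holt--Riley result is stated for finitely generated groups and must be extended to simply connected nilpotent Lie groups without lattices.
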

\begin{proof}
    The lower bound is given by Proposition \ref{prop:dehn-lower-lie} and the upper bound is the Lie-group version of the Gersten--Holt--Riley upper bound (see Theorem~\ref{th:GHR-lie}).
\end{proof}

\section{Upper bounds}
\label{sec:upper-bounds}
In this section we treat upper bounds on Dehn functions. With these results together with the ones in \S\ref{sec:lowerbounds} we are able to precisely compute the Dehn functions of the groups we consider. 

The first part, \S\ref{sec:upper-gp3-gp3corner}, deals with the central products $$G_{p,3} := L_p \times_Z L_{3} \quad  \text{and} \quad G^\lrcorner_{p,3} := L^\lrcorner_p \times_Z L_{3}$$ introduced in \S\ref{sec:efficient}. The main steps of the proof are similar to the ones in \cite[Section 6]{lipt}, but the actual proofs of some of the steps require significant changes that involve new ideas. We thus follow the overall structure and notation used in \cite{lipt} and refer the reader to that work in places where our proofs are completely analogous, while explaining in detail the steps where our proofs are different. We mostly focus on $G_{p,3}^{\lrcorner}$. The proofs for $G_{p,3}$ are similar, but slightly easier, since one of the relations simplifies. An important takeaway from this part is that the approach developed in \cite[Section 6]{lipt} holds with much higher generality. We achieve this by significantly simplifying the first part of the proof given there. The most important simplification is that we avoid the use of the highly technical Fractal Form Lemma  \cite[Lemma 6.17]{lipt} by instead using an inductive argument to prove the First and Second commuting Lemmas and then deduce the Main Commuting Lemma (see below for the statements of these results). Further refining this approach seems to be a promising route towards extending the upper bound in Conjecture \ref{conj:main} to very general classes of groups in the factor of higher nilpotency class. The fact that we simultaneously cover the groups $L_p$ and $L_{p}^{\lrcorner}$ showcases this idea, which is one of our motivations for proving this conjecture for both of these cases.

In the second part, \S\ref{sec:fromp3topqandbeyond}, we show how to obtain the upper bounds for a bigger class of central products of nilpotent groups using the results obtained for $G_{p,3}$ and  $G^\lrcorner_{p,3}$, namely central products of the form $$L_p \times_Z H \quad \text{and} \quad L^\lrcorner_p \times_Z H,$$ where $H$ is a simply connected nilpotent Lie group of class $q-1$ with one-dimensional centre. This provides the upper bounds in Theorem \ref{th:general-factor} and Theorem \ref{th:model-filiform}.

We start this section by recalling notation from \S\ref{sec:efficient} that is used throughout the whole section.

\subsubsection{Notation and conventions}\label{not:omegawords} 
For $j \geqslant 2$, $k \geqslant 1$, and $\underline{n} := (n_1, \ldots, n_k) \in \R^k$ we denote by $$\Omega_k^j (\underline{n}) := [x_1 ^{n_1}, \ldots , x_1 ^{n_{k-1}}, x_j ^{n_k}].$$
For $j=2$ we only denote it by $\Omega_k(\underline{n})$. We refer to them as \emph{$\Omega_k^j$-words}, and when we do not need to specify the parameters $(k,j)$ we shall refer to them simply as \emph{$\Omega$-words}. Moreover, we extend the notation of $\Omega$-words to the generators $y_i$ as follows: $$\widetilde{\Omega}_k^j(\underline{n}) :=[y_1^{n_1},\ldots, y_1^{n_{k-1}}, y_j^{n_k}].$$ 

Recall the efficient sets introduced in \S\ref{sec:efficient}:
$$
\Sigma := \{ x_1 ^{a_1}, x_2 ^{a_2} \mid |a_1|, |a_2| \leqslant  1\}, \quad \mathcal{F} := \{ s^n \mid s \in \Sigma, n \in \N \},
$$
$$
T := \{ x_1 ^{a_1}, x_2 ^{a_2}, y_{1}^{a_3}, y_{p-1}^{a_4} \mid |a_1|, |a_2|, |a_3|, |a_4| \leqslant 1\}, \ \text{and} \ \mathcal{G} := \{ s^n \mid s \in T, n \in \N \}.
$$

\subsection{Upper bounds on the Dehn functions of $G^\lrcorner_{p,3}$ and $G_{p,3}$} \label{sec:upper-gp3-gp3corner}
We fix once and for all the compact presentation $\calp$ for $G^\lrcorner_{p,3}$ (respectively for $G_{p,3}$) obtained using Lemma \ref{lem:central-prod-is-cglc} and the compact presentation for $L^\lrcorner_p$ (respectively for $L_p$) given in Proposition \ref{prop:compactpresentations}. By abuse of notation we denote both presentations by $\calp$ and only specify to which groups it refers whenever confusion may arise. We start by stating the main results of this section.
\begin{proposition}\label{prop:upperbound-dehn-Gp3}
        Let $p \geqslant 3$. The Dehn function of the group $G_{p,3} := L_p \times_Z L_{3}$ satisfies $ \delta_{G_{p,3}}(n) \preccurlyeq n^{p-1}$. 
\end{proposition}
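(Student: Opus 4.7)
The plan is to apply Proposition \ref{prop:reductiontrick} to the $O_p(1)$-efficient set $\mathcal{G}$ provided by Corollary \ref{cor:efificentcentral}. It then suffices to show that, for every fixed $k \geqslant 1$, any null-homotopic word $w = u_1 \cdots u_r$ with $r \leqslant k$ and each $u_i \in \mathcal{G}$ satisfies $\operatorname{Area}_{\calp}(w) \lesssim_{k,p} |w|^{p-1}$.

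The core of the argument is a hierarchy of commuting lemmas that describe how to shuffle the basic letters (powers of $x_1, x_2$, and of the two non-central generators of the $L_3$-factor) past each other at controlled area cost. The skeleton is modelled on \cite[Section 6]{lipt}, but with one important simplification: in place of the Fractal Form Lemma \cite[Lemma 6.17]{lipt}, one runs a direct induction on the depth index of the $\Omega$-words. Starting from the identities of Corollary \ref{cor:commutingxkxl}, Corollary \ref{cor:oneandjnotequalsoneplusj}, and Addendum \ref{add:omega3s} --- which, as explained in Remark \ref{rem:identitis-lpl32}, take a simpler form for $G_{p,3}$ with no $L_3$-side error term --- one proves inductively that each step of rewriting an $\Omega_k^j$-word with parameters bounded by $n$ as a product of $\Omega$-words of strictly greater depth, plus a controlled central remainder, costs only quadratic area in $n$. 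Iterating this $O_p(1)$ times reduces any null-homotopic concatenation $w$ of elements of $\mathcal{G}$ to a word whose only remaining deep letters are powers of $x_p$, with total accumulated exponent of order $n^{p-1}$.

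The final step exploits the central product structure through the identification $x_p = y_3$. One rewrites $x_p^N$ with $N = O(n^{p-1})$ as $y_3^N$ and fills it on the $L_3$-side as a single commutator $[y_1^{\lceil \sqrt N \rceil}, y_2^{\lceil \sqrt N \rceil}]$ up to bounded adjustments. Since the $3$-Heisenberg group has quadratic Dehn function, this filling costs area $\lesssim N = n^{p-1}$, matching the target bound. This is precisely the mechanism by which the central product beats the $n^p$ Gersten--Holt--Riley bound that applies to $L_p$ alone.

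The principal obstacle is the bookkeeping of the cascading error terms. Each application of the commuting identities produces lower-order $\Omega$-words that must themselves be shuffled; one must verify that at depth $j$ all induced errors involve only generators of depth strictly greater than $j$ and retain $\mathcal{G}$-word length $O(n)$, so that the recursion both terminates and accumulates total area only at the target order. Summing the quadratic area of the at most $p-2$ depth levels of the induction, together with the $O(n^{p-1})$ cost of the final central filling on the $L_3$-side, yields the desired bound $\operatorname{Area}_{\calp}(w) \lesssim_{k,p} |w|^{p-1}$.
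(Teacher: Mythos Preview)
Your outline has the right opening move (Proposition~\ref{prop:reductiontrick} with the efficient set $\mathcal{G}$) and correctly identifies that the Fractal Form Lemma is replaced by a direct induction on the index $j$ of the $\Omega_k^j$-words. But the endgame you describe does not work, and the area bookkeeping does not add up.

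First, the $3$-Heisenberg group $L_3$ has \emph{cubic} Dehn function, not quadratic. Filling $y_3^N$ via $[y_1^{\lceil\sqrt N\rceil},y_2^{\lceil\sqrt N\rceil}]$ inside $L_3$ costs area of order $N^{3/2}$, which for $N\asymp n^{p-1}$ is far above the target. The mechanism that actually saves the day is the \emph{$5$-Heisenberg} subgroup $\langle x_1,x_{p-1},y_1,y_2\rangle\leqslant G_{p,3}$, which has quadratic Dehn function; it is used not as a final filling step but throughout, to transfer $\Omega_2^{p-1}$-words between the two factors at quadratic cost.

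Second, your accounting ``quadratic area per depth level, summed over $p-2$ levels'' yields $O_p(n^2)$, not $n^{p-1}$. In fact the individual commuting steps already cost up to $n^{p-j+1}$ (see the statements of Lemmas~\ref{lem:gp3firstcommuting} and \ref{lem:gp3secondcommuting}): each application of Lemma~\ref{lem:commutatortinyletter} produces $O(n)$ new $\Omega^{j+1}$-terms, and commuting each of these past the remaining word costs the induction hypothesis at level $j+1$. The total comes to $n^{p-1}$ precisely because one layer of linear growth is absorbed at each depth.

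Third, and most importantly, you omit the outer induction on $p$. The paper's proof assumes $\delta_{G_{q,3}}\preccurlyeq n^{q-1}$ for all $q<p$ and uses this in two essential places: to control the cost of manipulating $\Omega^j$-words for $j\geqslant 3$ inside the subgroup $G_{p-j+2,3}\hookrightarrow G_{p,3}$, and in the Cancelling $k$-Lemma (Lemma~\ref{lem:cancelling}). The latter is the genuine replacement for your ``fill $x_p^N$ at the end'' step: after the Reduction Lemma writes $w$ as a product of $O_{\alpha,p}(1)$ many $\Omega_{l}$-words with $2\leqslant l\leqslant p-1$, one reorders them via the Main Commuting Lemma and then cancels them degree by degree using a cut-in-half argument (Lemma~\ref{lem:cuttinginhalf}) together with the inductive bound on $\delta_{G_{p-1,3}}$. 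Without this structure the argument does not close.
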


\begin{proposition}\label{prop:upperbound-dehn-Gcornerp3}
    Let $p \geqslant 5$. The Dehn function of the group $G^{\lrcorner}_{p,3}:= L^{\lrcorner}_p \times_Z L_{3}$ satisfies $\delta_{G^\lrcorner _{p,3}}(n) \preccurlyeq n^{p-1}$.
\end{proposition}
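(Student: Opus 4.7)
The plan is to invoke the reduction trick of Proposition~\ref{prop:reductiontrick}: it suffices to bound the area of a null-homotopic word $w \in \mathcal{G}[k]$ of length at most $n$ by $\lesssim_{k} n^{p-1}$, uniformly for each fixed $k$. Here $\mathcal{G}$ is the $O_p(1)$-efficient set of Corollary~\ref{cor:efificentcentral}, whose elements are powers of one of $x_1, x_2, y_1, y_{p-1}$. So we are reduced to bounding the area of a product of $k$ such powers that represents the identity.

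The first step is to rewrite $w$ in a tractable normal form as a product of $\Omega$-words. Using Corollary~\ref{cor:commutingxkxl}, Corollary~\ref{cor:oneandjnotequalsoneplusj} and Addendum~\ref{add:omega3s}, every pair of basic generator powers in $w$ can be exchanged at the cost of inserting a bounded product of $\Omega^{j}_{i}$-words (together with possible central error terms $[y_1^{\tilde b}, y_{p-1}^{\tilde b}]^{\pm 1}$ coming from the bracket $[x_2, x_3] = x_p$ that is peculiar to $L_p^\lrcorner$) and a quadratic area cost. Iterating this lets us rewrite $w$, with area already under control, as a product of (i) a bounded prefix in the abelianisation of $G_{p,3}^{\lrcorner}$, (ii) a bounded number of $\Omega^{j}_{i}$-words arising from the $L_p^\lrcorner$ factor, (iii) a bounded number of $\widetilde\Omega$-words from the $L_3$ factor, and (iv) a power of $x_p = y_3$.

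The technical core is then the analogues of the First and Second Commuting Lemmas of \cite[\S 6]{lipt}: any $\Omega$-word commutes past a generator power, and two $\Omega$-words commute past each other, with area $\lesssim n^{p-1}$. The plan is to prove them by induction on the \emph{depth} $i+(j-2)$ of the $\Omega^{j}_{i}$-word in the lower central series of $L_p^\lrcorner$. When this depth exceeds $p-2$ the word represents a central element (indeed, a power of $x_p$ possibly multiplied by central $y$-commutators), so commutation is free. When it is strictly smaller, the rewriting identities of Addendum~\ref{add:omega3s} allow us to push an $x_1$-letter through one step at a time, producing a bounded number of new $\Omega$-words of strictly greater depth; these are handled by the inductive hypothesis, and the number of such passes is of order $n$, whereas the cost per pass is of order $n^{p-2}$. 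This inductive bookkeeping replaces the Fractal Form Lemma of \cite{lipt}; the main obstacle is ensuring that the accumulated cost is $n^{p-1}$ rather than $n^p$, which relies precisely on the centrality of $x_p$ killing the terminal step of the induction.

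With these Commuting Lemmas in hand, any word in the form obtained after the first step can be brought, at area $\lesssim n^{p-1}$, into a canonical ordering in which all non-central $\Omega$-words appear in a prescribed positions and all central terms are gathered to the right. Since $w$ is null-homotopic, its image in the abelianisation is trivial and its accumulated central part must also be trivial; both are filled at quadratic cost (for the abelianisation, using that the projection to the abelianisation vanishes; for the centre, using that the centre is a copy of $\mathbf R$ in which cancellation is immediate), and the quadratic contribution is absorbed into $n^{p-1}$ since $p \geqslant 5$. Combining the three contributions yields $\operatorname{Area}_\calp(w) \lesssim_{k} n^{p-1}$, which via Proposition~\ref{prop:reductiontrick} delivers $\delta_{G^\lrcorner_{p,3}}(n) \preccurlyeq n^{p-1}$. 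The proof of Proposition~\ref{prop:upperbound-dehn-Gp3} follows the same blueprint, with the simplification noted in Remark~\ref{rem:identitis-lpl32} that no $[y_1^{\tilde b}, y_{p-1}^{\tilde b}]$ error terms appear.
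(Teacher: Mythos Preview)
Your proposal captures the broad architecture of the paper's argument (efficient set reduction, then Commuting Lemmas, then a normal form), but there is a genuine gap at the final step that the paper devotes substantial work to and that you have glossed over.

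After your ``canonical ordering'' you are left with a null-homotopic product
\[
\prod_{i} \Omega_{l_i}(\underline{\eta}_i)^{\pm 1}
\]
of \emph{non-central} $\Omega$-words with $2\leqslant l_i\leqslant p-1$. Your sentence ``its image in the abelianisation is trivial and its accumulated central part must also be trivial; both are filled at quadratic cost'' deals only with the abelian prefix and the central tail, not with this product. These $\Omega_{l_i}$-words are not central, and filling their product is precisely the content of the Cancelling $k$-Lemma (Lemma~\ref{lem:cancelling}). In the paper this is proved by a separate descending induction on $k$, using the Cutting-in-half Lemma (Lemma~\ref{lem:cuttinginhalf}), the extraction Lemma~\ref{lem:extracting}, and Corollary~\ref{cor:omegapminus1}; none of this follows from the Commuting Lemmas alone. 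Without it your argument has no mechanism to reduce the remaining $\Omega$-product to the trivial word at cost $\lesssim n^{p-1}$.

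A second structural omission is the outer induction on $p$. The paper's proof of the Commuting Lemmas and of several auxiliary identities (e.g.\ Lemmas~\ref{lem:omegacommuteswithproducts}, \ref{lem:omegainverse}, \ref{lem:separatingproducts}, Addendum~\ref{add:nullhomotopicomegawords}) repeatedly uses that $\delta_{G_{q,3}}\preccurlyeq n^{q-1}$ for $q<p$, via the embeddings $G_{p-\ell+2,3}\hookrightarrow G^{\lrcorner}_{p,3}$. Your induction on the ``depth'' $i+(j-2)$ of $\Omega_i^j$ does not by itself supply these bounds; in particular your claim that ``the cost per pass is of order $n^{p-2}$'' is not uniform in $j$ (the paper obtains $\lesssim |m|n^{p-j}+n^{p-j+1}$, and getting this right is what makes the total $\lesssim n^{p-1}$) and relies on the inductive Dehn-function bound for the smaller central products. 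You should make the ascending induction on $p$ explicit and then run the Commuting Lemmas as a descending induction on $j$, as in \S\ref{sec:upper-gp3-gp3corner}.
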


\subsubsection*{Strategy of the proofs of Propositions \ref{prop:upperbound-dehn-Gp3} and \ref{prop:upperbound-dehn-Gcornerp3}} 

The proof of Proposition \ref{prop:upperbound-dehn-Gp3} and Proposition \ref{prop:upperbound-dehn-Gcornerp3} are done simultaneously by ascending induction on $p$, where we emphasise that the \emph{induction steps from $p-1$ to $p$  only require the induction hypothesis for $G_{q,3}$ with $q<p$}. 

For the proof of the induction step for $G^\lrcorner_{p,3}$ (respectively $G_{p,3}$) we need to show that the area of null-homotopic words $w := w(x_1, x_2)$ in $G^\lrcorner_{p,3}$ (respectively $G_{p,3}$) is $\lesssim_p n^{p-1}$. By Proposition \ref{prop:reductiontrick} it suffices to show that null-homotopic words $w(x_1,x_2)\in \mathcal{F}[\alpha]$ have area $\lesssim_{p,\alpha} n^{p-1}$ for every $\alpha>0$. To show this we first prove a result about commuting certain words in $G^\lrcorner_{p,3}$ (respectively $G_{p,3}$), which we call the Main commuting Lemma \ref{lem:maincommuting} for $G^\lrcorner_{p,3}$ (respectively Lemma \ref{lem:gp3maincommuting} for $G_{p,3}$). 

As explained above, it is in the proof of the Main commuting Lemmas for $G_{p,3}$ and $G^\lrcorner_{p,3}$ that our proof significantly simplifies the one in \cite{lipt}. Each one of the Main commuting Lemmas is a consequence of two corresponding  results about commutators involving the $\Omega_k^j$-words -- namely the First and Second commuting $(k,j)$-Lemmas -- and the Reduction Lemma \ref{lem:reductionlemma}. It should be noted that the First commuting $(k,j)$-Lemma is a generalisation of \cite[Lemma 6.5 and Lemma 6.7]{lipt}, while the Second commuting $(k,j)$-Lemma is a generalisation of \cite[Lemma 6.4 and Lemma 6.6]{lipt}. Once we have proven the First and Second commuting $(k,j)$-Lemmas, the proof follows the arguments in \cite{lipt}: in particular, key steps include the Reduction Lemma \ref{lem:reductionlemma}, which allows us to deduce the Main Commuting Lemmas from the First and Second Commuting $(k,j)$-Lemmas, and the Cancelling $k$-Lemma \ref{lem:cancelling} for $G_{p,3}$ (respectively $G^\lrcorner_{p,3}$). We start by stating the main results.\vspace{.3cm}

\noindent \textbf{Disclaimer.} Throughout Section \S\ref{sec:upper-gp3-gp3corner} we state the results for both of the groups $G_{p,3}$ and $G^\lrcorner_{p,3}$ and prove them for the more complicated case of $G^\lrcorner_{p,3}$. In some cases the corresponding statements for $G_{p,3}$ are simpler due to the absence of the relation $[x_2,x_3]=_\calp x_p$ (see for instance the Main commuting Lemma or the Second commuting $(k,j)$-Lemma). Whenever it may not be clear that a proof for $G^\lrcorner_{p,3}$ translates easily to one for $G_{p,3}$ we offer an explanation.\vspace{.3cm}

\begin{lemma}[Main commuting Lemma for $G_{p,3}$, {cf. \cite[Lemma 6.2]{lipt}}]\label{lem:gp3maincommuting}
    Let $p \geqslant 3$, $\alpha \geqslant 1$, $n \geqslant 1$, and let $w_1 (x_1, x_2)$, $w_2(x_1, x_2)$ be either powers of $x_2$ or words in $\mathcal{F}[\alpha]$ representing elements in the derived subgroup of $G_{p,3}$. If $|w_1|,|w_2| \leqslant n$, then the identity $$[w_1, w_2] =_\calp 1$$ holds in $G_{p,3}$ with area $\lesap n^{p-1}$.
\end{lemma}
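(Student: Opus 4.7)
The plan is to deduce Lemma~\ref{lem:gp3maincommuting} from the First and Second Commuting $(k,j)$-Lemmas via the Reduction Lemma, following the overall architecture introduced in \cite[Section~6]{lipt} but in the form simplified by the authors above. The ascending induction on $p$ is carried along the entire chain Reduction~$\Rightarrow$~Cancelling~$\Rightarrow$~Commuting lemmas, so I take the induction hypothesis on $p$ for granted in every step below.

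First I would reduce $w_1$ and $w_2$ to a canonical form. Since each $w_i \in \mathcal F[\alpha]$ is a concatenation $x_1^{a_{i,1}}x_2^{b_{i,1}}\cdots x_1^{a_{i,\alpha}}x_2^{b_{i,\alpha}}$ with $|a_{i,j}|,|b_{i,j}|\leqslant n$, and $w_i$ represents an element of the derived subgroup, the total $x_1$-exponent and $x_2$-exponent both vanish. The Reduction Lemma~\ref{lem:reductionlemma} then allows us to rewrite $w_i$, at area cost $\lesssim_{\alpha,p} n^{p-1}$, as an $O_\alpha(1)$-bounded product of $\Omega_k^j$-words (for $j=2$ in the $L_p$ factor, and $j=p-1$ corresponding to the $\widetilde\Omega$-words in the $L_3$ factor) whose parameters are all bounded by a multiple of $n$. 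If $w_i$ is merely a power of $x_2$, no reduction is required; we simply treat $w_i$ as an $\Omega_1$-word. In all cases, the problem collapses to estimating the area of $[U,V]$, where $U$ and $V$ are each products of $O_\alpha(1)$ many $\Omega$-words or powers of $x_2$.

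Next I would expand $[U,V]$ using the free identities of Lemma~\ref{lem:freeidentities}, so that commuting $U$ past $V$ becomes the product of $O_\alpha(1)$ commutators of the form $[\Omega_{k}^{j}(\underline m),\Omega_{k'}^{j'}(\underline m')]$ or $[\Omega_{k}^{j}(\underline m),x_2^b]$, with all scalar parameters bounded by $n$. Each such commutator is controlled by the First Commuting $(k,j)$-Lemma (when one side is a power of $x_2$, which handles the case where $w_i$ is a power of $x_2$) and the Second Commuting $(k,j)$-Lemma (for two $\Omega$-words). By induction on $p$, each of these commuting identities holds with area $\lesssim_{p} n^{p-1}$. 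Summing over the $O_\alpha(1)$ commutators and adding the $\lesssim_{\alpha,p} n^{p-1}$ area incurred during the reduction step, the total area is $\lesssim_{\alpha,p} n^{p-1}$, as required.

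The main obstacle is not this final counting but the establishment of the Commuting $(k,j)$-Lemmas underlying it, where one must carefully track how moving an $x_1^{\pm 1}$ or $x_2^{\pm 1}$ past a nested commutator produces, via Corollary~\ref{cor:oneandjnotequalsoneplusj} and Addendum~\ref{add:omega3s}, deeper $\Omega^{j+1}_{k+1}$-words that drop the problem to smaller central quotients of $G_{p,3}$. In $G_{p,3}$ these quotients fit precisely into the inductive scheme on $p$ (via the Cancelling $k$-Lemma~\ref{lem:cancelling}), because the relation $[x_2,x_3]=_\calp x_p$ is absent and the error terms $[y_1^{\tilde b},y_{p-1}^{\tilde b}]$ appearing in the $G^\lrcorner_{p,3}$ case of Corollary~\ref{cor:oneandjnotequalsoneplusj} do not arise; this is what makes Lemma~\ref{lem:gp3maincommuting} the genuinely simpler half of the twin proof. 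Once the Commuting $(k,j)$-Lemmas are in hand, the present lemma follows by the routine assembly described above.
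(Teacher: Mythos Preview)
Your proposal is correct and follows essentially the paper's approach: apply the Reduction Lemma, then invoke the First and Second Commuting $(k,j)$-Lemmas on the resulting $\Omega$-factors. Two minor points of streamlining are worth noting. First, the paper reduces only $w_1$: since the Second Commuting $(l_i,2)$-Lemma already accepts an arbitrary $w\in\mathcal F[\alpha]$ in the derived subgroup as its second argument, there is no need to reduce $w_2$ as well, and one simply moves each $\Omega_{l_i}$ past $w_2$ (or past $x_2^m$, via the First Commuting Lemma). Second, your parenthetical about ``$\widetilde\Omega$-words in the $L_3$ factor'' is spurious here: $w_1,w_2$ are words in $x_1,x_2$ only, and the Reduction Lemma outputs only $\Omega_{l_i}$-words with $j=2$; no $y$-letters enter this argument.
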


\begin{lemma}[First commuting $(k,j)$-Lemma for $G_{p,3}$, cf. {\cite[Lemma 6.5 and Lemma 6.7]{lipt}}]\label{lem:gp3firstcommuting}
    Let $p \geqslant 3$, $n \geqslant 1$, $j \geqslant 2$, $k \geqslant 1$, $\underline{n} \in \R^k$ with $|\underline{n}| \leqslant n$, and $m \in \R$. The identity $$[\Omega_k ^j (\underline{n}), x_2^m]=_\calp 1$$ holds in $G_{p,3}$ with area $\lesssim_p |m|n^{p-j} + n^{p-j+1}$ if $j< p$ and area $\lesssim_p |m|n + n^2$ if $j = p$.
\end{lemma}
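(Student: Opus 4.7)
I will prove the First Commuting $(k,j)$-Lemma by induction on the commutator depth $k$.

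For the base case $k=1$, the statement reduces to $[x_j^{n_1}, x_2^m] =_\calp 1$ with area $\lesssim_p |m|n^{p-j} + n^{p-j+1}$. The case $j=2$ is trivial. For $j \geqslant 3$ one uses that the only non-vanishing brackets in $\mathfrak l_p$ are $[X_1,X_i] = X_{i+1}$, so $x_j$ commutes with $x_2$ in $L_p$. Combining the writing $x_j = [x_1,x_{j-1}]$ (cf.\ Proposition~\ref{prop:compactpresentations}) with a short induction on $j$ establishes $[x_j,x_2]=_{\calp} 1$ as a consequence of the relators at constant cost; iterating gives $[x_j^{n_1}, x_2^m]=_\calp 1$ at cost $\lesssim_p |n_1|\,|m|$. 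Since $p - j \geqslant 1$ and $|n_1| \leqslant n$, this is bounded by $|m|n \leqslant |m|n^{p-j}$, matching the claim for $j<p$; for $j=p$ the element $x_p$ is central and the same counting yields cost $\lesssim_p |n_1|\,|m| \leqslant |m|n + n^2$.

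For the inductive step $k \geqslant 2$, I write $\Omega_k^j(\underline n) = [x_1^{n_1}, \Omega_{k-1}^j(n_2,\ldots,n_k)]$ and expand this as the free product $x_1^{-n_1}\,\Omega_{k-1}^j(n_2,\ldots,n_k)^{-1}\,x_1^{n_1}\,\Omega_{k-1}^j(n_2,\ldots,n_k)$. Using the free identities of Lemma~\ref{lem:freeidentities}, the task of commuting $\Omega_k^j(\underline n)$ with $x_2^m$ splits into four subtasks, one for each factor. The two traversals past $\Omega_{k-1}^j(n_2,\ldots,n_k)^{\pm 1}$ are controlled by the inductive hypothesis at depth $k-1$ (with the same $j$), each contributing area $\lesssim_p |m|n^{p-j} + n^{p-j+1}$. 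The two traversals past $x_1^{\pm n_1}$ invoke Corollaries \ref{cor:commutingxkxl} and \ref{cor:oneandjnotequalsoneplusj}: the key identity is $[x_1^{n_1}, x_2^m] =_\calp x_3^{n_1 m}\,(\text{corrections in } x_4,\ldots,x_p)$, so each such traversal produces an $\Omega_2^2$-word together with higher-index corrections. These corrections are then commuted past the remaining pieces by invoking the inductive hypothesis at depth $k'=1$ and indices $j' \in \{3,\ldots,p\}$.

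The principal obstacle is to track the errors tightly enough to recover the exponent $p-j$ rather than something weaker. The mechanism is that every correction term produced above lives one level deeper in the lower central series than the original $\Omega_k^j$; by the base case applied with $j'>j$, its commutation with $x_2^m$ costs $\lesssim_p |m|n^{p-j'} + n^{p-j'+1}$, which is a factor of $n$ smaller than the target, and the successive contributions telescope into the claimed bound $\lesssim_p |m|n^{p-j} + n^{p-j+1}$. As a consistency check, the extremal case $k = p-j+1$ makes $\Omega_k^j(\underline n)$ essentially the central element $x_p^{n_1\cdots n_k}$, reducing the statement to the straightforward $j=p$ instance of the base case.
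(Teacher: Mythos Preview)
Your induction on $k$ is not the paper's route, and as written the inductive step has a genuine gap. The paper proves the First and Second commuting $(k,j)$-Lemmas \emph{simultaneously by descending induction on $j$}: the key device is Lemma~\ref{lem:commutatortinyletter}, which rewrites $\Omega_k^{j_0}(\underline n)$ as a prefix $\Omega_k^{j_0}(n_1,\dots,\beta,n_k)$ (with $|\beta|\leqslant 1$, handled by Lemma~\ref{lem:laderlemma}) followed by roughly $|\lfloor n_{k-1}\rfloor|$ many $\Omega^{j_0+1}$-words. The induction hypothesis for $j_0+1$ then commutes each of those past $W\in\{x_2^m,w\}$ at cost $\lesssim_p |W|n^{p-j_0-1}+n^{p-j_0}$, and summing over the $\lesssim n$ pieces gives the claimed bound. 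The point is that one never commutes $x_2^m$ through a bare $x_1^{\pm n_1}$.

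Your approach, by contrast, does exactly that. When you pass $x_2^m$ across $x_1^{\pm n_1}$ you create error words of the shape $x_3^{\pm n_1 m}x_4^{\ldots}\cdots x_p^{\ldots}$, whose exponents already scale like $n|m|$ (and higher). Two things go wrong. First, the sentence ``these corrections are then commuted past the remaining pieces by invoking the inductive hypothesis at depth $k'=1$'' does not do what you need: the $k'=1$ base case only tells you how to commute $x_{j'}^{a}$ with $x_2^m$, not with $x_1^{\pm n_1}$ or with $\Omega_{k-1}^j(\ldots)^{\pm1}$, which are precisely the ``remaining pieces'' the errors must traverse to cancel. Second, even if you could move them, the area of commuting $x_3^{n_1 m}$ past $x_1^{n_1}$ (or past an $\Omega$-word of length $\lesssim n$) is governed by the size of the exponent $n_1 m$, producing contributions like $n^2|m|$ or worse, which do not fit into $|m|n^{p-j}+n^{p-j+1}$ for small $p-j$. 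Finally, you never explain why the two error packets from $x_1^{n_1}$ and $x_1^{-n_1}$ actually cancel with controlled area; in the group they do (metabelianity), but establishing this efficiently is essentially the Second commuting $(k,j)$-Lemma, so your argument becomes circular. The descending induction on $j$ is designed precisely to avoid ever producing these large $[x_1^{n_1},x_2^m]$-errors.
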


\begin{lemma}[Second commuting $(k,j)$-Lemma for $G_{p,3}$, cf. {\cite[Lemma 6.4 and Lemma 6.6]{lipt}}]\label{lem:gp3secondcommuting}
    Let $p \geqslant 3$, $n \geqslant 1$, $j \geqslant 2$, $k \geqslant 1$, $\underline{n} \in \R^k$ with $|\underline{n}| \leqslant n$, $\alpha \geqslant 1$ and  $w:= w(x_1, x_2) \in \mathcal{F}[\alpha]$ be a word of length at most $n$ representing an element in the derived subgroup of $G_{p,3}$. The identity $$[\Omega_k ^j (\underline{n}), w] =_\calp 1$$ holds in $G_{p,3}$ with area $\lesap n^{p-j+1}$ if $j<p$ and area $\lesap n^2$ if $j=p$.
\end{lemma}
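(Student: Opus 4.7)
The plan is to establish Lemma~\ref{lem:gp3secondcommuting} by descending induction on $k$, carried out in tandem with the First commuting $(k,j)$-Lemma~\ref{lem:gp3firstcommuting}; the inductive step for the Second commuting $(k,j)$-Lemma will invoke both the First commuting $(k,j)$-Lemma and the Second commuting $(k',j)$-Lemma for all $k' > k$. We shall describe the argument in $G_{p,3}$, since the case of $G^\lrcorner_{p,3}$ differs only by routine bookkeeping for the low-order error terms $[y_1^{\tilde b}, y_{p-1}^{\tilde b}]^{\pm 1}$ arising from the bracket $[x_2, x_3] = x_p$, exactly as in Corollaries~\ref{cor:commutingxkxl} and~\ref{cor:oneandjnotequalsoneplusj}. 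The base case will be $k + j - 2 \geqslant p - 1$: in this range $\Omega_k^j(\underline n)$ lies in $C^{p-1}(G_{p,3}) = Z(G_{p,3}) = \langle x_p \rangle$ and has word length $O_k(n)$, so sliding it past $w$ letter by letter using the centrality relations in $\calp$ yields area $O_k(n^2)$, which is within the target bound in every relevant range.

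For the inductive step we will write $w = s_1^{m_1} \cdots s_\alpha^{m_\alpha}$ with each $s_i \in \{x_1, x_2\}$ and $\sum_i |m_i| \lesap n$. Iterating the identity $[a, bc] =_F [a,c][a,b]^c$ from Lemma~\ref{lem:freeidentities}(2) yields the free identity
\[ [\Omega_k^j(\underline n), w] =_F \prod_{i=\alpha}^{1} \bigl[\Omega_k^j(\underline n), s_i^{m_i}\bigr]^{g_i}, \qquad g_i := s_{i+1}^{m_{i+1}} \cdots s_\alpha^{m_\alpha}. \]
For indices $i$ with $s_i = x_2$, the First commuting $(k,j)$-Lemma gives $[\Omega_k^j(\underline n), x_2^{m_i}] =_\calp 1$ with area $\lesssim_p |m_i| n^{p-j} + n^{p-j+1}$; conjugation by $g_i$ does not affect the area, and summing over such indices contributes total area $\lesap n^{p-j+1}$. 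For $s_i = x_1$ the factor equals, up to sign, $\Omega_{k+1}^j(-m_i, n_1,\ldots, n_k)^{-1}$ conjugated by $g_i$, which is a genuinely non-trivial element of $C^{k+j-1}(G_{p,3})$.

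The remaining task will be to show that the product of these $\Omega_{k+1}^j$-factors is trivial with area $\lesap n^{p-j+1}$. Because $w$ is derived we have $\sum_{i : s_i = x_1} m_i = 0$, and the plan is to slide the $\Omega_{k+1}^j$-factors together to exploit this cancellation: crossing one such factor past a power of $x_2$ inside some $g_i$ will use the First commuting $(k+1, j)$-Lemma (contributing $\lesap n^{p-j+1}$ across each $g_i$), while crossing past an $x_1$-power will produce an $\Omega_{k+2}^j$-error term to which the descending induction will apply. Once aligned, the $\Omega_{k+1}^j$-factors sum to commutator exponent zero, and by the multilinearity of iterated commutators modulo $C^{k+j}$, their joint contribution will lie in the strictly deeper layer $C^{k+j}(G_{p,3})$; the residue can then be expressed as a product of $\Omega_{k+2}^j$-words and canceled by the inductive hypothesis at $(k+2, j)$.

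The hardest part will be the careful tracking of the conjugators $g_i$ during the sliding phase: since the $g_i$ are not themselves derived, the Second commuting $(k+1, j)$-Lemma cannot be invoked on them as a single unit, and one will instead have to peel off each of their letters one at a time, invoking the First commuting $(k+1, j)$-Lemma for the $x_2$-pieces and recursing into deeper $\Omega$-words for the $x_1$-pieces. The scheme will close quantitatively because the target area $n^{p-j+1}$ is independent of $k$, so each level of the recursion costs the same $\lesap n^{p-j+1}$, and the recursion terminates after at most $p - j - k$ steps, with the accumulated multiplicative constants absorbed into $\lesap$.
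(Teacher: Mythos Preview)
Your proposal has two genuine gaps that prevent it from going through as written.

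\textbf{The base case is incorrect.} For $k+j-2 \geqslant p-1$, the word $\Omega_k^j(\underline n)$ \emph{represents} a central element, but it is not a word in central letters: its letters are powers of $x_1$ and $x_j$, neither of which is central. So there are no ``centrality relations in $\calp$'' that let you slide $\Omega_k^j(\underline n)$ past $w$ letter by letter at cost $O_k(n^2)$. To handle this case correctly one must either rewrite $\Omega_k^j(\underline n)$ as a short word in genuinely central letters (e.g.\ via the $5$-Heisenberg subgroup, as the paper does for $(2,p-1)$), or decompose it into $\Omega^{j+1}$-words and invoke the outer induction hypothesis on $p$. Neither is the $O(n^2)$ argument you describe.

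\textbf{The recursion does not close for $j=2$.} After your sliding and merging, you are left with a null-homotopic product of $O_{\alpha,p}(1)$ words of the form $\Omega_{k'}^j(\cdot)^{\pm 1}$ with $k' \geqslant k+2$. Your induction hypothesis is the Second commuting $(k',j)$-Lemma, which controls the area of $[\Omega_{k'}^j, w']$ for $w'$ derived; it says nothing about the area of an arbitrary null-homotopic product of $\Omega_{k'}^j$-words. For $j \geqslant 3$ this gap can be patched: such a product lives in the subgroup $G_{p-j+2,3}$, and the outer hypothesis \ref{item:IH-fromp-1top} bounds its area by $n^{p-j+1}$. But for $j=2$ the product lives in $G_{p,3}$ itself, and bounding its area is exactly the content of the Cancelling $k$-Lemma~\ref{lem:cancelling}, which the paper proves \emph{after} (and using) the Second commuting Lemma. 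Your scheme is therefore circular in the crucial case $j=2$.

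The paper avoids both problems by inducting on $j$ rather than $k$. The key step is Lemma~\ref{lem:commutatortinyletter}, which decomposes $\Omega_k^{j_0}(\underline n)$ into roughly $n$ words $\Omega_{k-1}^{j_0+1}$ and $\Omega_k^{j_0+1}$. These live in $G_{p-j_0+1,3}$, and the commuting lemmas at level $j_0+1$ have area bound smaller by a factor of $n$, so the $O(n)$ applications fit within the target $n^{p-j_0+1}$. This is precisely the structure that an induction on $k$ lacks: raising $k$ pushes you deeper into the lower central series but does not land you in a smaller ambient group where \ref{item:IH-fromp-1top} applies.
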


\begin{lemma}[Main commuting Lemma for $G^\lrcorner _{p,3}$, cf. {\cite[Lemma 6.2]{lipt}}]\label{lem:maincommuting}
    Let $p \geqslant 5$, $\alpha \geqslant 1$, and $n \geqslant 1$. If $w_1$ and $w_2$ are either powers of $x_2$ or words of length at most $n$ in $\mathcal{F}[\alpha]$ representing elements of the derived subgroup of $G^\lrcorner_{p,3}$ with $|w_1|,|w_2| \leqslant n$, then the identities
    \begin{equation*}
        [w_1, w_2] =_\calp \left\{\arraycolsep=1.5pt\def\arraystretch{1.8}
    \begin{array}{ll}
            1, &\ w_1, w_2 \ \text{are both powers of} \ x_2,\\
            1, &\ w_1, w_2 \ \text{both represent elements in } \ [G^{\lrcorner}_{p,3}, G^{\lrcorner}_{p,3}],\\
            \prod\limits_{i=1}^{D} \Omega_3 ^{p-2}(\underline{\eta}_i)^{\pm 1}, &\ w_1 \ \text{represents a word in } [G^{\lrcorner}_{p,3}, G^{\lrcorner}_{p,3}] \ \text{and} \ w_2 := x_2 ^k. 
     \end{array}\right.
    \end{equation*} hold in $G^{\lrcorner}_{p,3}$ with area $\lesap n^{p-1}$ for some $D = O_{\alpha, p}(1)$, and $\underline{\eta}_i \in \R^3$ with $|\underline{\eta}_i| \lesssim_p n$. 
\end{lemma}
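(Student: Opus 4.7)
The plan is to handle the three cases by first dispensing with Case 1 trivially, then reducing the remaining cases to commutators of $\Omega$-words with $w_2$ via the Reduction Lemma, and finally invoking the appropriate commuting $(k,j)$-Lemma for each factor.

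If both $w_1$ and $w_2$ are powers of $x_2$, then $[w_1, w_2] =_F 1$ with area $0$, so Case 1 is immediate. From here on I would assume that $w_1$ is not a power of $x_2$, so that $w_1$ is a word of length at most $n$ in $\mathcal{F}[\alpha]$ representing an element of the derived subgroup. I would begin by invoking the Reduction Lemma to rewrite
\[
w_1 =_\calp \prod_{i=1}^{N} \Omega_{k_i}^{j_i}(\underline{n}_i)^{\varepsilon_i},
\]
where $N = O_{\alpha, p}(1)$, $2 \leqslant j_i \leqslant p-1$, $k_i = O_p(1)$, $|\underline{n}_i| \lesssim_p n$, and $\varepsilon_i \in \{\pm 1\}$, at an area cost of $\lesap n^{p-1}$.

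Using the first free identity of Lemma \ref{lem:freeidentities} repeatedly I would expand
\[
[w_1, w_2] =_F \prod_{i=1}^{N} u_i \, [\Omega_{k_i}^{j_i}(\underline{n}_i)^{\varepsilon_i}, w_2] \, u_i^{-1},
\]
for suitable conjugating words $u_i \in F_S$; this expansion is free and hence contributes no additional area. It then remains only to bound the area of each inner commutator, and this is precisely what the corner versions of the First and Second commuting $(k,j)$-Lemmas provide. In Case 2, where $w_2$ also represents an element of the derived subgroup, the Second commuting $(k,j)$-Lemma gives $[\Omega_{k_i}^{j_i}(\underline{n}_i)^{\varepsilon_i}, w_2] =_\calp 1$ with area $\lesap n^{p - j_i + 1} \leqslant n^{p-1}$, and summing over the $N = O_{\alpha,p}(1)$ factors yields the desired bound.

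In Case 3, with $w_2 = x_2^k$ and $|k| \leqslant n$, I would invoke the corner version of the First commuting $(k,j)$-Lemma, whose statement should differ from Lemma \ref{lem:gp3firstcommuting} by allowing a correction equal to a bounded product of $\Omega_3^{p-2}$-words with parameters $\lesssim_p n$; this correction arises from the extra defining relation $[x_2, x_3] = x_p$ in $L^{\lrcorner}_p$. Each such commutator has area $\lesssim_p |k| n^{p - j_i} + n^{p - j_i + 1} \leqslant n^{p-1}$ and contributes $O_p(1)$ factors of the form $\Omega_3^{p-2}(\underline{\eta})^{\pm 1}$. Since these $\Omega_3^{p-2}$-words represent central elements of $G^{\lrcorner}_{p,3}$, they commute freely with the $u_i$ and can be gathered together at no further area cost, producing an expression of the claimed form with $D = O_{\alpha, p}(1)$. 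The hard part will be establishing the corner versions of the Reduction Lemma and of the First and Second commuting $(k,j)$-Lemmas themselves, where the extra relation $[x_2, x_3] = x_p$ is what forces the central $\Omega_3^{p-2}$-correction terms to appear; granted those inputs, the present lemma reduces to the careful accounting outlined above.
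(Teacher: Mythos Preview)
Your approach is essentially the same as the paper's: dispose of Case 1 trivially, apply the Reduction Lemma to $w_1$, then push $w_2$ past each $\Omega$-factor using the First or Second commuting $(k,j)$-Lemma. Two small corrections: the Reduction Lemma outputs $\Omega_{l_i}$-words (i.e.\ $j=2$ throughout, with $2\leqslant l_i\leqslant p-1$), not $\Omega_{k_i}^{j_i}$ with varying $j_i$; and the $\Omega_3^{p-2}$-words are central in $G^\lrcorner_{p,3}$ but do not commute \emph{freely} with the conjugators $u_i$---each commutation costs a relation, giving total cost $O_{\alpha,p}(n^2)$, which is harmlessly absorbed into $\lesap n^{p-1}$ since $p\geqslant 5$.
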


\begin{lemma}[First commuting $(k,j)$-Lemma for $G^\lrcorner_{p,3}$, cf. {\cite[Lemma 6.5 and Lemma 6.7]{lipt}}
]\label{lem:almostkjcommuting} 
    Let $p \geqslant 5$, $n \geqslant 1, j \geqslant 2$, $k \geqslant 1$, $\underline{n} \in \R^k$, and $m \in \R$ with $|\underline{n}| \leqslant n$. Then, the identities
    \begin{equation*}
        [\Omega^{j}_{k}(\underline{n}), x_{2}^m] =_{\calp} \begin{cases}
            1, & \text{if} \ k+j \neq 4,\\
            \Omega_{2}^{p-1}(m, -n_1), & \text{if} \ (k,j)=(1,3), \\
            \Omega^{p-1}_{2}(m, -n_2)^{n_1}, & \text{if}  \ (k,j)=(2,2),
        \end{cases}
    \end{equation*}
    hold in $G^{\lrcorner}_{p,3}$. with area $\lesssim_p |m|n^{p-j} + n^{p-j+1}$ if $j<p$ and area $\lesssim_p |m|n + n^2$ if $j=p$. 

    If, moreover, $|m| \leqslant n$ and $(k,j)=(2,2)$, then an identity of the form $$[\Omega_2(\underline{n}), x_2^m]=_\calp \Omega^{p-2}_{3}(\underline{\tilde{m}})$$ holds in $G^\lrcorner_{p,3}$ with area $\lesssim_p n^{p-1}$ for some suitable $\underline{\tilde{m}} \in \R^3$ with $|\underline{\tilde{m}}| \lesssim_p n$.
\end{lemma}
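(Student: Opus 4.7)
The plan is to prove Lemma~\ref{lem:almostkjcommuting} by induction on $k$, keeping the analysis of the base case and the inductive step essentially separate. For the base case $k=1$, I would analyze $[x_j^{n_1}, x_2^m]$ directly using the commutator relations of $L_p^\lrcorner$ recorded in Proposition~\ref{prop:compactpresentations}. When $j\notin\{2,3\}$ (and $j\leqslant p$), the generators $x_j$ and $x_2$ commute in $L_p^\lrcorner$, so $[x_j^{n_1}, x_2^m]=_\calp 1$ is verified by pushing $x_2^m$ past $x_j^{n_1}$ generator-by-generator; the case $j=2$ is trivial, and the case $j=p$ is trivial by centrality of $x_p$. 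The special case $j=3$ exploits the bracket $[x_2,x_3]=x_p$ to obtain $[x_3^{n_1}, x_2^m]=_\calp x_p^{-n_1 m}$ and then converts this to $\Omega_2^{p-1}(m,-n_1)=[x_1^m, x_{p-1}^{-n_1}]$ via the relation $[x_1, x_{p-1}]=x_p$. The area bounds follow from direct bookkeeping of the number of applied relations.

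For the inductive step $k\geqslant 2$, I would write $\Omega_k^j(\underline n)=[x_1^{n_1}, W]$ with $W=\Omega_{k-1}^j(n_2,\ldots,n_k)$ and move $x_2^m$ through the expression $x_1^{-n_1}W^{-1}x_1^{n_1}W$ from right to left using the identity $ab=ba[a,b]$. This produces four boundary commutators: $[W, x_2^m]$, $[x_1^{n_1}, x_2^m]$, $[W^{-1}, x_2^m]$ and $[x_1^{-n_1}, x_2^m]$. The first and third are controlled by the induction hypothesis, since they are either trivial or $\Omega_2^{p-1}$-words, i.e.\ powers of the central element $x_p$; the second and fourth are $\Omega_2^2$-words by definition. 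Since $\Omega_2^{p-1}$-words are central, their two occurrences (coming from $W$ and $W^{-1}$) cancel in pairs, and the remaining $\Omega_2^2$-words commute with $x_1^{\pm n_1}$ and with $W$ up to higher commutators already covered by the induction hypothesis or by the base case with $j=3$. The only branch where this cancellation does not automatically terminate is $(k,j)=(2,2)$, where $W=x_2^{n_2}$ so that $[W,x_2^m]=1$, but the mixed commutator $[[x_1^{n_1},x_2^m], x_2^{n_2}]$ picks up the contribution $\Omega_2^{p-1}(m,-n_2)^{n_1}$ through the $[x_2,x_3]=x_p$ bracket, accounting for the stated non-trivial right-hand side.

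For the final claim of the Lemma, assuming $|m|\leqslant n$ and $(k,j)=(2,2)$, I would rewrite $\Omega_2^{p-1}(m, -n_2)^{n_1} = [x_1^m, x_{p-1}^{-n_2}]^{n_1}$ as $\Omega_3^{p-2}(n_1, m, -n_2)=[x_1^{n_1}, [x_1^m, x_{p-2}^{-n_2}]]$, using that both expressions equal $x_p^{-m n_1 n_2}$ in $G^\lrcorner_{p,3}$: by Proposition~\ref{prop:compactpresentations} the inner commutator $[x_1^m, x_{p-2}^{-n_2}]$ equals $x_{p-1}^{-mn_2}$ modulo terms with higher index, and bracketing with $x_1^{n_1}$ yields $x_p^{-m n_1 n_2}$. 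The area cost of the rewriting is $\lesssim_p n^{p-1}$, each of the intermediate expansions involving products of powers of size at most $n$, with $|m|\leqslant n$ preventing the $m$-factor from blowing up the bound. The main obstacle will be carrying out the cancellation of all the $\Omega_2^{p-1}$ contributions in the inductive step while preserving the sharp area bound $|m|n^{p-j}+n^{p-j+1}$ rather than a coarser $n^{p-1}$; the key simplification is that every such contribution is a power of the central generator $x_p$, so it may be shuffled freely through the rest of the expression and collected before being converted to $\Omega$-form.
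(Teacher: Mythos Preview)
Your proposed induction on $k$ is genuinely different from the paper's argument, and it has a real gap in the inductive step. When you write $\Omega_k^j(\underline n)=[x_1^{n_1},W]$ with $W=\Omega_{k-1}^j(n_2,\dots,n_k)$ and push $x_2^m$ through, you correctly identify the four boundary commutators. The contributions $[W^{\pm 1},x_2^m]$ are handled by your induction hypothesis on $k$. But the terms $C_\pm:=[x_1^{\pm n_1},x_2^m]$ are $\Omega_2^2$-words of word length $\sim n+|m|$, and to cancel them you must move $C_+$ past $W^{-1}$ and $x_1^{-n_1}$ (or equivalently move $C_-$ past $x_1^{n_1}$ and $W$). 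Your sketch says these moves are ``covered by the induction hypothesis or the base case with $j=3$''; they are not. What you need is $[\Omega_2(n_1,m),W]=_\calp 1$ with controlled area, i.e.\ commuting an $\Omega_2$-word with a derived-subgroup word---this is the content of the \emph{Second} commuting $(2,2)$-Lemma, which is not available from an induction on $k$ alone. Worse, since $|m|$ is unbounded in the hypothesis, naively shuffling the length-$(n+|m|)$ word $C_\pm$ past $W$ and $x_1^{\mp n_1}$ incurs costs of order $(n+|m|)^2$, which for large $|m|$ exceeds the target bound $|m|\,n^{p-j}+n^{p-j+1}$.

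The paper avoids this entirely by running a \emph{descending induction on $j$} (simultaneously for the First and Second commuting lemmas), inside the outer induction on $p$. The key device is Lemma~\ref{lem:commutatortinyletter}: rather than peeling off the outermost $x_1^{n_1}$, one reduces the innermost exponent $n_{k-1}$ by one at a time, expressing $\Omega_k^{j}(\underline n)$ as a product of $\lesssim n$ many $\Omega^{j+1}$-words plus a short prefix. Each $\Omega^{j+1}$-piece lives in $\langle x_1,x_{j+1},\dots,x_p,y_1,y_{p-1}\rangle\cong G_{p-j+1,3}\hookrightarrow G^\lrcorner_{p,3}$, where the outer hypothesis on $p$ and the induction hypothesis at level $j+1$ apply. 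Commuting $x_2^m$ past each piece then costs $|m|\,n^{p-j-1}+n^{p-j}$ by induction, and summing over $\lesssim n$ pieces gives exactly $|m|\,n^{p-j}+n^{p-j+1}$. The exceptional cases $(1,3)$ and $(2,2)$ are handled afterwards, using the $5$-Heisenberg subgroups $\langle x_2,x_3,y_1,y_{p-1}\rangle$ and $\langle x_1,x_{p-1},y_1,y_{p-1}\rangle$ to convert the residual $x_p$-powers into $\Omega_2^{p-1}$-words at quadratic cost; your base case $j=3$ omits this and would not achieve $\lesssim_p |m|n$ without it. Your treatment of the ``moreover'' clause for $(2,2)$ is essentially the same as the paper's.
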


\begin{lemma}[Second commuting $(k,j)$-Lemma for $G^\lrcorner_{p,3}$, cf. {\cite[Lemma 6.4 and Lemma 6.6]{lipt}}] \label{lem:kjcommuting}
     Let $p \geqslant 5$, $n \geqslant 1, j \geqslant 2$, $k \geqslant 1$, $\underline{n} \in \R^k$, and  $\alpha \geqslant 1$ with $|\underline{n}| \leqslant n$. If $w:=w(x_1, x_2) \in \mathcal{F}[\alpha]$ is a word of length at most $n$ representing an element in the derived subgroup of $G^{\lrcorner}_{p,3}$, then the identity
     \begin{equation*}
         [\Omega^{j}_{k}(\underline{n}), w] =_{\calp} 1,
     \end{equation*}
     holds in $G^{\lrcorner}_{p,3}$ with area $\lesap n^{p-j+1}$ if $j<p$ and area $\lesap n^2$ if $j=p$.
\end{lemma}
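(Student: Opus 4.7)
The plan is to proceed by descending induction on $p - k - j$, reducing $[\Omega_k^j(\underline n), w]$ to commutators with individual letters of $w$ via Lemma \ref{lem:freeidentities}(2), applying the First commuting $(k,j)$-Lemma (Lemma \ref{lem:almostkjcommuting}) to each piece, and using the hypothesis that $w$ is in the derived subgroup to cancel the non-trivial outputs. This parallels the strategy of \cite[Lemma 6.4 and Lemma 6.6]{lipt}.

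\textbf{Base case.} When $j = p$ or $k + j = p$, the element $\Omega_k^j(\underline n)$ is either trivial (for $j = p$ and $k \geqslant 2$, as $[x_1, x_p] = 1$) or central: namely $x_p^{n_1}$ for $(k,j) = (1,p)$, or a general element of $\langle x_p \rangle = C^{p-1}(L_p^\lrcorner)$ when $k + j = p$. Commuting a central element of length $\lesssim_p n$ past $w$ of length $\lesssim n$ costs area $\lesap n^2$, which matches the target bound when $j = p$ and is $\leqslant n^{p-j+1}$ when $j < p$, since $p-j+1 \geqslant 2$ in that case.

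\textbf{Inductive step.} Assume the conclusion for all larger $k + j$. Write $w = \prod_{i=1}^\ell s_i^{m_i}$ with $\ell \leqslant \alpha$, $s_i \in \Sigma$, and $\sum_i |m_i| \lesssim n$; the derived-subgroup hypothesis forces the total $x_1$- and $x_2$-exponents $\sigma_1, \sigma_2$ of $w$ to vanish. Iterating Lemma \ref{lem:freeidentities}(2) yields
\begin{equation*}
[\Omega_k^j(\underline n), w] =_F \prod_{i=1}^\ell [\Omega_k^j(\underline n), s_i^{m_i}]^{u_i},
\end{equation*}
where each $u_i$ is a suffix of $w$. For $s_i = x_2^{\epsilon_i}$, Lemma \ref{lem:almostkjcommuting} either gives a null-homotopic piece (total area $\lesap n^{p-j+1}$), or, in the cases $(k,j) \in \{(1,3),(2,2)\}$, an $\Omega_2^{p-1}$-output of the form $[x_1^{\epsilon_i m_i}, x_{p-1}^c]$ with $c \in \{-n_1, -n_2\}$ independent of $i$; since $[x_1, x_{p-1}] = x_p$ is central, these outputs lie in $\langle x_p \rangle$, are invariant under all conjugations, and combine bilinearly in their first argument into $[x_1^{\pm \sigma_2}, x_{p-1}^c] = 1$. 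For $s_i = x_1^{\epsilon_i}$, the definition of $\Omega$-words combined with $[a,b]^{-1} = [b,a]$ gives $[\Omega_k^j(\underline n), x_1^{\epsilon_i m_i}] =_\calp \Omega_{k+1}^j(\epsilon_i m_i, \underline n)^{-1}$; these deeper $\Omega$-words telescope via the linearisation identity
\begin{equation*}
\Omega_{k+1}^j(m + m', \underline n) =_\calp \Omega_{k+1}^j(m, \underline n) \cdot \Omega_{k+2}^j(m', m, \underline n)^{-1} \cdot \Omega_{k+1}^j(m', \underline n)
\end{equation*}
(a direct consequence of Lemma \ref{lem:freeidentities}) into $\Omega_{k+1}^j(\sigma_1, \underline n)^{-1} = 1$, the $\Omega_{k+2}^j$-errors being handled by the inductive hypothesis.

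The main obstacle is the bookkeeping of these error terms for the $x_1$-factors: the deeper $\Omega$-word errors produced by linearisation, together with the corrections introduced by the suffix-conjugations $u_i$, must all be absorbed into area $\lesap n^{p-j+1}$. This is controlled by the inductive hypothesis applied to $\Omega$-words at depths $k + j + 1, \ldots, p$, so the induction terminates after at most $p - k - j$ steps thanks to the nilpotency class $p - 1$ of $L_p^\lrcorner$.
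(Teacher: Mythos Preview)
Your base case is wrong: when $k+j = p$ the word $\Omega_k^j(\underline n)$ is \emph{not} central. Since $x_j \in C^{j-1}(L_p^\lrcorner)$, the $(k-1)$-fold iterated commutator $\Omega_k^j(\underline n)$ lies in $C^{k+j-2}$, which for $k+j=p$ is $C^{p-2}$, not the centre $C^{p-1}$. For instance $\Omega_2^{p-2}(n_1,n_2)=[x_1^{n_1},x_{p-2}^{n_2}]$ represents (up to higher terms) $x_{p-1}^{n_1 n_2}$, which is not central. The correct threshold is $k+j \geqslant p+1$.

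The more serious gap is the conjugation step. After expanding $[\Omega_k^j(\underline n),w]$ into $\prod_i [\Omega_k^j(\underline n),s_i^{m_i}]^{u_i}$, the factors coming from $x_1$-letters are $(\Omega_{k+1}^j(\epsilon_i m_i,\underline n)^{-1})^{u_i}$ with $u_i$ a \emph{suffix} of $w$. To telescope these you must first strip the conjugations, i.e.\ control $[\Omega_{k+1}^j(\cdot),u_i]$. But your inductive hypothesis is the Second commuting Lemma at depth $k+1$, which only applies when the second argument lies in the derived subgroup --- and suffixes $u_i$ generally do not. So the induction does not close as stated. What you actually need here is the analogue of the paper's Lemma~\ref{lem:omega-u} (moving $\Omega$-words past arbitrary words in $\mathcal F[\alpha]$, producing deeper $\Omega$-errors), whose proof relies on the \emph{First} commuting Lemma, not the Second.

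For comparison, the paper takes a different route: rather than decomposing $w$, it decomposes $\Omega_k^{j_0}(\underline n)$ itself into a product of $\Omega^{j_0+1}$-words via Lemma~\ref{lem:commutatortinyletter}, then commutes $w$ through each piece using the induction hypothesis in \emph{descending} $j$. For $k=1$ it uses Lemma~\ref{lem:omega-u} followed by the outer induction hypothesis (IH-$p$) on $G_{p-j_0+2,3}$. Your letter-by-letter approach could plausibly be made to work, but it requires the machinery of Lemma~\ref{lem:omega-u} (or a careful simultaneous induction with the First commuting Lemma) rather than a direct appeal to the Second commuting Lemma at higher depth.
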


A key step in the proof of the Main Commuting Lemmas is the Reduction Lemma; the proof of the latter relies on the First and Second commuting Lemmas.  

\begin{lemma}[{\cite[Lemma 6.8]{lipt}}, Reduction Lemma]\label{lem:reductionlemma}
    Let $\alpha \geqslant 1$, let $w=w(x_1, x_2)$ be a word of length at most $n$ in $\mathcal{F}[\alpha]$ representing an element in the derived subgroup of $G^\lrcorner_{p,3}$ for $p \geqslant 5$ (respectively $G_{p,3}$ for $p \geqslant 3$). There exists $L = O_{\alpha,p}(1)$ such that the identity
    $$
    w =_\calp \prod_{i=1}^L \Omega_{l_i}(\underline{\eta}_i)^{\pm 1} 
    $$
    holds in $G^{\lrcorner}_{p,3}$ (respectively $G_{p,3}$) with area $\lesap n^{p-1}$ for some $2 \leqslant l_i \leqslant p-1$ and some $\underline{\eta}_i \in \R^{l_i}$ with $|\underline{\eta}_i| \lesssim_p n$. 
\end{lemma}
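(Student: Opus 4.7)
The plan is to put $w$ into a sorted normal form by moving each $x_1$-letter to the left past every $x_2$-letter, collecting the commutator residues as $\Omega_{l_i}$-words. Since $w \in \mathcal{F}[\alpha]$ decomposes as $u_1 u_2 \cdots u_N$ with $N \leqslant \alpha$ and each $u_i$ equal to either $x_1^{b_i}$ or $x_2^{c_i}$ of size at most $n$, this sort consists of at most $O(\alpha^2)$ elementary adjacent swaps, plus further swaps of the newly introduced $\Omega$-words past remaining letters.

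The two basic moves are the free identities
\[ x_2^c \cdot x_1^b =_F x_1^b \cdot x_2^c \cdot \Omega_2(b,c)^{-1}, \qquad \Omega_k(\underline m) \cdot x_1^b =_F x_1^b \cdot \Omega_k(\underline m) \cdot \Omega_{k+1}(b, \underline m)^{-1}, \]
which contribute no area but either generate new $\Omega_2$-words or promote existing $\Omega_k$-words to level $k+1$. The level iteration terminates because $L_p$ and $L_p^\lrcorner$ both have nilpotency class $p-1$, so $\Omega_k$-words with $k \geqslant p$ are trivial. The only non-free move is the transport of an $\Omega_k$-word past an $x_2^c$-letter; for this I would invoke the First commuting $(k,2)$-Lemma (Lemma~\ref{lem:gp3firstcommuting} for $G_{p,3}$ and Lemma~\ref{lem:almostkjcommuting} for $G^\lrcorner_{p,3}$), which turns $[\Omega_k(\underline m), x_2^c]$ into either $1$ or, only in the corner case $(k,j) = (2,2)$ for $G^\lrcorner_{p,3}$, the central residue $\Omega_2^{p-1}(c, -m_2)^{m_1}$, at area $\lesssim_p n^{p-1}$ per use. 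For $G^\lrcorner_{p,3}$ this residue is a power of the central element $x_p$ of exponent magnitude $\lesssim_p n^3$, which I would rewrite as a single $\Omega_{p-1}(\eta_1, \ldots, \eta_{p-1})^{\pm 1}$ with $|\eta_i| \leqslant n$ by using the identity $x_p^{abc} = \Omega_{p-1}(a, b, c, 1, \ldots, 1)$ at a bounded additional cost $\lesssim_p n^{p-1}$.

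After all swaps, the expression becomes $x_1^B x_2^C \cdot \prod_{i=1}^{L} \Omega_{l_i}(\underline \eta_i)^{\pm 1}$ with $2 \leqslant l_i \leqslant p-1$, $|\underline \eta_i| \lesssim_p n$ and $L = O_{\alpha, p}(1)$. Since $w$ represents an element of the derived subgroup, $B = C = 0$ and the abelian prefix disappears, yielding the required identity. Summing the contributions gives total area $\lesap n^{p-1}$. The main obstacle I anticipate is the bookkeeping: tracking that each newly introduced parameter remains bounded by $O_p(n)$ throughout the iteration, that the number of promotions and residues stays uniformly $O_{\alpha, p}(1)$ in $n$, and that the corner-case residues can always be absorbed into the collected list of $\Omega_l$-words without inflating their parameters. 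Organising the proof as an induction on the number of $x_2$-chunks in $w$, as in \cite[\S 6]{lipt}, should provide a clean framework for this.
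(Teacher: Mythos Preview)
Your proposal is correct and follows essentially the same route as the paper: induction on $\alpha$, sorting the $x_1/x_2$-chunks while collecting $\Omega$-residues, transporting these past remaining letters via the First commuting $(k,2)$-Lemma, and absorbing the $(2,2)$ corner-case residue for $G^\lrcorner_{p,3}$ into a single $\Omega_{p-1}$-word. The only cosmetic difference is that the paper packages the transport step into Lemma~\ref{lem:omega-u} and Corollary~\ref{cor:movingomega2}, and routes the corner case through $\Omega_3^{p-2}$ via Lemma~\ref{lem:trick} rather than through your direct identification $x_p^{abc} = \Omega_{p-1}(a,b,c,1,\ldots,1)$.
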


Finally, we deduce the Cancelling $k$-Lemma from the Main Commuting Lemmas and then use it to complete the proofs of Proposition \ref{prop:upperbound-dehn-Gp3} and Proposition \ref{prop:upperbound-dehn-Gcornerp3}.

\begin{lemma}[Cancelling $k$-Lemma cf. {\cite[Lemma 6.9]{lipt}}]\label{lem:cancelling}
    Let $n \geqslant 1$, $2 \leqslant k \leqslant p-1$, $M_j$ be a positive integer for all $k \leqslant j \leqslant p-1$, and $M:=\max \left\{M_j\mid k\leqslant j \leqslant p-1\right\}$. Consider the word
    \[
    w(x_1, x_2) := \left(\prod_{i=1}^{M_k} \Omega_k (\underline{n}_{k,i})^{\pm 1} \right) \left(\prod_{i=1}^{M_{k+1}} \Omega_{k+1} (\underline{n}_{k+1,i})^{\pm 1}\right) \ldots \left(\prod_{i=1}^{M_{p-1}} \Omega_{p-1} (\underline{n}_{p-1,i})^{\pm 1}\right),
    \] 
    for some $\underline{n}_{l,i} \in \R^j$ with $|\underline{n}_{l,i}| \leqslant n$. If $w:=w(x_1,x_2)$ is null-homotopic in $G^{\lrcorner}_{p,3}$ for $p \geqslant 5$ (respectively $G_{p,3}$ for $p \geqslant 3$), then it has area $\lesssim_{M,p} n^{p-1}$.
\end{lemma}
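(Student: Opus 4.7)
The plan is to proceed by downward induction on the starting level $k$, running from the base case $k = p-1$ down to $k=2$. The Main Commuting Lemma (Lemma~\ref{lem:maincommuting} for $G^\lrcorner_{p,3}$, or Lemma~\ref{lem:gp3maincommuting} for $G_{p,3}$) is the workhorse of the argument: since every $\Omega_l$-word with $l \geqslant 2$ lies in the derived subgroup, it lets us freely reorder the $\Omega$-factors in $w$ at a cost of $\lesap n^{p-1}$ per swap, and the total number of swaps stays $O_{M,p}(1)$, so reordering never exceeds the target budget.

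In the base case $k=p-1$, the word $w$ is a product of central $\Omega_{p-1}$-words; each such iterated commutator equals $x_p^{n_1 \cdots n_{p-1}}$ in the centre (plus a bounded number of $\widetilde\Omega$-type error terms in the $G^\lrcorner_{p,3}$ case, absorbable at the same level). Null-homotopy of $w$ forces $\sum_i \pm \prod_j n_{p-1,i,j}=0$, so after reordering via the Main Commuting Lemma the central contributions pair up with their inverses and cancel explicitly with total area $\lesap n^{p-1}$.

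For the inductive step, assume the estimate holds for words starting at level $k+1$, and write $w = W_k W_{k+1} \cdots W_{p-1}$ with $W_l := \prod_{i} \Omega_l(\underline{n}_{l,i})^{\pm 1}$. The core step is to rewrite $W_k$ as a single $\Omega_k$-word $\Omega_k(\underline n^*)$ followed by a product of $\Omega_l$-words with $l>k$, using multilinearity of iterated commutators modulo $C^{k+1}G$. One iteratively applies the free identities of Lemma~\ref{lem:freeidentities}, together with the expansions of Corollary~\ref{cor:commutingxkxl}, Corollary~\ref{cor:oneandjnotequalsoneplusj} and Addendum~\ref{add:omega3s}, to combine the $\Omega_k$-factors; the correction terms produced lie in $C^{k+1}G$ and are expressible as bounded products of higher $\Omega_l$-words with argument norms $\lesssim n$. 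The First and Second Commuting $(k,j)$-Lemmas control the cost of shuffling these corrections past each other and past the $x_2$-powers that appear, keeping the total area $\lesap n^{p-1}$. Projecting the identity $w =_G 1$ to $G/C^{k+1}G$ kills every higher-level $\Omega$-factor, so $\Omega_k(\underline n^*)$ becomes trivial there; since in this quotient $\Omega_k$ corresponds to the $(n^*_1\cdots n^*_k)$-th power of a fixed non-zero element of $C^k G/C^{k+1}G$, one of the $n^*_j$ vanishes, and hence $\Omega_k(\underline n^*) =_F 1$. What remains is a Cancelling word starting at level $k+1$, to which the inductive hypothesis applies.

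The main obstacle is the multilinearization step inside the inductive step: turning the $\Omega_k$-block into a single $\Omega_k$-word modulo higher corrections while keeping the number of produced correction terms bounded in $M$ and $p$ and their total area $\lesap n^{p-1}$. In the $G^\lrcorner_{p,3}$ case, the extra $[y_1^{\tilde b}, y_{p-1}^{\tilde b}]$-type error terms produced by Corollary~\ref{cor:commutingxkxl} must be carefully carried through the bookkeeping and reabsorbed as $\widetilde\Omega$-words at an appropriate higher level; this is precisely why the First Commuting $(k,j)$-Lemma is stated with the specific correction terms appearing there, and getting the combinatorics right is the delicate part of the proof.
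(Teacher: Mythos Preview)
Your overall plan---descending induction on $k$ with the Main Commuting Lemma handling reorderings---matches the paper's. But both the base case and the heart of the inductive step contain genuine gaps.

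\textbf{Base case $k=p-1$.} The sentence ``the central contributions pair up with their inverses and cancel explicitly'' does not give an area bound. Null-homotopy yields only the single relation $\sum_i \pm\prod_j n_{p-1,i,j}=0$; there is no pairing, and you have not explained how to fill $\Omega_{p-1}(\underline{n})\cdot\Omega_{p-1}(\underline{m})^{-1}$ (two distinct words representing inverse central elements) at cost $\lesssim n^{p-1}$. The paper's argument is different and specific: it invokes Corollary~\ref{cor:omegapminus1} to rewrite each $\Omega_{p-1}(\underline{n}_{p-1,i})^{\pm1}$ as a bounded power of an $\Omega_{p-2}^3$-word, which lives in the subgroup $G_{p-1,3}\hookrightarrow G$, and then uses the \emph{outer} induction hypothesis on~$p$ (namely $\delta_{G_{p-1,3}}\preccurlyeq n^{p-2}$) to merge these words pairwise in $\lesssim Mn$ steps at cost $\lesssim n^{p-2}$ each.

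\textbf{Inductive step.} You correctly flag the ``multilinearization step'' as the obstacle, but the tools you cite (Corollaries~\ref{cor:commutingxkxl} and~\ref{cor:oneandjnotequalsoneplusj}, Addendum~\ref{add:omega3s}) are single-step expansions that change an $x_1$-exponent by~$\pm1$. Using them to combine the $\Omega_k$-block into one $\Omega_k(\underline{n}^*)$ would take $\sim n$ steps and generate $\sim n$ higher-level correction terms, each of which must then be commuted past words of length $\sim n$; the accounting overshoots to $\sim n^p$. The paper solves this with a genuinely different mechanism: Lemma~\ref{lem:extracting}, whose proof rests on the Cutting in Half Lemma~\ref{lem:cuttinginhalf}, a doubling identity $\Omega_k(2\underline{n})=\Omega_k(\underline{n})^{2^k}\cdot w_k(\underline{n})$ in which $w_k$ is an $O_p(1)$-bounded product of higher $\Omega$-words. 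Iterated $\lceil\log_2|\underline{n}|\rceil$ times, this extracts a power $x_{k+1}^\ell$ (with $|\ell|\lesssim_p n^k$) from each $\Omega_k(\underline{n}_{k,i})$ while producing only $O_p(1)$ correction terms per word; the $x_{k+1}$-powers then add, and null-homotopy modulo $C^{k+1}G$ forces the total exponent to vanish. This logarithmic device is the missing idea; without it your bookkeeping cannot stay within the $n^{p-1}$ budget.
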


We now proceed with the proofs of Proposition \ref{prop:upperbound-dehn-Gp3} and Proposition \ref{prop:upperbound-dehn-Gcornerp3}.

\subsubsection{Start of the induction on $p$}

As mentioned above, the proofs of Proposition \ref{prop:upperbound-dehn-Gp3} and Proposition \ref{prop:upperbound-dehn-Gcornerp3} are done in parallel by ascending induction on $p$. The corresponding bases of induction are as follows

\subsubsection*{Base of induction on $p$} By \cite{Allcock} and \cite{OlsSapCombDehn} the Dehn function of $G_{3,3}$ satisfies $\delta_{G_{3,3}}(n) \asymp n^2$ and by \cite{lipt} the Dehn function of the group $G_{4,3}$ satisfies $\delta_{G_{4,3}}(n) \asymp n^3$.

\subsubsection*{Induction hypothesis for $p$} Throughout \S\ref{sec:upper-gp3-gp3corner} these are our standing assumptions. As mentioned above, the induction steps from $p-1$ to $p$ only requires the induction hypothesis for $G_{q,3}$ with $q<p$

\begin{enumerate}[leftmargin=4em, font=\bfseries]
    \item[{\crtcrossreflabel{(IH-p)}[item:IH-fromp-1top]}] Let $p-1 \geqslant 3$. Suppose Proposition \ref{prop:upperbound-dehn-Gp3} holds for all $G_{q,3}$ with $q \leqslant p-1$. Moreover, suppose the Main commuting Lemma for $G_{q,3}$ along with the First commuting $(k,j)$-Lemma \ref{lem:gp3firstcommuting} and the Second commuting $(k,j)$-Lemma \ref{lem:gp3secondcommuting} for $G_{q,3}$ hold. 
\end{enumerate}

For the induction step from $p-1$ to $p$ for $G^\lrcorner_{p,3}$ (respectively $G_{p,3}$) we first prove the First and Second commuting $(k,j)$-Lemmas for $G^\lrcorner_{p,3}$ (respectively $G_{p,3}$).

\subsubsection{Proof of the First and Second Commuting $(k,j)$-Lemmas}

\subsubsection*{Strategy of the proofs} We prove the First and Second commuting $(k,j)$-Lemmas for $G^\lrcorner_{p,3}$ and $G_{p,3}$ in \emph{parallel by descending induction on $j$} for all $(k,j)\neq (1,2)$. The case $(1,2)$ follows from the Main Commuting Lemma, whose proof only relies on the cases $(k,j) \neq (1,2)$. 

The induction in $j$ for $G^\lrcorner_{p,3}$ is done for all pairs $(k,j)$ such that \emph{$k + j \geqslant 5$}, while for $G_{p,3}$ it is done for \emph{all} pairs $(k,j)$. In the induction step for $j$, namely to prove the statement for the pairs $(k,j_0)$ assuming the statements for the pairs $(k,\ell)$ with $j_0 +1 \leqslant \ell \leqslant p$, we distinguish between two cases: 
\begin{itemize}
    \item \emph{Case (i): pairs $(k, j_0)$ with $k\neq 1$}. In this case the First (respectively Second) commuting $(k,j_0)$-Lemma requires the First (respectively Second) commuting $(r,j_0 +1)$-Lemmas for $G^\lrcorner_{p,3}$ for all $r \geqslant k-1$.\vspace{3pt} 
    \item \emph{Case (ii): the pair $(1,j_0)$}. For this pair the First commuting $(1,j_0)$-Lemma will follow easily from the presentation of $G^\lrcorner_{p,3}$, while the Second commuting $(1,j_0)$-Lemma requires the First commuting $(k,j_0)$-Lemma for $G^\lrcorner_{p,3}$ with $k>1$.
\end{itemize}
Finally, the statements for the remaining pairs $(1,3)$ and $(2,2)$ for $G_{p,3}^{\lrcorner}$ require extra arguments to address the appearing ``error terms'' coming from the relation $[x_2, x_3]=_\calp x_p$. For a pictorial description of the implications between the statements see Figure \ref{fig:commuting-lemmas}.

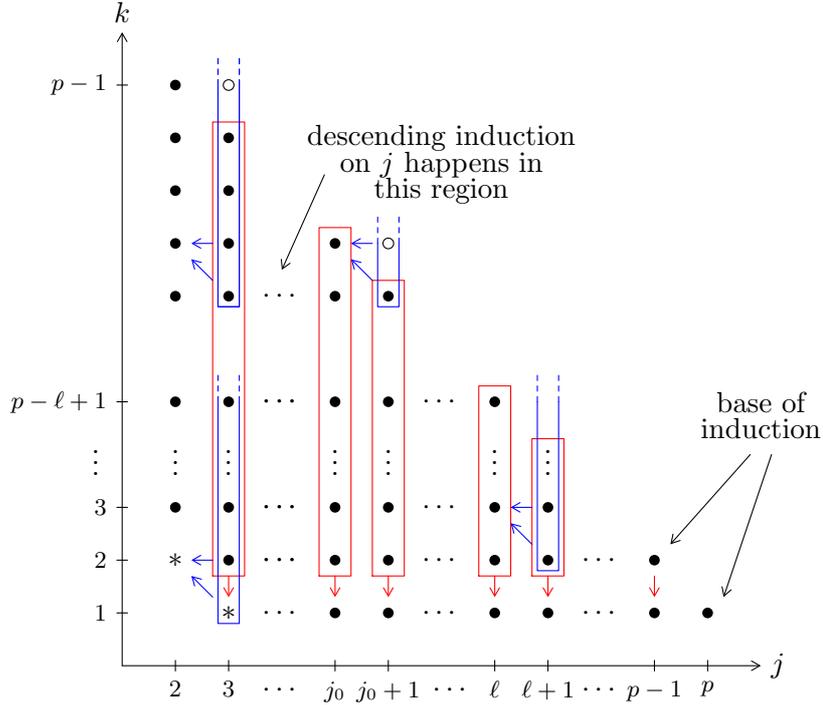
\begin{figure}
\begin{center}
\begin{tikzpicture}[line cap=round,line join=round,>=angle 60,x=0.7cm,y=0.7cm]
\clip(-2.5,-1) rectangle (14.5,13);
\draw[->,color=black] (1,0) -- (13,0) node[right]{$j$};
\foreach \x in {2,3}
\draw[shift={(\x,0)},color=black] (0pt,2pt) -- (0pt,-2pt) node[below] {\footnotesize $\x$};
\draw[shift={(4,0)},color=black] (0,-3pt) node[below] {$\cdots$};
\draw[shift={(5,0)},color=black] (0pt,2pt) -- (0pt,-2pt) node[below] {\footnotesize $j_0$};
\draw[shift={(6,0)},color=black] (0pt,2pt) -- (0pt,-2pt) node[below] {\footnotesize $j_0+1$};
\draw[shift={(7.2,0)},color=black] (0,-3pt) node[below] {$\cdots$};
\draw[shift={(8,0)},color=black] (0pt,2pt) -- (0pt,-2pt) node[below] {\footnotesize $\ell$};
\draw[shift={(9,0)},color=black] (0pt,2pt) -- (0pt,-2pt) node[below] {\footnotesize $\ell+1$};
\draw[shift={(10,0)},color=black] (0,-3pt) node[below] {$\cdots$};
\draw[shift={(11,0)},color=black] (0pt,2pt) -- (0pt,-2pt) node[below] {\footnotesize $p-1$};
\draw[shift={(12,0)},color=black] (0pt,2pt) -- (0pt,-2pt) node[below] {\footnotesize $p$};
\draw[->,color=black] (1,0) -- (1,12) node[above]{$k$};
\foreach \y in {1,2,3}
\draw[shift={(1,\y)},color=black] (2pt,0pt) -- (-2pt,0pt) node[left] {\footnotesize $\y$};
\draw[shift={(1,5)},color=black] (2pt,0pt) -- (-2pt,0pt) node[left] {\footnotesize $p-\ell+1$};
\draw[shift={(1,11)},color=black] (2pt,0pt) -- (-2pt,0pt) node[left] {\footnotesize $p-1$};

\foreach \j in {5,6,8,9,11}
\fill[shift={(\j,1)}, color=black] circle(2pt);
\foreach \j in {3,5,6,8,9}
\fill [color=black] (\j,2) circle (2pt) ;
\foreach \j in {2,3,5,6,8,9}
\fill [color=black] (\j,3) circle (2pt) ;
\foreach \j in {2,3,5,6,8}
\fill [color=black] (\j,5) circle (2pt) ;
\foreach \j in {2,3,5,6}
\fill [color=black] (\j,7) circle (2pt) ;
\foreach \j in {2,3,5}
\fill [color=black] (\j,8) circle (2pt) ;
\foreach \j in {2,3}
{\fill [color=black] (\j,9) circle (2pt) ;
\fill [color=black] (\j,10) circle (2pt) ;}
\fill [color=black] (2,11) circle (2pt) ;
\draw (2,2) node{$\ast$};
\draw (3,1) node{$\ast$};

\draw (6,8) circle(2pt);

\draw (3,11) circle(2pt);

\draw [color=red] (2.7,1.7) -- (3.3,1.7) -- (3.3,10.3) -- (2.7,10.3) -- (2.7,1.7);
\draw [shift={(2,0)},color=red] (2.7,1.7) -- (3.3,1.7) -- (3.3,8.3) -- (2.7,8.3) -- (2.7,1.7);
\draw [shift={(3,0)},color=red] (2.7,1.7) -- (3.3,1.7) -- (3.3,7.3) -- (2.7,7.3) -- (2.7,1.7);
\draw [shift={(5,0)},color=red] (2.7,1.7) -- (3.3,1.7) -- (3.3,5.3) -- (2.7,5.3) -- (2.7,1.7);
\draw [shift={(6,0)},color=red] (2.7,1.7) -- (3.3,1.7) -- (3.3,4.3) -- (2.7,4.3) -- (2.7,1.7);


\draw [color=blue] (2.8,11) -- (2.8,6.8) -- (3.2,6.8) -- (3.2,11);
\draw [color=blue, dash pattern = on 2pt off 2pt] (2.8,11) --(2.8,11.5);
\draw [color=blue, dash pattern = on 2pt off 2pt] (3.2,11) --(3.2,11.5);

\draw [color=blue] (5.8,8) -- (5.8,6.8) -- (6.2,6.8) -- (6.2,8);
\draw [color=blue, dash pattern = on 2pt off 2pt] (5.8,8) --(5.8,8.5);
\draw [color=blue, dash pattern = on 2pt off 2pt] (6.2,8) --(6.2,8.5);

\draw [color=blue] (2.8,11) -- (2.8,6.8) -- (3.2,6.8) -- (3.2,11);
\draw [color=blue, dash pattern = on 2pt off 2pt] (2.8,11) --(2.8,11.5);
\draw [color=blue, dash pattern = on 2pt off 2pt] (3.2,11) --(3.2,11.5);

\draw [color=blue] (2.8,5) -- (2.8,0.8) -- (3.2,0.8) -- (3.2,5);
\draw [color=blue, dash pattern = on 2pt off 2pt] (2.8,5) --(2.8,5.5);
\draw [color=blue, dash pattern = on 2pt off 2pt] (3.2,5) --(3.2,5.5);

\draw [color=blue] (8.8,5) -- (8.8,1.8) -- (9.2,1.8) -- (9.2,5);
\draw [color=blue, dash pattern = on 2pt off 2pt] (8.8,5) --(8.8,5.5);
\draw [color=blue, dash pattern = on 2pt off 2pt] (9.2,5) --(9.2,5.5);

\foreach \k in {2,8}
\draw [->,color=blue] (2.7,\k) -- (2.3,\k);
\foreach \k in {8}
\draw [->,color=blue] (5.7,\k) -- (5.3,\k);
\foreach \k in {8}
\draw [->,color=blue] (5.7,\k-0.7) -- (5.3,\k-0.3);
\foreach \k in {2,8}
\draw [->,color=blue] (2.7,\k-0.7) -- (2.3,\k-0.3);

\draw [->,color=blue] (8.7,3) -- (8.3,3);
\draw [->,color=blue] (8.7,2.3) -- (8.3,2.7);

\foreach \j in {3,5,6,8,9,11}
\draw [->,shift={(\j,0)}, color=red] (0,1.7) -- (0,1.3);


\fill (12,1) circle(2pt);
\fill (11,2) circle(2pt);

\draw (13,5) node {base of};
\draw (13,4.5) node {induction};

\draw [->] (12.8,4) -- (11.3,2.3);

\draw [->] (13.2,4) -- (12.3,1.3);

\draw (7,10) node {descending induction};
\draw (7,9.5) node {on $j$ happens in};
\draw (7,9) node {this region};

\draw [->] (4.8,9.3) -- (4,7.5);

\foreach \k in {1,2,3,5}
\draw (4,\k) node {$\cdots$};

\foreach \k in {1,2,3,5}
\draw (7,\k) node {$\cdots$};

\foreach \k in {1,2}
\draw (10,\k) node {$\cdots$};

\foreach \j in {0.5,2,3,5,6,8,9}
\draw (\j,4) node {$\vdots$};

\foreach \k in {2,3,5,7}
\draw (4,\k) node {$\cdots$};

\end{tikzpicture}
\end{center}
\caption{\small Implications of the Commuting $(k,j)$-Lemmas: the points $\bullet, \ast,$ and $\circ$ correspond to both the First and Second commuting $(k,j)$-Lemmas \ref{lem:almostkjcommuting} and \ref{lem:kjcommuting} respectively (see also Addendum \ref{add:nullhomotopicomegawords}). The drawn blue arrows/boxes encode the fact that for the corresponding statement for $(k,j)$ with $k \neq 1$ we need the statements corresponding to the pairs $(r, j+1)$ for all $r \geqslant k-1$ (see Lemma \ref{lem:commutatortinyletter}). The red arrows/boxes correspond to the fact that for the statement corresponding to $(1,j)$ we need the statements for the pairs $(k,j)$ for all $k \geqslant 2$ (see Lemma \ref{lem:omega-u}). All the points beside $\ast$ correspond to the proof by descending induction on $j$; the points $\ast$ require special treatment and can only be done once we are done with the proof of the induction step.}
\label{fig:commuting-lemmas}
\end{figure}
\subsubsection*{Bases of induction on $j$}
Let $W \in \{x_2^m, w\}$ where $w:=w(x_1, x_2) \in \mathcal{F}[\alpha]$ is a word in the derived subgroup of $G_{p,3}^{\lrcorner}$ (respectively $G_{p,3}$). We start the induction with the pairs $(1, p)$ and $(2, p-1)$.

For $(1, p)$ we have that $\Omega_1^p(\underline{n}) := x_p^{n_1}$ is central and it follows from the presentation $\calp$ that the identity $[x_p^{n_1},W] =_\calp 1$ has area $\lesap |W|n$ in $G^{\lrcorner}_{p,3}$ (respectively $G_{p,3}$). 

For $(2, p-1)$, we first use the fact that $\langle x_1, x_{p-1}, y_1, y_{p-1} \rangle$ is a 5-Heisenberg subgroup of $G^\lrcorner_{p,3}$ to rewrite $\Omega_2^{p-1}(\underline{n}) =_\calp \widetilde{\Omega}_2^{p-1}(\underline{n})$ with area $\lesssim_p n^2$ in $G^\lrcorner_{p,3}$. Since $y_i$ commutes with $x_i$, we deduce that the identity $[\Omega_2^{p-1}(\underline{n}), W]=_\calp 1$ holds in $G^\lrcorner_{p,3}$ with area $\lesap n^2$.

\subsubsection*{Induction hypotheses for $j$}
\begin{enumerate}[leftmargin=6em, labelindent=2em, font=\bfseries]
    \item[{\crtcrossreflabel{(IH-$G_{p,3}$)}[item:IH12-gp3]}] Let $3 \leqslant j_0 +1 \leqslant p$. Suppose that for all pairs $(k,\ell)$ such that $j_0 +1 \leqslant \ell \leqslant p$ the First commuting $(k,\ell)$-Lemma \ref{lem:gp3firstcommuting} and the Second commuting $(k,\ell)$-Lemma \ref{lem:gp3secondcommuting} for $G_{p,3}$ hold.
    
    \item[{\crtcrossreflabel{(IH-$G^\lrcorner_{p,3}$)}[item:IH-gp3corner]}] Let $3 \leqslant j_0 +1 \leqslant p$. Suppose that for all pairs $(k,\ell) \neq (1,3)$ such that $j_0 +1 \leqslant \ell \leqslant p$ the First commuting $(k,\ell)$-Lemma \ref{lem:almostkjcommuting} and the Second commuting $(k,\ell)$-Lemmas \ref{lem:kjcommuting} for $G^\lrcorner_{p,3}$ hold. 
\end{enumerate}

\subsubsection*{Auxiliary results for the induction steps for $j$}

\begin{lemma}[cf. {\cite[Lemma 6.12(3)]{lipt}}]\label{lem:laderlemma}
    Let $\ell \geqslant j_0$, and $\beta, n, m \in \R$ with $|\beta| \leqslant 1$. The identity $[x_{1}^{\beta}, x_{\ell}^{n}] =_{\calp} x_{\ell+1}^{\beta \cdot n} x_{\ell+2}^{s_{\ell+2}} \ldots x_{p-1}^{s_{p-1}}x_{p}^{s_p}$ holds in $G^\lrcorner_{p,3}$ for all $p \geqslant 5$ (respectively $G_{p,3}$ for all $p \geqslant 3$) for suitable $|s_i| \lesssim_p n$ with area $\lesssim_p n^2$.
\end{lemma}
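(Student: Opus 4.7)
The plan is to prove the identity by induction on $n$ in the case of nonnegative integer $n$, then extend to real $n$ by splitting off the fractional part. Write $n = n_0 + \rho$ with $n_0 \in \mathbf{Z}_{\geqslant 0}$ and $|\rho| \leqslant 1$; applying Lemma~\ref{lem:freeidentities}(2) to the factorisation $x_\ell^n = x_\ell^\rho x_\ell^{n_0}$ yields
\[
[x_1^\beta, x_\ell^n] =_F [x_1^\beta, x_\ell^{n_0}] \cdot [x_1^\beta, x_\ell^\rho]^{x_\ell^{n_0}},
\]
where $[x_1^\beta, x_\ell^\rho]$ is a direct instance of the presentation relator from Proposition~\ref{prop:compactpresentations} with area $O_p(1)$. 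So it suffices to treat the integer case, modulo one final conjugation absorbed below.

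For integer $n$, I would argue by induction on $n$, with base $n=1$ being a single relator in $\calp$. For the step from $n-1$ to $n$, Lemma~\ref{lem:freeidentities}(2) gives
\[
[x_1^\beta, x_\ell^n] =_F [x_1^\beta, x_\ell] \cdot [x_1^\beta, x_\ell^{n-1}]^{x_\ell}.
\]
One expands the first factor via the presentation relator, applies the induction hypothesis to the second, and then resolves the conjugation by $x_\ell$ of the inductive normal form. The leading exponents on $x_{\ell+1}$ add to $\beta + \beta(n-1) = \beta n$, while the remaining $s_i$ arise as linear combinations of the binomial coefficients $\binom{\beta}{i-\ell}$ (which are $O_p(1)$ since $|\beta| \leqslant 1$) scaled by the current exponent, so they remain $\lesssim_p n$.

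The key technical point is to carry out the conjugation by $x_\ell$ with area $\lesssim_p n$ per inductive step, summing to the stated total area $\lesssim_p n^2$. Because $\ell \geqslant j_0 \geqslant 3$ (which avoids the special bracket $[x_2, x_3] = x_p$ in $L_p^\lrcorner$), the letters $x_{\ell+1}, \ldots, x_p$ lie in the abelian derived subgroup and commute with $x_\ell$ in $G$. The commuting identities $[x_\ell, x_j] = 1$ for $\ell < j \leqslant p$ are not listed as relators of $\calp$, but can be derived with area $O_p(1)$ by expressing $x_j$ as an iterated commutator of $x_1$ with $x_\ell$ via the relators $[x_1, x_{j-1}] = x_j \cdots$ and applying the Hall--Witt identity together with the nilpotent structure. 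From these, $[x_\ell, x_j^s] =_\calp 1$ follows with area $\lesssim_p |s|$, bounding the per-step conjugation cost. The main obstacle is precisely the bookkeeping and derivation of these hidden commuting identities; for $G_{p,3}$ the argument specialises cleanly since the extra relator $[x_2, x_3] = x_p$ of $L_p^\lrcorner$ is absent.
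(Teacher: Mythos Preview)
Your inductive approach—splitting off the fractional part of $n$ and inducting on the integer part, handling the conjugation by $x_\ell$ via commuting relations in the derived subgroup—is precisely what the paper has in mind (it defers to Proposition~\ref{prop:compactpresentations} and \cite[Lemma~6.12(3)]{lipt}, where this same induction is carried out).

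Two remarks. First, your assumption $j_0 \geqslant 3$ is not justified: in the paper's descending induction on $j$, the value $j_0 = 2$ does occur (for pairs $(k,2)$ with $k \geqslant 3$), so the lemma is stated for $\ell \geqslant 2$. When $\ell = 2$ in $G^\lrcorner_{p,3}$, the bracket $[x_2,x_3] = x_p$ causes $s_p$ to acquire a term $-\beta\binom{n}{2}$ of order $n^2$, so the bound $|s_p| \lesssim_p n$ actually fails there; this is a slight imprecision in the lemma's statement rather than in your argument, and it is harmless in the applications since the factor $x_p^{s_p}$ is annihilated by the next iterated commutator (see Lemma~\ref{lem:omegacommuteswithproducts}, whose product stops at $r = p-1$). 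Second, deriving $[x_\ell^a, x_j^b] =_{\calp} 1$ for $3 \leqslant \ell < j \leqslant p$ and $|a|, |b| \leqslant 1$ is easier than the Hall--Witt route you sketch: these are null-homotopic words of length $4$ in a compact presentation, so their area is bounded by $\delta_G(4) = O_p(1)$; equivalently, one may simply adjoin them to $\mathcal R$ without changing the Dehn function.
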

\begin{proof}
     This is a straight-forward consequence of Proposition \ref{prop:compactpresentations} and its proof. Since the arguments are completely analogous to the ones employed in the proof of \cite[Lemma 6.12(3)]{lipt} we omit them here.
\end{proof}

\begin{lemma}[cf. {\cite[Lemma 6.15]{lipt}}]\label{lem:omegacommuteswithproducts}
    Let $k \geqslant 2$, $\ell \geqslant j_0 +1$, $n \geqslant 1$, and $|n_i|, |s_r| \leqslant n$ for $1 \leqslant i \leqslant k-1$, $\ell \leqslant r \leqslant p$. The identity  \begin{equation*}
        \left[ x_1 ^{n_1}, \ldots, x_1 ^{n_{k-1}}, x_\ell ^{s_\ell} \ldots x_p ^{s_p} \right] =_\calp \prod_{r = \ell} ^{p-1} \Omega_k^r (n_1, \ldots, n_{k-1}, s_r)
    \end{equation*} holds in $G^\lrcorner_{p,3}$ for all $p \geqslant 5$ (respectively $G_{p,3}$ for all $p \geqslant 3$)
    with area $\lesssim_p n^{p-\ell+1}$.
\end{lemma}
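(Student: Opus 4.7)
The plan is to proceed by induction on $k \geqslant 2$, leveraging at every step the descending induction hypothesis on $j$ (cf.~\ref{item:IH-gp3corner} or \ref{item:IH12-gp3}), which in particular provides the Second Commuting $(k',r)$-Lemma for every $k' \geqslant 1$ and every $r \geqslant j_0+1$---hence for every $r \in \{\ell, \ldots, p-1\}$ since $\ell \geqslant j_0+1$. The conceptual idea is that commuting with a product $x_\ell^{s_\ell} \cdots x_p^{s_p}$ expands via Lemma~\ref{lem:freeidentities}(2) into a product of single-factor commutators, at the cost of suffix conjugations, and each such conjugation produces a correction term that can be killed by the already-available Second Commuting Lemma.

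For the base case $k = 2$, iterating $[u,vw] =_F [u,w]\cdot[u,v]^w$ in the right slot yields the free identity
\begin{equation*}
[x_1^{n_1}, x_\ell^{s_\ell} \cdots x_p^{s_p}] =_F [x_1^{n_1}, x_p^{s_p}] \cdot \prod_{r=p-1}^{\ell} [x_1^{n_1}, x_r^{s_r}]^{x_{r+1}^{s_{r+1}} \cdots x_p^{s_p}}.
\end{equation*}
Since $x_p = y_3$ is central, the first factor collapses to the identity at area $\lesssim_p n^2$ using the commuting relators $[x_1^{\pm 1}, y_3^{\pm 1}] = 1$ of the adapted presentation. Each remaining factor rewrites as $\Omega_2^r(n_1, s_r) \cdot [\Omega_2^r(n_1, s_r), W_r]$ where $W_r := x_{r+1}^{s_{r+1}} \cdots x_p^{s_p}$ via Lemma~\ref{lem:freeidentities}(3). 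Proposition~\ref{prop:efficientset} lets me rewrite $W_r$ as a word in $\mathcal F[O_p(1)]$ of length $\lesssim_p n$ lying in the derived subgroup, so the Second Commuting $(2,r)$-Lemma kills $[\Omega_2^r(n_1, s_r), W_r]$ at area $\lesssim_p n^{p-r+1}$. Summing yields $\lesssim_p n^{p-\ell+1}$.

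For the inductive step $k-1 \to k$ with $k \geqslant 3$, I write the left-hand side as $[x_1^{n_1}, C]$ where $C := [x_1^{n_2}, \ldots, x_1^{n_{k-1}}, x_\ell^{s_\ell} \cdots x_p^{s_p}]$. The inductive hypothesis rewrites $C =_\calp \prod_{r=\ell}^{p-1} \Omega_{k-1}^r(n_2, \ldots, n_{k-1}, s_r)$ at area $\lesssim_p n^{p-\ell+1}$, and substituting this inside $[x_1^{n_1}, \cdot]$ costs at most twice that area. A second iteration of Lemma~\ref{lem:freeidentities}(2), combined with the definitional identity $[x_1^{n_1}, \Omega_{k-1}^r(\cdot)] = \Omega_k^r(n_1, n_2, \ldots, n_{k-1}, s_r)$, splits the outer commutator as
\begin{equation*}
\bigl[x_1^{n_1}, \textstyle\prod_{r=\ell}^{p-1} \Omega_{k-1}^r(\cdot)\bigr] =_F \prod_{r=\ell}^{p-1} \Omega_k^r(n_1, n_2, \ldots, n_{k-1}, s_r)^{\prod_{r'>r} \Omega_{k-1}^{r'}(\cdot)},
\end{equation*}
(suitably ordered). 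As before, each conjugation yields a correction $[\Omega_k^r(\cdot), W_r']$ with $W_r' := \prod_{r'>r} \Omega_{k-1}^{r'}(\cdot)$, and the Second Commuting $(k,r)$-Lemma (descending IH, $r \geqslant j_0+1$) reduces each to the identity at area $\lesssim_p n^{p-r+1}$; summing yields $\lesssim_p n^{p-\ell+1}$, closing the induction.

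The main obstacle I expect is the bookkeeping required at each step to verify that the suffix words---either $W_r = x_{r+1}^{s_{r+1}} \cdots x_p^{s_p}$ in the base case, or $W_r' = \prod_{r'>r} \Omega_{k-1}^{r'}(\cdot)$ in the inductive step---genuinely satisfy the hypotheses of the Second Commuting Lemma, namely that they lie in $\mathcal F[\alpha]$ for some $\alpha = O_p(k)$, have length $\lesssim_p n$, and represent elements of the derived subgroup. The derived-subgroup property is automatic from the commutator structure of the factors, but the $\mathcal F[\alpha]$ representation requires expanding each $x_i^{s_i}$ with $i \geqslant 3$ via the efficient-set construction of Proposition~\ref{prop:efficientset} while carefully tracking the resulting word length.
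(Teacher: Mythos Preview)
Your approach is workable but takes a substantially longer route than the paper's. The paper's proof is a one-line observation: since $\ell \geqslant j_0+1 \geqslant 3$, every generator appearing on either side of the identity lies in the subgroup $\langle x_1, x_\ell, x_{\ell+1}, \ldots, x_p, y_1, y_{p-1}\rangle \cong G_{p-\ell+2,3}$ (the bracket $[x_2,x_3]=x_p$ of $L_p^\lrcorner$ plays no role here because $x_2$ is absent). Both sides represent the same group element because the derived subgroup is abelian, and both have word length $\lesssim_p n$. The outer induction hypothesis \ref{item:IH-fromp-1top} gives $\delta_{G_{p-\ell+2,3}}(n)\preccurlyeq n^{p-\ell+1}$, so the area bound follows immediately. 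No induction on $k$ and no appeal to the Second Commuting Lemma is needed.

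By contrast, your argument unwinds the commutator layer by layer and repairs each conjugation defect via the Second Commuting $(k,r)$-Lemma. This is correct in outline, but the step where you invoke Proposition~\ref{prop:efficientset} to place $W_r = x_{r+1}^{s_{r+1}}\cdots x_p^{s_p}$ (or $W_r' = \prod_{r'>r}\Omega_{k-1}^{r'}(\cdot)$) into $\mathcal F[\alpha]$ hides nontrivial area: Proposition~\ref{prop:efficientset} only asserts the \emph{existence} of a short $\mathcal F$-representative, not a bound on the area of the rewriting. You can control that cost (each $x_i^{s_i}$ for $i\geqslant 3$ rewrites as an $\Omega$-word at area $\lesssim_p n^2$ via Corollary~\ref{cor:commutingxkxl}, and the resulting $y$-error terms commute away cheaply), so the gap is fillable---but once you observe that $[\Omega_k^r(\cdot),W_r]$ is already a null-homotopic word of length $\lesssim_p n$ in $G_{p-r+2,3}$, you may as well apply \ref{item:IH-fromp-1top} directly and skip the Second Commuting Lemma entirely. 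At that point the induction on $k$ becomes superfluous, and you have rediscovered the paper's argument.
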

\begin{proof}
    Since by assumption $\ell >2$, the statement is a direct consequence of the induction hypothesis \ref{item:IH-fromp-1top} using the fact that the Dehn function of $G_{p-\ell +2,3} \hookrightarrow G^\lrcorner_{p,3}$ is $\preccurlyeq n^{p-\ell+1}$. 
\end{proof}

\begin{lemma}\label{lem:omegainverse}
        Let $(k,\ell)$ with $k\neq1$ and $\ell \geqslant j_0 +1$, and $\underline{n}:=(n_1, \ldots, n_k) \in \R^k$. The identity
        $$[x_{1}^{n_1}, \Omega_{k-1}^{\ell}(n_2, \ldots, n_k)^{-1}] =_{\calp} \Omega_{k}^{\ell}(\underline{n})^{-1}$$
        holds in $G^{\lrcorner}_{p,3}$ for all $p \geqslant 5$ (respectively $G_{p,3}$ for all $p \geqslant 3$) with area $\lesssim_p \max\{|\underline{n}|^{p-\ell+1}, |\underline{n}|^2\}$.
\end{lemma}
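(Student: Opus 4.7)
The strategy is to extract the ``main term'' $\Omega_k^\ell(\underline n)^{-1}$ from $[x_1^{n_1}, W^{-1}]$ via standard free commutator manipulations, leaving a commutator-of-commutators error term which can be controlled by the induction hypothesis. Setting $W := \Omega_{k-1}^\ell(n_2, \ldots, n_k)$, by definition $\Omega_k^\ell(\underline n) = [x_1^{n_1}, W]$, so the identity to prove reads $[x_1^{n_1}, W^{-1}] =_\calp [x_1^{n_1}, W]^{-1}$.

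Applying Lemma \ref{lem:freeidentities}(2) to the trivial identity $[x_1^{n_1}, W^{-1} W] = [x_1^{n_1}, 1] = 1$ yields the free identity $[x_1^{n_1}, W^{-1}] =_F W \cdot \Omega_k^\ell(\underline n)^{-1} \cdot W^{-1}$. Then Lemma \ref{lem:freeidentities}(3), applied with $u = \Omega_k^\ell(\underline n)^{-1}$ and conjugation by $W^{-1}$, rewrites this as
\[ [x_1^{n_1}, W^{-1}] =_F \Omega_k^\ell(\underline n)^{-1} \cdot [\Omega_k^\ell(\underline n)^{-1}, W^{-1}]. \]
Both steps are free and cost no area, so the remaining task is to show that the correction commutator $[\Omega_k^\ell(\underline n)^{-1}, W^{-1}]$ is trivial in $\calp$ with area $\lesssim_p \max\{|\underline n|^{p-\ell+1}, |\underline n|^2\}$.

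For the correction term, both factors are (inverses of) $\Omega$-words at level $\ell \geqslant j_0 + 1$, so the plan is to invoke the Second commuting $(k, \ell)$-Lemma from the induction hypothesis on $j$. To match its hypotheses, I will first expand $W^{-1}$ in normal form as a product $\prod_{r \geqslant \ell + k - 2} x_r^{s_r}$ with exponents $|s_r| \lesssim_p |\underline n|^{k-1}$, via iterated application of Proposition \ref{prop:compactpresentations}, and then use Lemma \ref{lem:omegacommuteswithproducts} to commute $\Omega_k^\ell(\underline n)^{-1}$ past each factor $x_r^{s_r}$. Since all relevant indices lie in $[\ell + k - 2, p]$ with $\ell \geqslant 3$, the exceptional relation $[x_2, x_3] = x_p$ present in $L^\lrcorner_p$ does not intervene, and each individual commutation contributes area bounded so that the total sums to the required bound. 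The main obstacle is that $W^{-1}$ is an inverse of an $\Omega$-word rather than a word in the efficient set $\mathcal{F}[\alpha]$, so the Second commuting Lemma does not apply verbatim; this is resolved by the normal-form expansion, which reduces the problem to commuting $\Omega_k^\ell(\underline n)^{-1}$ with individual powers of $x_r$ whose aggregate area is controlled by the induction hypothesis.
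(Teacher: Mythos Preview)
Your free-identity reduction in the first two paragraphs is correct and equivalent to the paper's starting point (the paper uses $[a,b^{-1}] =_F ([a,b]^{-1})^{b^{-1}}$, arriving at the same correction term up to conjugation).

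The gap is in your treatment of the correction commutator $[\Omega_k^\ell(\underline{n})^{-1}, W^{-1}]$. Your plan via normal-form expansion and Lemma~\ref{lem:omegacommuteswithproducts} is both unnecessary and not sound as written. First, Lemma~\ref{lem:omegacommuteswithproducts} computes an iterated commutator $[x_1^{n_1},\dots,x_1^{n_{k-1}}, x_\ell^{s_\ell}\cdots x_p^{s_p}]$; it does not commute a given $\Omega$-word past a power $x_r^{s_r}$. Second, the Second commuting $(k,\ell)$-Lemma requires the second argument to lie in $\mathcal{F}[\alpha]$, i.e.\ to be a word in $x_1,x_2$; neither $W^{-1}$ nor the factors $x_r^{s_r}$ qualify, so that lemma is simply unavailable here. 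Third, you give no area bound for producing the normal form of $W^{-1}$ or for the subsequent commutations; with $|s_r|\lesssim_p |\underline n|^{k-1}$ these are not obviously within the $|\underline n|^{p-\ell+1}$ budget.

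The paper's argument bypasses all of this with a one-line observation you missed: since $\ell\geqslant j_0+1\geqslant 3$, the entire null-homotopic word $[x_1^{n_1},W^{-1}]\cdot\Omega_k^\ell(\underline{n})$ is built only from $x_1$ and $x_\ell$, hence has length $\lesssim_p|\underline n|$ and lies in the subgroup $G_{p-\ell+2,3}\hookrightarrow G^\lrcorner_{p,3}$. The \emph{outer} induction hypothesis \ref{item:IH-fromp-1top} (not the inner one on $j$) gives $\delta_{G_{p-\ell+2,3}}(n)\preccurlyeq n^{p-\ell+1}$, which immediately yields the bound. The case $\ell=p$ is treated separately using that $x_p$ is central, which accounts for the $\max$ with $|\underline n|^2$.
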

\begin{proof}
    For $\ell=p$ this is an easy consequence of $x_p$ being central, so assume $\ell<p$. Since by assumptions $\ell>2$, the statement is then a direct consequence of (i) the induction hypothesis \ref{item:IH-fromp-1top} that the Dehn function of $G_{p-\ell+2,3} \hookrightarrow G^\lrcorner_{p,3}$ is $\preccurlyeq n^{p-\ell+1}$ and (ii) the following free identity in $G_{p-\ell+2,3}$: 
    \[ \left[x_{1}^{n_1},\Omega_{k-1}^{\ell}(n_2, \ldots, n_k)^{-1}\right] =_{F} \left[\Omega_{k-1}^{\ell}(n_2, \ldots, n_k), x_{1}^{n_1}\right]^{\Omega_{k-1}^{\ell}(n_2, \ldots, n_k)^{-1}}.\] 
\end{proof}

\begin{corollary}\label{cor:growingomega}
    Let $k \geqslant 2$, $\ell \geqslant j_0+1$, $\underline{n} \in \R^k$, and $|l| \leqslant |\underline{n}|$. The identity
    $$
    [\Omega_k ^\ell (\underline{n})^{\pm 1},  x_1 ^l] =_\calp \Omega_{k+1}^\ell (l, \underline{n})^{\mp 1}
    $$
    holds in $G^{\lrcorner}_{p,3}$ for all $p \geqslant 5$(respectively $G_{p,3}$ for all $p \geqslant 3$ ) with area $\lesssim_p \max\{|\underline{n}|^{p-\ell+1}, |\underline{n}|^2\}$.
\end{corollary}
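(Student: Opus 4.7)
The plan is to treat the two sign cases separately, both reducing quickly to Lemma~\ref{lem:omegainverse} through the free identity $[a,b] = [b,a]^{-1}$.

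For the ``$+$'' case, I would observe that, by the very definition of the $\Omega$-words, one has $\Omega_{k+1}^{\ell}(l, \underline{n}) = [x_1^l, \Omega_k^\ell(\underline{n})]$, so the free identity $[a,b] = [b,a]^{-1}$ gives
\[ [\Omega_k^\ell(\underline{n}), x_1^l] =_F [x_1^l, \Omega_k^\ell(\underline{n})]^{-1} = \Omega_{k+1}^{\ell}(l, \underline{n})^{-1}. \]
This is a free identity and has area zero, trivially satisfying the advertised bound.

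For the ``$-$'' case, I would invoke Lemma~\ref{lem:omegainverse} after the following renaming: take the tuple $(l, n_1, \ldots, n_k) \in \R^{k+1}$ in place of $\underline{n}$, and $k+1$ in place of $k$. The hypothesis $k+1 \neq 1$ is satisfied since $k \geq 2$, and the assumption $\ell \geq j_0 + 1$ is unchanged. The lemma then produces
\[ [x_1^l, \Omega_k^\ell(\underline{n})^{-1}] =_\calp \Omega_{k+1}^\ell(l, \underline{n})^{-1} \]
with area $\lesssim_p \max\{|(l, \underline{n})|^{p-\ell+1}, |(l, \underline{n})|^2\}$, which is $\lesssim_p \max\{|\underline{n}|^{p-\ell+1}, |\underline{n}|^2\}$ using the hypothesis $|l| \leq |\underline{n}|$. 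Taking inverses on both sides and applying $[a,b]^{-1} = [b,a]$ once more, one obtains
\[ [\Omega_k^\ell(\underline{n})^{-1}, x_1^l] =_\calp \Omega_{k+1}^\ell(l, \underline{n}) \]
with the same area bound. Since Lemma~\ref{lem:omegainverse} is stated for both $G_{p,3}^\lrcorner$ and $G_{p,3}$, the same reasoning covers both groups simultaneously.

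I expect no substantial obstacle here: the corollary is essentially a repackaging of Lemma~\ref{lem:omegainverse} under the involution $[a,b] \leftrightarrow [b,a]^{-1}$, and the only minor point to verify carefully is the index bookkeeping in the renaming and the fact that the constraint $|l| \leq |\underline{n}|$ absorbs the extra entry into $|\underline{n}|$ in the area bound.
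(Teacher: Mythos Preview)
Your proposal is correct and follows essentially the same approach as the paper: both use the free identity $[a,b]=[b,a]^{-1}$ to reduce to the definition of $\Omega_{k+1}^\ell$ in the ``$+$'' case and to Lemma~\ref{lem:omegainverse} (with $k+1$ in place of $k$) in the ``$-$'' case, absorbing the extra entry $l$ into the area bound via $|l|\leqslant|\underline{n}|$. The paper merely writes both cases in a single line rather than separating them.
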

    \begin{proof} 
        Lemma \ref{lem:omegainverse} implies that the identity $$[\Omega_k ^j (\underline{n})^{\pm 1}, x_1 ^l] =_F [x_1 ^l , \Omega_k ^j (\underline{n})^{\pm 1}]^{-1} =_{\calp} \Omega_{k+1}^j (l, \underline{n})^{\mp 1}$$ holds in $G^{\lrcorner}_{p,3}$ with area $\lesssim_p \max\{|\underline{n}|^{p-\ell+1}, |\underline{n}|^2\}$. 
    \end{proof}

\begin{lemma}[cf. {\cite[Lemma 6.23]{lipt}}]\label{lem:separatingproducts}
    Let $j_0 \leqslant \ell \leqslant p-1$, $ n\geqslant 1, ~\nu \geqslant 1$, and $\eta \in \R$ with $|\eta| \lesssim_p n$. For words $v_i := \Omega^{\ell +1}_{k_i}(\underline{n_i})^{\pm 1}$, with $1\leqslant k_i\leqslant p-\ell$ and $|\underline{n}_i|\lesssim_p n$ ($1\leqslant i \leqslant \nu$), and  $w := \Omega^{\ell}_{k}(\underline{m})^{\pm 1}$, with $2\leqslant k \leqslant p-\ell +1$, the identity
    \begin{equation*}
        [x_{1}^{\eta}, v_1\cdots v_{\nu} \cdot w] =_{\calp} [x_{1}^{\eta},w] \cdot [x_{1}^{\eta},v_{\nu}] \cdots [x_{1}^{\eta},v_1]
    \end{equation*}
    holds in $G^\lrcorner_{p,3}$ for all $p \geqslant 5$ (respectively $G_{p,3}$ for all $p\geqslant 3$). Moreover, if $\ell \leqslant p-2$, then it has area $\lesssim_{\nu, p} n^{p-\ell}$; and if $\ell=p-1$, then it has area $\lesssim_{\nu, p} n^2$.
    \end{lemma}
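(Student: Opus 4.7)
The plan is to iteratively apply the free identity $[u, AB] =_F [u, B] \cdot [u, A]^B$ of Lemma \ref{lem:freeidentities}(2), splitting off the factors $w, v_\nu, v_{\nu-1}, \ldots, v_1$ one at a time from the right of the product $v_1 \cdots v_\nu \cdot w$. This yields the free identity
\begin{equation*}
    [x_1^\eta, v_1 \cdots v_\nu \cdot w] =_F [x_1^\eta, w] \cdot [x_1^\eta, v_\nu]^{w} \cdot [x_1^\eta, v_{\nu-1}]^{v_\nu w} \cdots [x_1^\eta, v_1]^{v_2 \cdots v_\nu w}.
\end{equation*}
It thus suffices to show that for each $i$, the conjugation $[x_1^\eta, v_i]^{Y_i}$, with $Y_i := v_{i+1} \cdots v_\nu \cdot w$, can be replaced by $[x_1^\eta, v_i]$ in $\calp$ at cost $\lesssim_p n^{p-\ell}$ (respectively $\lesssim_p n^2$ when $\ell = p-1$).

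For each such factor, I would first invoke Corollary \ref{cor:growingomega} to rewrite $[x_1^\eta, v_i]$ as an $\Omega^{\ell+1}_{k_i+1}$-word with area $\lesssim_p n^{p-\ell}$ (or observe that it vanishes when $k_i + 1 > p - \ell$, i.e.\ when $v_i$ is already central). Then Lemma \ref{lem:freeidentities}(3) gives $[x_1^\eta, v_i]^{Y_i} =_F [x_1^\eta, v_i] \cdot [[x_1^\eta, v_i], Y_i]$, reducing the problem to trivialising the error commutator $[[x_1^\eta, v_i], Y_i]$. Expanding $Y_i$ via $[A, BC] =_F [A, C] \cdot [A, B]^C$ further decomposes this error into a product of $O_\nu(1)$ commutators of the $\Omega^{\ell+1}_{k_i+1}$-word $[x_1^\eta, v_i]$ with either a single $v_j$ (for $j > i$) or with $w$. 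Since $\ell + 1 \geqslant j_0 + 1$, the Second commuting $(k_i+1, \ell+1)$-Lemma from the induction hypothesis \ref{item:IH-gp3corner} applies, provided we first rewrite $v_j$ (respectively $w$) as an efficient word in $\mathcal{F}[\alpha]$; this efficient rewriting is available at area $\lesssim_p n^{p-\ell}$ via the induction hypothesis \ref{item:IH-fromp-1top} applied to the appropriate filiform-like subgroup $G_{q,3}$ with $q \leqslant p-1$.

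The main obstacle is the intermediate rewriting of the $\Omega$-words $v_j$ and $w$ into efficient products in $\mathcal{F}[\alpha]$: since the Second commuting Lemma expects its second argument in that form, the cost of this rewriting step must be kept within the area budget of $n^{p-\ell}$. Summing over the $\nu + 1$ factors yields a total area $\lesssim_{\nu, p} n^{p-\ell}$, as claimed. In the edge case $\ell = p-1$, the constraints $k_i \leqslant p - \ell = 1$ and $k \leqslant p-\ell+1 = 2$ force each $v_i$ to be a central power of $x_p$, so $[x_1^\eta, v_i] = 1$ trivially; the identity then reduces to a mere rearrangement of central factors, and the $n^2$ area bound comes from the abelian relations in the centre.
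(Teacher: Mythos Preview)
Your overall skeleton matches the paper's: use the free identity $[u,AB]=[u,B][u,A]^B$ iteratively, then remove the conjugations one by one. The difficulty is in how you remove them, and here there is a genuine gap.

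The Second commuting $(k_i+1,\ell+1)$-Lemma you want to invoke requires its second argument to lie in $\mathcal{F}[\alpha]$, i.e.\ to be a word in $x_1,x_2$. For $\ell>2$ neither $v_j$ (a word in $x_1,x_{\ell+1}$) nor $w$ (a word in $x_1,x_\ell$) is of this form, and your ``efficient rewriting'' of them into $\mathcal{F}[\alpha]$ is not justified: you would have to replace $x_\ell^{m_k}$ by a nested commutator in $x_1,x_2$ \emph{inside} a $k$-fold commutator and then control the area of that substitution, which is not what \ref{item:IH-fromp-1top} gives you directly.

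The paper sidesteps this in two ways. First, it reduces to $\ell=2$ at the outset (for $\ell>2$ the whole computation takes place in the subgroup $G_{p-\ell+2,3}\hookrightarrow G_{p,3}^\lrcorner$, where the statement is either already established in an earlier induction step or follows by the same reasoning). With $\ell=2$, the word $w=\Omega_k(\underline m)^{\pm1}$ is already in $\mathcal{F}$, so the Second commuting $(k_i+1,3)$-Lemma applies to the conjugation by $w$ without any rewriting. Second, for the conjugation by the $v_j$'s the paper does \emph{not} use the Second commuting Lemma at all: since both $\Omega^{\ell+1}_{k_i+1}(\eta,\underline{n_i})$ and each $v_j$ lie in $G_{p-\ell+1,3}\hookrightarrow G_{p,3}^\lrcorner$, and the relevant conjugation identity is null-homotopic there (metabelianity), \ref{item:IH-fromp-1top} bounds its area by $n^{p-\ell}$ directly. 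This is both simpler and avoids the rewriting issue entirely.

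Your edge-case analysis for $\ell=p-1$ is fine.
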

    \begin{proof}
        Assume $\ell=2$. (If $p-1>\ell >2$ one can either apply the same reasoning in $G_{p-\ell+2,3} \hookrightarrow G^\lrcorner_{p,3}$ or observe that this assertion was already proved in the induction step from $G_{p-\ell+1,3}$ to $G_{p-\ell +2,3}$. In the special case $\ell=p-1$ we use that the power of $x_p$ appearing in $v_1,~ \cdots, v_{\nu}$ is central.)
        We assume that $v_i=  \Omega^{\ell +1}_{k_i}(\underline{n_i})^{-1}$ for $1\leqslant i \leqslant \nu$; the case where some exponents are $+1$ is analogous, but slightly easier. Since $\ell +1 >2$, the words $v_1,~\cdots,~ v_{\nu}$ represent elements in $G_{p-\ell +1,3} \hookrightarrow G^\lrcorner_{p,3}$. 

        In particular, it follows from the induction hypothesis \ref{item:IH-fromp-1top} and Lemma \ref{lem:omegainverse} that the identities 
        \begin{equation}\label{eq:inversion-of-Omega-words}
            [x_{1}^\eta, v_i] =_{F} (\Omega^{\ell +1}_{k_i+1}(\eta, \underline{n_i})^{-1})^{v_i^{-1}} =_{\calp} \Omega^{\ell +1}_{k_i+1}(\eta, \underline{n_i})^{-1}
        \end{equation}
        hold in $G^{\lrcorner}_{p,3}$ with area $\lesssim_p n^{p-\ell}$ for $1\leqslant i \leqslant \nu$. 
        
        Finally, the identities
        \begin{align*}
        [x_{1}^{\eta}, v_1\cdots v_{\nu} \cdot w] &=_F
        [x_{1}^{\eta}, w] \cdot \prod_{i=0}^{\nu-1}[x_{1}^{\eta}, v_{\nu-i} ]^{v_{\nu-i+1}\cdots v_{\nu} w}\\ 
        & =_\calp [x_{1}^{\eta}, w] \cdot \prod_{i=0}^{\nu-1}[x_{1}^{\eta}, v_{\nu-i} ]^{w}\\ 
        & =_\calp [x_{1}^{\eta}, w] \cdot [x_{1}^{\eta}, v_{\nu} ] \cdots [x_{1}^{\eta}, v_1]
        \end{align*}
        have area $\lesssim_{\nu, p} n^{p-\ell}$ in $G^\lrcorner_{p,3}$, where the first identity is free, the second holds in $G_{p-\ell+1,3}\hookrightarrow G_{p,3}^{\lrcorner}$, and for the last identity we apply the identity \eqref{eq:inversion-of-Omega-words} and the Second commuting $(k_i+1, \ell+1)$-Lemmas in $G^\lrcorner_{p,3}$ at most $\nu$ times.
    \end{proof}

    \begin{remark}
        In our applications of Lemma \ref{lem:separatingproducts} later we will always have $k_i \geqslant k-1$ meaning that in those case the above proof only relies on the Second commuting $(r, \ell+1)$-Lemmas for $r \geqslant k$. 
    \end{remark}

The following statement is a key ingredient of the induction step that allows us to decompose $\Omega_k^\ell$-words into a product of $\Omega^{\ell+1}$-words (see blue arrows/boxes in Figure \ref{fig:commuting-lemmas}).

\begin{lemma}[cf. {\cite[Proposition 6.21]{lipt}}]\label{lem:commutatortinyletter}
    Let $(k,\ell) \neq (2,2)$ with $k\neq 1$ and $j_0 \leqslant \ell \leqslant p-2$, $\underline{n}:= (n_1, \ldots, n_k) \in \R^k$, and $\beta:=n_{k-1}-\lfloor n_{k-1} \rfloor$. 
    
    If $n_{k-1} \geqslant 0$ , then the identity
    \begin{multline}\label{eq:tinypos}
        \Omega^{\ell}_{k}(\underline{n}) =_\calp \Omega_{k}^{\ell}(n_1, \ldots, \beta, n_k) \\
        \cdot \Big(\prod_{r=0}^{\lfloor n_{k-1} \rfloor-1} \Omega_{k}^{\ell+1}(n_1, \ldots, r+\beta, n_k)^{-1} \cdot \Omega_{k-1}^{\ell+1}(n_1, \ldots, n_{k-2}, n_k) \Big)
    \end{multline}
    holds in $G^\lrcorner_{p,3}$ for all $p \geqslant 5$ (respectively $G_{p,3}$ for all $p \geqslant 3$) with area $\lesssim_p |\underline{n}|^{p-\ell+1}$.

    If $n_{k-1} < 0$, then the identity \footnote{The formula for $n_{k-1}<0$ in the corresponding result in \cite{lipt} was slightly flawed and should read like the one given here. Besides producing a constant number of terms for every $r$ instead of only $2$ terms this has no impact on the proofs given there, which remain correct as written for the modified formula.}
    
    \begin{align}\label{eq:tinyneg}
        \Omega^{\ell}_{k}(\underline{n}) =_\calp \Omega_{k}^{\ell}(n_1, \ldots, \beta, n_k) \cdot \prod_{r=0}^{-\lfloor n_{k-1} \rfloor -1}\prod_{i=\ell +1}^{p} &\Omega_{k+ i-(\ell +1)}^{\ell + 1}(n_1, \ldots,n_{k-2}, \beta - r, 1, \ldots 1, -n_k)^{-1} \\ &\cdot \Omega_{k+ i-(\ell +1)-1}^{\ell +1}(n_1, \ldots, n_{k-2}, 1, \ldots,1, -n_k)\notag
    \end{align}
    holds in $G^\lrcorner_{p,3}$ for all $p \geqslant 5$ (respectively $G_{p,3}$ for all $p \geqslant 3$) with area $\lesssim_p |\underline{n}|^{p-\ell+1}$.
\end{lemma}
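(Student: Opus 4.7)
The plan is to prove \eqref{eq:tinypos} and \eqref{eq:tinyneg} in parallel by induction on $|\lfloor n_{k-1} \rfloor|$, focusing on the positive case \eqref{eq:tinypos}. The negative case is analogous but uses the longer expansion of $[x_1^{-1}, x_\ell^{b}]$ from Corollary~\ref{cor:commutingxkxl}(2) in place of the single-term expansion $[x_1, x_\ell^{b}] = x_{\ell+1}^{b}$, which accounts for the inner product over $i = \ell+1, \ldots, p$ in \eqref{eq:tinyneg}. The base case $\lfloor n_{k-1} \rfloor = 0$ is immediate since both sides collapse to $\Omega_k^\ell(n_1, \ldots, \beta, n_k)$. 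For the inductive step from $N$ to $N+1$, it suffices to establish the one-step identity
\begin{multline*}
\Omega_k^\ell(n_1, \ldots, n_{k-2}, N+1+\beta, n_k) =_\calp \Omega_k^\ell(n_1, \ldots, n_{k-2}, N+\beta, n_k) \\ \cdot\; \Omega_k^{\ell+1}(n_1, \ldots, n_{k-2}, N+\beta, n_k)^{-1} \cdot \Omega_{k-1}^{\ell+1}(n_1, \ldots, n_{k-2}, n_k)
\end{multline*}
with area $\lesssim_p |\underline n|^{p-\ell}$, so that summing over the $\lfloor n_{k-1} \rfloor \lesssim |\underline n|$ iterations yields the claimed total area $\lesssim_p |\underline n|^{p-\ell+1}$.

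To establish the one-step identity, I would set $f(c) := [x_1^c, x_\ell^{n_k}]$ so that $\Omega_k^\ell(n_1, \ldots, n_{k-2}, a, n_k) = [x_1^{n_1}, \ldots, x_1^{n_{k-2}}, f(a)]$, and apply Lemma~\ref{lem:freeidentities}(1) to obtain the free identity $f(a+b) =_F f(a)^{x_1^b} \cdot f(b)$; specialised at $a = N+\beta$, $b = 1$ it becomes
\[ f(N+1+\beta) =_F f(N+\beta) \cdot [f(N+\beta), x_1] \cdot f(1). \]
Corollary~\ref{cor:commutingxkxl}(1) then gives $f(1) =_\calp x_{\ell+1}^{n_k}$ when $\ell \geqslant 3$; when $\ell = 2$ (in which case $k \geqslant 3$ by the hypothesis $(k,\ell) \neq (2,2)$) an extra central factor $[y_1^{\tilde n_k}, y_{p-1}^{\tilde n_k}]^{\pm 1}$ appears, which lies in $\langle x_p \rangle$ and hence commutes with everything. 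A Hall--Witt rearrangement combined with the relation $[x_1, x_\ell^{n_k}] =_\calp x_{\ell+1}^{n_k}$ allows me to rewrite $[f(N+\beta), x_1]$ as $[x_1^{N+\beta}, x_{\ell+1}^{n_k}]^{-1}$ modulo correction terms of central depth $\geqslant \ell+2$. Applying the outer commutators $[x_1^{n_{k-2}}, \cdot], \ldots, [x_1^{n_1}, \cdot]$ via Lemma~\ref{lem:freeidentities}(2) to distribute over products, the three main summands become exactly $\Omega_k^\ell(n_1, \ldots, N+\beta, n_k)$, $\Omega_k^{\ell+1}(n_1, \ldots, n_{k-2}, N+\beta, n_k)^{-1}$, and $\Omega_{k-1}^{\ell+1}(n_1, \ldots, n_{k-2}, n_k)$. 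Reordering them into the arrangement dictated by the right-hand side requires further commutations, whose correction terms again lie in deeper central depth.

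The main technical obstacle is to keep the cumulative area of all the error terms within the per-iteration budget $|\underline n|^{p-\ell}$. These errors originate from the conjugates introduced by Lemma~\ref{lem:freeidentities}(2), the Hall--Witt residue, the central $x_p$-correction arising when $\ell = 2$, and the commutations needed to reorder the three main $\Omega$-factors. Each such error lies in $C^{\ell'}(G^{\lrcorner}_{p,3})$ for some $\ell' \geqslant \ell + 2$, hence by Lemma~\ref{lem:omegacommuteswithproducts} it can be written as a product of $\Omega^{\ell'}$-words with parameters bounded by $|\underline n|$. These are then absorbed using the already established Second Commuting $(r, \ell')$-Lemma for $r \geqslant k-1$ and $\ell' \geqslant \ell+1$, which are available thanks to the induction hypothesis~\ref{item:IH-gp3corner} (and~\ref{item:IH-fromp-1top} when invoking the Dehn function of a subgroup $G_{p - \ell + 2, 3} \hookrightarrow G^\lrcorner_{p,3}$). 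Each such absorption contributes area $\lesssim_p |\underline n|^{p - \ell' + 1} \leqslant |\underline n|^{p - \ell - 1}$, comfortably within the per-iteration budget. This completes the inductive step and thereby the proof sketch.
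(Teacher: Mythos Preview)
Your approach is essentially the same as the paper's: both argue by induction on $|\lfloor n_{k-1}\rfloor|$ and establish the identical one-step identity
\[
\Omega_k^\ell(\ldots,a,n_k) =_\calp \Omega_k^\ell(\ldots,a-1,n_k)\cdot \Omega_k^{\ell+1}(\ldots,a-1,n_k)^{-1}\cdot \Omega_{k-1}^{\ell+1}(\ldots,n_k)
\]
at cost $\lesssim_p |\underline n|^{p-\ell}$, by peeling one $x_1$ off the innermost commutator and then distributing the outer $[x_1^{n_i},\,\cdot\,]$'s over the resulting product. The paper simply packages your ``Hall--Witt plus $[x_1,x_\ell^{n_k}]=x_{\ell+1}^{n_k}$'' step as Corollary~\ref{cor:oneandjnotequalsoneplusj} and your ``distribute via Lemma~\ref{lem:freeidentities}(2) and absorb conjugation errors'' step as Lemma~\ref{lem:separatingproducts}; the latter makes the error bookkeeping cleaner than your sketch (the relevant errors are not really ``in $C^{\ell'}$ for $\ell'\geqslant\ell+2$'' and Lemma~\ref{lem:omegacommuteswithproducts} is not the right tool for them---they are commutators of two derived-subgroup words, hence trivial by metabelianity, and their area is controlled directly by the Second Commuting $(r,\ell+1)$-Lemmas together with the Dehn function of $G_{p-\ell+1,3}\hookrightarrow G_{p,3}^\lrcorner$ from \ref{item:IH-fromp-1top}), but the underlying argument is the same.
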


\begin{addendum}\label{add:nullhomotopicomegawords}
    For $3 \leqslant j_0 +1 \leqslant \ell \leqslant p$ and $k \geqslant p-\ell +2$ the words $\Omega_k^\ell(\underline{n})$ are null-homotopic in $G_{p-\ell+2,3} \hookrightarrow G^\lrcorner_{p,3}$ (respectively $G_{p,3}$). By the induction hypothesis \ref{item:IH-fromp-1top} their area is $\lesssim_p \delta_{G_{p-\ell+2,3}}(n) \lesssim_p n^{p-\ell+1}$.

    This observation has the following consequences in $G^\lrcorner_{p,3}$ and $G_{p,3}$:
    \begin{enumerate}
        \item For $j_0 +1 \leqslant \ell \leqslant p$ and $k \geqslant p-\ell +2$, the First and Second commuting $(k,\ell)$-Lemmas may be interpreted as the process of removing and creating the $\Omega_k^\ell$-words with area $\lesssim_p |m|n^{p-\ell} + n^{p-\ell+1}$ and $\lesssim_p n^{p-\ell+1}$ respectively;
        \item For $j_0 +1 \leqslant \ell \leqslant p$ and the pairs $(p-\ell+1,\ell)$, in the identity \eqref{eq:tinypos} (respectively \eqref{eq:tinyneg}) the $|\lfloor n_{k-1} \rfloor|$ appearances of the $\Omega_{p-\ell+1}^{\ell +1}$-words (respectively the $\Omega_{p-\ell+1}^{\ell+1}$-,$\ldots, \Omega_{2p-1}^{\ell+1}$-words) can be removed with area $\lesssim_p |\lfloor n_{k-1} \rfloor| \cdot \delta_{G_{p-\ell+1,3}}(n) \lesssim_p n^{p-\ell+1}$. For notational convenience we do not distinguish cases and leave these terms in the identities; 
        \item  For all $k \geqslant p$ by Lemma \ref{lem:commutatortinyletter} and using the induction hypothesis \ref{item:IH-fromp-1top} at most $n$ times we get that $\Omega_k(\underline{n})$ is a null-homotopic word with area $\lesssim_p n^{p-1}$.
    \end{enumerate}
\end{addendum}

\begin{proof}[Proof of Lemma \ref{lem:commutatortinyletter}]  
      We assume $n_{k-1} \geqslant 0$. The proof for $n_{k-1} <0$ is analogous, but produces slightly longer terms when we use the corresponding formula from Addendum \ref{add:omega3s}. The proof is by induction in $\left\lfloor n_{k-1}\right\rfloor$. The case $\left\lfloor n_{k-1}\right\rfloor=0$ is trivial and we now assume that we have proven the result for $\left\lfloor n_{k-1}\right\rfloor-1$.
     
     We deduce from $k\neq 1$ and Corollary \ref{cor:oneandjnotequalsoneplusj} that
    {\small
    \begin{align*}
        \Omega^{\ell}_{k}(\underline{n}) &=_\calp \left\{ \begin{array}{ll}\left[x_{1}^{n_1},\ldots, x_{1}^{n_{k-2}},x_{\ell+1}^{n_k} \cdot [x_{\ell+1}^{n_{k}},x_{1}^{n_{k-1}-1}] \cdot [x_{1}^{n_{k-1}-1},x_{\ell}^{n_k}]\cdot [y_{1}^{\tilde{n}},y_{p-1}^{\tilde{n}}]\right],~&\ell=2\\
        &\\
        \left[x_{1}^{n_1},\ldots, x_{1}^{n_{k-2}},x_{\ell+1}^{n_k} \cdot [x_{\ell+1}^{n_{k}},x_{1}^{n_{k-1}-1}] \cdot [x_{1}^{n_{k-1}-1},x_{\ell}^{n_k}]\right],~ & \ell\neq 2
        \end{array}\right. \label{eq:identity-(2,2)}\\
        &=_\calp \left[x_{1}^{n_1},\ldots, x_{1}^{n_{k-2}},x_{\ell+1}^{n_k} \cdot [x_{\ell+1}^{n_{k}},x_{1}^{n_{k-1}-1}] \cdot [x_{1}^{n_{k-1}-1},x_{\ell}^{n_k}]\right], \nonumber
    \end{align*}}%
    where the last identity holds in $G^\lrcorner_{p,3}$ at cost $\lesssim_p |\underline{n}|^2$, since the $y_i$ commute with the $x_i$. Therefore, by applying Lemma \ref{lem:separatingproducts} to the above equality for $\nu=2$ and for the words $v_1:=x_{\ell+1}^{n_k}$, $v_2 := [x_{1}^{n_{k-1}-1}, x_{\ell+1}^{n_{k}}]^{-1}$, and $w := [x_{1}^{n_{k-1}-1},x_{\ell}^{n_k}]$,\footnote{Note that in the case $n_{k-1}<0$, $\nu \leqslant p$.}  we get that the identity
     {\small  
    \begin{align*}
        &\left[x_{1}^{n_1},\ldots, x_{1}^{n_{k-2}},x_{\ell+1}^{n_k} \cdot [x_{\ell+1}^{n_{k}},x_{1}^{n_{k-1}-1}] \cdot [x_{1}^{n_{k-1}-1},x_{\ell}^{n_k}]\right] =_\calp\\
         &\left[x_{1}^{n_1},\ldots, x_{1}^{n_{k-3}},[x_{1}^{n_{k-2}}, x_{\ell+1}^{n_k}] \cdot [x_{1}^{n_{k-2}},[x_{1}^{n_{k-1}-1}, x_{\ell+1}^{n_{k}}]^{-1}] \cdot [x_{1}^{n_{k-2}},[x_{1}^{n_{k-1}-1},x_{\ell}^{n_k}]]\right].
    \end{align*}
    }%
    holds in $G^{\lrcorner}_{p,3}$ with area $\lesssim_p |\underline{n}|^{p-\ell}$. By Lemma \ref{lem:omegainverse} the identity
    \[
        \left[x_{1}^{n_{k-2}},[x_{1}^{n_{k-1}-1}, x_{\ell+1}^{n_{k}}]^{-1}\right] =_\calp \left[ x_{1}^{n_{k-2}}, [x_{1}^{n_{k-1}-1}, x_{\ell+1}^{n_{k}}]\right]^{-1}
    \]
    holds with area $\lesssim_p |\underline{n}|^{p-\ell}$.
    
    Overall, on the first iteration of applying Lemma \ref{lem:separatingproducts} we get that the identity
    {\small  
    \begin{align*}
        \Omega^{\ell}_{k}&(\underline{n}) =_\calp \left[x_{1}^{n_1},\ldots, x_{1}^{n_{k-3}},[x_{1}^{n_{k-2}}, x_{\ell+1}^{n_k}] \cdot \left[ x_{1}^{n_{k-2}}, [x_{1}^{n_{k-1}-1}, x_{\ell+1}^{n_{k}}]\right]^{-1} \cdot [x_{1}^{n_{k-2}},[x_{1}^{n_{k-1}-1},x_{\ell}^{n_k}]]\right]
    \end{align*}
    }%
    holds in $G^{\lrcorner}_{p,3}$ with area $\lesssim_p |\underline{n}|^{p-\ell}$. Iterating this argument $O_p(1)$-times we get that the identity
    \begin{align*}
        \Omega^{\ell}_{k}(\underline{n}) =_\calp \ &\Omega^{\ell}_{k}(n_{1}, \ldots, n_{k-1}-1,n_k) \cdot \Omega^{\ell+1}_{k}(n_{1}, \ldots, n_{k-1}-1,n_k)^{-1} \\
        &\cdot \Omega^{\ell+1}_{k-1}(n_{1}, \ldots, n_{k-2},n_k)
    \end{align*}
    holds in $G^{\lrcorner}_{p,3}$ with area $\lesssim_p |\underline{n}|^{p-\ell}$. Applying our induction hypothesis for $\left\lfloor n_{k-1}\right\rfloor-1$ to $\Omega_k^\ell(n_1,\dots, n_{k-1}-1,n_k)$ shows that identity \eqref{eq:tinypos} holds in $G_{p,3}^{\lrcorner}$ with area $\lesssim_p |\underline{n}|^{p-\ell+1}$.
\end{proof}

If $(k,j)=(2,2)$ the same arguments as in the proof of Lemma \ref{lem:commutatortinyletter} yield a similar identity with the only difference that now the terms of the form $[y_1^{\tilde{n}},y_{p-1}^{\tilde{n}}]$ do not cancel. We record it below.

\begin{addendum}\label{add:(2,2)}
    Let $p\geqslant 5$, $(k,j)=(2,2)$, $\underline{n}=(n_1,n_2)\in \R^k$ and $\beta=n_1-\left\lfloor n_1\right\rfloor$. 
    
    If $n_1\geqslant 0$, then the identity
    \begin{equation}\label{eq:claimE1}
        \Omega_{2}(\underline{n}) =_{\calp} [x_{1}^{\beta}, x_{2}^{n_{2}}] \cdot \left(\prod_{r=0}^{\lfloor n_{1} \rfloor -1} \Omega^3_1(n_2)\cdot \Omega^3_2(r+\beta,n_2)^{-1} \cdot  [y_{1}^{\tilde{n}}, y_{p-1}^{\tilde{n}}]^{\pm 1} \right),
    \end{equation}
    holds in $G^{\lrcorner}_{p,3}$ for all $p \geqslant 5$ with area $\lesssim_p |\underline{n}|^{p-1}$ and $|\tilde{n}| \lesssim_{p} |n_2|$.
    
    Analogously, if $n_1 <0$, then the identity
    {\small\begin{align}\label{eq:claimE2}
        \Omega_{2}(\underline{n}) =_{\calp} [x_{1}^{\beta}, x_{2}^{n_{2}}] \cdot  \Big( \prod_{r=0}^{- \lfloor n_{1} \rfloor -1} \Big( \prod_{i=3}^{p} & \Omega_{i-1}^{3}(\beta - r,1, \ldots, 1, -n_2)^{-1} \\
        &\cdot \Omega_{i-2}^{3}(1, \ldots, 1, -n_2) \Big)\cdot  [y_{1}^{\tilde{n}}, y_{p-1}^{\tilde{n}}]^{\pm1} \Big),\notag
    \end{align}}%
    holds in $G^{\lrcorner}_{p,3}$ for all $p \geqslant 5$ with area $\lesssim_p |\underline{n}|^{p-1}$ and $|\tilde{n}| \lesssim_{p} |n_2|$.
\end{addendum}

\begin{remark}\label{rem:(2,2)decomposition-gp3}
    For all $p \geqslant 3$ the identity \eqref{eq:claimE1} (respectively \eqref{eq:claimE2}) without the error terms $[y_{1}^{\tilde{n}}, y_{p-1}^{\tilde{n}}]$ holds in $G_{p,3}$ with area $\lesssim_p |\underline{n}|^{p-1}$.
\end{remark}

The following result for $(k,\ell)=(1,j_0)$ is a key ingredient in our derivation of the case $(1,j_0)$ of the Commuting Lemmas from the cases $(k,j_0)$ for $k\geqslant 2$ (see the red arrows/boxes in Figure \ref{fig:commuting-lemmas}.) In particular, in its proof we can assume that the Commuting $(k,j_0)$-Lemmas with $k\geqslant 2$ have already been proved. We prove a more general version for arbitrary $(k,\ell)$, as this is required in other parts of this work (after the proofs of the Commuting $(k,\ell)$-Lemmas).

\begin{lemma}[cf. {\cite[Lemma 6.20]{lipt}}]\label{lem:omega-u}
    Let $p \geqslant 5$, $n \geqslant 1$, $\alpha \geqslant 0$, $j_0 \leqslant \ell \leqslant p-1$, $k \geqslant 1$, and $\underline{n} \in \R^k$ with $|\underline{n}| \lesssim_p n$. For $u:= u(x_1,x_2) \in \mathcal{F}[2\alpha]$ with $|u| \leqslant n$, the identities
    \[
    [\Omega_k ^\ell (\underline{n})^{\pm 1}, u ]=_\calp 
    \left\{\arraycolsep=1.4pt\def\arraystretch{2.2}
    \begin{array}{ll}
         \prod\limits_{i=1}^{\nu} \Omega_{l_i}^{\ell}(\underline{\eta}_{i})^{\pm1}, &\ \text{if} \ k+\ell \geqslant 5, \\
         \left( \prod\limits_{i=1}^{\nu} \Omega_{l_i}^{\ell}(\underline{\eta}_{i})^{\pm1} \right) \left( \prod\limits_{i=1}^{\mu} \Omega_2 ^{p-1}(\underline{\hat{n}_i})^{\pm 1} \right), &\ \text{if} \ (k,\ell)=(1,3),\\
         \left( \prod\limits_{i=1}^{\nu} \Omega_{l_i}(\underline{\eta}_{i})^{\pm1} \right) \left( \prod\limits_{i=1}^{\mu} \Omega_{3}^{p-2}(\tilde{\underline{n}}_i)^{\pm 1} \right), &\ \text{if} \ (k,\ell)=(2,2).
    \end{array}\right.
    \]
    hold in $G^\lrcorner_{p,3}$ with area $\lesap n^{p-\ell+1}$, for suitable $\nu = O_{p,\alpha}(1)$, $\mu \leqslant 2\alpha$, $l_i \geqslant k+1$, $\underline{\eta}_i \in \R^{l_i}$ with $|\underline{\eta}_i| \lesssim_p n$ for $1 \leqslant i \leqslant \nu$, $\hat{n}_i \in \R^2$, and $\tilde{n}_i \in \R^3$ with $|\hat{n}_i|,|\tilde{n}_i| \lesssim_p n$ for $1\leqslant i \leqslant \mu$.
\end{lemma}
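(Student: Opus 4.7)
The plan is to proceed by descending induction on $k$, with $\ell$ fixed in the range $j_0 \leqslant \ell \leqslant p-1$. The base case is $k \geqslant p-\ell+2$, where $\Omega_k^\ell(\underline{n})$ is already null-homotopic in $G_{p-\ell+2,3} \hookrightarrow G^\lrcorner_{p,3}$ with area $\lesssim_p n^{p-\ell+1}$ by Addendum~\ref{add:nullhomotopicomegawords}(1), making the commutator trivial as the empty product of $\Omega^\ell$-words.

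For the inductive step, decompose $u = u_1 \cdots u_m$ with $m \leqslant 2\alpha$, each $u_j = s_j^{b_j}$ with $s_j \in \Sigma$ and $|b_j| \leqslant n$. Iterating Lemma~\ref{lem:freeidentities}(2) produces a free identity expressing $[\Omega_k^\ell(\underline{n})^{\pm 1}, u]$ as an ordered product of conjugated atomic commutators $[\Omega_k^\ell(\underline{n})^{\pm 1}, u_j]^{w_j}$, where each $w_j$ is a suffix of $u$ of length at most $n$. It remains to put each such term in the required form.

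For an $x_1$-atom, Corollary~\ref{cor:growingomega} evaluates the atomic commutator as a single $\Omega_{k+1}^\ell$-word with area $\lesssim_p n^{p-\ell+1}$; the conjugation by $w_j$, unwound with Lemma~\ref{lem:freeidentities}(3), leaves a further commutator $[\Omega_{k+1}^\ell(\cdot)^{\mp 1}, w_j]$ to which the induction hypothesis at $(k+1,\ell)$ applies, yielding a product of $\Omega^\ell_{l}$-words with $l \geqslant k+2$. Crucially, this recursion never encounters the exceptional pairs $(1,3)$ or $(2,2)$, since $k+1 \geqslant 2$, and $(k+1,\ell)=(2,2)$ would force $(k,\ell)=(1,2)$, which is outside our hypotheses; hence no extraneous error terms appear from the recursion. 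For an $x_2$-atom, the First commuting $(k,\ell)$-Lemma~\ref{lem:almostkjcommuting} gives $1$ in the generic case, an $\Omega_2^{p-1}(b_j,-n_1)^{\pm1}$ when $(k,\ell)=(1,3)$, or an $\Omega_3^{p-2}(\underline{\tilde m})^{\pm1}$ with $|\underline{\tilde m}| \lesssim_p n$ when $(k,\ell)=(2,2)$, each with area $\lesssim_p n^{p-\ell+1}$. Using the relations in Proposition~\ref{prop:compactpresentations}, one checks that $\Omega_2^{p-1}(m,n)$ reduces to $x_p^{-mn}$ and $\Omega_3^{p-2}(a,b,c)$ reduces to a power of $x_p$, so these error terms are central. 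Therefore the conjugation $[\,\cdot\,]^{w_j}$ acts trivially on them, and they can be pulled to the right and gathered into the separate product appearing in the statement.

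Summing contributions, each of the $O_\alpha(1)$ atomic levels contributes area $\lesssim_p n^{p-\ell+1}$ and the recursion has depth at most $p-\ell+2-k$, so the total area is $O_{p,\alpha}(1) \cdot n^{p-\ell+1} \lesap n^{p-\ell+1}$; the count $\nu$ satisfies $\nu_k \leqslant 2\alpha(1+\nu_{k+1})$ and thus $\nu = O_{p,\alpha}(1)$, while $\mu \leqslant 2\alpha$ because error terms only arise from top-level $x_2$-atoms. The main technical subtlety is the centrality of the exceptional outputs $\Omega_2^{p-1}$ and $\Omega_3^{p-2}$, which simultaneously makes the conjugations free and permits the clean separation into two products; everything else is a bookkeeping exercise built on the Commuting $(k',\ell)$-Lemmas already established for $k' \geqslant 2$ in this induction on $j$.
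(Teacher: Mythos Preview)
Your argument is correct, but the induction scheme is different from the paper's. The paper inducts on $2\alpha$, peeling one syllable $x_1^{\beta_1}x_2^{\gamma_1}$ off the left of $u$ at each step: moving $\Omega_k^\ell(\underline{n})$ past $x_1^{\beta_1}$ produces an extra $\Omega_{k+1}^\ell$-word, moving both past $x_2^{\gamma_1}$ invokes the First commuting $(k,\ell)$- and $(k+1,\ell)$-Lemmas, and then the induction hypothesis on $\alpha$ is applied to each of the two $\Omega$-words against the shorter tail of $u$. You instead fix $u$ once and for all, fully expand the commutator via Lemma~\ref{lem:freeidentities}(2), and run a descending induction on $k$ with base case $k\geqslant p-\ell+2$ coming from Addendum~\ref{add:nullhomotopicomegawords}. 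The two schemes are essentially dual---the paper recurses horizontally and picks up the $k\!\to\!k{+}1$ step implicitly, while you recurse vertically and handle all atoms of $u$ at once---and both yield the same $\nu = O_{p,\alpha}(1)$ and area bounds.

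Two small points. First, your invocation of Corollary~\ref{cor:growingomega} when $k=1$ is formally outside its hypothesis $k\geqslant 2$; of course $[x_\ell^{\pm n_1},x_1^{b_j}] = \Omega_2^\ell(b_j,\pm n_1)^{-1}$ is a free identity, so nothing is lost. Second, your sentence ``the conjugation $[\,\cdot\,]^{w_j}$ acts trivially on them'' for the central error terms $\Omega_2^{p-1}$ and $\Omega_3^{p-2}$ is correct as a statement about group elements, but you should say how to realise it at bounded cost: rewrite the error term as a $\widetilde\Omega$-word in $y_1,y_{p-1}$ using the $5$-Heisenberg subgroup (cost $\lesssim_p n^2$), commute freely past all $x$-letters, then convert back. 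This keeps the total area within $\lesap n^{p-\ell+1}$ as required.
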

\begin{proof}       
    We start by proving the identities for $k+\ell \geqslant 5$ for $\Omega_k ^\ell (\underline{n})$, the proof for $\Omega_k ^\ell (\underline{n})^{-1}$ being similar. The proof for $k+\ell \geqslant 5$ relies only on the First commuting $(k+r,\ell)$-Lemmas for $G^\lrcorner_{p,3}$ where $r \geqslant 1$. 
    
    Since $u \in \mathcal{F}[2\alpha]$, there exists $\mu \leqslant 2\alpha$ such that 
    {\small
    $$
    u := \prod_{i=1}^{\mu} x_1 ^{\beta_i} x_2 ^{\gamma_i}.
    $$
    }%
    Without loss of generality we can assume $\mu = 2\alpha$. The proof is done by induction on $2\alpha$. For the base of induction, $\alpha=0$, the statement is trivially true with $\nu =\mu =0$, with the convention that in this case the product is empty. Suppose that the statement is true for  $2 \alpha \geqslant 0$ and let $u \in \mathcal{F}[2(\alpha +1)]$. The following identities hold in $G^{\lrcorner}_{p,3}$. The area estimates are explained below.
    \begingroup
    \allowdisplaybreaks
    {\small  
    \begin{align*}
        \Omega_k^\ell(\underline{n}) \cdot \prod_{i=1}^{\mu} x_1 ^{\beta_i} x_2 ^{\gamma_i} \stackrel{\ref{eq:1-omega-u}}{=}_\calp& \ x_1^{\beta_1} \cdot  \Omega_k^\ell(\underline{n}) \cdot \Omega_{k+1}^\ell(\beta_1, \underline{n})^{-1} \cdot x_2^{\gamma_1} \cdot \prod_{i=2}^{\mu} x_1 ^{\beta_i} x_2 ^{\gamma_i}\\
        \stackrel{\ref{eq:2-omega-u}}{=}_\calp & \ x_1^{\beta_1}x_2^{\gamma_1} \cdot \Omega_k^\ell(\underline{n}) \cdot \Omega_{k+1}^\ell(\beta_1, \underline{n})^{-1} \cdot \prod_{i=2}^{\mu} x_1 ^{\beta_i} x_2 ^{\gamma_i} \\
        \stackrel{\ref{eq:3-omega-u}}{=}_\calp& \left( \prod_{i=1}^{\mu} x_1 ^{\beta_i} x_2 ^{\gamma_i} \right) \cdot \Omega_k^\ell(\underline{n})\cdot \left( \prod_{i=1}^{\nu} \Omega_{l_{i}}^{\ell}(\underline{\eta}_{i})^{\pm1}\right)
    \end{align*}
    }%
    \endgroup
    \begin{enumerate}[label={(\arabic*)}]
        \item is a consequence of Corollary \ref{cor:growingomega}\footnote{In fact this is a free identity for $\Omega_k^{\ell}(\underline{n})$, but Corollary \ref{cor:growingomega} is required for $\Omega_k^{\ell}(\underline{n})^{-1}$.}, so it has area $\lesssim_p \max\{n^{p-\ell+1}, n^2\} \leqslant n^{p-\ell+1}$; \label{eq:1-omega-u}
        \item is obtained as follows: note that $k +\ell \geqslant 5$, if $k=1$, then this follows because $[x_2^{\gamma_2}, x_\ell^{n_1}]$ has area $\lesssim_p n^2$ for all $\ell \geqslant 4$. Otherwise, by the induction hypothesis \ref{item:IH-gp3corner} we can apply the First commuting $(k, \ell)$-and $(k+1, \ell)$-Lemmas for $G^\lrcorner_{p,3}$. Therefore, it has area $\lesssim_p n^{p-\ell+1}$. In either case it has area $\lesssim_p n^{p-\ell+1}$;\label{eq:2-omega-u}
        \item is a consequence of the induction hypothesis for $2\alpha$. More specifically, we apply the induction hypothesis first to the $\Omega_{k+1}^\ell$-word to obtain the error term {\small$\prod_{i=1}^{\nu_2} \Omega_{l_{2,i}}^{\ell}(\underline{\eta}_{2,i})^{\pm1}$}, we then apply the induction hypothesis on $2\alpha$ to the $\Omega_k^\ell$-word to obtain the error term {\small$\prod_{i=1}^{\nu_1} \Omega_{l_{1,i}}^{\ell}(\underline{\eta}_{1,i})^{\pm1}$}, where $\nu_1, \nu_2 = O_{\alpha,p}(1)$, $l_{1,i} \geqslant k+1$, $l_{2,i} \geqslant k+2$, $\underline{\eta}_{1,i} \in \R^{l_{i,1}}$, and $\underline{\eta}_{2,i} \in \R^{l_{2,1}}$ with $|\underline{\eta}_{1,i}|, |\underline{\eta}_{2,i}| \lesssim_p n$. Finally, we merge these terms into the single product {\small$\prod_{i=1}^{\nu} \Omega_{l_{i}}^{\ell}(\underline{\eta}_{i})^{\pm1}$}. In total, this has area $\lesssim_{p,\alpha} n^{p-\ell+1}$.\label{eq:3-omega-u}
    \end{enumerate}
    This concludes the proof for the pairs $(k, \ell)$ with $k+\ell \geqslant 5$. The proofs for the pairs $(1,3)$ and $(2,2)$ are analogous. However, some additional error terms are being produced. Rather than repeating the line of argument several times, we just give a brief explanation of the main differences and leave the details to the reader.
    \begin{itemize}[label=\bfseries, leftmargin=*,labelindent=6.5em]
        \item[\textbf{Case (1,3):}] In this case we also have to commute terms of the form $x_3^{n_1}$ with terms of the form $x_2 ^{\gamma_i}$, which produces additional error terms of the form $[x_3^{n_1},x_2^{\gamma_i}]$. However, since $\langle x_3,x_2,y_1,y_{p-1} \rangle$  and $\langle x_1,x_{p-1},y_1,y_{p-1} \rangle$ generate subgroups of $G_{p,3}^{\lrcorner}$ isomorphic to the 5-Heisenberg group, each such commutation can be performed by producing an error term of the form $\Omega_2^{p-1}(\hat{\underline{n}}_i)$ on the very right with area $\lesssim_p n^2$. In particular, the total area of the transformations is still $\lesssim_p n^{p-2}$.
        \item[\textbf{Case (2,2):}] This is similar to the case (1,3) only that now the error terms $\Omega_3^{p-2}(\tilde{\underline{n}}_i) $ arise from commutators of the form $[\Omega_2(\underline{n}),x_2^{\gamma_i}]$. Which by the First commuting $(2,2)$-Lemma \ref{lem:almostkjcommuting}\footnote{At the point where we use the identity for (2,2) we have already proved the First commuting $(2,2)$-Lemma for $G^\lrcorner_{p,3}$.} for $G^\lrcorner_{p,3}$ has area $\lesssim_p n^{p-1}$. 
    \end{itemize}
\end{proof}
\begin{addendum}\label{add:omega-u-gp3}
    The above proof for the pairs $(k,\ell)$ with $k+\ell \geqslant 5$ translates  without modification to $G_{p,3}$ with $p \geqslant 3$ for all pairs $(k,\ell)$. So we get that the identity $$  [\Omega_k ^\ell (\underline{n})^{\pm 1}, u ]=_\calp \prod_{i=1}^{\nu} \Omega_{l_i}^{\ell}(\underline{\eta}_{i})^{\pm1} $$ holds in $G_{p,3}$ for all $p \geqslant 3$ with area $\lesap n^{p-\ell+1}$ for suitable $\nu = O_{p,\alpha}(1)$, $l_i \geqslant k+1$, and $\underline{\eta}_i \in \R^{l_i}$ with $|\underline{\eta}_i| \lesssim_p n$ for all  $1 \leqslant i \leqslant \nu$. 
\end{addendum}

With the required auxiliary results we can now complete the induction step for the First commuting $(k, j_0)$-Lemma \ref{lem:almostkjcommuting} and the Second commuting $(k,j_0$)-Lemma \ref{lem:kjcommuting} for $G^\lrcorner_{p,3}$. 

\subsubsection*{Proof of the induction step for $j$} \label{sec:inductionstep}
Recall that the induction in $j$ for $G^\lrcorner_{p,3}$ is done for all pairs $(k,j)$ such that $k + j \geqslant 5$.
So assume $k+j_0\geqslant 5$. As mentioned before we distinguish the cases (i) $k\neq 1$ and (ii) $k=1$. \vspace{4pt}

\emph{Case (i) $k \neq 1$.} Let $W \in \{x_{2}^m, w(x_1,x_2)\}$ with $w:= w(x_1,x_2)$ a word in the derived subgroup of $G_{p,3}^{\lrcorner}$. We need to show that the area of the null-homotopic word $[\Omega_{k}^{j_0}(\underline{n}), W]$ is $\lesap |W|\cdot n^{p - j_0} + n^{p - j_0 + 1}$. We assume that $n_k \geqslant 0$, the proof for $n_k <0$ is analogous, but involves slightly more terms. Let $\beta := n_{k-1}-\lfloor n_{k-1} \rfloor$. We have the following identities in $G^\lrcorner_{p,3}$ whose area estimates are explained below.
\begingroup
\allowdisplaybreaks
{\small  
\begin{align} 
    \Omega_{k}^{j_0}(\underline{n}) \cdot W \stackrel{\ref{eq:1-commutatorstinyletter}}{=}_\calp
    & \Big(\Omega_{k}^{j_0}(n_1, \ldots, \beta, n_k) \cdot \label{eq:commuting-omegakj-W}\\
    &\prod_{r=0}^{\lfloor n_{k-1} \rfloor-1} \left(\Omega_{k}^{j_{0}+1}(n_1, \ldots, r+\beta, n_k)^{-1} \cdot \Omega_{k-1}^{j_{0}+1}(n_1, \ldots, n_{k-2}, n_k) \right) \Big) \cdot W \notag \\
    \stackrel{\ref{eq:2-commutatorstinyletter}}{=}_\calp& \ \Omega_{k}^{j_0}(n_1, \ldots, \beta, n_k) \cdot W \notag \\
    \quad & \cdot \Big( \prod_{r=0}^{\lfloor n_{k-1} \rfloor-1} \Omega_{k}^{j_{0}+1}(n_1, \ldots, r+\beta, n_k)^{-1} \cdot \Omega_{k-1}^{j_{0}+1}(n_1, \ldots, n_{k-2}, n_k) \Big) \notag \\
    \stackrel{\ref{eq:3-commutatorstinyletter}}{=}_\calp& \left( \prod_{r = j_{0}+1}^{p-1} \Omega_{k-1}^{r}(n_1, \ldots, n_{k-2}, s_r) \right) \cdot W \notag \\
    \quad & \cdot \Big( \prod_{r=0}^{\lfloor n_{k-1} \rfloor-1} \Omega_{k}^{j_{0}+1}(n_1, \ldots, r+\beta, n_k)^{-1} \cdot \Omega_{k-1}^{j_{0}+1}(n_1, \ldots, n_{k-2}, n_k) \Big) \notag \\
    \stackrel{\ref{eq:4-commutatorstinyletter}}{=}_\calp& \ W \cdot \left( \prod_{r = j_{0}+1}^{p-1} \Omega_{k-1}^{r}(n_1, \ldots, n_{k-2}, s_{r}) \right) \notag \\
    \quad & \cdot \Big( \prod_{r=0}^{\lfloor n_{k-1} \rfloor-1} \left(\Omega_{k}^{j_{0}+1}(n_1, \ldots, r+\beta, n_k)^{-1} \cdot \Omega_{k-1}^{j_{0}+1}(n_1, \ldots, n_{k-2}, n_k) \right) \Big) \notag \\
    \stackrel{\ref{eq:5-commutatorstinyletter}}{=}_\calp& \ W \cdot \Omega_{k}^{j_{0}}(\underline{n}),\notag
\end{align}
}%
\endgroup

\begin{enumerate}[label={(\arabic*)}]
    \item follows from Lemma \ref{lem:commutatortinyletter}, so the area is $\lesssim_p |\underline{n}|^{p-j_0+1}$;\label{eq:1-commutatorstinyletter}
    \item is a consequence of the induction hypothesis \ref{item:IH-gp3corner} applied $2\cdot \lfloor n_k \rfloor$ times \footnote{In the case $n_{k-1}<0$ there are $2(p-j_0)|\lfloor n_{k-1} \rfloor|$ terms appearing.}. Therefore, it has area $\lesap |W|n^{p-j_0} + n^{p-j_{0}+1}$;\label{eq:2-commutatorstinyletter}
    \item follows from applying Lemma \ref{lem:laderlemma}, since $|\beta| \leqslant 1$, and then Lemma \ref{lem:omegacommuteswithproducts}. Therefore, it has area $\lesssim_p n^{p-j_0}$;\label{eq:3-commutatorstinyletter}
    \item follows from the induction hypothesis \ref{item:IH-gp3corner} applied $O_{p}(1)$  times, thus has area $\lesap |W|n^{p-j_{0}-1} + n^{p-j_{0}}$;\label{eq:4-commutatorstinyletter}
    \item follows from Lemma \ref{lem:commutatortinyletter}, Lemma \ref{lem:laderlemma}, and Lemma \ref{lem:omegacommuteswithproducts}, so it has area $\lesssim_p |\underline{n}|^{p-j_0+1}$.\label{eq:5-commutatorstinyletter}
\end{enumerate}
Overall, the null-homotopic word $[\Omega_{k}^{j_0}(\underline{n}), W]$ has area $\lesap |W|n^{p-j_0} + n^{p-j_{0}+1}$. This finishes the proof for all pairs $(k,j_0)$ with $k\neq 1$ and $k + j_0 \geqslant 5$.\vspace{4pt}

\emph{Case (ii) $k=1$.} In this case $\Omega_1^{j_0}(\underline{n}):= x_{j_0}^{n_1}$. Observe that since $1+ j_0 \geqslant 5$ the First commuting $(1,j_0)$-Lemma for $G^\lrcorner_{p,3}$ follows from the relation $[x_{j_0}, x_2]=_\calp 1$. For the Second commuting $(1,j_0)$-Lemma for $G^\lrcorner_{p,3}$, we have that $W=_{\text{def}} w(x_1,x_2) \in \mathcal{F}[\alpha]$ represents an elements in the derived subgroup of $G^\lrcorner_{p,3}$. So from Lemma \ref{lem:omega-u} for $1 +j_0 \geqslant 5$ we obtain the identity 
{\small
\begin{equation*}
    [x_{j_0}^{n_1}, w] =_\calp \prod_{i=1}^{\nu} \Omega_{l_i}^{j_0}(\underline{\eta}_i)^{\pm 1}
\end{equation*}}%
in $G^\lrcorner_{p,3}$ with area $\lesssim_{\alpha,p} n^{p-j_0+1}$, where $\nu = O_{\alpha,p}(1)$, $l_i \geqslant 2$, and $\underline{\eta}_i \in \R^{l_i}$ with $|\underline{\eta}_i| \lesssim_p n$. 

Since $G^\lrcorner_{p,3}$ is metabelian, it follows that the identity $[x_{j_0}^{n_1}, w]=_\calp 1$ holds. Thus, $\prod_{i=1}^{\nu} \Omega_{l_i}^{j_0}(\underline{\eta}_i)^{\pm1}$ is null-homotopic in $G_{p-j_0+2,3}\hookrightarrow G_{p,3}^{\lrcorner}$. It follows from the induction hypothesis \ref{item:IH-fromp-1top} that it has area $\lesssim_{\alpha,p} n^{p-j_0 +1}$. Thus, the identity $[x_{j_0}^{n_1}, w]=_\calp 1$ has area $\lesssim_{\alpha,p} n^{p-j_0+1}$ in $G^\lrcorner_{p,3}$.

This concludes the proof of the induction step of the First and Second commuting $(k,j_0)$-Lemmas for $G^\lrcorner_{p,3}$ with $k+j_0 \geqslant 5$. \hfill \qedsymbol{} \medskip

\begin{addendum}\label{add:proof-first-second-gp3}
    The above proof translates verbatim to the group $G_{p,3}$ \emph{for all pairs $(k,j_0)$}, completing the proof of the First and Second commuting Lemmas for $G_{p,3}$; there is no need to treat the cases $(1,3)$ and $(2,2)$ separately for $G_{p,3}$.
\end{addendum}

We now address the cases $(1,3)$ and $(2,2)$ of the Commuting Lemmas for $G^\lrcorner_{p,3}$. Since the proof of these cases shares some arguments with the above proof, rather than repeating all arguments, we explain the parts where the proofs differ and provide details whenever a new argument is needed.

\subsubsection*{Proof of case $(1,3)$.} The First commuting $(1,3)$-Lemma for $G^\lrcorner_{p,3}$ follows from the fact that $\langle x_3,x_2,y_1,y_{p-1} \rangle$ and $\langle x_1,x_{p-1},y_1,y_{p-1} \rangle$ generate subgroups of $G_{p-1,3}^{\lrcorner}$ isomorphic to the 5-Heisenberg group, so that the identities $[x_3^{n_1}, x_2^m]=_\calp [y_1^{m}, y_{p-1}^{-n_1}] =_\calp \Omega_2^{p-1}(m, -n_1)$ hold in $G^\lrcorner_{p,3}$ with area $\lesssim_p |m|n$. 

For the proof of the Second commuting $(1,3)$-Lemma for $G^\lrcorner_{p,3}$ we proceed as in \emph{Case (ii)}. We first use Lemma \ref{lem:omega-u} for $(1,3)$ to obtain
{\small
\begin{equation} \label{eq:(1,3)}
    [x_3^{n_1}, w] =_\calp \left( \prod_{i=1}^{\nu} \Omega_{l_i}^{3}(\underline{\eta}_{i})^{\pm1} \right) \left( \prod_{i=1}^{\mu} \Omega_2 ^{p-1}(\underline{\hat{n}_i})^{\pm 1} \right)
\end{equation}  
}%
in $G^\lrcorner_{p,3}$ with area $\lesap n^{p-2}$. We then observe that the left side of \eqref{eq:(1,3)} is null-homotopic in $G^\lrcorner_{p,3}$, since $G_{p,3}^{\lrcorner}$ is metabelian, and thus the same is true for the right side. The induction hypothesis \ref{item:IH-fromp-1top} for $G_{p-1,3}\hookrightarrow G_{p,3}^{\lrcorner}$ then implies that the right side has area $\lesssim_{\alpha,p} n^{p-2}$.  Thus, the word $[x_3^{n_1}, w]$ is null-homotopic with area $\lesssim_{\alpha,p} n^{p-2}$ in $G^\lrcorner_{p,3}$. \hfill \qedsymbol{} \medskip

\subsubsection*{Proof of case $(2,2)$.} 
We start with the proof of the First commuting $(2,2)$-Lemma. Arguing as in \emph{Case (i)}, we first prove that the identities 
\begin{equation}\label{eq:step1}
    [\Omega_2(\underline{n}), x_2^m]=_\calp [y_1^{m}, y_{p-1}^{-n_2}]^{n_1}  =_\calp \Omega_2^{p-1}(m,-n_2)^{n_1}
\end{equation} 
hold in $G^\lrcorner_{p,3}$ with area $\lesssim_p |m|n^{p-2} + n^{p-1}$. We first use the decomposition of $\Omega_2(\underline{n})$ given by Addendum \ref{add:(2,2)} (as before we assume that $n_1\geqslant 0$ with the case $n_1<0$ being analogous, but involving slightly more terms). We then follow the chain of identities in \eqref{eq:commuting-omegakj-W} applying the First commuting $(1,3)$- and $(2,3)$-Lemmas at most $\lfloor n_1 \rfloor$ times. Each application of the First commuting $(1,3)$-Lemma produces an error term of the form $[y_1^m, y_{p-1}^{-n_2}]$. The first identity in \eqref{eq:step1} then follows by shifting each of these error terms to the right, with total area $\lesssim_p n \cdot (|m|n^{p-3} + n^{p-2})$. The second identity in \eqref{eq:step1} follows with area $\lesssim_p |m|n^2$ by arguing with $5$-Heisenberg subgroups, similar as in Case $(1,3)$.

For the moreover-part of the First commuting $(2,2)$-Lemma it now suffices to show that the identity
\begin{equation*}
    \Omega_2^{p-1}(m,-n_2)^{n_1} =_\calp \Omega_3^{p-2}(m, n_1, -n_2)
\end{equation*} 
holds in $G^\lrcorner_{p,3}$ with area $\lesssim_p n^{p-1}.$ For this observe that one can first argue as in the proof of Lemma \ref{lem:commutatortinyletter}, expanding the commutator $[x_1^{n_1},x_{p-2}^{-n_2}]$, to obtain that the identity
\[
    \left[x_1^m,[x_1^{n_1},x_{p-2}^{-n_2}]\right]=_\calp \left[x_1^m,x_{p-1}^{-n_2}\right]^{n_1}
\]
holds in $G_{4,3}\hookrightarrow G_{p,3}^{\lrcorner}$ with area $\lesssim_p n^4$, since $|m|\leqslant n$. Combining this with \eqref{eq:step1} and using again that $|m|\leqslant n$, the assertion follows.

The Second commuting $(2,2)$-Lemma is done as follows. For simplicity assume that $n_1 \geqslant 0$, the proof for $n_1 <0$ is analogous but involves slightly more terms and thus uses more instances of the First and Second commuting Lemmas. First, from Addendum \ref{add:(2,2)} we obtain the identity $$\Omega_2(\underline{n})=_\calp [x_{1}^{\beta}, x_{2}^{n_{2}}] \cdot \left(\prod_{r=0}^{\lfloor n_{1} \rfloor -1} \Omega^3_1(n_{2})\cdot \Omega^3_2(r,n_2)^{-1} \cdot  [y_{1}^{\tilde{n}}, y_{p-1}^{\tilde{n}}] \right) $$ 
with area $\lesssim_p n^{p-1}$ in $G^\lrcorner_{p,3}$. 
Then, using the Second commuting $(1,3)$- and $(2,3)$-Lemmas for $G^\lrcorner_{p,3}$, and the fact that the $y_i$ commute with the $x_i$ we now proceed using the chain of identities in \eqref{eq:commuting-omegakj-W} to commute $w$ with $\Omega_2(\underline{n})$ with area $\lesap n^{p-1}$ in $G^\lrcorner_{p,3}$. \hfill \qedsymbol{} \medskip

This completes the proof of the First and Second commuting $(k,j)$-Lemmas for $G^\lrcorner_{p,3}.$ To close this section we record a consequence of Lemma \ref{lem:commutatortinyletter} and Lemma \ref{lem:omegacommuteswithproducts} that will only be used in the proof of the Cancelling $k$-Lemma \ref{lem:cancelling} and in Section \S\ref{sec:fromp3topqandbeyond}.

\begin{corollary}[cf. {\cite[Corollary 6.24]{lipt}}]\label{cor:omegapminus1}
    For all $\underline{n} \in \R^{p-1}$, there exists $m \in \R$ with $|m| \lesssim_p |\underline{n}|$ such that the identity 
    \begin{equation*}
        \Omega_{p-1}(\underline{n})^{\pm 1} =_\calp (\Omega_{p-2}^{3}(|\underline{n}|, \ldots, |\underline{n}|))^m
    \end{equation*}
    holds in $G^\lrcorner_{p,3}$ for all $p\geqslant 5$ (respectively $G_{p,3}$ for all $p \geqslant 3$) with area $\lesssim_p |\underline{n}|^{p-1}$.
\end{corollary}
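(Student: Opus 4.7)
The plan is to exploit the fact that both sides of the identity represent the same central element in $G^\lrcorner_{p,3}$ (respectively $G_{p,3}$). Indeed, $\Omega_{p-1}(\underline{n})$ is a $(p-1)$-fold iterated commutator landing in $C^{p-1}G^\lrcorner_{p,3} = \langle x_p \rangle$, and using the bracket identities in $\mathfrak l_p^{\lrcorner}$ (respectively $\mathfrak l_p$) together with the Baker--Campbell--Hausdorff formula one finds $\Omega_{p-1}(\underline{n}) = x_p^{\varepsilon_1 n_1 \cdots n_{p-1}}$ for a sign $\varepsilon_1$ depending only on the commutator convention; similarly $\Omega_{p-2}^3(|\underline{n}|,\ldots,|\underline{n}|) = x_p^{\varepsilon_2 |\underline{n}|^{p-2}}$. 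Setting
\[ m := \varepsilon_1\varepsilon_2 \cdot \frac{n_1 \cdots n_{p-1}}{|\underline{n}|^{p-2}}, \]
which satisfies $|m| \leqslant |\underline{n}|$ since $|n_i| \leqslant |\underline{n}|$ for all $i$, thus makes both sides coincide as group elements. The content of the corollary is the area bound for this identity.

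To establish the area bound, I would first apply Lemma \ref{lem:commutatortinyletter} iteratively starting at $(k,\ell) = (p-1,2)$; the hypothesis $(k,\ell) \neq (2,2)$ is met since $p \geqslant 5$ (for $G_{p,3}$ with $p \geqslant 3$ the statement is identical up to the absence of $[y_1,y_{p-1}]$-error terms). Each application of the Lemma produces (i) $\Omega_{p-1}^3$-words which are null-homotopic by Addendum \ref{add:nullhomotopicomegawords} and can be killed at cost $\lesssim_p |\underline{n}|^{p-2}$, (ii) $\Omega_{p-2}^3$-words, which are central powers of $x_p$, and (iii) a residual $\Omega_{p-1}^2$-term whose $(p-2)$-th parameter has decreased by one unit. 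Iterating $\lesssim_p |\underline{n}|$ times accumulates a cost of $\lesssim_p |\underline{n}| \cdot |\underline{n}|^{p-2} = |\underline{n}|^{p-1}$ and reduces $\Omega_{p-1}(\underline{n})^{\pm 1}$ to a product of $\lesssim_p |\underline{n}|$ many central $\Omega_{p-2}^3$-words. An analogous reduction, applied symmetrically to $(\Omega_{p-2}^3(|\underline{n}|,\ldots,|\underline{n}|))^m$ by using Lemma \ref{lem:commutatortinyletter} at $(k,\ell) = (p-2,3)$ together with the First and Second commuting $(k,3)$-Lemmas to shuffle the $m$ copies, converts the right-hand side into a similar product of central $\Omega_{p-2}^3$-words at total cost $\lesssim_p |\underline{n}|^{p-1}$.

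The final step is to match the two resulting products of $\Omega_{p-2}^3$-words. Since all these factors are central (lying in the one-dimensional $\langle x_p \rangle$) they commute freely, and the matching reduces to the abelian identity $x_p^{\varepsilon_1 n_1\cdots n_{p-1}} = x_p^{\varepsilon_1 n_1\cdots n_{p-1}}$ in the cyclic centre, where the Dehn function is linear and the cost is negligible. The main obstacle I expect will be the iterative bookkeeping in the reduction of $\Omega_{p-1}(\underline{n})$: one must verify that the residual $\Omega_{p-1}^2$-term produced at each step can itself be recursively reduced without exceeding the area budget. This termination is guaranteed by the fact that the $(p-2)$-th parameter strictly decreases at each iteration, so the recursion halts after $\lesssim_p |\underline{n}|$ steps, and the accumulated costs telescope cleanly to $\lesssim_p |\underline{n}|^{p-1}$.
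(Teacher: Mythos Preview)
Your overall line is the paper's: apply Lemma~\ref{lem:commutatortinyletter} at $(k,\ell)=(p-1,2)$ to decompose $\Omega_{p-1}(\underline n)$ into null-homotopic $\Omega_{p-1}^3$-words and central $\Omega_{p-2}^3$-words, at total cost $\lesssim_p |\underline n|^{p-1}$. (A minor omission: the residual term $\Omega_{p-1}^2(n_1,\ldots,\beta,n_{p-1})$ with $|\beta|<1$ left over after the iteration is handled via Lemmas~\ref{lem:laderlemma} and~\ref{lem:omegacommuteswithproducts}, which you do not mention.)

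The genuine gap is your matching step. You assert that comparing the two products of $\Omega_{p-2}^3$-words ``reduces to the abelian identity in the cyclic centre, where the Dehn function is linear and the cost is negligible.'' This conflates group elements with words. An $\Omega_{p-2}^3$-word is not the letter $x_p^a$ but a commutator word of length $\simeq |\underline n|$, and the concatenation of $\lesssim |\underline n|$ such words is a null-homotopic word of length $\lesssim |\underline n|^2$ in $G_{p-1,3}$; invoking $\delta_{G_{p-1,3}}$ naively on such a word yields only $|\underline n|^{2(p-2)}$, which is far too large. What is actually needed---and this is precisely \cite[Lemma~6.14]{lipt}, which the paper explicitly invokes---is a merging step: given two $\Omega_{p-2}^3$-words $W_1,W_2$ of length $\lesssim |\underline n|$ representing $x_p^{c_1},x_p^{c_2}$, choose a third $\Omega_{p-2}^3$-word $W_3$ of length $\lesssim |\underline n|$ representing $x_p^{c_1+c_2}$ (possible since $|c_1+c_2|\lesssim |\underline n|^{p-2}$); then $W_1W_2W_3^{-1}$ is null-homotopic in $G_{p-1,3}$ with length $\lesssim |\underline n|$, hence has area $\lesssim |\underline n|^{p-2}$ by the induction hypothesis~\ref{item:IH-fromp-1top}. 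Iterating this merge $\lesssim |\underline n|$ times collapses each side to a single short $\Omega_{p-2}^3$-word at total cost $\lesssim |\underline n|^{p-1}$, after which the two singletons cancel at cost $\lesssim |\underline n|^{p-2}$. Without this merging argument your proof is incomplete.
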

\begin{proof}
    The proof is analogous to the one for \cite[Corollary 6.24]{lipt} so we omit it here (it uses Lemma \ref{lem:commutatortinyletter}, Lemma \ref{lem:omegacommuteswithproducts}, and \cite[Lemma 6.14]{lipt} which is stated for $G_{p-1,p-1}$ in \cite{lipt} but can be proved in precisely the same way for $G_{p-1,3}$).
\end{proof}

\subsubsection{Proof of the Main commuting Lemma for $G^\lrcorner_{p,3}$ and $G_{p,3}$}

To deduce the Main commuting Lemmas from the First and Second commuting $(k,j)$-Lemmas for $G^\lrcorner_{p,3}$ (respectively for $G_{p,3}$) we require the Reduction Lemma \ref{lem:reductionlemma}. Its proof follows the one of \cite{lipt}, but requires some modifications; we thus include it here. We start with two auxiliary results.

\begin{lemma}\label{lem:trick}
    Let $\underline{n} :=(n_1, n_2, n_3) \in \R^3$. The identity $$\Omega_3 ^{p-2} (n_1, n_2, n_3) =_\calp \Omega_{p-1} (n_1, n_2, 1, \ldots, 1, n_3)$$ holds in $G^{\lrcorner}_{p,3}$ for $p \geqslant 5$ (respectively $G_{p,3}$ for $p \geqslant 3$) with area $\lesssim_p |\underline{n}|^2$.
\end{lemma}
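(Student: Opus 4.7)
The plan is to convert $\Omega_3^{p-2}(n_1,n_2,n_3) = [x_1^{n_1}, x_1^{n_2}, x_{p-2}^{n_3}]$ into $\Omega_{p-1}(n_1,n_2,1,\ldots,1,n_3)$ by iteratively unfolding the innermost letter using the relations $x_{k+1}^{n_3} =_\calp [x_1, x_k^{n_3}]$. The main tool is Corollary~\ref{cor:commutingxkxl}(1), which gives $[x_1, x_j^{n_3}] =_\calp x_{j+1}^{n_3}$ for $j \geqslant 3$ with area $\lesssim_p |n_3|^2$ in $G^\lrcorner_{p,3}$ (and in $G_{p,3}$), together with $[x_1, x_2^{n_3}] =_\calp x_3^{n_3} \cdot \epsilon^{\pm 1}$ with area $\lesssim_p |n_3|^2$ in $G^\lrcorner_{p,3}$, where $\epsilon = [y_1^{\tilde n_3}, y_{p-1}^{\tilde n_3}]$ is a central element with $|\tilde n_3| \lesssim_p |n_3|$; in $G_{p,3}$ the error $\epsilon$ is absent.

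First I would iteratively substitute $x_{k+1}^{n_3} \leftarrow [x_1, x_k^{n_3}]$ inside the innermost position of the commutator, for $k = p-3, p-4, \ldots, 3$. Since $k \geqslant 3$ at every step, no error term arises; each substitution costs area $\lesssim_p |n_3|^2$, and after $p-5$ iterations we reach $[x_1^{n_1}, x_1^{n_2}, \underbrace{x_1, \ldots, x_1}_{p-5}, x_3^{n_3}]$ with cumulative area $\lesssim_p (p-5)|n_3|^2 \lesssim_p |\underline n|^2$. The final substitution $x_3^{n_3} \leftarrow [x_1, x_2^{n_3}] \cdot \epsilon^{-1}$ then yields $[x_1^{n_1}, x_1^{n_2}, \underbrace{x_1, \ldots, x_1}_{p-5}, [x_1, x_2^{n_3}] \cdot \epsilon^{-1}]$; in $G_{p,3}$ (where $\epsilon = 1$), re-bracketing immediately gives $\Omega_{p-1}(n_1,n_2,1,\ldots,1,n_3)$ and we are done.

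The remaining obstacle, only in $G^\lrcorner_{p,3}$, is to eliminate the central factor $\epsilon^{-1}$ from the innermost bracket. For this I would apply Lemma~\ref{lem:freeidentities}(2) to write $[x_1, [x_1, x_2^{n_3}] \cdot \epsilon^{-1}] =_F [x_1, \epsilon^{-1}] \cdot [x_1, [x_1, x_2^{n_3}]]^{\epsilon^{-1}}$. Since $\epsilon$ is central with word length $O_p(|n_3|)$, the factor $[x_1, \epsilon^{-1}]$ reduces to $1$ with area $O_p(|n_3|)$ using the central-product relations $[x_1, y_j] =_\calp 1$, while the conjugation $[x_1, [x_1, x_2^{n_3}]]^{\epsilon^{-1}} =_\calp [x_1, [x_1, x_2^{n_3}]]$ costs $O_p(|n_3|^2)$ by commuting each of the $O_p(|n_3|)$ letters of the conjugated word past each of the $O_p(|n_3|)$ letters of $\epsilon^{-1}$. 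After re-bracketing we obtain $\Omega_{p-1}(n_1, n_2, 1, \ldots, 1, n_3)$ with total area $\lesssim_p |\underline n|^2$. The crucial point, and the only place where cheapness is non-obvious, is that $\epsilon$ comes out of Corollary~\ref{cor:commutingxkxl}(1) presented as a short commutator in the $L_3$-generators, of word length linear in $|n_3|$; had it been presented as the power $x_p^{\binom{n_3}{2}}$ of word length $\approx |n_3|^2$, the analogous central-commuting step would have cost $O(|n_3|^3)$, breaking the quadratic budget.
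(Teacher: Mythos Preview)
Your proof is correct and follows essentially the same approach as the paper: iteratively unfold $x_j^{n_3}=_\calp[x_1,x_{j-1}^{n_3}]$ from $j=p-2$ down to $j=3$ via Corollary~\ref{cor:commutingxkxl}(1), then handle the extra central factor $[y_1^{\tilde n_3},y_{p-1}^{\tilde n_3}]^{\pm 1}$ arising at the step $j=2$ in $G^\lrcorner_{p,3}$. The paper's proof is terser and does not spell out the elimination of the error term, whereas you do so carefully via Lemma~\ref{lem:freeidentities}(2) and the commutation relations between the $x$- and $y$-generators; your observation that the quadratic bound relies on $\epsilon$ being given as a commutator of length $O(|n_3|)$ rather than as $x_p^{\binom{n_3}{2}}$ is exactly the point.
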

\begin{proof}
    In $G_{p,3}^{\lrcorner}$ this is a straight-forward consequence of iteratively applying the identities $x_{j}^{n_3} =_\calp [x_1, x_{j-1}^{n_3}]$ for $3 \leqslant j \leqslant p-2$ and the identity $\Omega_1^3(n_3)=_\calp [y_1^{\tilde n_3},y_3^{\tilde n_3}]^{\pm 1}\Omega_2(1,n_3)$, which hold with area $\lesssim_p |\underline n|^2$. The proof for $G_{p,3}$ is analogous, but simpler.
\end{proof}

Using Lemma \ref{lem:trick} we can rewrite the identity in Lemma \ref{lem:omega-u} for $(2,2)$ as follows.

\begin{corollary}[cf. {\cite[Lemma 6.26]{lipt}}]\label{cor:movingomega2}
    Let $p \geqslant 5$, $n\geqslant 1$, $\alpha \geqslant 0$, and $\underline{n} \in \R^2$. If $u:= u(x_1, x_2) \in \mathcal{F}[\alpha]$ with $|u| \leqslant n$, then there exists $M= O_{\alpha,p}(1)$ such that the identity $$ \Omega_2 (\underline{n})^{\pm1} \cdot u =_\calp u \cdot \prod_{i=1}^{M} \Omega_{\ell_i}(\underline{\tilde{n}}_i)^{\pm1}$$ holds in $G^\lrcorner _{p,3}$ with area $\lesap n^{p-1}$ for suitable $2 \leqslant \ell_i \leqslant p-1$, and $\underline{\tilde{n}}_i \in \R^{\ell_i}$ with $|\underline{\tilde{n}}_i| \lesap n$.
\end{corollary}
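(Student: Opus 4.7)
The plan is to reduce this corollary to Lemma \ref{lem:omega-u} (case $(k,\ell)=(2,2)$) via the elementary free identity $AB =_F BA \cdot [A,B]$ and then cosmetically rewrite the error terms using Lemma \ref{lem:trick}. Concretely, I will start with
\[ \Omega_2(\underline{n})^{\pm 1} \cdot u =_F u \cdot \Omega_2(\underline{n})^{\pm 1} \cdot \left[\Omega_2(\underline{n})^{\pm 1},\,u\right]. \]
This is a purely free rearrangement, costing no area, and has now isolated the cost of the rewrite into a single commutator.

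Next, I apply the case $(k,\ell)=(2,2)$ of Lemma \ref{lem:omega-u} to expand
\[ \left[\Omega_2(\underline{n})^{\pm 1},\,u\right] \;=_\calp\; \left(\prod_{i=1}^\nu \Omega_{l_i}(\underline{\eta}_i)^{\pm 1}\right) \left(\prod_{i=1}^\mu \Omega_3^{p-2}(\underline{\tilde n}_i)^{\pm 1}\right), \]
with $\nu = O_{\alpha,p}(1)$, $\mu \leqslant 2\alpha$, $l_i \geqslant 3$, and total area $\lesap n^{p-1}$. The first product is already of the desired shape (indices in $[3,p-1]$, after removing any $\Omega_{l_i}$ with $l_i \geqslant p$, which are null-homotopic with area $\lesssim_p n^{p-1}$ by Addendum \ref{add:nullhomotopicomegawords}(3) and contribute only $O_{\alpha,p}(1)$ further terms at acceptable cost). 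The second product contains $\Omega_3^{p-2}$-words, which are not of the required form since the upper index is $p-2$ rather than $2$.

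I then invoke Lemma \ref{lem:trick} once for each of the (at most $2\alpha$) factors to convert
\[ \Omega_3^{p-2}(\underline{\tilde n}_i)^{\pm 1} =_\calp \Omega_{p-1}(\tilde n_{i,1},\tilde n_{i,2},1,\ldots,1,\tilde n_{i,3})^{\pm 1}, \]
each at cost $\lesssim_p n^2$. Summed over $i$ this contributes $O_\alpha(n^2) \leqslant n^{p-1}$ (we are in the regime $p \geqslant 5$). Concatenating the leading $\Omega_2(\underline{n})^{\pm 1}$ on the right of $u$ with all the rewritten terms gives a product of the form $\prod_{i=1}^M \Omega_{\ell_i}(\underline{\tilde n}_i)^{\pm 1}$ with $2 \leqslant \ell_i \leqslant p-1$ and $M = O_{\alpha,p}(1)$.

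Since each ingredient is already in hand, there is no real obstacle; the only thing to monitor is bookkeeping of the indices $\ell_i$ to guarantee the upper bound $\ell_i \leqslant p-1$, which is why the truncation via Addendum \ref{add:nullhomotopicomegawords}(3) is needed. The total area is $\lesap n^{p-1}$, as required.
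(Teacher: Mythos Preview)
Your proof is correct and follows essentially the same approach as the paper's own proof: apply Lemma~\ref{lem:omega-u} in the case $(k,\ell)=(2,2)$, use Lemma~\ref{lem:trick} to convert the $\Omega_3^{p-2}$ error terms into $\Omega_{p-1}$-words, and truncate any $\Omega_{l_i}$-words with $l_i\geqslant p$ via Addendum~\ref{add:nullhomotopicomegawords}(3). The only difference is that you spell out explicitly the free identity $\Omega_2(\underline n)^{\pm1}\cdot u =_F u\cdot \Omega_2(\underline n)^{\pm1}\cdot[\Omega_2(\underline n)^{\pm1},u]$ linking the commutator identity of Lemma~\ref{lem:omega-u} to the product form of the corollary, whereas the paper leaves this implicit.
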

\begin{proof}
    Apply the identity from Lemma \ref{lem:trick} at most $O_{\alpha,p}(1)$-times to the identity in Lemma \ref{lem:omega-u} for $(2,2)$ to obtain the desired identity and the area estimate. Finally, for all $l_i \geqslant p$ we remove the $\Omega_{l_i}$-words as explained in Addendum \ref{add:nullhomotopicomegawords}(3) with area $\lesap n^{p-1}$.
\end{proof}

Note that the above corollary is not needed for $G_{p,3}$ since Addendum \ref{add:omega-u-gp3} to Lemma \ref{lem:omega-u} already gives us the required identity.

\begin{proof}[Proof of the Reduction Lemma \ref{lem:reductionlemma}]
    The proof is done by induction on $2\alpha$. The base of induction, $\alpha=1$, is trivial with $L=0$ and the product being empty, since if $w=x_1 ^{\beta_1} x_2 ^{\gamma_1}$ represents an element in the derived subgroup of $G^\lrcorner _{p,3}$, then $\beta_1 = \gamma_1 = 0$. 
    
    Suppose that the statement is true for some $2\alpha \geqslant 1$ and let $w := \prod_{i=1}^{\mu} x_1 ^{\beta_i} x_2 ^{\gamma_i} \in \mathcal{F}[2(\alpha +1)]$. Observe that we can assume $\mu = 2(\alpha +1)$. The following identities hold in $G^{\lrcorner}_{p,3}$, the area estimates of which are explained below.
    \begingroup
    \allowdisplaybreaks
    {\small  
    \begin{align*}
        \prod_{i=1}^{\mu} x_1 ^{\beta_i} x_2 ^{\gamma_i} &\stackrel{\ref{eq:1-(2,2)omega-u}}{=}_F x_1 ^{\beta_1 + \beta_2} x_2 ^{\gamma_1} \cdot [x_2 ^{\gamma_1}, x_1 ^{\beta_2}] \cdot x_2 ^{\gamma_2}\cdot \prod_{i=3}^{\mu} x_1 ^{\beta_i} x_2 ^{\gamma_i} \\
        &\stackrel{\ref{eq:2-(2,2)omega-u}}{=}_F x_1 ^{\beta_1 + \beta_2} x_2 ^{\gamma_1 + \gamma_2} \cdot [x_2 ^{\gamma_1}, x_1 ^{\beta_2}] \cdot \left[ [x_2 ^{\gamma_1}, x_1 ^{\beta_2}], x_2 ^{\gamma_2} \right] \cdot \prod_{i=3}^{\mu} x_1 ^{\beta_i} x_2 ^{\gamma_i}\\
        &\stackrel{\ref{eq:3-(2,2)omega-u}}{=}_\calp x_1 ^{\beta_1 + \beta_2} x_2 ^{\gamma_1 + \gamma_2} \cdot \Omega_2 (\beta_2, \gamma_1)^{-1} \cdot \Omega_3 ^{p-2}(\underline{\hat{\gamma}}_1) \cdot \prod_{i=3}^{\mu} x_1 ^{\beta_i} x_2 ^{\gamma_i} \\
        &\stackrel{\ref{eq:4-(2,2)omega-u}}{=}_\calp x_1 ^{\beta_1 + \beta_2} x_2 ^{\gamma_1 + \gamma_2} \cdot \Omega_2 (\beta_2, \gamma_1)^{-1} \cdot \Omega_{p-1}(\underline{\tilde{\gamma}}_1) \cdot \prod_{i=3}^{\mu} x_1 ^{\beta_i} x_2 ^{\gamma_i} \\
        &\stackrel{\ref{eq:5-(2,2)omega-u}}{=}_\calp x_1 ^{\beta_1 + \beta_2} x_2 ^{\gamma_1 + \gamma_2} \cdot \Omega_2 (\beta_2, \gamma_1)^{-1} \cdot \left( \prod_{i=3}^{\mu} x_1 ^{\beta_i} x_2 ^{\gamma_i} \right) \cdot \prod_{i=1}^{\nu} \Omega_{s_i}(\underline{\eta}_{i})^{\pm 1} \\
        &\stackrel{\ref{eq:6-(2,2)omega-u}}{=}_\calp x_1 ^{\beta_1 + \beta_2} x_2 ^{\gamma_1 + \gamma_2} \cdot \left( \prod_{i=3}^{\mu} x_1 ^{\beta_i} x_2 ^{\gamma_i} \right) \cdot \left( \prod_{i=1}^{N} \Omega_{l_i}(\underline{\tilde{n}}_i)^{\pm 1} \right) \cdot \left( \prod_{i=1}^{\nu} \Omega_{s_i}(\underline{\eta}_{i})^{\pm 1} \right) \\
        &\stackrel{\ref{eq:7-(2,2)omega-u}}{=}_\calp x_1 ^{\beta_1 + \beta_2} x_2 ^{\gamma_1 + \gamma_2} \cdot \left( \prod_{i=3}^{\mu} x_1 ^{\beta_i} x_2 ^{\gamma_i} \right) \cdot \prod_{i=1}^L \Omega_{l_i}(\underline{\hat{\eta}}_i)^{\pm 1} 
    \end{align*}
     }%
    \endgroup
    \begin{enumerate}[label={(\arabic*)}]
        \item is a free identity so it has no area;\label{eq:1-(2,2)omega-u}
        \item is also a free identity;\label{eq:2-(2,2)omega-u}
        \item is a consequences of the First commuting $(2,2)$-Lemma \ref{lem:almostkjcommuting} for $G^\lrcorner_{p,3}$ so it has area $\lesssim_p n^{p-1}$ where $|\underline{\hat{\gamma}}_1| \lesssim_p n$;\label{eq:3-(2,2)omega-u}
        \item follows from applying Lemma \ref{lem:trick}, thus it has area $\lesssim_p n^2$ where $|\underline{\tilde{\gamma}}_1| \lesssim_p n$;\label{eq:4-(2,2)omega-u}
        \item is a consequence of Lemma \ref{lem:omega-u} so it has area $\lesssim_p n^2$, where $|\underline{\eta}_i| \lesssim_{\alpha, p} n$, $\nu = O_{\alpha,p}(1)$, and $s_i \geqslant p$;\label{eq:5-(2,2)omega-u}
        \item follows from Corollary \ref{cor:movingomega2}, so it has area $\lesssim_p n^{p-1}$, where $l_i \geqslant 2$, $|\underline{\tilde{n}}_i| \lesssim_{\alpha, p} n$, and  $N = O_{\alpha,p}(1)$;\label{eq:6-(2,2)omega-u}
        \item follows by first removing all the $\Omega_{l_i}$- and $\Omega_{s_i}$-words with $l_i, s_i \geqslant p$ using Addendum \ref{add:nullhomotopicomegawords}(3). Since there are at most $O_{\alpha,p}(1)$ of them, this has total area $\lesssim_{\alpha,p} n^{p-1}$. We then merge the remaining $\Omega$-words into a single product without any area.\label{eq:7-(2,2)omega-u}
    \end{enumerate}
    The word $\upsilon(x_1, x_2) := x_1 ^{\beta_1 + \beta_2} x_2 ^{\gamma_1 + \gamma_2} \cdot \left( \prod_{i=3}^{\mu} x_1 ^{\beta_i} x_2 ^{\gamma_i} \right)$ has the same exponent sum for $x_1$ and $x_2$ as that of the word $w$, therefore it represents an elements in the derived subgroup of $G^{\lrcorner}_{p,3}$ of length at most $n$ and $\upsilon (x_1, x_2) \in \mathcal{F}[2\alpha]$. Hence applying the induction hypothesis for $2\alpha$ to the word $\upsilon (x_1, x_2)$ we obtain the desired result.
\end{proof}

\subsubsection*{Proof of the Main commuting Lemma \ref{lem:maincommuting}}
If both $w_1$ and $w_2$ are powers of $x_2$ the statement is clear. Otherwise, we apply the Reduction Lemma \ref{lem:reductionlemma} to $w_1$ to obtain the identity $w_1 =_\calp \prod_{i=1}^{L} \Omega_{l_i}(\underline{\eta}_i)^{\pm 1}$ in $G^\lrcorner _{p,3}$ with area $\lesssim_p n^{p-1}$ where $2 \leqslant l_i \leqslant p-1$. Thus, if $w_2$ represents an element of the derived subgroup of $G^\lrcorner _{p,3}$, then by applying the Second commuting $(l_i,2)$-Lemma \ref{lem:kjcommuting} for $G^\lrcorner_{p,3}$ we get the desired identity; if $w_2 := x_2 ^k$, then by applying the First commuting $(l_i,2)$-Lemma \ref{lem:almostkjcommuting} for $G^\lrcorner_{p,3}$ we get that the identity
{\small
$$
[w_1, w_2] =_\calp \prod\limits_{i=1}^{D} \Omega_3 ^{p-2}(\underline{\eta}_i)^{\pm1}
$$
}%
holds in $G^\lrcorner _{p,3}$ with area $\lesssim_p n^{p-1}$. Moreover, $D$ equals the number of $l_i$ that are equal to 2, in particular $D \leqslant L = O_{\alpha,p}(1)$. \hfill \qedsymbol{} \medskip

\subsubsection{Proof of the cancelling $k$-Lemma for $G^\lrcorner_{p,3}$}

As in \cite[Section 6.7]{lipt} the proof of the Cancelling $k$-Lemma \ref{lem:cancelling} is done by descending induction on $k$. For the base case $k=p-1$ suppose that for some positive integer $M_{p-1}$ and $\underline{n}_{p-1,i} \in \R^{p-1}$ with $|\underline{n}_{p-1,i}| \leqslant n$, the word 
{\small$$
w(x_1,x_2) := \prod_{i=1}^{M_{p-1}} \Omega_{p-1}(\underline{n}_{p-1,i})^{\pm1}
$$}%
is null-homotopic in  $G^\lrcorner_{p,3}$ for all $p \geqslant 5$ (respectively $G_{p,3}$ for all $p \geqslant 3$). It follows from Corollary \ref{cor:omegapminus1} that for all $1 \leqslant i \leqslant p-1$ there exist some $m_i \in \R$ with $|m_i| \lesssim_p |\underline{n}_{p-1,i}|$ such that the identity 
{\small
$$
\Omega_{p-1}(\underline{n}_{p-1,i})^{\pm1} =_\calp \left( \Omega_{p-2}^3(|\underline{n}_{p-1,i}|, \ldots, |\underline{n}_{p-1,i}|)\right)^{m_i}
$$
}%
holds in $G^\lrcorner_{p,3}$ for all $p \geqslant 5$ (respectively $G_{p,3}$ for all $p \geqslant 3$ ) with area $\lesssim_p n^{p-1}$. 

For all $1\leqslant i \leqslant p-1$, the word $\Omega_{p-2}^3(|\underline{n}_{p-1,i}|, \ldots, |\underline{n}_{p-1,i}|)$ represents an element in  $G_{p-1,3} \hookrightarrow G_{p,3}$ (respectively $G^\lrcorner_{p,3}$). In particular, the word
{\small$$
\prod_{i=1}^{M_{p-1}} \left( \Omega_{p-2}^3(|\underline{n}_{p-1,i}|, \ldots, |\underline{n}_{p-1,i}|)\right)^{m_i}
$$}%
is null-homotopic in $G_{p-1,3}$. Thus, by merging suitable pairs of $\Omega_{p-2}^3$-words into new shorter $\Omega_{p-2}^3$-words, in $\lesssim M_{p-1}n$ steps we conclude from the induction hypothesis \ref{item:IH-fromp-1top} for $G_{p-1,3}$ that it has area $\lesssim_{M_{p-1},p} n^{p-1}$. It now follows that the null-homotopic word $w(x_1,x_2)$ has area $\lesssim_{M_{p-1},p} n^{p-1}$ in $G^\lrcorner_{p,3}$ for all $p \geqslant 5$ (respectively $G_{p,3}$ for all $p \geqslant 3$). This proves the base case. 

We now assume that the statement of the Cancelling $(k+1)$-Lemma \ref{lem:cancelling} holds. The proof of the induction step is analogous to the one in \cite[Section 6.7]{lipt}. The only changes required are some small modifications in the proofs of the auxiliary results for $G_{p,3}^{\lrcorner}$ that we explain below. Once these have been proved the remainder of the induction step proceeds as in \cite[p.754]{lipt}. One may worry that the applications of the Main commuting Lemma \ref{lem:maincommuting} there could produce error terms. However, this is \emph{not} the case, since we only apply it for pairs of words in the derived subgroup. 

\begin{lemma}[{\cite[Lemma 6.27]{lipt}}]\label{lem:extracting}
    Let $n \geqslant 1$. For $2 \leqslant k \leqslant p-2$ and $\underline{n} \in \R^k$ with $|\underline{n}| \leqslant n$ an identity of the form
    \[
    \Omega_k (\underline{n})^{\pm 1} =_\calp x_{k+1}^{\ell} \cdot E_{p,k}(\underline{n})
    \]
    holds in $G^{\lrcorner}_{p,3}$ for $p \geqslant 5$ (respectively $G_{p,3}$ for all $p \geqslant 3$) with area $\lesssim_p n^{p-1}$, where $E_{p,k}(\underline{n})$ is equal to $\prod_{i= k+1}^{p-1} \Omega_{i}(\underline{m}_i)^{\pm 1}$ with $|\underline{m}_i| \lesssim_p n$. Moreover, $|\ell| \lesssim_p n^k$.
\end{lemma}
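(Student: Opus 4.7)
The plan is to proceed by downward induction on $k$ from $k=p-2$ down to $k=2$, using Lemma~\ref{lem:commutatortinyletter} (and Addendum~\ref{add:(2,2)} for the case $k=2$) to peel outer layers off $\Omega_k(\underline{n})$, together with the Main Commuting Lemma~\ref{lem:maincommuting} to reorganise the resulting products. Throughout I would freely invoke the induction hypothesis~\ref{item:IH-fromp-1top} on $G_{q,3}$ for $q<p$ as well as the First and Second Commuting Lemmas already established in this section.

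For the base case $k=p-2$, I would expand $\Omega_{p-2}(\underline{n})$ from the inside using the defining relations of Proposition~\ref{prop:compactpresentations}: the innermost bracket $[x_1^{n_{p-3}},x_2^{n_{p-2}}]$ equals $x_3^{n_{p-3}n_{p-2}}x_4^{\cdots}\cdots x_p^{\cdots}$ modulo a $[y_1,y_{p-1}]^{\pm 1}$ correction that can be absorbed into an $\Omega_{p-1}$-word via the $5$-Heisenberg subgroup $\langle x_1,x_{p-1},y_1,y_{p-1}\rangle$. Then I would iterate the outer commutators $[x_1^{n_{p-4}},\cdot],\ldots,[x_1^{n_1},\cdot]$, distributing with Lemma~\ref{lem:freeidentities} and rearranging with the Main Commuting Lemma, to produce $x_{p-1}^{n_1\cdots n_{p-2}}$ as the leading term together with a residual in $\langle x_p\rangle$; this residual would be rewritten as an $\Omega_{p-1}$-word using the identity $x_p^c=_\calp\Omega_{p-1}(1,\ldots,1,c)$. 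Since only $O_p(1)$ invocations of $\lesssim_p n^{p-1}$-area results occur, the total cost fits within the budget.

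For the inductive step, assuming the result holds for all $k'>k$, I would apply Lemma~\ref{lem:commutatortinyletter} (or Addendum~\ref{add:(2,2)} when $k=2$) to obtain, at area cost $\lesssim_p n^{p-1}$, a decomposition
\[
\Omega_k(\underline{n})=_\calp\Omega_k^2(n_1,\ldots,\beta,n_k)\cdot\prod_{r=0}^{\lfloor n_{k-1}\rfloor-1}\bigl(\Omega_k^3(\ldots)^{-1}\cdot\Omega_{k-1}^3(\ldots)\bigr),
\]
modulo $[y_1,y_{p-1}]^{\pm 1}$ corrections handled as in the base case. The leading residual $\Omega_k^2(n_1,\ldots,\beta,n_k)$ with $|\beta|\leq 1$ unfolds directly to $x_{k+1}^{\beta n_1\cdots n_{k-2}n_k}$ plus higher-index corrections; each $\Omega_k^3$-factor would be rewritten as an $\Omega_{k+1}$-word via $x_3^{n_k}=[x_1,x_2^{n_k}]$ (at area $\lesssim_p n^2$), and each $\Omega_{k-1}^3$-factor as an $\Omega_k$-word in the analogous way, to which the induction hypothesis would then apply. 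The extracted $x_j^\cdot$ factors would be collected at the front using the Main Commuting Lemma (all such factors lie in the derived subgroup since $j\geq k+1\geq 3$), and those with $j>k+1$ would be absorbed into the $E_{p,k}$-word via $x_j^c=_\calp\Omega_{j-1}(1,\ldots,1,c)$, yielding the required form with $|\ell|\lesssim_p n^k$.

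The main obstacle will be the bookkeeping. After the inductive calls I must verify that the exponent collected at the leading $x_{k+1}$-factor is $\lesssim_p n^k$ in absolute value (the multiplicative telescoping of nested brackets yields a product of at most $k$ of the $n_i$, which fits since each $|n_i|\leq n$), that the higher-index residue organises into a product of $\Omega_i$-words for $k+1\leq i\leq p-1$ with parameters $\lesssim_p n$, and that only $O_p(1)$ invocations of $\lesssim_p n^{p-1}$-area results occur throughout. A secondary subtlety will be the treatment of the $[y_1,y_{p-1}]^{\pm 1}$ corrections appearing in the $(2,2)$ and $(1,3)$ decompositions, which must be converted into $\Omega_{p-1}$-words and absorbed into $E_{p,k}$ without inflating the area budget.
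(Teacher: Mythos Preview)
Your approach has a genuine gap in the area accounting. When you apply Lemma~\ref{lem:commutatortinyletter} to $\Omega_k(\underline{n})$, the resulting product contains $\lfloor n_{k-1}\rfloor\approx n$ many $\Omega_k^3$-factors (one for each $r$), not $O_p(1)$ many. After rewriting each as an $\Omega_{k+1}$-word, you propose to apply the induction hypothesis (for $k+1$) to each of them; but each such invocation costs $\lesssim_p n^{p-1}$, so the total cost is $\approx n\cdot n^{p-1}=n^p$, which blows the budget. Your claim that ``only $O_p(1)$ invocations of $\lesssim_p n^{p-1}$-area results occur'' is therefore not justified. A second, related problem is the shape of the output: each of those $\approx n$ inductive calls produces its own $x_{k+2}^{\ell_r}$ and its own $E_{p,k+1}$-tail, so you would end up with $\approx n$ many $\Omega$-words in the error term, whereas the statement requires $E_{p,k}$ to be a product of exactly $p-1-k$ many $\Omega_i$-words with $|\underline{m}_i|\lesssim_p n$. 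You give no mechanism for consolidating these. (There is also a smaller issue: the $\Omega_{k-1}^3$-factors become $\Omega_k$-words, which is the \emph{current} case in your downward induction on $k$, so the induction hypothesis does not apply to them.)

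The paper takes a different route that avoids precisely this blow-up: it inducts not on $k$ but on $m=\lceil\log_2|\underline{n}|\rceil$, and the inductive step is driven by the Cutting in half $k$-Lemma~\ref{lem:cuttinginhalf}, which gives $\Omega_k(2\underline{n})=_\calp\Omega_k(\underline{n})^{2^k}\cdot w_k(\underline{n})$ with $w_k$ a product of only $O_p(1)$ many $\Omega_{l_i}$-words, $l_i\geqslant k+1$. Halving the parameters keeps the number of error terms bounded at every scale, and after $O(\log n)$ steps one reaches constant-size parameters; this is what makes both the area estimate and the $O_p(1)$-term structure of $E_{p,k}$ come out right.
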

\begin{proof}
    The proof is analogous to the one of \cite[Lemma 6.27]{lipt}. It is done by induction in $m:= \lceil \log_2(\underline{n}) \rceil$. The induction step uses the Cutting in half $k$-Lemma \ref{lem:cuttinginhalf} stated below, which corresponds to \cite[Lemma 6.10]{lipt}; its proof requires a modification for $k=2$ with respect to the proof in \cite{lipt}. Aside from this, the same arguments as in \cite{lipt} can be applied without modification.\footnote{Note that $k \geqslant 2$, so whenever in the proof of \cite[Lemma 6.27]{lipt} the Main commuting Lemma is applied, we do not produce any error terms, since all the terms involved represent elements in the derived subgroup of $G^\lrcorner _{p,3}$.}
\end{proof}

\begin{lemma}[{\cite[Lemma 6.10]{lipt}}, Cutting in half $k$-Lemma]\label{lem:cuttinginhalf}
    Let $2 \leqslant k \leqslant p-2$ and $\underline{n} \in \R^k$. The following identities hold in $G^{\lrcorner}_{p,3}$ for $p \geqslant 5$ (respectively $G_{p,3}$ for $p \geqslant 3$):
    $$\Omega_k (2\underline{n}) =_\calp \Omega_k (\underline{n})^{2^k} \cdot w_k (\underline{n}) \quad \text{and} \quad \Omega_k (2\underline{n}) =_\calp w_k (\underline{n}) \cdot \Omega_k (\underline{n})^{2^k},$$
    where $w_k (\underline{n}) =_\calp \prod_{i=1}^L \Omega_{l_i}(\underline{\eta}_i)^{\pm 1}$ with $L = O_p (1)$, $l_i \geqslant k+1$, and $|\underline{\eta}_{i}| \lesssim_p |\underline{n}|$ for $1 \leqslant i \leqslant L$. Moreover, each of the identities has area $\lesssim_p |\underline{n}|^{p-1}$.
\end{lemma}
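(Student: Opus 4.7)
The plan is to prove both identities by induction on $k$, treating $k=2$ as the base case and $k \geqslant 3$ as the inductive step; I describe only the first identity in detail, as the second follows by expanding the commutator from the right rather than from the left.

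\textbf{Base case $k=2$.} Writing $x_1^{2n_1} = x_1^{n_1} \cdot x_1^{n_1}$ and $x_2^{2n_2} = x_2^{n_2} \cdot x_2^{n_2}$, I apply Lemma \ref{lem:freeidentities}(1)--(2) to expand $\Omega_2(2\underline{n}) = [x_1^{2n_1}, x_2^{2n_2}]$ as a product of four conjugates of $\Omega_2(\underline{n})$. Using $u^v = u \cdot [u,v]$ from Lemma \ref{lem:freeidentities}(3), each conjugation introduces an error term of the form $[\Omega_2(\underline{n}), x_1^{n_1}]$ or $[\Omega_2(\underline{n}), x_2^{n_2}]$. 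By Corollary \ref{cor:growingomega}, the former are $\Omega_3$-words; by the First commuting $(2,2)$-Lemma \ref{lem:almostkjcommuting}, the latter are $\Omega_3^{p-2}$-words, which become $\Omega_{p-1}$-words via Lemma \ref{lem:trick}. Since these error terms lie in higher terms of the lower central series, they commute with $\Omega_2(\underline{n})$ up to $\Omega_\ell$-words of strictly higher length (using the Second commuting Lemma), allowing us to collect the four copies of $\Omega_2(\underline{n})$ on the left and the errors on the right. In $G^\lrcorner_{p,3}$, the extra relation $[x_2, x_3] = x_p$ additionally yields error terms of the form $[y_1^{\tilde{n}}, y_{p-1}^{\tilde{n}}] = \Omega_2^{p-1}(\pm \tilde{n}, \mp \tilde{n})$ (see Addendum \ref{add:(2,2)}); these are $\Omega_2^{p-1}$-words of length $\geqslant k+1 = 3$, and thus fit into the form of $w_k(\underline{n})$.

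\textbf{Inductive step $k \geqslant 3$.} Since $\Omega_k(2\underline{n}) = [x_1^{2n_1}, \Omega_{k-1}(2n_2, \ldots, 2n_k)]$, the induction hypothesis for $k-1$ yields the substitution $\Omega_{k-1}(2n_2, \ldots, 2n_k) =_\calp \Omega_{k-1}(n_2, \ldots, n_k)^{2^{k-1}} \cdot w_{k-1}$, where $w_{k-1}$ is a product of $\Omega_{l_i}$-words of length $l_i \geqslant k$. Applying Lemma \ref{lem:freeidentities}(2) splits the outer commutator into $[x_1^{2n_1}, w_{k-1}]$ (a product of $\Omega_{l_i+1}$-words by Corollary \ref{cor:growingomega}) and a conjugate of $[x_1^{2n_1}, \Omega_{k-1}(n_2, \ldots, n_k)^{2^{k-1}}]$. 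The latter expands via the standard identity $[a, b^m] = \prod_{i=0}^{m-1} [a,b]^{b^i}$ into $2^{k-1}$ conjugates of $\Omega_k(2n_1, n_2, \ldots, n_k)$. Splitting $x_1^{2n_1} = x_1^{n_1} \cdot x_1^{n_1}$ in each copy and applying Lemma \ref{lem:freeidentities}(1) together with Corollary \ref{cor:growingomega} yields $\Omega_k(\underline{n})^2$ times an $\Omega_{k+1}$-error; collecting all $2^{k-1}$ copies produces $\Omega_k(\underline{n})^{2^k}$ on the left and a bounded list of higher-order errors on the right, with the remaining conjugations absorbed via the Second commuting Lemmas.

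\textbf{Area estimates and main obstacle.} Each of the steps above uses $O_p(1)$ applications of free identities and of the First/Second commuting $(l, j)$-Lemmas, which are already established by the descending induction on $j$ carried out earlier in this section (the uses are at pairs $(l,j)$ with $l+j \geqslant 5$, plus the special pairs $(1,3)$ and $(2,2)$ for the base case). The dominant contribution comes from Corollary \ref{cor:growingomega} and, in the base case, from the First commuting $(2,2)$-Lemma, each of area $\lesssim_p |\underline{n}|^{p-1}$; since only $O_p(1)$ such steps occur, the total area is $\lesssim_p |\underline{n}|^{p-1}$. The main obstacle is the base case $k=2$ in $G^\lrcorner_{p,3}$: the relation $[x_2, x_3] = x_p$ forces extra $[y_1^{\tilde{n}}, y_{p-1}^{\tilde{n}}]$-error terms that do not appear in $G_{p,3}$, and these must be carefully identified with $\Omega_2^{p-1}$-words of length $\geqslant k+1$ so that they fit into the claimed form of $w_k(\underline{n})$.
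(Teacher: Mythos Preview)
Your approach is essentially the same as the paper's: ascending induction on $k$, with the base case $k=2$ requiring care in $G^\lrcorner_{p,3}$, and the inductive step handled by expanding $\Omega_{k-1}(2n_2,\ldots,2n_k)$ via the hypothesis and then splitting $x_1^{2n_1}$. The paper phrases the commutations in the base case through the Main commuting Lemma~\ref{lem:maincommuting} rather than directly through the First/Second commuting $(2,2)$-Lemmas, but since the former is derived from the latter this is a cosmetic difference.

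There is, however, a genuine confusion in your ``main obstacle'' paragraph. When you invoke the moreover-part of the First commuting $(2,2)$-Lemma~\ref{lem:almostkjcommuting}, the error $[\Omega_2(\underline{n}),x_2^{n_2}]$ is already delivered as a single $\Omega_3^{p-2}$-word; no separate $[y_1^{\tilde n},y_{p-1}^{\tilde n}]$ terms survive at this stage (those appear in Addendum~\ref{add:(2,2)}, which decomposes $\Omega_2$ itself, not its commutator with $x_2^m$). More importantly, your claim that such a term ``is an $\Omega_2^{p-1}$-word of length $\geqslant k+1=3$'' is self-contradictory: the subscript $2$ in $\Omega_2^{p-1}$ \emph{is} the length, and $2<3$; moreover $[y_1^{\tilde n},y_{p-1}^{\tilde n}]$ is a $\widetilde\Omega$-word in the $y$-generators, not an $\Omega$-word. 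The statement of the lemma requires $w_k$ to be a product of $\Omega_{l_i}$-words (implicit superscript $j=2$) with $l_i\geqslant k+1$, so an $\Omega_2^{p-1}$-term does \emph{not} fit as written. The fix is exactly what the paper does and what you already do one line earlier for the $\Omega_3^{p-2}$-terms: convert via Lemma~\ref{lem:trick} to an $\Omega_{p-1}$-word, which does satisfy $l_i=p-1\geqslant 3$. Once you drop the spurious $[y_1,y_{p-1}]$ discussion (or route it through Lemma~\ref{lem:trick} as well), the argument is sound.
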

\begin{proof}
    Like for \cite[Lemma 6.10]{lipt} the proof is by ascending induction in $k$. The case $G^\lrcorner_{p,3}$ requires a minor adjustment for  $k=2$, which we explain below. The remainder of the induction step is completely analogous to the one in \cite{lipt} and we omit it.  So assume $k=2$. Then the following identities hold in $G^\lrcorner _{p,3}$
    \begingroup
    \allowdisplaybreaks
    {\small  
    \begin{align*}
        \Omega_2 (2\underline{n}) := [x_1 ^{2 n_1}, x_2 ^{2 n_2}] &\stackrel{\ref{eq:1-cutting}}{=}_F [x_1 ^{2n_1}, x_2 ^{n_2}] \cdot \left[ x_1 ^{2n_1}, x_2 ^{n_2} \right]^{x_2 ^{n_2}} \\  
        &\stackrel{\ref{eq:2-cutting}}{=}_\calp \left[ x_1 ^{2n_1}, x_2 ^{n_2} \right]^2 \cdot \prod\limits_{i=1}^{D} \Omega_3 ^{p-2}(\underline{\eta}_i)^{\pm1} \\
        &\stackrel{\ref{eq:3-cutting}}{=}_F \left( \left[ x_1 ^{n_1}, x_2 ^{n_2} \right]^{x_1 ^{n_1}} \cdot [x_1 ^{n_1}, x_2 ^{n_2}] \right)^2 \cdot \prod\limits_{i=1}^{D} \Omega_3 ^{p-2}(\underline{\eta}_i)^{\pm1} \\
        &\stackrel{\ref{eq:4-cutting}}{=}_F \Big( \Omega_2 (\underline{n}) \cdot [\Omega_2 (\underline{n}), x_1 ^{n_1}] \cdot [x_1 ^{n_1}, x_2 ^{n_2}] \Big)^2 \cdot \prod\limits_{i=1}^{D} \Omega_3 ^{p-2}(\underline{\eta}_i)^{\pm1} \\
        &\stackrel{\ref{eq:5-cutting}}{=}_\calp \Omega_2 (\underline{n})^4 \cdot \Omega_3 (n_1, n_1, n_2)^{-2} \cdot \prod\limits_{i=1}^{D} \Omega_3 ^{p-2}(\underline{\eta}_i)^{\pm1} \\
        &\stackrel{\ref{eq:6-cutting}}{=}_\calp \Omega_2 (\underline{n})^4 \cdot \Omega_3 (n_1, n_1, n_2)^{-2} \cdot \prod\limits_{i=1}^{D} \Omega_{p-1} (\underline{\tilde{n}})^{\pm1}.
    \end{align*}
     }%
    \endgroup
    \begin{enumerate}[label={(\arabic*)}]
        \item is just the free identity $[u, v \cdot w] =_F [u,w] \cdot [u,v]^w$, so it has no area;\label{eq:1-cutting}
        \item is a consequence of the Main commuting Lemma \ref{lem:maincommuting} for $G^\lrcorner_{p,3}$, where $|\underline{\eta}_i| \lesssim_p |\underline{n}|$ and $D=O_{\alpha,p}(1)$, so it has area $\lesssim_p n ^{p-1}$;\label{eq:2-cutting}
        \item is an application of the free identity $[u \cdot v, w] =_F [u,w]^v \cdot [v,w]$, so it has no area;\label{eq:3-cutting}
        \item is a free identity so it has no area;\label{eq:4-cutting}
        \item is a consequence of the Main commuting Lemma \ref{lem:maincommuting}, hence it has area $\lesssim_p n^{p-1}$;\label{eq:5-cutting}
        \item follows from Lemma \ref{lem:trick} applied $O_{\alpha,p}(1)$ times, thus it has area $\lesssim_p n^2$.\label{eq:6-cutting}
    \end{enumerate}
    Setting $w_2 := \Omega_3 (n_1, n_1, n_2)^{-2} \cdot \prod_{i=1}^{D} \Omega_{p-1} (\underline{\tilde{n}})^{\pm1}$ completes the proof for $k=2$. 
\end{proof}

\subsubsection{Proof of the upper bounds for $\delta_{G_{p,3}}$ and $\delta_{G^\lrcorner_{p,3}}$}

The proof is analogous to \cite[Theorem 6.1, p.756]{lipt}. We include it here for the readers convenience, since it is short and explains how the various technical results proved above fit together to yield the upper bound on the Dehn function.

One has to show that for all $\alpha \geqslant 1$ every null-homotopic word in $\mathcal{G}[\alpha]$ of length at most $n$ has area $\lesssim_p n^{p-1}$. Consider a null-homotopic word $w:= w(x_1, x_2, y_1, y_{p-1}) \in \mathcal{G}[\alpha]$ of length $|w| \leqslant n$. Then, there are $u:= u(x_1, x_2)$ and $v:=v(y_1, y_{p-1})$ with $|u|,|v|\leqslant n$ such that the equality $w =_\calp u \cdot v$ holds in $G_{p,3}^\lrcorner$ with area $\lesssim_p n^2$. Since $\langle x_1, x_{p-1}, y_1, y_{p-1} \rangle$ generates a 5-Heisenberg subgroup of $G^\lrcorner_{p,3}$, the identity $v(y_1, y_{p-1}) =_\calp v(x_1, x_{p-1})$ holds in $G^\lrcorner_{p,3}$ with area $\lesssim_p n^2$. Since the word $v(x_1, x_{p-1})$ represents a central element, there exists $\underline{\tilde{n}} \in \R^{p-1}$ with $|\underline{\tilde{n}}| \lesssim_p n$ such that $v(x_1, x_{p-1}) =_\calp \Omega_{p-1}(\underline{\tilde{n}})$ holds in $G^\lrcorner_{p,3}$ with area $\lesssim_p n^{p-2}$, and for $\alpha$ sufficiently large $\Omega_{p-1}(\underline{\tilde{n}}) \in \mathcal{F}[\alpha]$. Therefore, after possibly increasing $\alpha$, the word $u\cdot \Omega_{p-1}(\widetilde{\underline{n}})$ is null-homotopic in $\mathcal{F}[\alpha]$ of length at most $n$, which allows us to apply the results of the previous sections: first, the Reduction Lemma \ref{lem:reductionlemma} implies that 
{\small
\begin{equation}\label{eq:reductionw}
    w =_\calp u\cdot \Omega_{p-1}(\widetilde{\underline{n}})=\calp \prod_{i=1}^L \Omega_{l_i}(\underline{\eta}_i)^{\pm1}
\end{equation} 
}%
holds in $G^\lrcorner_{p,3}$ with area $\lesssim_p n^{p-1}$, where $|\underline{\eta}_i| \lesssim_p n$ and $2 \leqslant l_i \leqslant p-1$. We can then apply the Main commuting Lemma \ref{lem:maincommuting} to rearrange the $\Omega$-words on the left hand side of \eqref{eq:reductionw}; note that all the $\Omega$ words represent elements in the derived subgroup of $G^\lrcorner_{p,3}$ so they commute with area $\lesssim_{\alpha,p} n^{p-1}$. We thus get
{\small
\begin{equation}\label{eq:cancellingw}
    w =_\calp \left(\prod_{i=1}^{M_k} \Omega_k (\underline{n}_{k,i})^{\pm 1} \right) \left(\prod_{i=1}^{M_{k+1}} \Omega_{k+1} (\underline{n}_{k+1,i})^{\pm 1}\right) \ldots \left(\prod_{i=1}^{M_{p-1}} \Omega_{p-1} (\underline{n}_{p-1,i})^{\pm 1}\right)
\end{equation}
}%
Since $w$ is a null-homotopic word, the word on the right hand side of \eqref{eq:cancellingw} is also null-homotopic, therefore by the Cancelling 2-Lemma \ref{lem:cancelling} we get that $w$ has area $\lesssim_p n^{p-1}$ in $G^\lrcorner_{p,3}$. This finishes the proof of the induction step \ref{item:IH-fromp-1top} for $p$. \hfill \qedsymbol{} \medskip

\subsection{From $G_{p,3}$ and $G_{p,3}^\lrcorner$ to the general case}\label{sec:fromp3topqandbeyond}
We can now obtain upper bounds on Dehn functions for a larger class of central products using the results obtained for $G_{p,3}$ and $G^\lrcorner_{p,3}$. In particular, we treat the upper bound corresponding to Theorem \ref{th:general-factor} and Theorem \ref{th:model-filiform}. 

\begin{proposition}\label{prop:upper-LpcentralH}
    Let $q$ be an integer such that $q \geqslant 3$. Let $L$ be a simply connected nilpotent group of class $\leqslant q-1$ with one-dimensional cyclic centre. Let $K$ be either $L_p$ for $p>q$ or $L^\lrcorner_p$ for $p > \max\{q, 4\}$. The Dehn function of $G := K \times_Z L$ satisfies $\delta_G (n) \preccurlyeq n^{p-1}$.
\end{proposition}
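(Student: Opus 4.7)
The plan is to reduce Proposition~\ref{prop:upper-LpcentralH} to the cases of $G_{p,3}$ and $G^\lrcorner_{p,3}$ already established in \S\ref{sec:upper-gp3-gp3corner}, by exploiting both the large-scale geometry of $L$ (through its Dehn function) and the presence of an $L_3$-subgroup of $L$ sharing the centre. The key inputs are $\delta_K \preccurlyeq n^{p-1}$ (Gersten--Holt--Riley for $K$), $\delta_L \preccurlyeq n^q \leq n^{p-1}$ (Theorem~\ref{th:GHR-lie} applied to the class-$(q-1)$ group $L$), and the bounds $\delta_{G_{p,3}} \preccurlyeq n^{p-1}$, $\delta_{G^\lrcorner_{p,3}} \preccurlyeq n^{p-1}$ from Propositions~\ref{prop:upperbound-dehn-Gp3} and \ref{prop:upperbound-dehn-Gcornerp3}.

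Working in an adapted compact presentation of $G$ produced by Lemma~\ref{lem:central-prod-is-cglc}, I would process a null-homotopic word $w$ of length $n$ as follows. First, applying $O(n^2)$ commutation relations $[\text{$K$-gen}, \text{$L$-gen}] = 1$, rewrite $w = u \cdot v$ with $u$ over $K$-generators and $v$ over $L$-generators, both of length $\leq n$; since $K \cap L = Z(G)$ inside $G$, both $u$ and $v$ represent inverse central elements. Next, I would identify an $L_3$-subgroup $H \subseteq L$ sharing the centre: letting $c \leqslant q-1$ denote the class of $L$, the identity $[L, C^{c-1}(L)] = C^c(L) = Z(L) \neq 0$ produces elements $y_a \in L$ and $y_b \in C^{c-1}(L) \setminus Z(L)$ with $[y_a, y_b]$ generating $Z(L)$, so that $H := \overline{\langle y_a, y_b \rangle}$ is a $3$-dimensional subgroup isomorphic to $L_3$ with $Z(H) = Z(L)$; the subgroup $K \times_Z H$ of $G$ is canonically isomorphic to $G_{p,3}$ (resp. $G^\lrcorner_{p,3}$). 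Using $\delta_L \preccurlyeq n^q$, I would rewrite $v$ as a word $\hat v$ over the generators of $H$ representing the same central element, at area cost $\lesssim n^{p-1}$; the combined null-homotopic word $u \cdot \hat v$ then sits in $K \times_Z H$, so its area in $G$ is $\lesssim n^{p-1}$ by the base case, giving the total estimate $\delta_G(n) \preccurlyeq n^{p-1}$.

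The hard part will be controlling the length of $\hat v$ in the reduction: since $v$ of length $\leqslant n$ represents $y_q^{-a}$ with $|a| \lesssim n^c$ (centre distortion in $L$), but $H \cong L_3$ has only quadratic centre distortion, the shortest $H$-word representing this central element has length $\sim \sqrt{|a|}$, which exceeds $n$ as soon as $c \geqslant 3$. For $c=2$ (covering in particular $q=3$) the reduction is immediate and yields the result directly. For higher $c$, an inductive approach on $\dim L$ is needed: $\delta_L$ allows one to first replace $v$ by a simple $c$-fold iterated commutator of length $\lesssim c \cdot |a|^{1/c} = O(n)$, as permitted by Lemma~\ref{lem:first-lemma-simple-k-fold}, after which the letters span a (possibly proper) subgroup $M \leqslant L$ of class $c$ retaining the centre, and one invokes the induction hypothesis on $K \times_Z M$ together with the base case $H=L_3$. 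The case of filiform $L$ (where $M$ cannot be chosen properly smaller than $L$) requires particular care and an adaptation to this setting of the commuting-lemma machinery of \S\ref{sec:upper-gp3-gp3corner} applied in $G$ along the embedded $G_{p,3}$-subgroup $K \times_Z H$.
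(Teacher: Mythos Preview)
Your overall strategy --- split $w=u\cdot v$, identify an $L_3$ subgroup $H\leqslant L$ sharing the centre, and reduce to filling a null-homotopic word in $K\times_Z H\cong G_{p,3}$ (resp.\ $G^\lrcorner_{p,3}$) --- is exactly what the paper does. You have also correctly pinpointed the obstacle: rewriting $v$ inside $H$ forces a word of length $\sim |a|^{1/2}$, which is too long when the class $c$ of $L$ exceeds $2$.

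Where you go astray is in the fix. Your proposed induction on $\dim L$, with the filiform case left to ``an adaptation of the commuting-lemma machinery'', is both vague and unnecessary. The paper sidesteps the length problem by moving the short representative of the central element to the \emph{$K$-side} rather than the $L$-side. Concretely: the subgroup of $K$ generated by $x_1,x_3$ is a copy of $L_{p-1}$ whose centre is $Z$, and its centre distortion is $n^{p-2}$. Since $v$ represents $z^b$ with $|b|\lesssim n^{q-1}\leqslant n^{p-2}$ (here is where the hypothesis $p>q$ enters), one can choose $\underline{\tilde n}$ with $|\underline{\tilde n}|\lesssim n$ so that $\Omega_{p-2}^{3}(\underline{\tilde n})$ represents $z^b$ in $K$. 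The transformation $v=_\calp \Omega_{p-2}^{3}(\underline{\tilde n})$ now takes place entirely inside $L_{p-1}\times_Z L\hookrightarrow G$, a group of nilpotency class $p-2$, so the Gersten--Holt--Riley bound (Theorem~\ref{th:GHR-lie}) gives area $\lesssim n^{p-1}$ for this step. The remaining null-homotopic word $u(x_1,x_2)\cdot\Omega_{p-2}^{3}(\underline{\tilde n})$ lives over the $K$-generators, has length $\lesssim n$, and is filled in $K\times_Z H$ at cost $\lesssim n^{p-1}$ by Proposition~\ref{prop:upperbound-dehn-Gp3} (resp.\ \ref{prop:upperbound-dehn-Gcornerp3}). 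No induction on $L$ is needed.
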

\begin{proof}
    We prove the statement for $K=L_p$ the argument for $K=L^\lrcorner_p$ is exactly the same.
    Let $\calp$ be the presentation for $G$ given by Proposition \ref{prop:compactpresentations} and Lemma \ref{lem:central-prod-is-cglc}. Let $w$ be a word of length at most $n$ in $G$. Then, the identity $w =_\calp u(x_1, x_2)v_L$ holds in $G$ with area $\lesssim_p n^2$ where $v_L$ is a word in the generating set of $L$ and $|u|,|v_L| \leqslant n$. 
    
    Since the centre $Z$ is distorted in $L$ with polynomial distortion of degree $q-1$ and $v_L$ represents a central element of $G$, there exists $b$ with $|b| \lesssim_p n^{q-1}$ such that $v_L =_\calp z^b$. Since $q-1 \leqslant p-2$ there thus exists $\underline{\tilde{n}}$ with $|\underline{\tilde{n}}| \leqslant n$ such that we can rewrite $v_L$ as $\Omega_{p-2}^3(\underline{\tilde{n}})$ in $L_{p-1} \times_Z L \hookrightarrow L_p \times_Z L$. It follows from \cite{GerstenRileyHolt}, that the identity $v_L =_\calp \Omega_{p-2}^3(\underline{\tilde{n}})$ has area $\lesssim_p n^{p-1}$. Therefore, since $w=_\calp u(x_1, x_2) \cdot  \Omega_{p-2}^3 (\tilde{\underline{n}})$ is a null-homotopic word in $L_p \times_Z L_{3,2}$ of length $\lesssim_p n$, it follows from Proposition \ref{prop:upperbound-dehn-Gp3} (respectively \ref{prop:upperbound-dehn-Gcornerp3}) that it has area $\lesssim_p n^{p-1}$.
\end{proof}

An immediate consequence of Proposition \ref{prop:upper-LpcentralH} is the following result that establishes the upper bound of Theorem \ref{th:model-filiform} for $k > \ell$.

\begin{corollary}\label{cor:upper-model-filiform}
    Let $L$ be either $L_q$  with $q \geqslant 3$ or $L^\lrcorner_q$ with $q \geqslant 5$, and let $K$ be either $L_p$ with $p>q$ or $L^\lrcorner_p$ with $p > \max\{q, 4\}$. Then the group $G :=K \times_Z L$ has Dehn function $\delta_G (n) \preccurlyeq n^{p-1}$.
\end{corollary}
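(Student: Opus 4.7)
The plan is to observe that Corollary~\ref{cor:upper-model-filiform} is a direct specialisation of Proposition~\ref{prop:upper-LpcentralH} once we verify that the two families $\{L_q\}_{q \geqslant 3}$ and $\{L_q^\lrcorner\}_{q \geqslant 5}$ belong to the class of groups $L$ admissible in that proposition's hypothesis, namely: simply connected nilpotent Lie groups of class at most $q-1$ with one-dimensional (cyclic) centre.

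First, I will check the nilpotency class. For $L_q$, the adapted basis $X_1, \ldots, X_q$ of $\mathfrak{l}_q$ satisfies $X_i = [X_1, X_{i-1}]$ for $3 \leqslant i \leqslant q$, so $X_q$ is a simple $(q-1)$-fold iterated bracket and belongs to $C^{q-1}(\mathfrak{l}_q)$, while any further bracket vanishes. Hence $L_q$ has class exactly $q-1$. For $L_q^\lrcorner$ the additional relation $[X_2,X_3]=X_q$ produces an element already lying in $C^{q-1}$, so the class remains $q-1$. Second, I will verify that the centre is one-dimensional in both cases: spanned by $X_q$. For $L_q$ this is standard, and for $L_q^\lrcorner$ an element $\sum a_i X_i$ is central iff it commutes with $X_1$ and $X_2$; using the bracket relations $[X_1, X_i] = X_{i+1}$ for $2 \leqslant i \leqslant q-1$ and $[X_2, X_3] = X_q$, one reads off that $a_1 = 0$, then $a_2 = \cdots = a_{q-1} = 0$, leaving only the span of $X_q$.

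With these two verifications, since $p > q \geqslant 3$ (for $L = L_q$) or $p > \max\{q, 4\}$ (for $L = L_q^\lrcorner$, with $q \geqslant 5$ so automatically $q \geqslant 4$), the hypotheses of Proposition~\ref{prop:upper-LpcentralH} are met in every combination allowed by the corollary. Applying that proposition with the chosen $L$ and $K \in \{L_p, L_p^\lrcorner\}$ yields $\delta_G(n) \preccurlyeq n^{p-1}$, concluding the argument. There is no genuine obstacle here: the only thing to be careful about is the bookkeeping of indices ($L_{k+1}$ has class $k$, so $L_q$ has class $q-1$), which is what makes the hypothesis ``class $\leqslant q-1$'' of the proposition exactly match the class of $L_q$ and $L_q^\lrcorner$.
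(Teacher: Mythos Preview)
Your proposal is correct and follows the same approach as the paper, which simply states that the corollary is an immediate consequence of Proposition~\ref{prop:upper-LpcentralH}. You have filled in the verification that $L_q$ and $L_q^\lrcorner$ meet that proposition's hypothesis (class $q-1$, one-dimensional centre), which is straightforward and accurate; one small phrasing issue is that the conditions $p>q$ versus $p>\max\{q,4\}$ depend on the choice of $K$ (whether $K=L_p$ or $K=L_p^\lrcorner$), not on the choice of $L$, but this does not affect the argument.
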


We close this section with the upper-bound of Theorem \ref{th:model-filiform} for $k=\ell$.

\begin{proposition}\label{prop:upper-LpcornerLpcorner}
    Let each of $K$ and $L$ be either $L_p$ with $p \geqslant 3$ or $L^\lrcorner_p$ with $p \geqslant 5$. Then the Dehn function of $G:= K \times_Z L$ satisfies satisfies $\delta_G (n) \preccurlyeq n^{p-1}$.
\end{proposition}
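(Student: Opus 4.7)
The plan is to reduce any null-homotopic word in $G = K \times_Z L$ to a product of two parallel $\Omega$-type words---one in each factor's generators---and then eliminate these using two auxiliary subgroups of $G$ whose Dehn functions have already been controlled in \S\ref{sec:upper-gp3-gp3corner}. Let $L_3^L := \langle y_1, y_{p-1}\rangle \subset L$ and $L_3^K := \langle x_1, x_{p-1}\rangle \subset K$ denote the $3$-Heisenberg subgroups of the respective factors containing their centres; then the subgroups $G_K := K \times_Z L_3^L$ and $G_L := L_3^K \times_Z L$ of $G$ are each isomorphic to one of $G_{p,3}$ or $G^\lrcorner_{p,3}$, and hence have Dehn function $\preccurlyeq n^{p-1}$ by Propositions~\ref{prop:upperbound-dehn-Gp3} and \ref{prop:upperbound-dehn-Gcornerp3}.

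Given a null-homotopic word $w$ of length $\leqslant n$ in $G$, I would first use the commutator relations $[x_i, y_j] = 1$ (at cost $\lesssim n^2$) to write $w =_{\calp} u \cdot v$, where $u$ is in $K$-generators and $v$ is in $L$-generators, both of length $\leqslant n$, and both representing the central elements $z^{-m}$ and $z^m$ respectively for some $m$ with $|m| \lesssim_p n^{p-1}$ by centre distortion. Choosing $\underline{a}, \underline{b} \in \mathbf{R}^{p-1}$ with $|\underline{a}|, |\underline{b}| \lesssim_p n$ such that $\Omega_{p-1}(\underline{a})$ represents $z^{-m}$ in $K$ and $\widetilde{\Omega}_{p-1}(\underline{b})$ represents $z^m$ in $L$ (which is feasible since the central value of these $(p-1)$-fold commutators is a polynomial in $\underline{n}$ with leading term $\prod_i n_i$), I observe that $u \cdot \Omega_{p-1}(\underline{a})^{-1}$ and $v \cdot \widetilde{\Omega}_{p-1}(\underline{b})^{-1}$ are null-homotopic words of length $\lesssim_p n$ lying entirely in $G_K$ and $G_L$ respectively. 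The Dehn function bounds for these subgroups then yield the identities $u =_\calp \Omega_{p-1}(\underline{a})$ and $v =_\calp \widetilde{\Omega}_{p-1}(\underline{b})$ with area $\lesssim_p n^{p-1}$.

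It remains to bound the area of $\Omega_{p-1}(\underline{a}) \cdot \widetilde{\Omega}_{p-1}(\underline{b})$ in $G$ by $n^{p-1}$. For this I would establish, by induction on $k$, the auxiliary identity $\Omega_{k}(\underline{n}) =_K x_{k+1}^{N_k(\underline n)} \cdot (\text{terms in the higher-layer centre})$ with area $\lesssim_p n^k$, together with its analogue in $L$. The inductive step expands $\Omega_k(\underline{n}) = [x_1^{n_1}, \Omega_{k-1}(n_2, \ldots, n_k)]$, applies the induction hypothesis (cost $\lesssim_p n^{k-1}$), and then uses the elementary relation $[x_1^{n_1}, x_{k}^{M}] =_K x_{k+1}^{n_1 M} \cdot (\text{lower-order centre terms})$ (cost $\lesssim_p |n_1 M| \lesssim n^k$). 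At $k = p-1$ this reduces $\Omega_{p-1}(\underline a)$ to $x_p^{-m}$ and $\widetilde{\Omega}_{p-1}(\underline b)$ to $y_p^{m}$ with total area $\lesssim_p n^{p-1}$; the resulting null-homotopic word $x_p^{-m} \cdot y_p^m$ is then cancelled by applying the centre-identification relator $x_p = y_p$ exactly $|m|$ times, contributing a final cost $\lesssim_p |m| \lesssim n^{p-1}$.

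The main obstacle will be the careful tracking of the lower-order correction terms arising from the Baker--Campbell--Hausdorff expansion of the nested commutators, particularly in the $L_p^{\lrcorner}$ case, where the additional relation $[x_2, x_3] = x_p$ contributes further central terms via the formulas of Proposition~\ref{prop:compactpresentations}. Since all such corrections land in the centre $\langle z\rangle$ and commute with everything in $G$, they can be absorbed into the polynomials $N_k(\underline n)$ and handled together with the final centre-identification step without exceeding the $n^{p-1}$ area budget.
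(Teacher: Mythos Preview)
Your reduction of an arbitrary null-homotopic word to a product $\Omega_{p-1}(\underline a)\cdot\widetilde\Omega_{p-1}(\underline b)$ via the two embedded copies of $G_{p,3}$ (or $G^\lrcorner_{p,3}$) is exactly what the paper does, and is correct.

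The gap is in your final step. The identity $\Omega_k(\underline n)=_K x_{k+1}^{N_k(\underline n)}\cdot(\text{higher terms})$ cannot hold with area $\lesssim_p n^k$ as you claim, nor even with area $\lesssim_p n^{p-1}$, by the method you describe. The difficulty is that the ``higher terms'' in normal form are explicit powers $x_{k+2}^{b_{k+2}}\cdots x_p^{b_p}$ with $|b_j|\sim n^{j-1}$; in particular the word on the right has length $\sim n^{p-1}$, so already for $k=2$ the area is at least $n^{p-1}$, not $n^2$. More seriously, at the step $k=p-1$ your inductive hypothesis gives $\Omega_{p-2}(\underline a')=x_{p-1}^{M}x_p^{M'}$ with $|M|\sim n^{p-2}$ and $|M'|\sim n^{p-1}$, and you then need to evaluate $[x_1^{a_1},x_{p-1}^{M}x_p^{M'}]$. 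Cancelling the central contribution of $x_p^{M'}$ requires commuting $x_1^{a_1}$ (length $\sim n$) past $x_p^{M'}$ (length $\sim n^{p-1}$); this is a rectangular loop in the abelian subgroup $\langle x_1,x_p\rangle$ and costs $\sim n\cdot n^{p-1}=n^p$ relations, which exceeds the budget.

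The paper avoids ever producing long explicit central powers. Instead it argues by induction on $p$: using Corollary~\ref{cor:omegapminus1} it rewrites $\Omega_{p-1}(\hat n)$ as a product of $O(n)$ words of the form $\Omega_{p-2}^{3}(\cdot)$, each of length $\lesssim_p n$, and does the same for $\widetilde\Omega_{p-1}(\hat n)$. These live in the embedded $G_{p-1,p-1}\hookrightarrow G$, whose Dehn function is $\preccurlyeq n^{p-2}$ by the inductive hypothesis; pairing them off and cancelling costs $\lesssim_p n\cdot n^{p-2}=n^{p-1}$. The point is that the central element is always carried as a short $\Omega$-word rather than as an explicit power of $x_p$, and the cancellation happens inside a smaller central product rather than in the factor $K$ alone.
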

\begin{proof}
    Let $\calp$ be the presentation for $G$ given by Lemma \ref{lem:central-prod-is-cglc} and Proposition \ref{prop:compactpresentations}.
    The proof is done by ascending induction on $p$ \footnote{Note that the proofs for $L_p^{\lrcorner}\times_Z L_p^{\lrcorner}$ and $L^\lrcorner_p \times_Z L_p$ only use the induction hypothesis for $G_{q,q}$ with $q<p$.}. For the base case we have that by \cite{Allcock} and \cite{OlsSapCombDehn} the Dehn function of $L_3 \times_Z L_3$ is quadratic and  by \cite{lipt} the Dehn function of $L_p \times_Z L_p$ is cubic if $p=4$ and quartic if $p=5$. Assume now that the statements hold for $p-1 \geqslant 4$.  
    
    Let $n \geqslant 1$ be an integer. Let $w := w(x_1, x_2, y_1, y_2)$ be a null-homotopic word in $G$ of length at most $n$. Using the fact that the $y_i$'s commute with the $x_i$'s we can rewrite $w$ as $w_1(x_1, x_2)w_2(y_1, y_2)$ in $G$ with area $\lesssim_p n^2$. Since $w_1 w_2$ is also a null-homotopic word in $G$ and $\langle x_1, x_2\rangle \cap \langle y_1, y_2 \rangle = \langle z \rangle$, where $x_p = z = y_p$ is the generator of the centre of $G$, we get that $w_1$ and $w_2$ represent elements in the centre. Thus, there exists $d \in \R$ such that the identities $w_1 (x_1,x_2)=_\calp z^d$ and $w_2(y_1, y_2) =_\calp z^{-d}$ hold in $G$. 
    
    Since the distortion of $\langle z \rangle$ in $G$ is $\simeq n^{p-1}$, we get $|d| \lesssim_p n^{p-1}$. It follows from Proposition \ref{prop:compactpresentations} that the identities 
    {\small
    $$
    z^d =_\calp \Omega_{p-1}(\hat{n}) \quad \text{and} \quad z^{d} =_\calp \widetilde{\Omega}_{p-1}(\hat{n})
    $$
    }
    hold in $G$ for some $\hat{n}\in \R^{p-1}$ with $|\hat{n}| \lesssim_p n$. Therefore, the words $w_1(x_1, x_2) \cdot \left(\Omega_{p-1}(\hat{n})\right)^{-1}$ and $ \widetilde{\Omega}_{p-1}(\hat{n})\cdot w_2(y_1, y_2)$ are null-homotopic words in $G^\lrcorner_{p,3} \hookrightarrow G$ with area $\lesssim_p n^{p-1}$. 
    
    By Corollary \ref{cor:omegapminus1} we can rewrite the $\Omega_{p-1}$- and $\widetilde{\Omega}_{p-1}$-words as products of  $\Omega_{p-2}^3$- and $\widetilde{\Omega}_{p-2}^3$-words in $G_{p-1,p-1} \hookrightarrow G$. It then follows by applying the induction hypothesis on $p$ $\lesssim_p n$ times to pairs of $\Omega_{p-2}^3$- and $\widetilde{\Omega}_{p-2}^3$-words, that the identity $\Omega_{p-1}(\hat{n}) =_\calp \widetilde{\Omega}_{p-1}(\hat{n})$ holds in $G$ with area $\lesssim_p n^{p-1}$. Overall, the identities 
    {\small
    \begin{align*}
        w &=_\calp w_1(x_1, x_2)w_2(y_1, z_2) \\
        &=_F w_1(x_1, x_2) \cdot \Omega_{p-1}(\hat{n})^{-1} \widetilde{\Omega}_{p-1}(\hat{n})\cdot w_2(y_1, y_2) \\
        &=_\calp 1
    \end{align*}
    }%
    hold in $G$ with area $\lesssim_p n^{p-1}$.
\end{proof}

\subsection{Proof of Theorem~\ref{th:sbe}}

\begin{remark}
The following proof assumes knowledge of \cite[Section 9]{lipt}.
    The reader unfamilar with Carnot groups arising as asymptotic cones of nilpotent groups may first consult Appendix~\ref{sec:QI} and especially Proposition~\ref{prop:preparing-cor-QI} before reading \cite[Section 9]{lipt} and then come back here.
\end{remark}

Let $p \geqslant 3$.
As in \cite{lipt}, it readily follows from our proof that $L_p \times_Z L_3$ admits $(n^{p-1}, n)$ as a filling pair\footnote{See \cite[\S3.3]{lipt} for the definition of a filling pair we use here.}. For this one checks that all prefix words of the transformations we apply represent group elements of distance $\lesssim n$ to the identity element in the Cayley graph for our chosen generating set. Thus, one argues as in \cite[Section 9]{lipt} that $L_p \times_Z L_ 3$ and its associated Carnot graded group are not $O(r^e)$-equivalent for $e \in [0, 1/(2p))$. On the other hand Cornulier shows in \cite[Proposition 6.13]{cornulier2017sublinear} that the two groups are $O(r^{2/(p-1)})$-bilipschitz equivalent. This shows that Cornulier's bound is optimal in the limit as the nilpotency class $p-1$ tends to $+\infty$.

\section{Proofs of Theorems~\ref{th:general-factor}, \ref{th:model-filiform}, and \ref{th:lowdim}}\label{sec:proofsAB}

In the first part of this section we recall the statements of Theorem \ref{th:general-factor} and Theorem \ref{th:model-filiform} whose proofs consist of putting together the upper and lower bounds obtained in the previous sections. The second part treats the proof of Theorem \ref{th:lowdim} whose proof is done by treating each of the groups individually. The treatment of these cases can be divided into two categories the \emph{elementary ones} and the \emph{non-elementary ones}. The latter are proved using (some of the ingredients in the proofs of) Theorem \ref{th:general-factor} and Theorem \ref{th:model-filiform}, while the former are done with a hands-on treatment. 

\begingroup
\renewcommand{\thetheorem}{\ref{th:general-factor}}

\begin{theorem}
    Let $k> \ell\geqslant 2$ be integers.
    Let $K$ be either the group $L_{k+1}$ or $L_{k+1}^\lrcorner$. Let $L$ be a simply connected nilpotent Lie group with one-dimensional centre of nilpotency class $\ell$.
    Let $G = K\times_ZL$.
    Then $\delta_G(n) \asymp n^k$.
\end{theorem}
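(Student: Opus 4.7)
The plan is to assemble the theorem as a direct combination of the lower bound from \S\ref{sec:lowerbounds} and the upper bound from \S\ref{sec:upper-bounds}. Setting $p = k+1$, so that $K$ is either $L_p$ or $L_p^\lrcorner$, the lower bound will follow immediately from Proposition \ref{prop:lower-bound-general}: that proposition applies whenever the first factor is $L_p$ ($p \geqslant 3$) or $L_p^\lrcorner$ ($p \geqslant 5$) and the second factor is a simply connected nilpotent Lie group with one-dimensional centre, which is exactly the setting of the theorem. It yields $\delta_G(n) \succcurlyeq n^{p-1} = n^k$ without further work.

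For the matching upper bound I plan to invoke Proposition \ref{prop:upper-LpcentralH} with parameter $q = \ell + 1$. Its hypotheses require $L$ to have nilpotency class at most $q-1 = \ell$ with one-dimensional centre (given), and either $K = L_p$ with $p > q$, or $K = L_p^\lrcorner$ with $p > \max(q,4)$. The strict inequality $k > \ell$ translates to $p > q$, handling the first case. In the second case, the very definition of $L_{k+1}^\lrcorner$ forces $k \geqslant 4$ (see the footnote to Theorem \ref{th:general-factor}), so $p \geqslant 5 > 4$ and the condition $p > \max(q,4)$ holds automatically. This gives $\delta_G(n) \preccurlyeq n^{p-1} = n^k$, and combining with the lower bound yields $\delta_G(n) \asymp n^k$.

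The genuine difficulty has already been absorbed into the two cited propositions, so at the level of Theorem \ref{th:general-factor} itself there is no obstacle to overcome: one is only checking that the parameters match. The hard work lies upstream: the lower bound comes from integrating the non-left-invariant one-form $\beta_1$ of \eqref{eq:definition-of-beta1} along a loop of the form $\overline{x_1^L\,\Omega_{p-1}(\ell,\ldots,\ell)\,x_1^{-2L}\,\Omega_{p-1}(\ell,\ldots,\ell)^{-1}\,x_1^L}$ and applying the combinatorial Stokes inequality (Lemma \ref{combinatorial-stokes}); the upper bound is obtained by using the maximal polynomial distortion of the centre of $L$ (of degree at most $\ell \leqslant p-2$) to rewrite the $L$-part of any word as an $\Omega_{p-2}^3$-word inside $L_{p-1}\times_Z L\hookrightarrow K\times_Z L$, thereby reducing to the central products $G_{p,3}$ and $G_{p,3}^\lrcorner$ treated in Propositions \ref{prop:upperbound-dehn-Gp3} and \ref{prop:upperbound-dehn-Gcornerp3}.
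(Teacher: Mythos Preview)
Your proposal is correct and matches the paper's own proof exactly: the paper also derives the upper bound from Proposition~\ref{prop:upper-LpcentralH} and the lower bound from Proposition~\ref{prop:lower-bound-general}, with the parameter checks you spell out being implicit there.
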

\begin{proof}
    The upper bound follows from Proposition \ref{prop:upper-LpcentralH}. The lower bound follows from Proposition \ref{prop:lower-bound-general}.
\end{proof}

\renewcommand{\thetheorem}{\ref{th:model-filiform}}
\begin{theorem}
    Let $k\geqslant \ell \geqslant 2$ be integers.
    Let $K$ be either the group $L_{k+1}$ or $L_{k+1}^\lrcorner$ . Let $L$ be either the group $L_{\ell+1}$ or $L_{\ell+1}^\lrcorner$. Let $G = K\times_ZL$.
    Then $\delta_G(n) \asymp n^k$.
\end{theorem}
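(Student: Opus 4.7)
The plan is to deduce Theorem~\ref{th:model-filiform} by assembling three pieces established earlier in the paper: the lower bound from Section~\ref{sec:lowerbounds}, and two complementary upper bounds from Section~\ref{sec:upper-bounds} that together cover the full admissible parameter range.

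I would begin with the lower bound $\delta_G(n) \succcurlyeq n^k$. Setting $p = k+1$, the factor $K$ is filiform of class $p-1 = k$, either $L_p$ or $L_p^\lrcorner$, and $L$ is a simply connected nilpotent Lie group with one-dimensional centre. These are precisely the hypotheses of Proposition~\ref{prop:lower-bound-general}, whose conclusion reads $\delta_G(n) \succcurlyeq n^{p-1} = n^k$. No further work is needed because the loop $\Lambda$ and the continuous one-form $\beta_1$ used in that proof were designed to accommodate arbitrary second factors $H$ with one-dimensional centre; in particular, the filiform structure on $L = L_{\ell+1}$ or $L^\lrcorner_{\ell+1}$ is a special case of this setup.

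For the upper bound $\delta_G(n) \preccurlyeq n^k$ I would split into two cases according to whether $k > \ell$ or $k = \ell$. If $k > \ell$, then Corollary~\ref{cor:upper-model-filiform} applies with $p = k+1$ and $q = \ell+1$ (the dimension requirements $p \geqslant 3$ and, when relevant, $p, q \geqslant 5$ being automatic in view of the footnote to the statement), yielding $\delta_G(n) \preccurlyeq n^k$ directly. If $k = \ell$, then Proposition~\ref{prop:upper-LpcornerLpcorner} applies with $p = k+1$ and simultaneously handles all four combinations of filiform factor types $L_p$ or $L_p^\lrcorner$ on each side, again giving $\delta_G(n) \preccurlyeq n^k$. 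Combining with the lower bound yields $\delta_G(n) \asymp n^k$ in both cases.

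Since all the substantive content has already been packaged into the lower-bound proposition and the two upper-bound results, the only remaining task is bookkeeping: checking case-by-case that every admissible pair $(K, L)$ for Theorem~\ref{th:model-filiform} falls within the scope of exactly one of the cited results, and that the associated parameter constraints ($p \geqslant 5$ when $L_p^\lrcorner$ appears, $p > q$ when invoking Corollary~\ref{cor:upper-model-filiform}, etc.) are met. The genuine obstacle therefore lies not in this final deduction but upstream -- in particular in the proof of Proposition~\ref{prop:upper-LpcornerLpcorner}, which handles the diagonal case $k = \ell$ by using Corollary~\ref{cor:omegapminus1} to reduce central-element identifications $z^d =_\calp \Omega_{p-1}(\hat n) =_\calp \widetilde\Omega_{p-1}(\hat n)$ to the already established non-diagonal case $G_{p-1, p-1} \hookrightarrow G$, together with an ascending induction on $p$.
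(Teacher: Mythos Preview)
Your proposal is correct and matches the paper's own proof essentially verbatim: the paper derives the lower bound from Proposition~\ref{prop:lower-bound-general} and the upper bound from Corollary~\ref{cor:upper-model-filiform} (case $k>\ell$) together with Proposition~\ref{prop:upper-LpcornerLpcorner} (case $k=\ell$). Your additional remarks on parameter bookkeeping and on where the real work lies are accurate and consistent with the paper's structure.
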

\begin{proof}
    The upper bound follows from Corollary \ref{cor:upper-model-filiform} and Proposition \ref{prop:upper-LpcornerLpcorner}. The lower bound follows from Proposition \ref{prop:lower-bound-general}.
\end{proof}

\renewcommand{\thetheorem}{\ref{th:lowdim}}

\begin{theorem}
    Let $k\geqslant \ell\geqslant 2$ be integers and let $K$ and $L$ be simply connected nilpotent Lie groups with one-dimensional centres, and class $k$ and $\ell$ respectively. Assume that
    $
    \max\{\dim K, \dim L\} \leqslant 5 
    $.
    Let $G = K \times_ZL$.
    Then $\delta_G(n) \asymp n^k$.
\end{theorem}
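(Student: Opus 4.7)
The plan is to proceed by case analysis of the pairs $(K,L)$, using the classification recalled in Section \ref{subsubsec:grp-low-dim-prelim}: the simply connected nilpotent Lie groups of dimension at most $5$ with one-dimensional centre are $L_3, L_{5,4}$ (class $2$), $L_4, L_{5,5}$ (class $3$), and $L_5, L_5^{\lrcorner}$ (class $4$). Combined with the hypothesis $k \geqslant \ell$, this yields only finitely many isomorphism classes of pairs to consider. First I would isolate the cases already covered: when $K \in \{L_{k+1}, L_{k+1}^{\lrcorner}\}$ and $k > \ell$, Theorem \ref{th:general-factor} applies; when $K,L \in \{L_p, L_p^{\lrcorner}\}$ with $k = \ell$, Theorem \ref{th:model-filiform} applies; and when $K, L \in \{L_3, L_{5,4}\}$, the product is a higher Heisenberg group with $\delta \asymp n^2$ by Theorem \ref{thm:Allcock-OS}. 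An enumeration shows the only cases yet to settle are those with $K = L_{5,5}$ paired with $L \in \{L_3, L_{5,4}, L_4, L_{5,5}\}$, all of class $3$.

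For the lower bound $\delta_G \succcurlyeq n^3$ in these remaining cases, I would adapt the one-form technique of Section \ref{sec:lowerbounds}. Writing $(X_i)$ for the adapted basis of $\mathfrak l_{5,5}$ and $(\xi_i)$ for the dual basis, a direct check gives $d\xi_3 = -\xi_1\wedge\xi_2$ and hence the left-invariant 2-form $\omega = \xi_1 \wedge \xi_3$ is closed on $G$ (the potential contributions from $L$-brackets cancel because they take values in $\langle X_4\rangle \subset \ker \omega$). A smooth primitive is then $\alpha = u_1 \xi_3 + \tfrac{1}{2} u_1^2 \xi_2$, where $u_1$ is a (non-invariant) primitive of $\xi_1$ on $G$; the continuous form $\beta_1 := \operatorname{sign}(u_1)\,\alpha$ has bounded exterior derivative piecewise. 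Constructing a loop $\Lambda$ in $G$ analogous to the one in Proposition \ref{prop:lower-bound-general}, but now centered on the two-fold iterated commutator $[x_1^\ell, [x_1^\ell, x_2^\ell]]$ instead of the $(p-2)$-fold one, Lemma \ref{lem:main-ingredient-lower-lie} applied to the $\omega$-central extension of $G$ yields $\int_\Lambda \beta_1 \asymp \ell^3$, and Lemma \ref{combinatorial-stokes} then gives $\delta_G(n) \succcurlyeq n^3$.

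For the upper bound $\delta_G \preccurlyeq n^3$, I would work directly with a finite presentation of a lattice in $G$, exploiting the special structure of $L_{5,5}$: the generator $x_5$ commutes with $x_1,x_3,x_4$ and all $L$-generators, and satisfies $[x_5,x_2] = x_4^{-1}$ as its only non-trivial commutator relation. The strategy is to reduce any null-homotopic word of length $n$ to a null-homotopic word in the subgroup $H = \langle x_1,x_2,x_3,x_4\rangle\cdot L \cong L_4 \times_Z L$, whose Dehn function is bounded by $n^3$ via Theorem \ref{th:general-factor} or Theorem \ref{th:model-filiform} when $L \in \{L_3, L_{5,4}, L_4\}$; the case $L = L_{5,5}$ is handled in parallel by a recursive version of the same argument. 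The transit through the $x_5$-relation is then accounted for explicitly, together with a geometric filling of the $H$-portion.

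The main obstacle is the upper bound: a naive ``shuffle $x_5$'s to one side'' approach introduces $O(n^2)$ auxiliary $x_4$-letters via the $[x_5,x_2]$-relation and yields only an $n^6$ estimate when combined with the cubic Dehn function of $H$. The elementary argument must instead pre-process the word so that each $x_5$-letter pairs with a nearby $x_2$-letter so that the introduced central letters can be cancelled, keeping the resulting $H$-word of length $O(n)$. The case $(L_{5,5},L_{5,5})$ is the most delicate, since both factors contribute an $x_5$-type generator whose shufflings interact through the shared centre; handling them simultaneously, rather than sequentially, is where the careful combinatorics of the finite presentation does the main work.
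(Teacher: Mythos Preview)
Your case enumeration is correct, and your lower bound is fine: the form $\beta_1 = \operatorname{sign}(u_1)(u_1\xi_3 + \tfrac12 u_1^2\xi_2)$ on $G$ has piecewise left-invariant derivative $\pm\,\xi_1\wedge\xi_3$, the loop $\Lambda$ lives in the $L_4$-subgroup $\langle x_1,x_2,x_3,x_4\rangle$, and Lemma~\ref{lem:evaluate-integral-beta1} (with $p=4$) gives the cubic integral. The paper obtains the same cubic lower bound via distortion of an explicit central extension (using the cocycle $\xi_2\wedge\xi_3$ rather than $\xi_1\wedge\xi_3$), which is the other face of the same technique as explained at the start of Section~\ref{sec:lowerbounds}.

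The gap is in your upper bound. The ``pairing each $x_5$-letter with a nearby $x_2$-letter so that the introduced central letters can be cancelled'' step does not work: crossing $x_5^{\pm1}$ past $x_2^{\pm1}$ \emph{creates} an $x_4^{\mp1}$, and there is no reason these should cancel in pairs. After shuffling all $x_5$'s to one side (they vanish since the exponent sum is zero) you are left with a word in $\langle x_1,x_2,x_4\rangle\cdot L$ of length $O(n^2)$, and feeding that into the cubic Dehn function of $L_4\times_Z L$ gives only $n^6$. What is missing is a \emph{compression} step: collect the $O(n^2)$ central letters into a single $x_4^m$ with $|m|\leqslant n^2$ at cost $O(n^3)$, then rewrite $x_4^m$ as a commutator of length $O(\sqrt{|m|})=O(n)$ inside a $5$-Heisenberg subgroup (e.g.\ $\langle x_2,x_5,y_1,y_3\rangle$ in $L_{5,5}\times_Z L_{3,2}$) at quadratic cost. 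Only then is the residual word short enough to apply the cubic bound for $L_4\times_Z L$.

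The paper's route is in fact somewhat different. For the base case $L_{5,5}\times_Z L_{3,2}$ it moves the $x_1$'s (not the $x_5$'s) to the left, producing $\Omega(m,n)=[x_1^m,x_3^n]$-words that are shuttled to the far right using the $5$-Heisenberg subgroup $\langle x_1,x_3,y_1,y_3\rangle$, and then a leftover word $U(x_2,x_5)$ is rewritten via the \emph{other} $5$-Heisenberg subgroup $\langle x_2,x_5,y_2,y_5\rangle$. The cases $L_{5,5}\times_Z L_{4,3}$ and $L_{5,5}\times_Z L_{5,4}$ reduce to this one via Lemma~\ref{lem:back-first-factor} and Lemma~\ref{lem:wlog-avoid-high-heisenberg}. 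For $L_{5,5}\times_Z L_{5,5}$ the paper does move $y_5$'s aside \emph{sequentially} (not simultaneously as you suggest), then performs exactly the Heisenberg compression of $x_4^m$ you are missing, and reduces to $L_{5,5}\times_Z L_{4,3}$.
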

\endgroup
\setcounter{theorem}{0}

\subsection{Proof of Theorem \ref{th:lowdim}} 

There are twenty-one central products $K \times_Z L$ as in the assumption of Theorem~\ref{th:lowdim}, namely where $K$ and $L$ have one-dimensional centre and dimension at most $5$.
 In Table~\ref{tab:groupsTheoremlowdim} we list them, using de Graaf's notation.
 We treat them separately in \S\ref{subsubsec:high-heisenberg-factor} the group with a factor isomorphic to the 5-Heisenberg group. For the readers convenience we mention the group corresponding to the given central product on Magnin's list \cite{Magnin} when it has dimension $6$ or $7$. We order the groups in the lexicographic order of $(k,\ell,d)$ where $k$ and $\ell$ are the nilpotency classes of $K$ and $L$, and $d = \dim G$.
 For readability, let us recall from \S\ref{subsubsec:grp-low-dim-prelim} that
 $$
 L_{3,2}= L_3, \quad L_{4,3} = L_4, \quad L_{5,4} = H_5, \quad L_{5,6}=L_5^\lrcorner, \quad \text{and} \quad L_{5,7} = L_5.
 $$
 
\begin{table}[t]
    \centering
    \begin{tabular}{c|c|c|c|c|c}
    Group & $(k,\ell, d)$ & $\delta(n)$ & reference & Our name & Other names \\
    \hline 
        $L_{3,2} \times_Z L_{3,2}$ & $(2,2,5)$ & $n^2$ & \cite{Allcock}       & $L_3 \times_Z L_3 = H_5$ & $L_{5,4}$ \cite{deGraafclass} \\
        $L_{5,4} \times_Z L_{3,2}$ & $(2,2,7)$ & $n^2$ & \cite{Allcock}       &   $H_5 \times_Z L_3 = H_7$        &  \\
        $L_{5,4} \times_Z L_{5,4}$ & $(2,2,9)$ & $n^2$ & \cite{Allcock}       &    $H_5 \times_Z H_5 = H_9$       &  \\
        $L_{4,3} \times_Z L_{3,2}$ & $(3,2,6)$ & $n^3$ & \cite{lipt}          & $L_4 \times_Z L_3$ & $\mathcal G_{6,2}$ \cite{Magnin} \\
        $L_{5,5} \times_Z L_{3,2}$ & $(3,2,7)$ & $n^3$ & \S\ref{subsec:G7318} & & $\mathcal{G}_{7,3.18}$ \cite{Magnin} \\
        $L_{4,3} \times_Z L_{5,4}$ & $(3,2,8)$ & $n^3$ & Theorem~\ref{th:general-factor} & $L_4 \times_Z H_5$ & \\
        $L_{5,5} \times_Z L_{5,4}$ & $(3,2,9)$ & $n^3$ & \S\ref{subsubsec:high-heisenberg-factor} & \\
        $L_{4,3} \times_Z L_{4,3}$ & $(3,3,7)$ & $n^3$ & \cite{lipt} & $L_4 \times_Z L_4$ & $\mathcal G_{7,3.16}$ \cite{Magnin} \\
        $L_{5,5} \times_Z L_{4,3}$ & $(3,3,8)$ & $n^3$ & \S\ref{subsec:L55L43} & \\
        $L_{5,5} \times_Z L_{5,5}$ & $(3,3,9)$ & $n^3$ & \S\ref{L55L55} & \\
        $L_{5,6} \times_Z L_{5,4}$ & $(4,2,9)$ & $n^4$ & Theorem~\ref{th:general-factor} & $L^\lrcorner_5 \times_Z H_5$ \\
        $L_{5,7} \times_Z L_{5,4}$ & $(4,2,9)$ & $n^4$ & Theorem~\ref{th:general-factor} & $L_5 \times_Z H_5$\\
        $L_{5,7} \times_Z L_{3,2}$ & $(4,3,7)$ & $n^4$ & Theorem~\ref{th:model-filiform} & $L_5 \times_Z L_3$ & $\mathcal G_{7,3.17}$ \cite{Magnin} \\
        $L_{5,6} \times_Z L_{3,2}$ & $(4,3,7)$ & $n^4$ & Theorem~\ref{th:model-filiform} & $L_5^\lrcorner \times_Z L_3$ & $\mathcal G_{7,2.30}$ \cite{Magnin} \\
        $L_{5,7} \times_Z L_{4,3}$ & $(4,3,8)$ & $n^4$ & \cite{lipt} & $L_5 \times_Z L_4$ & \\
        $L_{5,6} \times_Z L_{4,3}$ & $(4,3,8)$ & $n^4$ & Theorem~\ref{th:model-filiform} & $L_5^\lrcorner \times_Z L_4$ & \\
        $L_{5,7} \times_Z L_{5,5}$ & $(4,3,9)$ & $n^4$ & Theorem \ref{th:general-factor} & \\
        $L_{5,6} \times_Z L_{5,5}$ & $(4,3,9)$ & $n^4$ & Theorem \ref{th:general-factor} & \\
        $L_{5,7} \times_Z L_{5,7}$ & $(4,4,9)$ & $n^4$ & \cite{lipt} & $L_5 \times_Z L_5$ & \\
        $L_{5,7} \times_Z L_{5,6}$ & $(4,4,9)$ & $n^4$  & Theorem~\ref{th:model-filiform} & $L_5^\lrcorner \times_Z L_5$ \\
        $L_{5,6} \times_Z L_{5,6}$ & $(4,4,9)$ & $n^4$  & Theorem~\ref{th:model-filiform} & $L_5^\lrcorner \times_Z L_5^\lrcorner$ \\
    \end{tabular}\vspace{.3cm}
    
    \caption{Central products of groups with one-dimensional centre and dimension at most $5$. In the fifth column we provide the name according to the notation in the present paper, when applicable. The sixth column gives names from de Graaf's and Magnin's classification (note that Magnin considers the Lie algebras over the complex numbers).}
    \label{tab:groupsTheoremlowdim}
\end{table}

\subsubsection{Non-elementary cases of Theorem~\ref{th:lowdim}} 

\label{subsubsec:non-elem}
We start with the non-elementary cases, which are treated using Theorems \ref{th:general-factor} and Theorem \ref{th:model-filiform}.

\begin{proposition}
    The following groups all have quartic Dehn functions:
    \begin{enumerate}
        \item $L_{5,6} \times_Z L_{3,2}$
        \item $L_{5,6} \times_Z L_{4,3}$
        \item $L_{5,6} \times_Z L_{5,6}$
        \item $L_{5,6} \times_Z L_{5,7}$
        \item $L_{5,i} \times_Z L_{5,4}$ for $i =6,7$.
        \item $L_{5,i} \times_Z L_{5,5}$ for $i =6,7$.
    \end{enumerate}
\end{proposition}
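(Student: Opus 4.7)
The plan is to verify that each of the six listed cases falls directly under the hypotheses of one of the previously established Theorems~\ref{th:general-factor} and~\ref{th:model-filiform}. Since both of these theorems yield exactly $\delta_G(n) \asymp n^k$ for central products where the factor of highest nilpotency class is either $L_{k+1}$ or $L_{k+1}^{\lrcorner}$, and since in all six cases the high-class factor is $L_{5,6} = L_5^\lrcorner$ or $L_{5,7} = L_5$, we will always obtain $k = 4$.

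First I would dispatch the cases where both factors are filiform, namely (1)--(4). In each case the first factor is $L_5^\lrcorner = L_{5,6}$, which is of the form $L_{k+1}^\lrcorner$ with $k=4$, while the second factor is, respectively, $L_3 = L_{3,2}$, $L_4 = L_{4,3}$, $L_5^\lrcorner = L_{5,6}$, and $L_5 = L_{5,7}$; each of these is either $L_{\ell+1}$ or $L_{\ell+1}^\lrcorner$ with $\ell \in \{2,3,4\}$ satisfying $\ell \leqslant k = 4$. Hence Theorem~\ref{th:model-filiform} applies and gives $\delta_G(n) \asymp n^4$ in each of these four cases.

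Next I would handle the mixed cases (5) and (6) using Theorem~\ref{th:general-factor}. In case (5), the second factor is $H_5 = L_{5,4}$, the $5$-dimensional Heisenberg group: it is a simply connected nilpotent Lie group of nilpotency class $\ell = 2$ with one-dimensional centre, and its class is strictly smaller than $k = 4$. In case (6), the second factor is $L_{5,5}$; a short computation in its defining bracket relations $[X_1,X_2]=X_3$, $[X_1,X_3]=[X_2,X_5]=X_4$ shows that its lower central series is $C^1 = \mathfrak l_{5,5}$, $C^2 = \langle X_3, X_4\rangle$, $C^3 = \langle X_4\rangle$, $C^4 = 0$, so it has nilpotency class $\ell = 3 < 4$ with one-dimensional centre $\langle X_4 \rangle$. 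In both cases the hypotheses of Theorem~\ref{th:general-factor} are satisfied with $K = L_5$ or $L_5^\lrcorner$ and $L$ the second factor, yielding $\delta_G(n) \asymp n^4$.

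There is no genuine obstacle here: the statement is really a bookkeeping corollary, and the only substance is verifying that the second factor in cases (5) and (6) indeed has one-dimensional centre and the correct nilpotency class. The hardest of these is the verification for $L_{5,5}$, which is the short lower central series computation above.
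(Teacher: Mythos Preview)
Your proposal is correct and follows exactly the same approach as the paper: cases (1)--(4) are handled by Theorem~\ref{th:model-filiform} with $k=4$ and $\ell\in\{2,3,4,4\}$, and cases (5)--(6) by Theorem~\ref{th:general-factor} with $k=4$ and $\ell\in\{2,3\}$. Your added verification of the lower central series and centre of $L_{5,5}$ is a small bonus that the paper leaves implicit.
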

\begin{proof}
\begin{enumerate}
    \item Since $L_{5,6}$ is the group $L_5^{\lrcorner}$, this is a consequence of Theorem~\ref{th:model-filiform} with $k=4$ and $\ell = 2$.
    \item This is a consequence of Theorem~\ref{th:model-filiform} with $k=4$ and $\ell = 3$.
    \item  This is a consequence of Theorem~\ref{th:model-filiform} with $k=\ell=4$.
    \item Since $L_{5,6} \times_Z L_{5,7}$ is the group $G_{5,5}^{\lrcorner}$, this is a consequence of Theorem~\ref{th:model-filiform} with $k=\ell = 4$.
    \item 
    This is a special case of Theorem~\ref{th:general-factor} with $k=4$ and $\ell = 2$. 
    \item 
    This is a special case of Theorem~\ref{th:general-factor} with $k=4$ and $\ell = 3$. 
    \qedhere
\end{enumerate}
\end{proof}

\begin{proposition}
\label{prop:dehn-L567L54}
   The group $G = L_{4,3} \times_Z L_{5,4}$ has a cubic Dehn function.
\end{proposition}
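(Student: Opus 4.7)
The plan is to recognize $G = L_{4,3} \times_Z L_{5,4}$ as a direct instance of the setup of Theorem~\ref{th:general-factor}. Indeed, $L_{4,3}$ is exactly the model filiform group $L_{k+1}$ with $k=3$, so $K := L_{4,3}$ satisfies the hypothesis on the first factor (one could also use $L_{k+1}^\lrcorner$, but $L_{k+1}^\lrcorner$ is only defined for $k \geqslant 4$, so here only the model filiform option is available). The second factor $L := L_{5,4} = H_5$ is a simply connected nilpotent Lie group of nilpotency class $\ell = 2$ with one-dimensional centre. Since $k = 3 > 2 = \ell$, Theorem~\ref{th:general-factor} applies and yields $\delta_G(n) \asymp n^3$.

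Alternatively, one can assemble the two bounds separately from the building blocks used in the proof of Theorem~\ref{th:general-factor}. First, I would note that the upper bound $\delta_G(n) \preccurlyeq n^3$ is a direct instance of Proposition~\ref{prop:upper-LpcentralH}, applied with $p = 4$ (so that $K = L_p = L_4$ and we obtain the bound $n^{p-1} = n^3$) and with $L = H_5$, which has nilpotency class $q-1 = 2 \leqslant p - 2$ and one-dimensional cyclic centre, as required. Next, the matching lower bound $\delta_G(n) \succcurlyeq n^3$ follows from Proposition~\ref{prop:lower-bound-general} with the same choice of $K = L_p = L_4$ (again $p=4$) and $H = H_5$; here one only needs that $H_5$ is simply connected nilpotent with one-dimensional centre, which is clear.

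There is no substantive obstacle in this case: all of the technical work has already been absorbed into Theorem~\ref{th:general-factor} (or more granularly into Proposition~\ref{prop:upper-LpcentralH} for the upper bound, which itself reduces to the analysis of $G_{p,3} = L_p \times_Z L_3$ in \S\ref{sec:upper-gp3-gp3corner}, and into Proposition~\ref{prop:lower-bound-general} for the lower bound). The only point worth verifying, which is immediate, is that $L_{5,4} = H_5$ really has one-dimensional centre and class $2$, so that it genuinely fits the hypotheses of these results. Consequently, the proposition is proved by a single application of Theorem~\ref{th:general-factor}.
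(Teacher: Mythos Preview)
Your proof is correct and matches the paper's own argument exactly: the paper proves this proposition in one line by invoking Theorem~\ref{th:general-factor} with $k=3$ and $\ell=2$. Your additional remark unpacking this into Proposition~\ref{prop:upper-LpcentralH} and Proposition~\ref{prop:lower-bound-general} is accurate and simply makes explicit the two halves of that theorem's proof.
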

\begin{proof}
    This is a special case of Theorem~\ref{th:general-factor} with $k=3$ and $\ell = 2$.
\end{proof}

This finishes the treatment of the non-elementary cases. 

\subsubsection{Elementary cases of Theorem~\ref{th:lowdim}} \label{subsec:lowdim-case-by-case}
In this paragraph, we take an opposite approach to the rest of the paper in that we compute the Dehn function of finitely presented groups $\Gamma_{d,i}$ arising as cocompact lattices in $L_{d,i}$. No knowledge of Lie groups and Lie algebras is needed here for the computations of the upper bound. For the lower bound we make use of some computations of cocycles in Lie algebra cohomology, which could be avoided.

\subsubsection*{The group $\Gamma_{5,5} \times_Z \Gamma_{3,2}$.}\label{subsec:G7318}

Let us recall that by Proposition~\ref{prop:pres-lattice-L55} the group $L_{5,5}$ contains a lattice $\Gamma_{5,5}$ that is generated by $x_1,\ldots, x_5$, with the nontrivial commutators
\begin{equation}
\label{eq:pres-L55L32}
    [x_1,x_2] = x_3, \ [x_1,x_3] = x_4, \  [x_2,x_5] = x_4.
\end{equation}
We adapt the argument in \S4 of \cite{lipt}, to prove the following: 

\begin{proposition}
\label{prop:dehn-L55H3}
The Dehn function of $\Gamma = \Gamma_{5,5} \times_Z \Gamma_{3,2}$ is cubic.
\end{proposition}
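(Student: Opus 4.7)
The plan is to establish matching bounds $\delta_\Gamma(n) \preccurlyeq n^3$ and $\delta_\Gamma(n) \succcurlyeq n^3$, adapting the arguments of \cite[\S4]{lipt} for $L_{4,3} \times_Z L_{3,2}$ and of Proposition~\ref{prop:lower-bound-general} respectively. The bridge is that $\mathfrak{l}_{5,5}$ contains the filiform sub-chain $[X_1,X_2]=X_3$, $[X_1,X_3]=X_4$ spanning a copy of $\mathfrak{l}_4$, while the extra generator $X_5$ and relation $[X_2,X_5]=X_4$ only provide an alternative ``shortcut'' to the centre.

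For the lower bound I would copy the form-and-loop construction of Proposition~\ref{prop:lower-bound-general} almost verbatim. On $L_{5,5} \times_Z L_3$ I define the non-left-invariant one-form $\beta_0 = u_1 \xi_3 + \tfrac{1}{2} u_1^2 \xi_2$ and $\beta_1 = \operatorname{sign}(u_1)\beta_0$, where $u_1$ is a continuous primitive of the closed left-invariant form $\xi_1$. Using only $d\xi_3 = -\xi_1 \wedge \xi_2$ and $d\xi_2 = 0$, a direct computation yields $d\beta_0 = \xi_1 \wedge \xi_3$; the additional cocycle term $\xi_2 \wedge \xi_5$ appearing in $d\xi_4$ plays no role because $\beta_0$ involves neither $\xi_4$ nor $\xi_5$. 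The loop $\Lambda$ constructed in \S\ref{sec:lowerbounds} from the $X_1,X_2$-commutator sub-loop $\lambda_4$ lies entirely inside the $L_4 \times_Z L_3$ subgroup, so Lemma~\ref{lem:evaluate-integral-beta1} applied there gives $\int_\Lambda \beta_1 \asymp \ell^3$. Combined with Lemma~\ref{combinatorial-stokes} and a uniform bound on $\int_{g_\ast \overline{r}} \beta_1$ over the relators (obtained from the Lipschitz estimate on $u_1$ as in Lemma~\ref{lem:u1-is-lipschitz}), this yields $\delta_\Gamma(n) \succcurlyeq n^3$.

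For the upper bound I would reduce any null-homotopic word $w$ of length $n$ to one of length $O(n)$ in the subgroup $\Gamma_{4,3} \times_Z \Gamma_{3,2}$, at cubic total area cost. Since $[x_5,x_j]=1$ for $j \in \{1,3,4\}$ and $[x_5,x_2]=x_4^{-1}$, I would first collect all occurrences of $x_5^{\pm 1}$ to a fixed position at area $O(n^2)$, producing an accumulated central term $x_4^{k}$ with $|k| \lesssim n^2$. Since $w$ is null-homotopic its exponent sum in $x_5$ vanishes, so the collected $x_5$'s cancel. Using the Heisenberg relation $[y_1,y_2]=x_4$ one then rewrites $x_4^k$ as $[y_1^\alpha, y_2^\alpha]$ with $\alpha \asymp \sqrt{|k|} \lesssim n$ at area $O(|k|) = O(n^2)$. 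The resulting null-homotopic word has length $O(n)$ and lies in $\Gamma_{4,3} \times_Z \Gamma_{3,2}$, whose Dehn function is $O(n^3)$ by \cite[Theorem A]{lipt} (or Theorem~\ref{th:model-filiform}).

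The main obstacle lies in the upper bound: I must prevent the accumulated central power $x_4^k$, whose magnitude is quadratic in $n$, from inflating the intermediate word length past $O(n)$. The central product relation $[y_1,y_2]=x_4$ is essential here, since it allows $x_4^k$ to be recorded as a depth-$2$ commutator of short support $O(\sqrt{k})$ rather than through the depth-$3$ commutator $[x_1, x_3^{k}]$ or via iterated powers of $[x_2,x_5]$, either of which would inflate the total area above $n^3$.
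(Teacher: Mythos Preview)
Your approach is essentially correct but genuinely different from the paper's on both sides.

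For the lower bound, the paper does not use the non-invariant form $\beta_1$; instead it exhibits an explicit central extension $\widetilde\Gamma_1$ of $\Gamma$ (corresponding to the cocycle $\xi_2\wedge\xi_3$) whose central kernel is cubically distorted, and then invokes \cite[Proposition~7.2]{lipt}. Your adaptation of Proposition~\ref{prop:lower-bound-general} is valid: the form $\beta_0=u_1\xi_3+\tfrac12u_1^2\xi_2$ really has $d\beta_0=\xi_1\wedge\xi_3$ on all of $L_{5,5}\times_ZL_3$ (the extra bracket $[X_2,X_5]=X_4$ does not interfere), the loop $\Lambda$ sits in the $L_4$ subgroup, and the bound on $\int_{g_\ast\overline r}\beta_1$ goes through verbatim.

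For the upper bound, the paper does not eliminate $x_5$. It first applies Lemma~\ref{lem:back-first-factor} to reduce to null-homotopic words $w(x_1,x_2,x_5)$, then runs an algorithm that pushes $x_1$'s leftward (producing $x_3,x_4$'s), packages the resulting central pieces into $\Omega$-words $[x_1^m,x_3^n]$, and shuttles these to the right using the $5$-Heisenberg subgroup $\langle x_1,x_3,y_1,y_3\rangle$; the endgame is \cite[Lemma~4.5]{lipt}. Your route---kill all $x_5$'s to land in the subgroup $\Gamma_{4,3}\times_Z\Gamma_{3,2}$ and quote its cubic Dehn function---is more modular and equally valid.

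One correction: collecting the $x_5$'s is not $O(n^2)$. Each time an $x_5$ crosses an $x_2$ it deposits a central $x_4^{\pm1}$, and the later $x_5$'s must then commute past these accumulated $x_4$'s as well; with up to $n$ occurrences of $x_5$ and of $x_2$, the honest cost is $O(n^3)$ (the paper makes exactly this count in the analogous step for $\Gamma_{5,5}\times_Z\Gamma_{5,5}$). Similarly, gathering the $\lesssim n^2$ scattered $x_4$'s into a single power $x_4^k$ costs $O(n^3)$. This does not damage your argument, since the target is a cubic bound; only your intermediate area estimate needs adjusting.
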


\begin{proof}
    Let us start with the proof that the Dehn function of $\Gamma$ is at most cubic.
The presentation of $\Gamma$ that we shall work with is the following:
the generating set is 
\[ \{x_1,x_2,x_3,x_4,x_5,y_1,y_2\} \] 
and the (non-trivial) relations are
\[ [x_1,x_2] = x_3, \ [x_1,x_3] = x_4, \ [x_2,x_5] = x_4, \ [y_1,y_3] = x_4; \]
it is also convenient to add two letters corresponding to artificial generators $y_2$ and $y_5$ and the relations $y_2=y_1$ and $y_5=y_3$, because we use the fact that the set $\{x_2,x_5, y_2, y_5\}$ generates an integral 5-Heisenberg group which is the central product of $\left\langle x_2,x_5 \right\rangle$ and of $\left\langle y_2,y_5 \right\rangle$. This can be summarized by the following diagram.

\begin{center}
    \begin{tikzpicture}[line cap=round,line join=round,>=triangle 45,x=1cm,y=0.6cm]
\clip(-5,-2.8) rectangle (7,3.2);
\draw (-3.65,2.14) node[anchor=north west] {$x_1 $};
\draw (-1.64,2.14) node[anchor=north west] {$ x_2 $};
\draw (0.08,2.14) node[anchor=north west] {$ x_5 $};
\draw (2.07,2.14) node[anchor=north west] {$ y_1=y_2 $};
\draw (4.06,2.14) node[anchor=north west] {$y_3=y_5$};
\draw (-2,0.21) node[anchor=north west] {$ x_3 $};
\draw (0.08,-1.85) node[anchor=north west] {$ x_4 $};
\draw (-3,1)-- (-1.72,0.11);
\draw (-1.5,1)-- (-1.72,0.11);
\draw (-1,1)-- (0.24,-1.98);
\draw (0.27,1.03)-- (0.24,-1.98);
\draw (-1.27,-0.53)-- (0,-2);
\draw (2.43,1)-- (0.56,-2);
\draw (4.43,0.99)-- (0.56,-2);
\draw [shift={(3,4)}] plot[domain=3.61:4.25,variable=\t]({1*6.71*cos(\t r)+0*6.71*sin(\t r)},{0*6.71*cos(\t r)+1*6.71*sin(\t r)});
\end{tikzpicture}
\end{center}

By Lemma~\ref{lem:back-first-factor}, to prove Proposition \ref{prop:dehn-L55H3} it is sufficient to show that null-homotopic words $w(x_1,x_2,x_5)$ of length at most $n$ have area at most $Cn^3$ in the given presentation.

We define $\Omega(m,n):=[x_1^{m},x_3^n]$ with $m,n \in \mathbf Z$. We describe an algorithm to reduce $w$ to a trivial word, which is a variant of the one given in \cite[\S4]{lipt}. It consists of first rewriting $w(x_1,x_2,x_5)$ as a product of words $\Omega(m_i,n_i)$ at a cubic cost. Then the product of the $\Omega(m_i,n_i)$ is reduced to a single word of the form $\Omega(m,n)$, which is trivial, since $w$ is null-homotopic. 

We give names to the steps of our algorithm and compute the total area of our transformations at the end. First note that if there is no occurrence of $x_1^{\pm 1}$ in $w$ we are done, because $\{x_2,x_5\}$ generates a Heisenberg subgroup, which has cubic Dehn function. We thus assume that there is at least one occurrence of $x_1^{\pm 1}$ in $w$.

\begin{description}
\item[(M\textsubscript{1})]
Start from $w(x_1,x_2,x_5)$, look for the leftmost occurence of $x_1^{\pm 1}$ that has other preceding letters and decompose $w$ as $w=x_1^{k_{1,0}}u_1(x_2,x_5)x_1^{\pm 1}v_1(x_1,x_2,x_5)$ for some $k_{1,0} \in \mathbf Z$. Move the $x_1^{\pm 1}$ to the left using the identities
\[ \begin{array}{ccc}
      x_2x_1=x_1x_3^{-1}x_2,   & \vspace{.2cm} & x_2^{-1}x_1=x_1x_3x_2^{-1},  \\
      x_2x_1^{-1}=x_1^{-1}x_3x_4x_2,   & \vspace{.2cm} & x_2^{-1}x_1^{-1}=x_1^{-1}x_3^{-1}x_4^{-1}x_2^{-1}.
    \end{array} \]
    (Note also that $x_1$ commutes with $x_5$.)
This produces occurrences of $x_3^{\pm 1}$ and $x_4^{\pm 1}$ directly right of $x_1^{\pm 1}$ every time it crosses an occurrence of $x_2^{\pm 1}$. We carry these occurrences to the left with us. At the end of this step we are left with a word of the form
\[ 
x_1^{k_{1,1}}x_3^{k_{3,1}}x_4^{m_{1,2}} u_1(x_2,x_5)v_1(x_1,x_2,x_5), \]
for some $k_{3,1}, m_{1,2} \in \mathbf Z$, where $|k_{3,1}|$ is bounded above by the total number of occurrences of $x_2^{\pm 1}$ in $w$ and $m_{1,2}$ is either zero or equal to $k_{3,1}$, depending on the sign of the exponent of the $x_1^{\pm 1}$ we moved. The total cost of this step is $\lesssim n^2$.
\item[(M\textsubscript{3})]
Move all the $x_3^{\pm 1}$'s produced in Step {\bf (M\textsubscript{1})} to the very left. At the end of this step, we are left with a word of the form
\[ x_3^{k_{3,1}} x_1^{k_{1,1}} \Omega(\widetilde{m}_1,\widetilde{n}_1) x_4^{m_{1,2}}u_1(x_2,x_5) v_1(x_1,x_2,x_5). \]
for some $\widetilde m_1, \widetilde n_1 \in \mathbf Z$. Note that $|\widetilde{m}_1|+|\widetilde{n}_1|$ is bounded by $n$, as $\widetilde{m}_1=k_{1,1}$ and $\widetilde{n}_1=k_{3,1}$ and that this transformation happens in the free group on the generating set, meaning that it does not have any cost.
\item[(M\textsubscript{$\Omega$})] Move the $x_4^{m_{1,2}}$ and the $\Omega(\widetilde{m}_1,\widetilde{n}_1)$ created in Step {\bf (M\textsubscript{3})} to the very right using the 5-Heisenberg group generated by the set $\left\{x_1,x_3,y_1,y_3\right\}$. After that the word has the form 
\[ x_3^{k_{3,1}} x_1^{k_{1,1}}  u_1(x_2,x_5) v_1(x_1,x_2,x_5) \Omega(\widetilde{m}_1,\widetilde{n}_1) x_4^{m_{1,2}}. \]
Then use the 5-Heisenberg group once more to obtain an identity of the form $\Omega(\widetilde{m}_1,\widetilde{n}_1)x_4^{m_{1,2}}=\Omega(m_1,n_1)$ with $|m_1| + |n_1|\lesssim n$.
The cost of this step is quadratic in $|m_1|+|n_1|\leqslant n$.
\end{description}

We inductively repeat the moves {\bf (M\textsubscript{1})}, {\bf (M\textsubscript{3})} and {\bf (M\textsubscript{$\Omega$})}, $i$ times\footnote{Strictly speaking we apply the inductive step to the word $u_i (x_2, x_5) v_i (x_1, x_2, x_5)$ in \eqref{eq:L55-H3-step-i}.} to obtain a word of the form
\begin{equation}\label{eq:L55-H3-step-i}
 x_3^{k_{3,i}} x_1^{k_{1,i}} u_i (x_2, x_5) v_i (x_1, x_2, x_5) \Omega(m_i, n_i) \cdots \Omega(m_1, n_1),
\end{equation}
and it is always true that $\vert u_i \vert + 1 + \vert v_i \vert \leqslant n$, since we did not create new occurrences of $x_1^{\pm 1}$, $x_2^{\pm 1}$ and $x_5^{\pm 1}$ in the process. Moreover, we reduce the total number of $x_1^{\pm 1}$'s by at least one in each step.

The algorithm ends when there is not a single occurrence of $x_1^{\pm 1}$ left in $v_i$. At this point, say $i = I$, we are left with a word of the form 
\[x_3^{k_1,I}x_1^{k_{1,I}} U(x_2,x_5) \Omega(m_I, n_I) \cdots \Omega(m_1, n_1).\]
Since $\Omega(m_I, n_I) \cdots \Omega(m_1, n_1)$ represents a central element in $G$, the same must be true for $x_3^{k_1,I} x_1^{k_2,I} U(x_2,x_5)$.
In particular, its $x_1$-exponent sum $k_{1,I}$ must be zero, as its image under the abelianization map is trivial.

The remaining word is then 
\begin{equation}
\label{eq:intermezzo-L55H3-1}
    x_3^{k_{3,I}} U(x_2,x_5) \Omega(m_I, n_I) \cdots \Omega(m_1, n_1).
\end{equation}

Using that the 5-Heisenberg group has quadratic Dehn function \cite{OlsSapCombDehn}, we can first rewrite \eqref{eq:intermezzo-L55H3-1} as 
\[x_3^{k_{3,I}} U(y_2,y_5) \Omega(m_I, n_I) \cdots \Omega(m_1, n_1),\] 
and then as
\begin{equation}
\label{eq:intermezzo-L55H3-2}
    x_3^{k_{3,I}} U(x_1,x_3) \Omega(m_I, n_I) \cdots \Omega(m_1, n_1)
\end{equation}
at quadratic cost.

We now observe that there is some $x_3^{k_{3,I+1}}$ with $|k_{3,I+1}|\leqslant n$ such that $x_3^{-k_{3,I+1}} U(x_1,x_3)$ is central. Thus, an identity of the form $x_3^{-k_{3,I+1}} U(x_1,x_3)=\Omega(m_{I+1},n_{I+1})$ holds in the corresponding 5-Heisenberg subgroup at quadratic cost. This yields a word of the form
\begin{equation*}
    x_3^{k_{3,I+1}} \Omega(m_{I+1},n_{I+1}) \cdots \Omega(m_1,n_1).
\end{equation*}
Since the product of the $\Omega(m_i,n_i)$ is central, $k_{3,I+1} = 0$.

Let us now bound the costs.
Apart from the steps between \eqref{eq:intermezzo-L55H3-1} and \eqref{eq:intermezzo-L55H3-2}, which have quadratic cost, the procedure described above has $I$ steps, each of which has quadratic cost (since this is true for each of steps {\bf (M\textsubscript{1})}, {\bf (M\textsubscript{3})} and {\bf (M\textsubscript{$\Omega$})} individually). Since $I$ is the number of occurrences of $x_1^{\pm 1}$ in the word $w$ at the beginning 
it follows that $I \leqslant \vert w \vert \leqslant n$. In particular, at total cost $\lesssim I\cdot n^2\lesssim n^3$ we transformed the initial word $w$ into a null-homotopic word of the form
\[\Omega(m_{I+1},n_{I+1}) \cdots \Omega(m_1,n_1) \]
with $I+1\leqslant {\rm const}\cdot n$ and $|m_i|+|n_j|\leqslant {\rm const}\cdot n$. By \cite[Lemma 4.5]{lipt} the area of such a word is cubic in $n$. This completes the proof of Proposition \ref{prop:dehn-L55H3}.

We now turn to the lower bound.
Letting $\xi_1, \xi_2, \xi_3, \xi_4, \eta_1, \eta_3$ be the basis dual to $X_1, X_2, X_3, X_4, Y_1, Y_3$ in the Lie algebra of $L_{5,5} \times_Z L_{4,3}$ we find that $\xi_2 \wedge \xi_3$ is a cocycle and not a coboundary, and so defines a central extension of the Lie algebra of $L_{5,5} \times_Z L_{3,2}$, with generators $\widetilde X_1, \ldots, \widetilde X_4, \widetilde Y_1, \widetilde Y_3, \widetilde{Z}$ and Lie brackets
\[ [\widetilde X_1,\widetilde X_2] = \widetilde X_3, \  [\widetilde X_1, \widetilde X_3] = \widetilde X_4, \ [\widetilde X_2, \widetilde X_5] = \widetilde X_4, \ [\widetilde Y_1, \widetilde Y_3] = \widetilde X_4, \ [\widetilde X_2, \widetilde X_3] = \widetilde{Z}. \]
Using the Baker--Campbell--Hausdorff formula exactly in the same way as in Proposition~\ref{prop:pres-lattice-L55}, it then follows that the group $\Gamma_{5,5} \times_Z \Gamma_{3,2}$ has a central extension $\widetilde \Gamma_1$ whose presentation is given by the set of generators $\widetilde x_1, \widetilde x_2, \widetilde x_3, \widetilde x_4, \widetilde y_1, \widetilde y_3, \widetilde z$ and the relations
    \[ [\widetilde x_1,\widetilde x_2] = \widetilde x_3, \ [\widetilde x_1, \widetilde x_3] = \widetilde x_4, \ [\widetilde x_2, \widetilde x_5] = \widetilde x_4, \  [\widetilde y_1, \widetilde y_3] = \widetilde x_4, \ [\widetilde x_2, \widetilde x_3] = \widetilde{z}. \]
    More precisely, $\widetilde \Gamma_1$ fits in the short exact sequence 
    \[ 1 \to \langle \widetilde z \rangle \to \widetilde \Gamma_1 \to \Gamma \to 1 \]
    where $\widetilde x_i \mapsto x_i$ and $\widetilde y_j \mapsto y_j$ for all $i,j$.
    The subgroup generated by $\widetilde{z}$ lies in the third term of the central series of $\widetilde \Gamma_1$, and thus, it is cubically distorted. By \cite[Proposition 7.2]{lipt} (using criterion (i) there), the Dehn function of $\Gamma$ is at least cubic.
\end{proof}

\subsubsection*{The group $\Gamma = \Gamma_{5,5} \times_Z \Gamma_{4,3}$.}\label{subsec:L55L43}
The generating set for the group $\Gamma_{5,5} \times_Z \Gamma_{4,3}$ is \[ \{x_1,x_2,x_3,x_4,x_5,y_1,y_2,y_3\} \] 
and the relations that we use are
\[ [x_1,x_2] = x_3, \ [x_1,x_3] = x_4, \ [x_2,x_5] = x_4, \  [y_1,y_2] = y_3, \ [y_1,y_3] = x_4. \]
These are summarized in the diagram below.
\begin{center}
    \begin{tikzpicture}[line cap=round,line join=round,>=triangle 45,x=1cm,y=0.6cm]
\clip(-5,-3.4) rectangle (7,2.2);
\draw (-3.65,2.14) node[anchor=north west] {$x_1 $};
\draw (-1.64,2.14) node[anchor=north west] {$ x_2 $};
\draw (0.08,2.14) node[anchor=north west] {$ x_5 $};
\draw (2.07,2.14) node[anchor=north west] {$ y_1 $};
\draw (4.06,2.14) node[anchor=north west] {$y_2$};
\draw (-2,0.21) node[anchor=north west] {$ x_3 $};
\draw (0.08,-1.85) node[anchor=north west] {$ x_4 $};
\draw (-3,1)-- (-1.72,0.11);
\draw (-1.5,1)-- (-1.72,0.11);
\draw (-1,1)-- (0.24,-1.98);
\draw (0.27,1.03)-- (0.24,-1.98);
\draw (-1.27,-0.53)-- (0,-2);
\draw (2.43,1)-- (0.56,-2);
\draw (3.43,-0.4)-- (0.56,-2);
\draw (3.43,-0.4) node[anchor=south] {$ y_3 $};
\draw (2.5,1) -- (3.43,0.4) -- (4.2,1);
\draw [shift={(3,4)}] plot[domain=3.61:4.25,variable=\t]({1*6.71*cos(\t r)+0*6.71*sin(\t r)},{0*6.71*cos(\t r)+1*6.71*sin(\t r)});
\end{tikzpicture}
\end{center}

\begin{proposition}
\label{prop:dehn-L55L43}
The Dehn function of $\Gamma = \Gamma_{5,5} \times_Z \Gamma_{4,3}$ is cubic.
\end{proposition}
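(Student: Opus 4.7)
The plan is to mimic the proof of Proposition~\ref{prop:dehn-L55H3} concerning $\Gamma_{5,5} \times_Z \Gamma_{3,2}$, adapting its two-step strategy---an explicit rewriting algorithm for the upper bound, and a cubically distorted central extension for the lower bound---to the presence of the extra generator $y_2$ coming from the $\Gamma_{4,3}$ factor.

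For the upper bound I would first apply Lemma~\ref{lem:back-first-factor} with $K = \Gamma_{5,5}$, $L = \Gamma_{4,3}$ and $K_0 = \langle x_1, x_2, x_3, x_4 \rangle \cong \Gamma_{4,3}$, so that the corresponding $G_0 = \Gamma_{4,3} \times_Z \Gamma_{4,3}$ has cubic Dehn function by~\cite{lipt}. This reduces the task to bounding, by $C n^3$, the area in $G$ of any null-homotopic word of length $n$ in the generators of $\Gamma_{5,5}$; after replacing each $x_3$ by $[x_1,x_2]$ and each $x_4$ by $[x_1,[x_1,x_2]]$ at quadratic cost, we may assume such a word is of the form $w(x_1, x_2, x_5)$ of length $O(n)$. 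The algorithm of Proposition~\ref{prop:dehn-L55H3} then applies, since its only use of the second factor is to provide two integral $5$-Heisenberg subgroups: in our setting, $\langle y_1, y_3 \rangle$ is already a $3$-Heisenberg subgroup of $\Gamma_{4,3}$ (as $[y_1,y_3]=x_4$), so both $\{x_1, x_3, y_1, y_3\}$ and $\{x_2, x_5, y_1, y_3\}$ directly generate integral $5$-Heisenberg subgroups of $G$ with common centre $\langle x_4 \rangle$, dispensing with the artificial identifications $y_2 = y_1$ and $y_5 = y_3$ used in the proof of Proposition~\ref{prop:dehn-L55H3}. The three moves $(\mathbf{M}_1)$, $(\mathbf{M}_3)$, $(\mathbf{M}_\Omega)$ and their iteration then produce, at cubic cost, a null-homotopic product $\Omega(m_{I+1}, n_{I+1}) \cdots \Omega(m_1, n_1)$ with $I + 1 \lesssim n$ and $|m_i|, |n_i| \lesssim n$, whose area is cubic by~\cite[Lemma~4.5]{lipt}.

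For the lower bound I would repeat the cocycle construction used at the end of the proof of Proposition~\ref{prop:dehn-L55H3}. In the Lie algebra of $L_{5,5} \times_Z L_{4,3}$ with dual basis $\xi_1, \ldots, \xi_5, \eta_1, \eta_2, \eta_3$, the form $\omega = \xi_2 \wedge \xi_3$ is a closed non-exact $2$-form (a direct check gives $d\omega = -\xi_2 \wedge d\xi_3 = \xi_2 \wedge \xi_1 \wedge \xi_2 = 0$, and inspection of the differentials of $\xi_3, \xi_4, \eta_3$ rules out exactness), so the Baker--Campbell--Hausdorff lift of Proposition~\ref{prop:pres-lattice-L55} yields a non-trivial discrete central extension $1 \to \langle \widetilde z \rangle \to \widetilde \Gamma \to \Gamma \to 1$, with presentation obtained from that of $\Gamma$ by adding the single relation $[\widetilde x_2, \widetilde x_3] = \widetilde z$. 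The three-fold commutator identity $[\widetilde x_2, \widetilde x_1, \widetilde x_2] = [\widetilde x_2, \widetilde x_3] = \widetilde z$ places $\widetilde z$ in the third term of the lower central series of $\widetilde \Gamma$, hence it is cubically distorted, and criterion (i) of~\cite[Proposition~7.2]{lipt} gives $\delta_\Gamma(n) \succcurlyeq n^3$.

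The main piece of care will be confirming that the two $5$-Heisenberg subgroups $\{x_1, x_3, y_1, y_3\}$ and $\{x_2, x_5, y_1, y_3\}$ of $G$ drive moves $(\mathbf{M}_1)$--$(\mathbf{M}_\Omega)$ exactly as their counterparts did in Proposition~\ref{prop:dehn-L55H3}, and in particular that their sharing of $\langle y_1, y_3 \rangle$ introduces no unexpected cross terms; since the two subgroups meet precisely in the centre $\langle x_4 \rangle$ and every $y_j$ commutes with all $x_i$ except through the single relation $[y_1, y_3] = x_4$, this reduces to a straightforward transcription of the earlier argument.
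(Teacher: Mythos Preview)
Your proposal is correct and follows essentially the same approach as the paper. For the upper bound, the paper streamlines your argument slightly: rather than re-running the $(\mathbf{M}_1)$--$(\mathbf{M}_\Omega)$ algorithm inside $\Gamma$, it simply observes that the subgroup generated by $\{x_1,x_2,x_5,y_1,y_3\}$ is isomorphic to $\Gamma_{5,5}\times_Z\Gamma_{3,2}$ with the presentation of Proposition~\ref{prop:dehn-L55H3} as a subpresentation, and cites that proposition directly; your version unpacks this, but the content is the same, and your worry about cross terms from sharing $\langle y_1,y_3\rangle$ dissolves once you recognise that subgroup. The lower bound via the cocycle $\xi_2\wedge\xi_3$ and the resulting cubically distorted extension is exactly the paper's construction.
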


\begin{proof}
Let us start with the upper bound.
The subgroup $K_0$ of $\Gamma_{5,5}$ generated by $x_1$ and $x_2$ is isomorphic to $L_{4,3}$ and contains the centre of $L_{5,5}$. Since $L_{4,3} \times_Z L_{4,3}$ has cubic Dehn function \cite{lipt}, Lemma~\ref{lem:back-first-factor} implies that it suffices to prove that any null-homotopic word $w(x_1,x_2,x_5)$ in $\Gamma$ of length at most $n$ has area at most $Cn^3$ in the given presentation.
This is an immediate consequence of Proposition \ref{prop:dehn-L55H3}, since the subgroup of $G$ generated by $\left\{x_1,x_2,x_5,y_1,y_3\right\}$ is isomorphic to $L_{5,5} \times_Z L_{3,2}$ and its presentation, given in \eqref{eq:pres-L55L32}, is a subpresentation of our presentation for $G$.

Now, let us prove that the Dehn function of $\Gamma$ is at least cubic.
The central extension $\widetilde \Gamma_1$ of the group $\Gamma_{5,5} \times_Z \Gamma_{3,2}$ considered in the proof of Proposition~\ref{prop:dehn-L55H3} has itself an extension given by adding the generator $\widetilde y_2$ and setting the additional relation $[\widetilde y_1, \widetilde y_2] = \widetilde y_3$. This is a central extension of $\Gamma_{5,5} \times_Z \Gamma_{4,3}$. The distortion of the subgroup generated by $\widetilde{z}$ in this central extension is again cubic. We deduce in the same way using \cite[Proposition 7.2]{lipt} that the Dehn function of $\Gamma_{5,5} \times_Z \Gamma_{4,3}$ is at least cubic.
\end{proof}

\subsubsection*{The group $\Gamma = \Gamma_{5,5} \times_Z \Gamma_{5,5}$}\label{L55L55}
\begin{proposition}
\label{prop:dehn-L55L55}
The Dehn function of $\Gamma = \Gamma_{5,5} \times_Z \Gamma_{5,5}$ is cubic.
\end{proposition}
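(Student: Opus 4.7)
Following the strategy of Propositions~\ref{prop:dehn-L55H3} and~\ref{prop:dehn-L55L43}, I would establish the lower bound via a central extension. Let $X_1, \ldots, X_5, Y_1, Y_2, Y_3, Y_5$ (with $Y_4 = X_4$) be the natural basis of $\operatorname{Lie}(L_{5,5} \times_Z L_{5,5})$ and $\xi_i, \eta_j$ the dual one-forms. A direct computation shows that $\xi_2 \wedge \xi_3$ is closed (since $d(\xi_2 \wedge \xi_3) = -\xi_2 \wedge d\xi_3 = \xi_2 \wedge \xi_1 \wedge \xi_2 = 0$) but not exact: the only nontrivial differentials in the Chevalley--Eilenberg complex are $d\xi_3 = -\xi_1 \wedge \xi_2$, $d\xi_4 = -\xi_1 \wedge \xi_3 - \xi_2 \wedge \xi_5 - \eta_1 \wedge \eta_3 - \eta_2 \wedge \eta_5$, and $d\eta_3 = -\eta_1 \wedge \eta_2$, none of which contain a $\xi_2 \wedge \xi_3$ summand. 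The associated central extension $\widetilde \Gamma$ of $\Gamma$ adjoins a central generator $\tilde z$ with $[x_2, x_3] = \tilde z$; since $\tilde z = [x_2, [x_1, x_2]]$ lies in $C^3(\widetilde \Gamma)$, it is cubically distorted in $\widetilde \Gamma$, and \cite[Proposition~7.2]{lipt} yields $\delta_\Gamma(n) \succcurlyeq n^3$.

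For the upper bound, Lemma~\ref{lem:back-first-factor} cannot directly reduce this case, since $K = L = L_{5,5}$ forces the only admissible $K_0$ to be $K$ itself (yielding the tautological $G_0 = G$). I therefore proceed directly. Given a null-homotopic word $u$ in $\Gamma$ of length $\leqslant n$, cross-commutator relations rewrite $u = u_1(x_i) u_2(y_j)$ at cost $O(n^2)$, with $|u_1|, |u_2| \leqslant n$ representing inverse central elements $x_4^{\pm a}$ for some $a \in \mathbf{Z}$ with $|a| \lesssim n^3$.

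The core step is to introduce the efficient central representatives $\Omega(a) := [x_1^p, [x_1^q, x_2^r]]$ and $\tilde\Omega(a) := [y_1^{p'}, [y_1^{q'}, y_2^{r'}]]$, with integer parameters chosen so that $pqr = p'q'r' = a$ and all of them have absolute value $\lesssim |a|^{1/3} \lesssim n$. Using the compact presentation of $L_{4,3} \cong \langle x_1, x_2 \rangle$ from Proposition~\ref{prop:compactpresentations} together with the centrality of $x_4$ in $L_{5,5}$, one verifies the exact identity $\Omega(a) =_{L_{5,5}} x_4^a$ (the inner commutator $[x_1^q, x_2^r] = x_3^{qr} x_4^{-\binom{q}{2}r}$ interacts with $x_1^p$ only through the $x_3^{qr}$ part, since the $x_4$ factor is central), and similarly $\tilde\Omega(a) =_{L_{5,5}} y_4^a$. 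Then $u_1 \Omega(a)^{-1}$ is null-homotopic of length $O(n)$ in the subgroup $\langle x_1, \ldots, x_5, y_1, y_3 \rangle \leqslant \Gamma$, which is isomorphic to $L_{5,5} \times_Z L_{3,2}$; by Proposition~\ref{prop:dehn-L55H3}, its area is $\lesssim n^3$. A symmetric argument in the subgroup $\langle y_1, \ldots, y_5, x_1, x_3 \rangle \cong L_{5,5} \times_Z L_{3,2}$ rewrites $u_2$ as $\tilde\Omega(a)^{-1}$ at cost $O(n^3)$. The remaining word $\Omega(a) \tilde\Omega(a)^{-1}$ is null-homotopic of length $O(n)$ in $\langle x_1, x_2, y_1, y_2\rangle \cong L_{4,3} \times_Z L_{4,3}$ (both $\Omega(a)$ and $\tilde\Omega(a)$ representing the common central element $x_4^a = y_4^a$ of $\Gamma$), so the cubic Dehn function of $L_{4,3} \times_Z L_{4,3}$ from \cite{lipt} fills it with area $\lesssim n^3$. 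Totalling these contributions gives the upper bound $O(n^3)$, and the main obstacle to overcome is verifying cleanly the commutator identity $\Omega(a) = x_4^a$ in $L_{5,5}$.
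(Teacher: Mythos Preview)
Your lower bound is the same as the paper's: both construct the central extension with $[\tilde x_2,\tilde x_3]=\tilde z$ and invoke cubic distortion via \cite[Proposition~7.2]{lipt}.

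Your upper bound takes a genuinely different route from the paper. The paper does not attempt to name the full central element represented by $u_1$; instead it pushes all occurrences of $y_5^{\pm 1}$ in $w_2$ to the right, which produces at most $n^2$ central letters $x_4^{\pm 1}$ along the way (one per crossing with a $y_2^{\pm 1}$). The resulting power $x_4^m$ therefore satisfies the sharper bound $|m|\leqslant n^2$, so it can be repackaged as a single Heisenberg commutator $[x_1^{\sim\sqrt{|m|}},x_3^{\sim\sqrt{|m|}}]$ of length $O(n)$, and what remains is a null-homotopic word in $\Gamma_{5,5}\times_Z\Gamma_{4,3}$, handled by Proposition~\ref{prop:dehn-L55L43}. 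Your approach is more symmetric---two appeals to $\Gamma_{5,5}\times_Z\Gamma_{3,2}$ and one to $\Gamma_{4,3}\times_Z\Gamma_{4,3}$---and is equally valid; it has the mild advantage of not relying on the intermediate Proposition~\ref{prop:dehn-L55L43}.

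There is, however, a real gap in your construction of $\Omega(a)$: an arbitrary integer $a$ with $|a|\lesssim n^3$ need not admit a factorisation $a=pqr$ with all three factors $\lesssim|a|^{1/3}$ (take $a$ prime near $n^3$). The standard repair is to write $a$ in base $n$, say $a=c_2 n^2+c_1 n+c_0$ with $|c_i|<n$, and replace $\Omega(a)$ by a product of $O(1)$ triple commutators such as $[x_1^n,[x_1^n,x_2^{c_2}]]\cdot[x_1^n,[x_1,x_2^{c_1}]]\cdot[x_1,[x_1,x_2^{c_0}]]$, each of length $O(n)$ and representing the correct power of $x_4$ by the identity you verified. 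With this adjustment your argument goes through. Note that the paper's approach sidesteps this issue precisely because its exponent satisfies $|m|\leqslant n^2$ rather than $n^3$, so only a two-factor decomposition is needed.
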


\begin{remark}
A weaker estimate can be derived more directly. The asymptotic cone of $\Gamma_{5,5} \times_Z \Gamma_{5,5}$ can be checked to be (quasiisometric to) the real Malcev completion of $\Gamma_{4,3} \times_Z \Gamma_{4,3} \times \mathbf Z^2$ by \cite{PanCBN}.
Also, the Dehn function of $\Gamma_{4,3} \times_Z \Gamma_{4,3} \times \mathbf Z^2$ is $n^3$ by \cite{lipt}.
It then follows from \cite{DrutuRemplissage} that $\delta_\Gamma(n) \preccurlyeq n^{3+\epsilon}$ for every $\epsilon > 0$. 
\end{remark}

\begin{center}
    \begin{tikzpicture}[line cap=round,line join=round,>=triangle 45,x=1cm,y=0.6cm]
\clip(-5,-3.4) rectangle (7,2.2);
\draw (-3.65,2.14) node[anchor=north west] {$x_1 $};
\draw (-1.64,2.14) node[anchor=north west] {$ x_2 $};
\draw (0.08,2.14) node[anchor=north west] {$ x_5 $};
\draw (2.07,2.14) node[anchor=north west] {$ y_1 $};
\draw (4.06,2.14) node[anchor=north west] {$y_2$};
\draw (-2,0.21) node[anchor=north west] {$ x_3 $};
\draw (0.08,-1.85) node[anchor=north west] {$ x_4 $};
\draw (-3,1)-- (-1.72,0.11);
\draw (-1.5,1)-- (-1.72,0.11);
\draw (-1,1)-- (0.24,-1.98);
\draw (0.27,1.03)-- (0.24,-1.98);
\draw (-1.27,-0.53)-- (0,-2);
\draw (2.43,1)-- (0.56,-2);
\draw (2.8,-0.4)-- (0.56,-2);
\draw (2.8,-0.4) node[anchor=south] {$ y_3 $};
\draw (2.5,1) -- (2.8,0.4) -- (4.2,1);
\draw [shift={(3,4)}] plot[domain=3.61:4.25,variable=\t]({1*6.71*cos(\t r)+0*6.71*sin(\t r)},{0*6.71*cos(\t r)+1*6.71*sin(\t r)});
\draw [shift={(-4.74,8.55)}] plot[domain=5.21:5.6,variable=\t]({1*12.01*cos(\t r)},{1*12.01*sin(\t r)});
\draw [shift={(-1.12,8.23)}] plot[domain=4.92:5.52,variable=\t]({1*10.44*cos(\t r)},{1*10.44*sin(\t r)});
\draw (6.06,2.15) node[anchor=north west] {$y_5 $};
\end{tikzpicture}
\end{center}

\begin{proof}
Again, the diagram above is meant to recall the relations in $\Gamma$ and ease the reading of the proof.
    Let $w$ be a null-homotopic word of length at most $n$ over the generating set $\{ x_1, x_2,x_5,y_1,y_2,y_5 \}$.
    We can rewrite $w$ as $w_1(x_1,x_2,x_5)w_2(y_1,y_2,y_5)$ at a quadratic cost.
    Let's now consider the word $w_2$, and move all instances of $y_{5}^{\pm 1}$ to the right, starting with the leftmost.
    Each time a $y_{5}^{\pm 1}$ passes through a $y_2^{\pm 1}$ from left to right, a $x_4^{\mp 1}$ is produced.
    Since there are, in total, at most $n$ occurrences of $y_5^{\pm 1}$ and of $y_2^{\pm 1}$, at most $n^2$ occurrences of $x_4^{\pm 1}$ are produced in this process, and the cost of all $y_5^{\pm 1}$ crossing them from left to right is at most $n^3$.
    At the end of this process the word has form
    \begin{equation*}
        w_1(x_1,x_2,x_5)u_2(y_1,y_2,x_4)y_5^k,
    \end{equation*}
    for some $k \in \mathbf Z$, where $\vert k \vert \leqslant n$.
    Now, since $x_4$ is central in $\Gamma$, moving all instances of $x_4^{\pm 1}$ that are present in the subword $u_2$ to the right of $u_2$ costs, again, at most $n^3$. The resulting word is
    \begin{equation*}
        w_1(x_1,x_2,x_5)v_2(y_1,y_2)x_4^my_5^k,
    \end{equation*}
    for some $m \in \mathbf Z$, where $\vert m \vert \leqslant n^2$ and, again, $\vert k \vert \leqslant n$.
    Now consider the subgroup $H$ of $\Gamma$ generated by $x_1$ and $x_3$. It is isomorphic to a 3-Heisenberg group, therefore we can rewrite $x_4^m$ as $[x_1^{\sqrt{\pm m}}, x_3^{\pm \sqrt{\pm m}}]$ (where the $\pm$ should be set according to the sign of $m$) at cost $\leqslant C n^3$.   
    The resulting word 
    \begin{equation*}
        w' = w_1(x_1,x_2,x_5)v_2(y_1,y_2)[x_1^{\sqrt{\pm m}}, x_3^{\pm \sqrt{\pm m}}]y_5^k
    \end{equation*}
    now has length $\leqslant c \cdot n$. Since it is null-homotopic, we have that $k=0$.
    Therefore, the word
     \begin{equation*}
        w' = w_1(x_1,x_2,x_5)v_2(y_1,y_2)[x_1^{\sqrt{\pm m}}, x_3^{\pm \sqrt{\pm m}}]
    \end{equation*}
    is a null-homotopic word in $\Gamma_{5,5} \times_Z \Gamma_{4,3}$, and by Proposition~\ref{prop:dehn-L55L43} its area is at most $Cn^3$. This finishes the proof.
    
Let us now turn to the lower bound.
    Arguing as in the proof of Proposition~\ref{prop:dehn-L55H3}, we can construct a  central extension $\widetilde \Gamma_3$ of $\Gamma_{5,5} \times_Z \Gamma_{5,5}$ whose presentation is as follows: the generators are 
    \[ \widetilde x_1, \ldots, \widetilde x_5, \, \widetilde y_1, \,  \widetilde y_2, \, \widetilde y_3, \, \widetilde y_5, \, \widetilde{z} \]
    and the relations are
    \[ [\widetilde x_1,\widetilde x_2] = \widetilde x_3, \ [\widetilde x_1, \widetilde x_3] = \widetilde x_4, \ [\widetilde x_2, \widetilde x_5] = \widetilde x_4, \ [\widetilde y_1, \widetilde y_3] = \widetilde x_4, \ [\widetilde x_2, \widetilde x_3] = \widetilde{z}, \]
    all other commutators between generators being trivial.
    The subgroup generated by $\widetilde{z}$ in this central extension is cubically distorted, so we can conclude again using criterion (i) in \cite[Proposition 7.2]{lipt}.
\end{proof}

\subsubsection{Remaining case.}

\label{subsubsec:high-heisenberg-factor}

We exclude from the enumeration the higher Heisenberg groups, whose Dehn function was determined in \cite{Allcock}. 

 \begin{lemma}
 \label{lem:wlog-avoid-high-heisenberg}
     Let $K$ be a non-abelian simply connected nilpotent Lie group with a one-dimensional centre and let $H$ be a Heisenberg group of any dimension. Then
     \begin{equation*}
         \delta_{K \times_Z H}(n) \preccurlyeq \delta_{K \times_Z L_{3}}(n).
     \end{equation*}
 \end{lemma}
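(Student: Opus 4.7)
The plan is to reduce a null-homotopic word in $K \times_Z H$ to a null-homotopic word in $K \times_Z L_{3}$ at quadratic overhead, exploiting that any higher Heisenberg group has quadratic Dehn function (Theorem \ref{thm:Allcock-OS}), while $K \times_Z L_{3}$, being a non-abelian nilpotent group, has Dehn function at least quadratic.

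I first dispose of the trivial case $H = L_{3}$. For $H = H_{2m+1}$ with $m \geqslant 2$, I fix the standard compact presentation of $H$ on generators $a_1, b_1, \ldots, a_m, b_m, z$ with nontrivial relators $[a_i, b_i] = z$, so that the subgroup $\langle a_1, b_1\rangle$ is isomorphic to $L_{3}$ and shares its centre with $H$. Applying Lemma \ref{lem:central-prod-is-cglc} with a fixed compact presentation of $K$, I build an adapted compact presentation of $K \times_Z H$ that contains, as a subpresentation, both the chosen presentation of $H$ and the adapted presentation of $K \times_Z L_{3}$ obtained by restricting $H$ to $\langle a_1, b_1\rangle$.

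Given a null-homotopic word $w$ of length at most $n$ over the generators of $K \times_Z H$, I first use the commutator relations $[s,t]=1$ ($s$ a generator of $K$, $t$ a generator of $H$) at cost $\lesssim n^2$ to obtain $w =_G w_K \cdot w_H$ with $|w_K|, |w_H| \leqslant n$. Since $w$ is null-homotopic and $K \cap H = Z$, both $w_K$ and $w_H$ represent central elements; write $w_H =_G z^{-k}$ with $|k| \lesssim n^2$, using that the centre of $H$ is quadratically distorted. The key observation is that $z^{-k} = [a_1^{s_1}, b_1^{s_2}]$ for integers $s_1, s_2$ with $s_1 s_2 = -k$ and $|s_1|, |s_2| \lesssim n$, so $u := [a_1^{s_1}, b_1^{s_2}]$ is a word of length $\lesssim n$ inside the $L_{3}$-subgroup. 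The null-homotopic word $w_H \cdot u^{-1}$ has length $\lesssim n$ in $H$, hence its area in $H$ is $\lesssim n^2$ by Theorem \ref{thm:Allcock-OS}; the choice of presentation transfers this estimate to $K\times_Z H$.

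After this rewriting, we are left with the null-homotopic word $w_K \cdot u$ of length $\lesssim n$, lying entirely in the subpresentation for $K \times_Z L_{3}$; its area there is at most $\delta_{K \times_Z L_{3}}(O(n))$, which again bounds its area in $K \times_Z H$. Since $K$ is non-abelian, $K \times_Z L_{3}$ is a non-abelian simply connected nilpotent Lie group and so $\delta_{K \times_Z L_{3}}(n) \succcurlyeq n^2$; the quadratic overhead is therefore absorbed, yielding the stated inequality. The only real technical obstacle is the bookkeeping of compatible adapted presentations for $K\times_Z H$, $K\times_Z L_{3}$ and $H$; once that is set up via a single application of Lemma \ref{lem:central-prod-is-cglc} and a judicious choice of generators for $H$, every remaining step is a direct use of the quadratic isoperimetric inequality in $H$ combined with undistorted rewriting inside $L_{3} \subset H$.
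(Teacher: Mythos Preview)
Your argument is correct and follows essentially the same strategy as the paper's proof: separate a null-homotopic word into its $K$-part and $H$-part at quadratic cost, use the quadratic Dehn function of the higher Heisenberg group to rewrite the $H$-part as a short commutator lying in a fixed $L_3$-subgroup, and finish inside $K\times_Z L_3$. The only structural difference is that the paper proceeds inductively, peeling off one $L_3$-factor at a time by proving $\delta_{K\times_Z H\times_Z L_3}\preccurlyeq\delta_{K\times_Z H}$ for every Heisenberg $H$, whereas you collapse $H_{2m+1}$ to $L_3$ in a single step; your direct route is slightly cleaner and avoids the iteration.

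One small slip: you write ``integers $s_1,s_2$ with $s_1s_2=-k$ and $|s_1|,|s_2|\lesssim n$'', but an arbitrary integer $k$ with $|k|\lesssim n^2$ need not factor this way over $\mathbf Z$. In the compact-presentation framework you have set up, real powers are available, so take $s_1,s_2\in\mathbf R$ (e.g.\ $s_1=s_2=\sqrt{|k|}$ with an appropriate sign) and the issue disappears; alternatively, in a lattice setting one would use a two-commutator expression of length $\lesssim n$. With that correction the proof stands.
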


\begin{proof}
Any Heisenberg group is an iterated central product of copies of the group $L_3$. 
It is then sufficient to prove that, given a Heisenberg group $H$ and a group $K$ as in the assumptions, $\delta_{K \times_Z H \times_Z L_3} \preccurlyeq \delta_{K \times_Z H}$.
Starting from a given presentation of the factors, equip both groups with adapted presentations $\mathcal P$ and $\mathcal Q$ respectively using Lemma~\ref{lem:central-prod-is-cglc}.
Let $w$ be a null-homotopic word of length $n$ in the generating set of $G = K\times_Z H \times_Z L_3$. 
Then with quadratic  cost, one can rewrite 
\[ w =_{\mathcal P} u_K u_H u_L \]
where $u_K$, $u_H$ and $u_L$ are words in the generating sets of $K$, $H$ and the copy $L$ of $L_3$ in $G$ respectively.
The words $u_K$, $u_H$ and $u_L$ represent central elements in $K \cap H \cap L$.
Since $H \times_Z L$ is a higher Heisenberg group we have that $\delta_{H \times_Z L}(n) \preccurlyeq n^2$.
With quadratic cost, one can rewrite $u_Hu_L$ as a single commutator word $v$ of length bounded by a constant times $n$, in the generating set of $H$.
This proves that 
\begin{equation*}
    \operatorname{Area}_{\mathcal P}(w) \lesssim_{\mathcal Q} n^2 + \operatorname{Area}_{\mathcal Q}(u_K v) \lesssim_{\mathcal P, \mathcal Q} \delta_{K \times_Z H}(n).
\end{equation*}
where we used in the last inequality that the Dehn function of $K \times_Z H$ is at least quadratic.
\end{proof}

\begin{proposition}
\label{prop:dehn-L55L54}
    The group $\Gamma = \Gamma_{5,5} \times_Z \Gamma_{5,4}$ has cubic Dehn function.
\end{proposition}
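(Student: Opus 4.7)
The proof splits cleanly into an upper bound and a lower bound, each of which I would obtain by reducing to statements already established earlier in the paper.

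For the upper bound, the plan is to apply Lemma~\ref{lem:wlog-avoid-high-heisenberg} with $K = \Gamma_{5,5}$ and $H = \Gamma_{5,4}$. The hypothesis is satisfied since $\Gamma_{5,5}$ is non-abelian with one-dimensional centre and $\Gamma_{5,4}$ is the $5$-Heisenberg group. This yields
\[ \delta_{\Gamma}(n) \preccurlyeq \delta_{\Gamma_{5,5} \times_Z \Gamma_{3,2}}(n), \]
and then Proposition~\ref{prop:dehn-L55H3} delivers the desired upper bound $n^3$.

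For the lower bound, the plan is to mimic the central-extension arguments used in Propositions~\ref{prop:dehn-L55H3}, \ref{prop:dehn-L55L43} and \ref{prop:dehn-L55L55}. Using the basis $X_1,\ldots,X_5,Y_1,\ldots,Y_4$ of the Lie algebra of $L_{5,5} \times_Z L_{5,4}$ (where the centre is spanned by $X_4 = [Y_1,Y_2]=[Y_3,Y_4]$), one checks that the $2$-cocycle $\xi_2\wedge\xi_3$ is not a coboundary. By the Baker--Campbell--Hausdorff formula, applied exactly as in the proof of Proposition~\ref{prop:pres-lattice-L55}, the associated central extension of the lattice is the group $\widetilde\Gamma$ presented by generators $\widetilde x_1,\ldots,\widetilde x_5,\widetilde y_1,\ldots,\widetilde y_4,\widetilde z$ with the relations of $\Gamma_{5,5}\times_Z\Gamma_{5,4}$ augmented by $[\widetilde x_2, \widetilde x_3] = \widetilde z$, where $\widetilde z$ generates the kernel of $\widetilde \Gamma \twoheadrightarrow \Gamma$. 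The Jacobi identities for the triples $(X_1,X_2,X_3)$ and $(X_2,X_3,X_5)$ are verified exactly as in Proposition~\ref{prop:dehn-L55H3}, and the Jacobi identities for triples involving one of the $Y_i$ reduce to $[\widetilde z, Y_i] = 0$, which holds by construction.

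Since $\widetilde z = [\widetilde x_2, [\widetilde x_1, \widetilde x_2]]$ in $\widetilde\Gamma$, the cyclic subgroup $\langle \widetilde z \rangle$ is contained in the third term of the lower central series of $\widetilde\Gamma$, hence is cubically distorted. Applying criterion (i) of \cite[Proposition 7.2]{lipt} then gives $\delta_{\Gamma}(n) \succcurlyeq n^3$, completing the proof. The only substantive step is the Jacobi verification, but this is essentially cost-free here because adding the $Y_i$ to the presentation used in Proposition~\ref{prop:dehn-L55H3} introduces no new relations involving $\widetilde x_2$ or $\widetilde x_3$.
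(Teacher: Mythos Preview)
Your proof is correct and follows essentially the same approach as the paper: the upper bound via Lemma~\ref{lem:wlog-avoid-high-heisenberg} and Proposition~\ref{prop:dehn-L55H3} is identical, and your lower-bound central extension is the same object the paper builds (the paper phrases it as $\widetilde\Gamma_1 \times_Z L_3$, reusing the extension from Proposition~\ref{prop:dehn-L55H3}, whereas you reconstruct it from the cocycle $\xi_2\wedge\xi_3$ directly).
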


\begin{proof}
    The upper bound follows from the combination of Lemma~\ref{lem:wlog-avoid-high-heisenberg} and Proposition~\ref{prop:dehn-L55H3}.
    For the lower bound, consider the central extension $\widetilde \Gamma_1$ defined in the proof of Proposition~\ref{prop:dehn-L55H3}. The group $\widetilde \Gamma_1 \times_Z L_3$ naturally is a central extension of $\Gamma$ with cubically distorted centre.
\end{proof}

\section{Uncountably many disjoint pairs of groups with same asymptotic cones and different Dehn functions}
\label{sec:uncountable}

In this section we first explain the proof of Theorem \ref{thm:main-uncountable} and then explain the construction of the lowest-dimensional examples that satisfy the conclusion of Theorem \ref{th:general-factor} and have no lattices.

\subsection{An uncountable family of 4-nilpotent groups with same asmpytotic cones and different Dehn functions}

In his textbook, Raghunathan shows that there are uncountably many isomorphism types of nilpotent Lie algebras of class $2$ without rational forms \cite[Remark 2.14]{RagDS}.
We develop Raghunathan's argument in larger generality, allowing more parameters, but still constructing nilpotent Lie algebras of class $2$. Then we show that for appropriate values of the parameters, the Lie algebras that are built in this way all have $3$-distorted central extensions. 

Let $(m,k)$ be a pair of positive integers. We will assume that 
{\small
\begin{equation}
    \label{eq:standing-assumption-pair-n-k}
    m^2 + k^2 < \binom{m}{2}k.
\end{equation}
}%
Note that this condition implies
{\small
\begin{equation}
\label{eq:k-not-too-large}
    k = \frac{k^2}{k} < \frac{1}{k} \left( k\binom{m}{2} -m^2 \right) < \binom{m}{2}.
\end{equation}
}%
Let $E = \mathbf R^m$ and $V = \mathbf R^k$.
Consider
\[ \mathcal L(m,k) = \{ \mu \in \bigwedge\nolimits^2 E^\ast \otimes V \colon \mu \text{ is surjective} \}. \]
Condition \eqref{eq:k-not-too-large} ensures that $\mathcal L (m,k)$ is open\footnote{We mean in the usual topology; it is also a Zariski open set, but we will not need this.} and non-empty in $\bigwedge^2 E^\ast \otimes V$. 
Let $G = \operatorname{GL}_m(\mathbf R) \times \operatorname{GL}_k(\mathbf R)$ act on $\bigwedge^2 E^\ast \oplus V$ in the tautological way.

Every $\mu \in \mathcal L(m,k)$ represents a $2$-nilpotent Lie algebra law on $E \oplus V$ for which $V$ is exactly the centre, the bracket being given by 
$[e+v,e'+v'] = \mu(e,e')$. This Lie algebra has a rational form if and only if $g.\mu(\bigwedge^2 (\mathbf Q^m)^\ast) = {\mathbf Q}^k$ for some $g \in G$. We denote by $\mathcal L_{\mathbf Q}(m,k)$ the subspace of $\mathcal L(m,k)$ formed by those $\mu$ having at least one rational form. 
In this way $\mathcal L(m,k)/G$ corresponds to the set of isomorphism types of nilpotent Lie algebras $\mathfrak n$ of class $2$ with $\dim \mathfrak n=m+k$ and $\dim C^2 \mathfrak n =k$, while $\mathcal L_{\mathbf Q}(m,k)/G$ corresponds to the subset of those ones having rational forms.

Note that the action of $G$ on $\mathcal L(m,k)$ is smooth; since $\dim G = m^2+k^2 < \dim \mathcal L(m,k)$, every $G$-orbit has empty interior.
In particular, $\mathcal L_{\mathbf Q}(m,k)$ is a countable union of subsets with empty interior.
By Baire's theorem it follows that $\mathcal L(m,k) \setminus \mathcal L_{\mathbf Q}(m,k)$ is non-empty; even better, it is partitioned into uncountably many $G$-orbits.

We now view $E^\ast \otimes V^\ast$ as a subspace of $\bigwedge^2(E\oplus V)^\ast$ by mapping the simple tensor $\alpha \otimes \beta$ to $\iota_{E^\ast} \alpha \wedge \iota_{V^\ast} \beta$ and extending linearly.
For every $\mu \in \mathcal L(m,k)$, denote by $d_\mu$ the exterior derivative of the Lie algebra cohomology associated to $\mu$ (see e.g. \cite{ChevalleyEilenberg} for the definition). In view of the previous identification, we see $d_\mu$ as a map
\[ d_\mu \colon E^\ast \otimes V^\ast \to \bigwedge\nolimits^3 E^\ast \]
and let $Z^{2,3}_\mu = \{ \omega \in E^\ast \otimes V^\ast \colon d_\mu \omega = 0 \}$.
The notation can be explained as follows: there is a grading on the space of cocycles on the Lie algebra represented by $\mu$. Under this grading (that we will take in the positive integers for convenience) $E^\ast$ has weight $1$, $V^\ast$ has weight $2$, and $Z_\mu^{2,3}$ is the space of $2$-cocycles of weight $3$. Since  $d_{\mu}: E^{\ast}\oplus V^{\ast}\to \bigwedge ^2 (E^{\ast}\oplus V^{\ast})$ has image in $\bigwedge^2 E^{\ast}$ there are no 2-cochains of weight 3. Thus, the non-zero elements of $Z_{\mu}^{2,3}$ are in one-to-one correspondence with the space of cubically distorted central extensions of the Lie group $\widetilde{L}_{\mu}$ associated with the Lie algebra defined by $\mu$.

\begin{lemma}
\label{lemma:the-new-raghunathan}
    Let $m \geqslant 6$, $k \geqslant 4$. If \eqref{eq:standing-assumption-pair-n-k} holds and
    {\small
    \begin{equation}
    \label{eq:bounds-on-k}
        \frac{1}{m}\binom{m}{3} < k, 
    \end{equation}
    }%
    then every $\mu \in \mathcal L(m,k)$ has $Z_\mu^{2,3} \neq 0$. In particular, there is a cubically distorted central extension of $\widetilde{L}_{\mu}$.
\end{lemma}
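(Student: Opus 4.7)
The plan is to exploit the natural grading on the Chevalley--Eilenberg cochains (with $E^*$ in weight $1$ and $V^*$ in weight $2$) together with a dimension count. The weight-$3$ part of $\bigwedge^\bullet(E\oplus V)^*$ vanishes in degree $1$, equals $E^*\otimes V^*$ in degree $2$, and equals $\bigwedge^3 E^*$ in degree $3$ (the only way to obtain weight $3$ as a sum of three weights in $\{1,2\}$ being $1+1+1$). In particular, no non-zero $2$-coboundary is of weight $3$, so any non-zero element of $Z_\mu^{2,3}$ automatically represents a non-trivial class in $H^2$.

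Since $\mu$ sends $E\times E$ into $V$, a direct application of the Chevalley--Eilenberg formula to a simple tensor $\alpha\otimes\beta\in E^*\otimes V^*$ yields
$$d_\mu(\alpha\otimes\beta)=\alpha\wedge(\beta\circ\mu)\in\bigwedge\nolimits^3 E^*,$$
where $\beta\circ\mu\in\bigwedge^2 E^*$. Thus the target of $d_\mu\colon E^*\otimes V^*\to\bigwedge^3 E^*$ has dimension at most $\binom{m}{3}$, and the rank--nullity theorem gives
$$\dim Z_\mu^{2,3}\geq mk-\binom{m}{3},$$
which is strictly positive by the hypothesis \eqref{eq:bounds-on-k}. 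This proves the first assertion of the lemma.

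For the ``in particular'' clause, given any non-zero $\omega\in Z_\mu^{2,3}$ I would form the central extension $\widetilde{\mathfrak g}_\omega=(E\oplus V)\oplus\R Z$ defined by the cocycle $\omega$, whose bracket acts as $\mu$ on $E\times E$, sends $(e,v)\in E\times V$ to $\omega(e,v)\,Z$, and is trivial elsewhere (with $Z$ central). By non-vanishing of $\omega$ one can pick $e\in E$, $v\in V$ with $\omega(e,v)\neq 0$; using the surjectivity of $\mu$ (which is exactly the defining condition for $\mathcal L(m,k)$), one writes $v=\mu(a,b)$ for some $a,b\in E$, and then $Z=\omega(e,v)^{-1}\bigl[e,[a,b]\bigr]_{\widetilde{\mathfrak g}}\in C^3\widetilde{\mathfrak g}_\omega$, while $[C^3\widetilde{\mathfrak g}_\omega,\widetilde{\mathfrak g}_\omega]=0$. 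Hence $\widetilde{\mathfrak g}_\omega$ is $3$-nilpotent with $\R Z$ sitting in the deepest step of the lower central series, producing the required cubically distorted one-parameter central subgroup $\exp(\R Z)$ in the associated simply connected Lie group $\widetilde L_\omega$. The only mildly subtle point is verifying that $Z$ lies in $C^3$ rather than merely in $C^2$, which is precisely where the surjectivity of $\mu$ enters; everything else is a dimension count or a routine computation of the Chevalley--Eilenberg differential.
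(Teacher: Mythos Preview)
Your proof is correct and follows essentially the same approach as the paper: the core argument is the dimension count $\dim(E^*\otimes V^*)=mk>\binom{m}{3}=\dim\bigwedge^3 E^*$, forcing $\ker d_\mu\neq 0$. You supply more detail than the paper does (the explicit formula for $d_\mu$, and the verification that the resulting extension is genuinely $3$-step via surjectivity of $\mu$), but the paper already established the correspondence between non-zero elements of $Z_\mu^{2,3}$ and cubically distorted central extensions in the discussion preceding the lemma, so its proof reduces to the one-line dimension count.
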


\begin{proof}
    A dimension count gives that $\dim E^\ast \otimes V^\ast = mk$, while $\dim \bigwedge^3 E^\ast = \binom{m}{3}$.
    Thus if $m$ and $k$ are as in the assumptions , $d_\mu$ has nonzero kernel for all $\mu \in \mathcal L$.
\end{proof}

Note that the assumption of Lemma~\ref{lemma:the-new-raghunathan} is not void: let $p \geqslant 2$ be a parameter, set $m=3p$ and $k=3p^2$. 
Then as $p$ goes to infinity,
$m^2+k^2  \sim 9p^4$
is eventually smaller than $\binom{m}{2}k \sim \frac{27}{2}p^4$,
while
{\small
\[ \frac{1}{m}\binom{m}{3} \sim \frac{3p^2}{2} \; \text{ and }  k \sim 3p^2, \]}%
so that \eqref{eq:standing-assumption-pair-n-k} and  \eqref{eq:bounds-on-k} are satisfied by infinitely many pairs $(m,k)$; see also Remark \ref{rmk:minimal-dimension} below).

\begingroup
\renewcommand{\thetheorem}{\ref{thm:main-uncountable}}
\begin{theorem}
    There exists an uncountable family $\left\{(G_i,H_i)\right\}_{i\in I}$ of pairs of 4-nilpotent groups such that the following hold:
    \begin{enumerate}
        \item $H_i$ equipped with a Carnot-Carathéodory metric is bilipschitz homeomorphic to the asymptotic cone of $G_i$ for every $i\in I$;
        \item $\delta_{G_i}(n)\asymp n^4 \prec n^5 \asymp \delta_{H_i}(n)$,  for every $i\in I$; and
        \item $H_i\not \cong H_j$ for all $i,j\in I$ with $i\neq j$ (and thus also $G_i\not \cong G_j$).
    \end{enumerate}
\end{theorem}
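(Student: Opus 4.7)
The plan is to combine Theorem~\ref{th:general-factor} with the extended Raghunathan-style construction of Lemma~\ref{lemma:the-new-raghunathan}. Fix a pair $(m,k)$ satisfying the hypotheses of Lemma~\ref{lemma:the-new-raghunathan} and moreover with $m \geqslant k$, for instance $(m,k) = (6,4)$, yielding the smallest overall dimension of $15$. Then $(\mathcal L(m,k) \setminus \mathcal L_{\mathbf Q}(m,k))/G$ is uncountable, and the 2-nilpotent Lie algebras $\widetilde{\mathfrak l}_\mu$ indexed by it have no rational forms and satisfy $Z^{2,3}_\mu \neq 0$. For $\mu$ in a further Zariski-open subset, I would choose a cocycle $\omega_\mu \in Z^{2,3}_\mu$ whose associated bilinear map $V \to E^{\ast}$, $v \mapsto \omega_\mu(\cdot, v)$, is injective (a non-empty open condition thanks to $m \geqslant k$), and denote by $\widehat{\mathfrak l}_\mu$ the central extension of $\widetilde{\mathfrak l}_\mu$ by a new generator $Z$ with bracket $[e,v] = \omega_\mu(e,v) Z$. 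This extension is $3$-nilpotent with one-dimensional centre $\mathbf R Z$ sitting in $C^{3}\widehat{\mathfrak l}_\mu$, as a direct computation of the centre using the non-degeneracy of $\mu$ and $\omega_\mu$ shows.

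Let $G_\mu$ be the simply connected Lie group with Lie algebra $\mathfrak g_\mu := \mathfrak l_5 \times_Z \widehat{\mathfrak l}_\mu$, which is $4$-nilpotent with one-dimensional centre. Theorem~\ref{th:general-factor}, applied with $k = 4 > 3 = \ell$, yields $\delta_{G_\mu}(n) \asymp n^4$. A computation of the lower central series gives $C^2 \mathfrak g_\mu = \langle X_3, X_4, X_5 \rangle \oplus V$, $C^3 \mathfrak g_\mu = \langle X_4, X_5 \rangle$ and $C^4 \mathfrak g_\mu = \langle X_5 \rangle$. The mixed bracket $[e,v] = \omega_\mu(e,v) X_5$ takes values in $C^4 \mathfrak g_\mu$ while $e$ has weight $1$ and $v$ has weight $2$ in the Carnot grading; hence the corresponding graded bracket at weight $3$ vanishes on such arguments, and the Carnot-graded Lie algebra decomposes as a direct sum $\mathfrak h_\mu \cong \mathfrak l_5 \oplus \widetilde{\mathfrak l}_\mu$. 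Letting $H_\mu$ be the associated simply connected Lie group, Pansu's theorem identifies the asymptotic cone of $G_\mu$ with $(H_\mu, d_{CC})$ for an adapted Carnot-Carathéodory metric, proving (1). The upper bound $\delta_{H_\mu}(n) \preccurlyeq n^5$ follows from the Lie-group Gersten--Holt--Riley bound in Theorem~\ref{th:GHR-lie}, while the lower bound $\delta_{H_\mu}(n) \succcurlyeq n^5$ comes from Proposition~\ref{prop:dehn-lower-lie} applied to the class-$5$ central extension $\mathfrak l_6 \oplus \widetilde{\mathfrak l}_\mu$ of $\mathfrak h_\mu$, establishing (2).

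For (3), the goal is to show that $\widetilde{\mathfrak l}_\mu$ is canonically reconstructible from $\mathfrak h_\mu$, so that $H_\mu \cong H_{\mu'}$ forces $\widetilde{\mathfrak l}_\mu \cong \widetilde{\mathfrak l}_{\mu'}$ and hence $\mu$ and $\mu'$ lie in the same $G$-orbit. The subspace $V \subset \mathfrak h_\mu$ is intrinsic via the formula $V \cong (C^2 \mathfrak h_\mu \cap Z(\mathfrak h_\mu))/(C^3 \mathfrak h_\mu \cap Z(\mathfrak h_\mu))$. Writing $\overline V$ for its image in $C^2 \mathfrak h_\mu / C^3 \mathfrak h_\mu$, the subspace of $\mathfrak h_\mu / C^2 \mathfrak h_\mu$ consisting of those elements whose Carnot bracket with all of $\mathfrak h_\mu/C^2 \mathfrak h_\mu$ lies in $\overline V$ is also intrinsic; a direct computation in the basis $(X_1, X_2, e_1, \ldots, e_m)$ identifies it with the span $E$ of the $e_i$'s, since $X_3 \notin \overline V$ excludes any element having a nonzero $X_1$- or $X_2$-coefficient. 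The induced Carnot bracket $E \wedge E \to \overline V$ then recovers $\mu$ up to the action of $G$, completing the proof.

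The main obstacle will be twofold: (a) verifying that the rank condition on $\omega_\mu$ combined with any further genericity needed in the recovery step survives for uncountably many $G$-orbits of $\mu$, which requires controlling how the linear subspace $Z^{2,3}_\mu$ of $E^\ast \otimes V^\ast$ meets the rank-$k$ open locus; and (b) carrying out carefully the intrinsic characterisations of $V$ and $E$, which ultimately hinge on the filiform generator $X_1$ behaving very differently in the graded bracket from the $e_i$'s generating the Raghunathan part.
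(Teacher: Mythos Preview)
Your approach coincides with the paper's: take $G_\mu = L_5 \times_Z \widehat L_\mu$ for a class-$3$ central extension $\widehat L_\mu$ of the $2$-nilpotent $\widetilde L_\mu$ by a weight-$3$ cocycle, invoke Theorem~\ref{th:general-factor} for $\delta_{G_\mu}\asymp n^4$, identify $\operatorname{gr}(G_\mu)\cong L_5\times \widetilde L_\mu$ with $\delta\asymp n^5$, and use the uncountability of the Raghunathan moduli. The paper's write-up is considerably terser than yours: it neither verifies the one-dimensional centre hypothesis needed to apply Theorem~\ref{th:general-factor}, nor spells out the Carnot computation, nor justifies the implication $L_5\times\widetilde L_\mu\cong L_5\times\widetilde L_{\mu'}\Rightarrow \mu\sim\mu'$. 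Your rank condition on $\omega_\mu$ together with $m\geqslant k$ is exactly what is needed for the first point and is a genuine addition; for the third point your intrinsic recovery of $V$ and $E$ works, though a shorter route is the Krull--Schmidt theorem for finite-dimensional Lie algebras once one observes that $\widetilde{\mathfrak l}_\mu$ is generically indecomposable and that $\mathfrak l_5\not\cong\widetilde{\mathfrak l}_{\mu'}$ by nilpotency class.

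Regarding your obstacle (a): it is real but not deep. On the Zariski-open locus where $\dim Z^{2,3}_\mu$ is minimal, the assignment $\mu\mapsto Z^{2,3}_\mu$ varies continuously in the Grassmannian, so the condition ``$Z^{2,3}_\mu$ meets the rank-$k$ stratum of $E^\ast\otimes V^\ast$'' is open in $\mu$ there. It therefore suffices to exhibit a single $\mu_0$ with generic $\dim Z^{2,3}_{\mu_0}$ and some $\omega_0\in Z^{2,3}_{\mu_0}$ of rank $k$; for $(m,k)=(6,4)$ this is a finite computation. Obstacle (b) you have already dispatched with the characterisations given.
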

\endgroup
\setcounter{theorem}{0}

\begin{proof}
    Let $(m,k)$ be as in the assumption of Lemma~\ref{lemma:the-new-raghunathan}.
    For every $\mu \in \mathcal L(m,k) \setminus \mathcal L_{\mathbf Q}(m,k)$ and $\omega \in Z^{2,3}_\mu \setminus \{0 \}$, denote $\widetilde L_{\mu,\omega}$ the simply connected nilpotent Lie group whose Lie algebra is the central extension of the nilpotent Lie algebra with law $\mu$ corresponding to the two-cocycle $\omega$.     
    Let 
    \[G_{\mu, \omega}  = L_5 \times_Z \widetilde L_{\mu, \omega}. \]
    Then $G_{\mu, \omega}$ has Dehn function $n^4$ by Theorem~{\ref{th:general-factor}}. The asymptotic cone of $G_{\mu, \omega}$, on the other hand, is bilipschitz homeomorphic to the group 
    \[ L_5 \times \widetilde L_{\mu, \omega} / Z(\widetilde L_{\mu, \omega})\cong L_5\times \widetilde{L}_{\mu} \]
    which has Dehn function $n^5$. Moreover, $L_5\times \widetilde{L}_\mu \cong L_5\times \widetilde{L}_{\mu'}$ if and only if $\mu$ and $\mu'$ are in the same $G$-orbit in $\mathcal{L}(m,k)$.
    As $\mathcal L(m,k) \setminus \mathcal L_{\mathbf Q}(m,k)$ is the union of uncountably many $G$-orbits, this concludes the proof.
\end{proof}

\begin{remark}
    With the notation as in the proof, note that the Dehn function of $\widetilde{L}_{\mu}$ is cubic by Corollary \ref{cor:dehn-lower-lie}. The use of the latter is necessary when $\mu \notin \mathcal L_{\mathbf Q}(m,k)$, since the group $\widetilde{L}_{\mu}$ does not have lattices in this case by \cite{MalcevNilvarietes}.
\end{remark}
\begin{remark}\label{rmk:minimal-dimension}
    For a fixed choice of $(m,k)$ the groups $G_i$ and $H_i$ constructed in Theorem \ref{thm:main-uncountable} are all of dimension $5+m+k$. The smallest dimensional pair to which Theorem \ref{thm:main-uncountable} applies is $(m,k)=(6,4)$, meaning that we obtain a $15$-dimensional family satisfying its conclusion. 
\end{remark}

\subsection{An explicit 14-dimensional central product without lattices}
\label{subsec:central-product-wo-lattice}

As explained in Remark \ref{rmk:minimal-dimension}, the lowest-dimensional family satisfying Theorem \ref{thm:main-uncountable} that we know is 15-dimensional. Here we show that we can find an uncountable family of examples without lattices to which Theorem \ref{th:general-factor} applies already in dimension 14. However, these examples will all have the same Carnot graded group and thus not satisfy the conclusion of Theorem \ref{thm:main-uncountable}.

Here we let $\lambda \in \mathbf R \setminus \{0 \}$ and we consider the Lie algebra 
\[ \mathfrak g_\lambda  = \mathfrak l_8 \times_Z \mathfrak k_{7,\lambda} \]
where $\mathfrak k_{7,\lambda}$ is the filiform Lie algebra of dimension $7$ with brackets
{\small
\begin{align*}
    [Y_1,Y_i] & = Y_{i+1}, 1 \leqslant i \leqslant 6; \\
    [Y_2,Y_3] & = Y_5;  \\
    [Y_2,Y_4] & = Y_6; \\
    [Y_2,Y_5] & = \lambda Y_7; \\
    [Y_3,Y_4] & = (1-\lambda) Y_7.
\end{align*}
}%
Let $G_\lambda$ be the simply connected Lie group with Lie algebra $\mathfrak g_\lambda$. 

\begin{proposition}
\label{prop:G-lambda}
    There are uncountably many values of $\lambda$ for which $G_\lambda$ has no lattices.
\end{proposition}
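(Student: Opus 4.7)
The plan is to apply Malcev's theorem: a simply connected nilpotent Lie group admits a lattice if and only if its Lie algebra admits a $\mathbf{Q}$-form, i.e., a rational form. The proposition will then follow by a cardinality argument, showing that $\{[\mathfrak g_\lambda]\}_{\lambda \neq 0}$ yields uncountably many real isomorphism classes while only countably many 14-dimensional real nilpotent Lie algebras admit a rational form.

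First I would carry out the classical cardinality argument. The set of nilpotent Lie brackets on $\mathbf{Q}^{14}$ is a subset of $\mathbf{Q}^N$ (for $N$ depending only on the dimension) cut out by finitely many polynomial equations expressing antisymmetry, the Jacobi identity, and nilpotency; as such, it is countable. Isomorphism classes of 14-dimensional rational nilpotent Lie algebras correspond to $\operatorname{GL}_{14}(\mathbf{Q})$-orbits in this set, hence form a countable family as well. Scalar extension $\mathfrak h \mapsto \mathfrak h \otimes_{\mathbf{Q}} \mathbf{R}$ sends this family to the set of real isomorphism classes of 14-dimensional real nilpotent Lie algebras that admit a rational form; the image is therefore countable.

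Second, I would establish that the map $\lambda \mapsto [\mathfrak g_\lambda]$ is injective on $\mathbf{R} \setminus \{0\}$. This is the content of the statement in Example~\ref{exm:nolattices} that $L_8 \times_Z K_{7,\lambda_1} \cong L_8 \times_Z K_{7,\lambda_2}$ forces $\lambda_1 = \lambda_2$. The argument splits naturally into two parts: (a) reconstructing the factor $\mathfrak k_{7,\lambda}$ from $\mathfrak g_\lambda$ up to isomorphism, using the fact that $\mathfrak l_8$ and $\mathfrak k_{7,\lambda}$ have different nilpotency classes ($7$ and $6$ respectively) so that any Lie algebra automorphism of $\mathfrak g_\lambda$ must preserve, up to a suitable choice, a natural complement corresponding to $\mathfrak k_{7,\lambda}$; and (b) showing that the family $\{\mathfrak k_{7,\lambda}\}_{\lambda \neq 0}$ consists of pairwise non-isomorphic real Lie algebras, which is part of the classification of 7-dimensional filiform nilpotent Lie algebras, realizing this family as the real forms of the complex family $\mathcal G_{7,1.1(i_\lambda)}$ in Magnin's tables \cite{Magnin}. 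Concretely, one can exhibit an algebraic invariant of $\mathfrak k_{7,\lambda}$ (for instance, a cross-ratio built from the characteristic data of $\operatorname{ad}_{Y_2}$ restricted to a natural subquotient, or equivalently the moduli parameter of the cup product of suitable classes in $H^2$) that recovers $\lambda$ from the isomorphism class.

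Combining the two steps finishes the proof: the uncountable set $\{[\mathfrak g_\lambda]\}_{\lambda \neq 0}$ cannot sit inside the countable set of isomorphism classes admitting a rational form, so uncountably many $\mathfrak g_\lambda$ admit no rational form, whence by Malcev's theorem uncountably many $G_\lambda$ admit no lattice. The main obstacle is the injectivity step (b), as it requires a concrete classification-theoretic input; the cleanest approach is to quote the classification of 7-dimensional (filiform) nilpotent Lie algebras from \cite{Magnin} or \cite{GomezJimenezFiliform}, where this one-parameter family already appears as pairwise non-isomorphic, rather than constructing the separating invariant by hand.
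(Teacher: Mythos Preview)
Your proposal is correct and follows essentially the same strategy as the paper: Malcev's criterion, the countability of rational Lie algebra structures in a given dimension, and injectivity of $\lambda \mapsto [\mathfrak g_\lambda]$ obtained by recovering $\mathfrak k_{7,\lambda}$ from $\mathfrak g_\lambda$ and then invoking the classification (Magnin) to see that the $\mathfrak k_{7,\lambda}$ are pairwise non-isomorphic. The only difference is in your step (a): the paper characterizes $\mathfrak k_{7,\lambda}$ intrinsically as the unique $7$-dimensional filiform ideal of $\mathfrak g_\lambda$ that is maximal among filiform ideals, which is a cleaner invariant description than your appeal to automorphisms preserving a ``natural complement''; otherwise the arguments coincide.
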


\begin{proof}
In view of the Malcev correspondence, $G_\lambda$ has a lattice if and only if there is a basis of $\mathfrak g_{\lambda}$ in which the Lie brackets have rational coefficients \cite{MalcevNilvarietes}. 
We start by observing that $\mathfrak k_{7,\lambda}$ is a $7$-dimensional filiform ideal in $\mathfrak g_\lambda$ which is maximal among filiform ideals with respect to inclusion, and that it is unique with these properties.
    In addition, the Lie algebras $\mathfrak k_{7,\lambda}$ are pairwise non-isomorphic (see \cite[page 191]{Magnin}; there the complexification of $\mathfrak k_{7,\lambda}$ is called $\mathcal G_{7,1.1(i_\lambda)}$), and thus there are uncountably many. 
    It follows that the $\mathfrak g_\lambda$ are pairwise non-isomorphic, and there are uncountably many. Since there are only countably many isomorphism types of rational Lie algebras, uncountably many of the $\mathfrak{g}_{\lambda}$ have no rational form and thus no lattices.
    \end{proof}

    \begin{remark}
        A slight variant of the concluding argument can be obtained by invoking that torsion-free finitely generated nilpotent  groups are finitely presented, so that there are countably many isomorphism types of those, and each of them sits in a unique simply connected Lie group, by uniqueness of the real Malcev completion.
    \end{remark}

\begin{remark}
    In fact one can prove that $G_\lambda$ has lattices if and only if $\lambda$ is rational. The proof can be done using the same method as was used in \cite{HamrouniSouissi} to prove that $K_{7,\lambda}$ has lattices if and only if $\lambda$ is rational. We leave the details to the reader.
\end{remark}

\begin{appendix}
\renewcommand{\thesubsection}{\Alph{section}.\arabic{subsection}}

\section{Quasiisometry invariants of nilpotent groups}
\label{sec:QI}

Below we provide some context that motivates Corollary~\ref{cor:QI} and explain its proof. We then proceed to explain how in special cases it can also be deduced from existing results in the literature. Note that we currently don't know a proof based on results from the literature that covers all cases and does not use our main results; it would be interesting to know if there is such a proof.

\subsection{Context and motivation}
An outstanding problem in Geometric Group Theory is the classification of nilpotent  groups up to quasiisometry. In retrospect, this was raised by Gromov's Polynomial Growth Theorem \cite{GromovPolyGrowth}.
The most general version of the problem asks to classify simply connected nilpotent Lie groups up to quasiisometry, and it is conjectured that two such groups should be isomorphic if and only if they are quasiisometric \cite{CornulierQIHLC}.

Pansu's original contribution to this problem is the following:

\begin{theorem}[Pansu, {\cite{PansuCCqi}}]
    Let $G$ and $G'$ be simply connected nilpotent Lie groups. If $G$ and $G'$ are quasiisometric, then $\operatorname{gr}(G)$ and $\operatorname{gr}(G')$ are isomorphic.
\end{theorem}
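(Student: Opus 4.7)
The plan is to use asymptotic cones as the intermediary, reducing the statement to two essentially independent facts: (a) the asymptotic cone of a simply connected nilpotent Lie group $G$ is naturally identified with the Carnot graded group $\operatorname{gr}(G)$ equipped with a left-invariant Carnot--Carath\'eodory metric, and (b) any bilipschitz homeomorphism between two such Carnot groups descends to a graded Lie group isomorphism. Given these two ingredients, the theorem follows at once: a quasiisometry $f\colon G\to G'$ induces a bilipschitz homeomorphism between the asymptotic cones (this is a general feature of the asymptotic cone construction), which by (a) is a bilipschitz homeomorphism $F\colon \operatorname{gr}(G)\to \operatorname{gr}(G')$, and by (b) this forces $\operatorname{gr}(G)\cong \operatorname{gr}(G')$ as Lie groups.

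First I would establish (a). Fix a left-invariant Riemannian (or sub-Finsler) metric on $G$ and pick Malcev coordinates adapted to the lower central series $G=C^1G\supset C^2G\supset\cdots$. The one-parameter family of automorphisms $\delta_t$ of $\operatorname{gr}(G)$ given by $\delta_t(X)=t^iX$ for $X$ of weight $i$ gives a dilation structure; the heuristic is that commutator words of length $\ell$ produce displacements of size $\ell$ in the abelianization, $\ell^2$ in $C^2G/C^3G$, and so on, exactly matching the weights. I would show, following Pansu, that the rescaled pointed metric spaces $(G,o,\tfrac{1}{n}d_G)$ converge in the pointed Gromov--Hausdorff sense to $(\operatorname{gr}(G),o,d_{CC})$, where $d_{CC}$ is the Carnot--Carath\'eodory metric associated with the first layer $\operatorname{gr}^1(G)=G/[G,G]$ together with a norm inherited from the original metric. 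The main technical input here is a uniform control, in the Malcev coordinates, of the deviation between the group laws of $G$ and $\operatorname{gr}(G)$ under dilation, which is what makes the limit well defined and independent of the ultrafilter.

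Next I would tackle (b), which is the core of the argument and the main obstacle. The statement needed is Pansu's differentiation theorem: any bilipschitz map $F\colon \operatorname{gr}(G)\to\operatorname{gr}(G')$ between Carnot groups endowed with Carnot--Carath\'eodory metrics is Pansu-differentiable almost everywhere, and at almost every point $x$ the Pansu derivative
\[
 DF(x)(v)\;=\;\lim_{t\to 0^+}\delta_{1/t}\!\left(F(x)^{-1}\cdot F\!\left(x\cdot \delta_t(v)\right)\right)
\]
exists and defines a graded Lie group homomorphism $DF(x)\colon \operatorname{gr}(G)\to \operatorname{gr}(G')$. Being the derivative of a bilipschitz map, $DF(x)$ is itself bilipschitz, hence a bijective graded homomorphism, yielding the required isomorphism of Carnot groups. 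The proof of the differentiation theorem is the hard analytic part: one cannot appeal to the Euclidean Rademacher theorem because the correct notion of differentiability uses the dilations $\delta_t$ rather than linear scaling. Instead one establishes a sub-Riemannian Lebesgue-type differentiation statement along horizontal curves, combined with a precompactness argument for the family of rescaled maps $F_{x,t}(v)=\delta_{1/t}(F(x)^{-1}F(x\,\delta_t v))$, and finally argues that any limit of such rescalings is automatically a Lie group morphism respecting the grading (because the group law of $\operatorname{gr}(G)$ is recovered precisely from the dilation structure, as in step (a)).

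I would expect step (a) to be technically involved but conceptually clear once the right coordinates are in place, whereas step (b) is where the genuinely new difficulty lies: extracting algebraic rigidity (a graded group isomorphism) from what is a priori only a metric statement (bilipschitzness) requires the full strength of the Pansu calculus and has no shortcut through classical analysis in Euclidean space.
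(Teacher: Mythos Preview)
The paper does not prove this theorem; it is quoted as a known result of Pansu \cite{PansuCCqi} in the appendix on quasiisometry invariants, with no proof supplied. There is therefore nothing in the paper to compare your proposal against.

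That said, your outline is an accurate high-level summary of Pansu's original argument: identify the asymptotic cone of $G$ with $\operatorname{gr}(G)$ equipped with a Carnot--Carath\'eodory metric (this is \cite{PanCBN}), observe that a quasiisometry induces a bilipschitz homeomorphism of asymptotic cones, and then invoke the Pansu--Rademacher differentiation theorem from \cite{PansuCCqi} to conclude that the Pansu derivative at a point of differentiability is a graded Lie group isomorphism. Your assessment of where the difficulty lies is also correct: step (b), the differentiation theorem, is the substantial analytic input and cannot be shortcut.
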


Here, given a nilpotent Lie group $G$ with Lie algebra $\mathfrak{g}=Lie(G)$, $\operatorname{gr}(G)$ designates the nilpotent Lie group with Lie algebra 
\begin{equation}
\label{eq:carnotify}
    \operatorname{gr}(\operatorname{Lie}(G)):= \bigoplus_{i\geqslant 1} C^i 
    {\mathfrak g}/ C^{i+1}(\mathfrak g),
\end{equation} 
with brackets defined by the (well-defined) linear maps $$C^i \mathfrak g / C^{i+1} \mathfrak g \otimes C^j \mathfrak g / C^{j+1} \mathfrak g \to C^{i+j} \mathfrak g / C^{i+j+1} \mathfrak g$$ induced by the brackets on $\mathfrak{g}$ for all $i,j >0$; we call $\operatorname{gr}(G)$ the {\em Carnot-graded group associated to} $G$.
One of the steps in the proof of Pansu's theorem consists of identifying the {\em asymptotic cone} of $G$ as $\operatorname{gr}(G)$ with a particular metric, and thus Pansu's theorem reduces the classification problem to that of discerning groups with the same asymptotic cones (or equivalently, isomorphic associated Carnot-graded groups) up to quasiisometry.
Under this viewpoint, a special role is played by the {\em Carnot groups}, that is, those $G$ such that $G \cong \operatorname{gr}(G)$; the latter possess a special positive grading $(\mathfrak g_i)$ on their Lie algebra given by the direct sum decomposition \eqref{eq:carnotify}, which translates into a one-parameter group of automorphisms $(\sigma_t)_{t \in \mathbf R}$, $\sigma_t$ rescaling $\mathfrak g_i$ by $e^{ti}$ for all $i$.

Another approach to the quasiisometry classification was initiated by Shalom in \cite{ShalomHarmonic} using Gromov's dynamical characterization of quasiisometry.
Let us state below the current best result given by Shalom's approach, which was pushed by Sauer \cite{SauerHom} and more recently by Gotfredsen and Kyed \cite{GotfredsenKyed}. 
The key to the extension of Sauer's theorem to locally compact groups was provided by \cite{BaderRosendal}; see the third footnote in \cite[\S1F]{cornulier2017sublinear}.

\begin{theorem}[\cite{GotfredsenKyed}]
\label{th:shalom}
    Let $G$ and $G'$ be simply connected nilpotent Lie groups. If $G$ and $G'$ are quasiisometric, then the cohomology rings $H^\ast(G,\mathbf R)$ and $H^\ast(G',\mathbf R)$ are isomorphic.
\end{theorem}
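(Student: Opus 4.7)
The plan is to prove the theorem via the measure-equivalence machinery initiated by Gromov, refined by Shalom, and strengthened by Sauer to yield the full cohomology ring (rather than merely the Betti numbers), with the extension from discrete to locally compact groups supplied by Bader--Rosendal.

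The first step is to convert the quasiisometry into an algebraic object. For finitely generated discrete groups, Gromov established that $G$ and $G'$ are quasiisometric if and only if there exists a \emph{topological coupling}: a locally compact space $\Omega$ equipped with commuting, continuous, proper actions of $G$ and $G'$, each admitting a relatively compact open fundamental domain. Bader--Rosendal extend this equivalence to locally compact second countable compactly generated groups, which is precisely the input required in the Lie-group setting. Integrating Haar measures over the fundamental domains then upgrades this to a measure coupling of finite type, symmetric in the roles of $G$ and $G'$.

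The second step uses the coupling to construct mutually inverse induction and coinduction functors between suitable categories of continuous $G$- and $G'$-modules. Applied to the trivial module $\mathbf{R}$, the standard homological algebra of continuous cohomology yields an isomorphism $H^\ast_c(G,\mathbf{R}) \cong H^\ast_c(G',\mathbf{R})$ of graded vector spaces. For a simply connected nilpotent Lie group, van Est's theorem identifies $H^\ast_c(G,\mathbf{R})$ with the Chevalley--Eilenberg cohomology $H^\ast(\mathrm{Lie}(G),\mathbf{R})$, which is the ring in the statement; in the presence of a lattice $\Gamma$ it also coincides with the de Rham cohomology of the nilmanifold $\Gamma \backslash G$.

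The third and most delicate step is to promote this vector-space isomorphism to a graded ring isomorphism compatible with the cup product; this is Sauer's essential improvement over Shalom's earlier Betti-number theorem. One chooses a cochain model -- for instance, bounded Borel cochains on an appropriate quotient of $\Omega$ -- in which the cup product is implemented at the cocycle level by an Alexander--Whitney type pairing, and then verifies that the induction functor is lax monoidal for this pairing, so that the induced isomorphism on cohomology is multiplicative. The main obstacle, and the reason why the Gotfredsen--Kyed extension is needed on top of Bader--Rosendal, is analytic rather than algebraic: carrying the cup-product compatibility out in the locally compact setting demands continuity and local integrability of the coupling cocycles, regularity of the cross sections, and compatibility of all these conditions with a cochain model computing continuous cohomology. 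Once these analytic prerequisites are installed, the monoidal part of Sauer's argument transports essentially formally.
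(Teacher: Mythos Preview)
The paper does not give its own proof of this theorem: it is quoted from the literature (Gotfredsen--Kyed), with surrounding text attributing the ingredients to Shalom, Sauer, and Bader--Rosendal. Your outline is a faithful sketch of that literature and matches the paper's brief historical remarks, in particular the observation that the extension of Sauer's theorem to locally compact groups rests on the topological-coupling result of Bader--Rosendal.
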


It follows from Theorem~\ref{th:shalom} that, under the same assumptions on $G$ and $G'$,
    \begin{enumerate}
        \item The Betti numbers $b_i(G)$ and $b_i(G')$ are equal for all $i \geqslant 0$.
        \item \label{item:shalom} If $G$ and $G'$ have lattices $\Gamma$ and $\Gamma'$ respectively, then $b_i(\Gamma) = b_i(\Gamma')$ for all $i \geqslant 0$, where we use the fact, due to Nomizu, that $b_i(\Gamma) = b_i(G)$ for all $i \geqslant 0$ (and similarly for $\Gamma'< G'$) \cite{Nomizu1954Cohomology}. 
    \end{enumerate}

The conclusion of Shalom's original theorem is equivalent to the statement \eqref{item:shalom}. Sauer's result is Theorem~\ref{th:shalom} under the additional assumption that $G$ and $H$ have lattices, or equivalently, that they are real points of unipotent algebraic groups defined over $\mathbf Q$.

\begin{remark}
    By semicontinuity one always has $b_i(\operatorname{gr}(G)) \geqslant b_i(G)$ for all $i \geqslant 0$ (see e.g. \cite[\S 5]{Mumfordabelian}).
\end{remark}

\begin{remark}
    Shalom and Sauer's theorems also yield the quasiisometry invariance of certain cohomological dimensions under more general assumptions. These ideas were pursued by Li, who obtained the quasiisometry invariance of various cohomologies, however, for infinite-dimensional representations \cite{LiCohomology}. 
    One is faced with the following dilemma: the cohomology of nilpotent Lie groups with coefficients in finite-dimensional modules is computable and rich, but it is hard to prove it is a
    quasiisometry invariant (for instance, as remarked in \cite{CornulierQIHLC}, knowing it is invariant when coefficients are taken in the adjoint module would allow significant progress); whereas if one takes coefficients in certain infinite-dimensional modules, the invariance is somewhat more natural, but the cohomology groups or reduced cohomology groups turn out to vanish or be much harder to compute. For instance $\overline H^\ast (G, L^p(G,\mathbf R))$ vanishes for $G$ simply connected nilpotent and $p \notin \{ 1,\infty \}$, the key point being that powers of left translations by non-trivial elements in the centre are parallel to the identity, which contradicts the uniqueness of a cochain of minimising $L^p$ norm in any nontrivial cohomology class \cite[page 146]{AsInv}.
\end{remark}

\subsection{Proof of Corollary~\ref{cor:QI} using Theorems \ref{th:general-factor}, \ref{th:model-filiform} and \ref{th:lowdim}}

Let us recall that Corollary~\ref{cor:QI} applies to groups $G$ obtained as central products of $K$ and $L$, where the pair $(K,L)$ should satisfy the assumption in one of the Theorems \ref{th:general-factor}, \ref{th:model-filiform} or \ref{th:lowdim}, and have different nilpotency classes $k$ and $\ell$.

\begin{proposition}\label{prop:preparing-cor-QI}
    Let $G$, $K$ and $L$ be as above. Then 
    \[ \delta_{G}(n) \asymp n^k \quad \text{whereas} \quad  \delta_{\operatorname{gr}(G)}(n) \asymp n^{k+1}.  \]
\end{proposition}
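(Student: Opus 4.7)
The equivalence $\delta_G(n) \asymp n^k$ is immediate: it is precisely what Theorem~\ref{th:general-factor}, \ref{th:model-filiform} or \ref{th:lowdim} asserts under the hypotheses on $(K,L)$. The plan for the second equivalence is to show that $\operatorname{gr}(G)$ still has nilpotency class $k$, to obtain the upper bound $n^{k+1}$ from the Gersten--Holt--Riley theorem (Theorem~\ref{th:GHR-lie}), and to obtain the matching lower bound by applying Proposition~\ref{prop:dehn-lower-lie} to a central $\mathbf{R}$-extension of $\operatorname{gr}(G)$ of class $k+1$.

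The first key step is a structural computation of $\operatorname{gr}(\mathfrak{g})$, where $\mathfrak{g} = \operatorname{Lie}(G)$. Writing $\mathfrak{g} = \mathfrak{k} \oplus \mathfrak{l}^\circ$ with $\mathfrak{l}^\circ$ a vector-space complement to the shared centre $Z$ inside $\mathfrak{l}$, the lower central series of $\mathfrak g$ decomposes as $C^i \mathfrak{g} = C^i \mathfrak{k} + C^i \mathfrak{l}$ for $i \leqslant \ell$ and $C^i \mathfrak{g} = C^i \mathfrak{k}$ for $\ell < i \leqslant k$. Since $Z$ sits in $C^\ell \mathfrak{l}$ but is identified in $\mathfrak{g}$ with an element of $C^k \mathfrak{k} \subset C^{\ell+1} \mathfrak{k}$ (using $k > \ell$), the image of $Z$ in the weight-$\ell$ piece of the associated graded vanishes. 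Tracking this through together with the fact that $[\mathfrak{k}, \mathfrak{l}^\circ] = 0$ in $\mathfrak{g}$, I expect to obtain an isomorphism
\[ \operatorname{gr}(\mathfrak{g}) \cong \operatorname{gr}(\mathfrak{k}) \oplus \operatorname{gr}(\mathfrak{l}/Z) \]
of graded Lie algebras, hence $\operatorname{gr}(G) \cong \operatorname{gr}(K) \times \operatorname{gr}(L/Z(L))$ as simply connected Lie groups. The right-hand side has class $k$ (inherited from $\operatorname{gr}(K)$, since $\operatorname{gr}(L/Z(L))$ has class $\ell - 1 < k$), and $\operatorname{gr}(K)$ appears as a direct factor.

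The second step is to construct a central $\mathbf R$-extension of $\operatorname{gr}(G)$ of class $k+1$. Since $\operatorname{gr}(K)$ is a direct factor of $\operatorname{gr}(G)$, it is enough to construct such an extension for $\operatorname{gr}(K)$ and pull it back via the projection $\operatorname{gr}(G) \twoheadrightarrow \operatorname{gr}(K)$. In each case covered by the hypotheses, $\operatorname{gr}(K)$ is an explicit Carnot group for which this is straightforward: for $K = L_{k+1}$ or $L_{k+1}^\lrcorner$ one has $\operatorname{gr}(K) = K$ and the cocycle $\xi_1 \wedge \xi_k$ defines an extension isomorphic to $L_{k+2}$ or $L_{k+2}^\lrcorner$ respectively, which has class $k+1$; in the low-dimensional cases of Theorem~\ref{th:lowdim} one checks case-by-case that $\operatorname{gr}(K)$ (e.g. $\operatorname{gr}(L_{5,5}) \cong L_4 \times \mathbf R$) admits such an extension. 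Corollary~\ref{cor:dehn-lower-lie} then yields $\delta_{\operatorname{gr}(G)}(n) \asymp n^{k+1}$.

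The main obstacle I anticipate is the structural identification $\operatorname{gr}(G) \cong \operatorname{gr}(K) \times \operatorname{gr}(L/Z)$: while intuitively clear from the mismatch between the weight $k$ of $Z$ in $\mathfrak{k}$ and its weight $\ell$ in $\mathfrak{l}$, a careful verification is required to track how the identification of centres collapses the would-be top graded piece of $\operatorname{gr}(L)$ and to confirm that no new cross-brackets appear in $\operatorname{gr}(\mathfrak g)$. Once the product decomposition is in place, the remainder of the argument is a routine application of the results on Dehn functions of simply connected nilpotent Lie groups already established in the paper.
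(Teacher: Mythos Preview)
Your approach matches the paper's almost exactly: both first invoke Theorems~\ref{th:general-factor}--\ref{th:lowdim} for $\delta_G$, then establish the product decomposition $\operatorname{gr}(G) \cong \operatorname{gr}(K) \times \operatorname{gr}(L/Z(L))$ (the paper states this in one line; you supply the heuristic behind it), and finally combine the Gersten--Holt--Riley upper bound with a lower bound coming from $\operatorname{gr}(K)$. The only difference in the last step is cosmetic: the paper cites \cite[Example~7.8]{lipt} for $\delta_{\operatorname{gr}(K)}(n)\asymp n^{k+1}$, whereas you rebuild that lower bound via Corollary~\ref{cor:dehn-lower-lie} and a pullback extension, which amounts to the same thing.

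One small slip to correct: for $K = L_{k+1}^\lrcorner$ you do \emph{not} have $\operatorname{gr}(K) = K$. The extra bracket $[X_2,X_3]=X_p$ lands in $C^{p-1}\mathfrak{k}$, so it vanishes in $C^3\mathfrak{k}/C^4\mathfrak{k}$ and one gets $\operatorname{gr}(L_{k+1}^\lrcorner)\cong L_{k+1}$, as the paper notes. This does not affect your argument, since your construction of the class-$(k+1)$ extension of $L_{k+1}$ already handles this case; just adjust the sentence accordingly.
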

\begin{proof}
    The statement concerning $G$ is precisely the conclusion of Theorem \ref{th:general-factor}, \ref{th:model-filiform} or \ref{th:lowdim}. As for the statement concerning $\operatorname{gr}(G)$, we start by observing that since $K$ and $L$ have different nilpotency classes $k$ and $\ell$, and $k > \ell$,
    \begin{equation*}
        \operatorname{gr}(G) \cong \operatorname{gr}(K) \times \operatorname{gr}(L/Z(L)).
    \end{equation*}
    Now $K$ is always either $L_{k+1}$, $L_{k+1}^{\lrcorner}$, in which case $\operatorname{gr}(K)$ is always $L_{k+1}$, or $L_{5,5}$ in which case $\operatorname{gr}(K)$ is $L_4\times \R$. In every case, $\operatorname{gr}(K)$ has Dehn function $n^{k+1}$ (for instance, by \cite[Example 7.8]{lipt} and the Gersten--Holt--Riley upper bound). 
\end{proof}

Corollary~\ref{cor:QI} then directly follows from Proposition~\ref{prop:preparing-cor-QI} and the quasiisometry invariance of the Dehn function.

\subsection{Some particular cases of Corollary~\ref{cor:QI}}

We check that Corollary~\ref{cor:QI} follows from \cite{ShalomHarmonic} in the following cases:

\begin{enumerate}
    \item \label{item:model-fils-repeated}
    $G = G_{p,q} = L_{p} \times_Z L_q$ where $p>q \geqslant 3$.
    \item \label{item:reminders}
    \begin{enumerate}
        \item  \label{item:reminders-lowdim}
        $G = K \times_Z L$ where $k > \ell$ and $\max(\dim K, \dim L) \leqslant 5$.
        \item \label{item:reminders-heisenberg factors}
        $G$ is one of the groups in the two infinite families
    \[ J_{k,m} = L_{k+1} \times_Z H_{2m+1} \quad  k\geqslant 3, m \geqslant 2 \]
    and
    \[ J^{\lrcorner}_{k,m} = L^{\lrcorner}_{k+1} \times_Z H_{2m+1}, \quad k\geqslant 3, m \geqslant 2,\]
    where $H_{2m+1}$ denotes the real Heisenberg group of dimension $2m+1$.
    \end{enumerate}    
\end{enumerate}

These are not mutually exclusive: there is some overlap between \eqref{item:model-fils-repeated} and \eqref{item:reminders-lowdim} occuring when $p \leqslant 5$ in \eqref{item:model-fils-repeated} and between \eqref{item:reminders-lowdim} and \eqref{item:reminders-heisenberg factors} occuring when $m=2$ in \eqref{item:reminders-heisenberg factors}.

\subsection{Case when both factors are model filiform}
Here we prove Corollary \ref{cor:QI} in case \eqref{item:model-fils-repeated} using Theorem \ref{th:shalom}.    It was computed in \cite[Lemma 7.12]{lipt} that 
    for $p > q \geqslant 3$,
    {\small \[ b_2(G_{p,q}) - b_2(L_p \times L_{q-1}) = \frac{3+(-1)^{p+1}}{2} > 0. \]}%
    It now follows from Theorem~\ref{th:shalom} 
    that $G_{p,q}$ and its associated Carnot graded group $L_p \times L_{q-1}$ cannot be quasiisometric.

\renewcommand{\arraystretch}{1.2}

\begin{table}[t]
    \centering
    \begin{tabular}{c|c|c|l|l}
         $G$ (non-Carnot) & $\delta_G$ & $\delta_{\operatorname{gr}(G)}$ & Betti numbers of $G$ & Betti numbers of $\operatorname{gr}(G)$  \\
         \hline
         $L_{5,5} \times_Z L_{3,2}$ & $n^3$ & $n^3$ & 1,5,10,11,11,10,5,1 &  1,5,{\bf 11},15,15,11,5,1 \\
         $L_{5,5} \times_Z L_{4,3}$ & $n^3$ & $n^3$ & 1,5,11,14,14,14,11,5,1 &  1,5,11,{\bf 15},16,15,11,5,1 \\
         $L_{5,5} \times_Z L_{5,5} $ & $n^3$ & $n^3$ & 1,6,16,25,26,26,25,16,6,1 & same as for $G$ \\
         $L_{5,5} \times_Z L_{5,4} $ & $n^3$ & $n^4$ & 
         1,7,21,34,33,33,34,21,7,1 & 1,7,21,34,{\bf 37},{37},34,21,7,1 \\
         $L_{5,7} \times_Z L_{3,2}$ & $n^4$ & $n^5$ & 1,4,6,9,9,6,4,1 & 1,4,{\bf 8},11,11,8,4,1 \\
         $L_{5,6} \times_Z L_{3,2}$ & $n^4$ & $n^5$ & 1,4,6,8,8,6,4,1 & 1,4,{\bf 8},11,11,8,4,1 \\
         $L_{5,7} \times_Z L_{4,3}$ & $n^4$ & $n^5$ & 1,4,7,9,10,9,7,4,1 & 1,4,{\bf 9},14,16,14,9,4,1 \\
         $L_{5,6} \times_Z L_{4,3}$ & $n^4$ & $n^5$ & 1,4,7,10,12,10,7,4,1 & 1,4,{\bf 9},14,16,14,9,4,1 \\
         $L_{5,6} \times_Z L_{5,5}$ & $n^4$ & $n^5$ & 1,5,11,16,21,21,16,11,5,1 &  1,5,11,{\bf 17},22,22,17,11,5,1 \\
         $L_{5,7} \times_Z L_{5,5}$  & $n^4$ & $n^5$ & 1,5,11,16,19,19,16,11,5,1 & 1,5,11,{\bf 17},22,22,17,11,5,1\\
\end{tabular}
~\vspace{.2cm}
    
    \caption{Betti numbers for the central products of low dimension. We emphasize in bold the first difference with the Betti numbers of the associated Carnot group. Note that $b_i(\operatorname{gr}(G)) \geqslant b_i(G)$ for all $i$.}
    \label{tab:betti}
\end{table}

\subsection{Low dimensions}
With the help of {\sc Sagemath}, we computed the Betti numbers for the Lie algebra cohomology with trivial coefficients of the non-Carnot graded central products of low-dimension.
The results are in Table~\ref{tab:betti}. We also indicate on the right the Betti numbers of the associated Carnot-graded group.
In particular, this proves Corollary~\ref{cor:QI} in case \eqref{item:reminders-lowdim} due to Theorem \ref{th:shalom}.

\subsection{The groups $J_{k,m}$}
Here we let $k \geqslant 3$ and $m \geqslant 2$.
Let $\mathfrak j_{k,m}$, resp.\ $\mathfrak j_{k,m}^{\lrcorner}$ be the Lie algebra of the group $J_{k,m}$, resp.\ $J_{k,m}^{\lrcorner}$ defined in Case \eqref{item:reminders-heisenberg factors}.
The Carnot-graded group associated to $J_{k,m}$ and $J_{k,m}^{\lrcorner}$ is $L_{k+1} \times \mathbf R^{2m}$ in both cases.
We now compute the second Betti numbers of $J_{k,m}$ and $J_{k,m}^{\lrcorner}$ using Lie algebra cohomology and check that they differ from those of the associated Carnot-graded group. We start with $\mathfrak j_{k,m}$ and describe the mild changes needed to treat $\mathfrak j_{k,m}^{\lrcorner}$.

A basis of $\bigwedge^1 \mathfrak j_{k,m}^\ast$ is spanned by
$\xi_1, \ldots, \xi_k, \theta_1, \ldots, \theta_{2m}, \zeta$
where
\begin{equation*}
\begin{cases}
    d \xi_i  = - \xi_1 \wedge \xi_{i-1} & 3 \leqslant i \leqslant k \\
    d \zeta  = - \xi_1 \wedge \xi_k - \omega
\end{cases}
\end{equation*}
where $\omega$ is the symplectic form 
\[ \omega = \sum_{j=1}^m \theta_j \wedge \theta_{j+m} \]
and all the exterior derivatives of the remaining one-forms from the basis vanish.
So 
\begin{align}
\label{eq:2-boundaries}
    B^2 (\mathfrak j_{k,m}) & = \operatorname{span}\left(\xi_1 \wedge \xi_2, \ldots, \xi_1\wedge \xi_{k-1}, \xi_1 \wedge \xi_k + \omega \right)
\end{align}
has dimension $k-1$.
On the other hand, computing the exterior derivatives of $2$-forms
we find that
{\small
\begin{align*}
    d(\xi_1 \wedge \xi_i) & = 0  & 2 \leqslant i \leqslant k \\
    d(\xi_2 \wedge \xi_i) & = - \xi_1 \wedge \xi_2 \wedge \xi_{i-1} & 3 \leqslant i \leqslant k \\
    d(\xi_i \wedge \xi_j) & = \xi_1 \wedge \xi_{i-1} \wedge \xi_j - \xi_1 \wedge \xi_i \wedge \xi_{j-1} & 3 \leqslant i \leqslant k-2, i+2\leqslant j \leqslant k \\
    d(\xi_i \wedge \xi_{i+1}) & = \xi_1 \wedge \xi_{i-1} \wedge \xi_{i+1} &  3 \leqslant i \leqslant k-1 \\
    d(\xi_i \wedge \theta_j) & = 0 & 1\leqslant i \leqslant 2, 1 \leqslant j \leqslant 2m \\
    d(\xi_i \wedge \theta_j) & = \theta_j \wedge \xi_1 \wedge \xi_{i-1}  & 3\leqslant i \leqslant k, 1 \leqslant j \leqslant 2m \\
    d(\xi_1 \wedge \zeta) & = \xi_1 \wedge \omega &  \\
    d(\xi_2 \wedge \zeta) & = \xi_2 \wedge \omega - \xi_1 \wedge \xi_2 \wedge \xi_k &  \\
    d(\xi_i \wedge \zeta) & = \xi_i \wedge \omega - \xi_1 \wedge \xi_{i-1} \wedge \zeta - \xi_1 \wedge \xi_i \wedge \xi_k & 3 \leqslant i \leqslant k-1 \\
    d(\xi_{k} \wedge \zeta) & = \xi_{k} \wedge \omega - \xi_1 \wedge \xi_{k-1} \wedge \zeta & \\
    d(\theta_j \wedge \zeta) & = \theta_j \wedge \xi_1 \wedge \xi_k + \sum_{1 \leqslant k \leqslant m, k \neq j, k+m \neq j} \theta_j \wedge \theta_k \wedge \theta_{k+m} & 1 \leqslant j \leqslant 2m \\
    d(\theta_i \wedge \theta_j ) & = 0 & 1 \leqslant i < j \leqslant 2m.
\end{align*}}%
For $l \geqslant 2$, define 
\begin{equation*}
    \nu_{2l} = \xi_{2,2l-1} - \xi_{3,2l-2} + \cdots +(-1)^{l+1}\xi_{l,l+1}.
\end{equation*}
Then, proceeding as in the lines dedicated to the obtention of \cite[Eq.(7.21)]{lipt},
\begin{equation*}
    Z^2 (\mathfrak j_{k,m}, \mathbf R) = B^2 (\mathfrak j_{k,m}, \mathbf R) \oplus \operatorname{span} 
    \begin{cases} 
        \nu_{2l} & 2 \leqslant l \leqslant \lfloor (k+1)/2 \rfloor \\ 
        \xi_i \wedge \theta_j & 1 \leqslant i \leqslant 2, 1 \leqslant j \leqslant 2m \\
        \theta_i \wedge \theta_j & 1 \leqslant i < j \leqslant 2m.
    \end{cases} 
\end{equation*}
Note that we did not include the cocycle $\xi_1 \wedge \xi_k$, since it is equivalent to $- \omega$ in cohomology, and as such, to a linear combination of the $\theta_i \wedge \theta_j$. 
Thus, 
\begin{equation}\label{eq:second-betti-jkm}
    b_2(\mathfrak j_{k,m}) = \left\lfloor \frac{k+1}{2} \right\rfloor -2 +1 + 4m+m(2m-1)  =  \left\lfloor \frac{k+1}{2} \right\rfloor - 1 + m(2m+3). 
\end{equation}
On the other hand, by the Künneth formula
{\small
\begin{align*}
    b_2(\mathfrak l_{2k+1} \times \mathbf R^{2m})
    & = b_2(\mathfrak l_{k+1}) + b_2(\mathbf R^{2m}) + b_1(\mathfrak l_{k+1}) \cdot b_1 (\mathbf R^{2m}) \\ 
    & = \left\lceil \frac{k+1}{2} \right\rceil  + \frac{2m(2m-1)}{2} + 2 \cdot 2m \\
    & = \left\lceil \frac{k+1}{2} \right\rceil  + m(2m+3).
\end{align*}}%
This finishes the proof that $b_2(\mathfrak j_{k,m}) \neq b_2(\mathfrak l_{k+1} \times \mathbf R^{2m})$.

We now describe the adaptation needed for $\mathfrak j_{k,m}^{\lrcorner}$. It is convenient to assimilate the linear spaces underlying $\mathfrak j_{k,m}$ and $\mathfrak j_{k,m}^\lrcorner$ simultaneously as a single vector space $V$ whose dual is spanned by $\xi_1, \ldots, \xi_k, \theta_1, \ldots, \theta_{2m}, \zeta$ while still denoting $(\bigwedge^\ast, d)$ and $(\bigwedge^\ast, d^{\lrcorner})$ the differential complexes computing the Lie algebra cohomologies in order to distinguish them from one another. The $d^\lrcorner$-differential of one-forms is now
\[\begin{cases}
    d^{\lrcorner} \xi_i  = - \xi_1 \wedge \xi_{i-1} & 3 \leqslant i \leqslant k \\
    d^{\lrcorner} \zeta  = - \xi_1 \wedge \xi_k - \xi_2 \wedge \xi_3 -\omega.
\end{cases}
\]
This change from $d\zeta$ to $d^\lrcorner \zeta$ creates some changes in some of the exterior derivatives of the $2$-forms in the basis involving $\zeta$. We mention all such derivatives below, even if some of them turn out to be the same as in the case of $\mathfrak j_{k,m}$.
{\small
\begin{align*}
    d^{\lrcorner} (\xi_1 \wedge \zeta) & = \xi_1 \wedge \omega + \xi_1 \wedge \xi_2 \wedge \xi_3 \\
    d^{\lrcorner} (\xi_i \wedge \zeta) & = \xi_i \wedge \omega - \xi_1 \wedge \xi_i \wedge \xi_k & 2 \leqslant i \leqslant 3\\
    d^{\lrcorner} (\xi_i \wedge \zeta) & = \xi_i \wedge \omega - \xi_1 \wedge \xi_{i-1} \wedge \zeta -  \xi_1 \wedge \xi_i \wedge \xi_k + \xi_1 \wedge \xi_2 \wedge \xi_3  & 4 \leqslant i \leqslant k-1 \\
    d^{\lrcorner} (\theta_j \wedge \zeta) & = \theta_j \wedge \xi_1 \wedge \xi_k + \theta_j \wedge \xi_2 \wedge \xi_3 + \sum_{1 \leqslant k \leqslant m, k \neq j, k+m \neq j} \theta_j \wedge \theta_k \wedge \theta_{k+m} & 1 \leqslant j \leqslant 2m.
\end{align*}
}%
Let $W$ be the subspace of $\bigwedge^2 V^\ast$ of $2$-forms having $\zeta$ as a factor, and let $R$ be the complementary subspace to $W$ spanned by two-forms in the basis not having $\zeta$ as a factor.
We have seen (by \eqref{eq:2-boundaries} and \eqref{eq:second-betti-jkm}) that $d\colon \bigwedge^2 V^\ast \to  \bigwedge^3 V^\ast $ has 
{\small
\begin{equation*}
    \operatorname{rank} d = \binom{k+1+2m}{2} - m(2m+3) - \left\lfloor \frac{k+1}{2} \right\rfloor +1 - k + 1 > k+1 +2m = \dim W.
\end{equation*}
}%
Now, $\operatorname{rank} d^{\lrcorner}$ is the largest size of a nonsingular minor of $d^\lrcorner$; since $d_{\mid R} = d^{\lrcorner}_{\mid R}$ and $d$ and $d^\lrcorner$ are injective on $W$ (by the above computations of derivatives of $2$-forms), we have that $\operatorname{rank} d^\lrcorner = \operatorname{rank} d$. Finally 
\begin{equation*}
    B^2 (\mathfrak j_{k,m}^\lrcorner) = \operatorname{span}\left(\xi_1 \wedge \xi_2, \ldots, \xi_1\wedge \xi_{k-1}, \xi_1 \wedge \xi_k + \omega + \xi_2 \wedge \xi_3 \right)
\end{equation*}
has the same dimension as $B^2 (\mathfrak j_{k,m})$. This completes the proof that 
\[ b_2(\mathfrak j^\lrcorner_{k,m}) = b_2(\mathfrak j_{k,m}). \]
The two corresponding Lie groups have the same associated Carnot group $L_{k+1} \times \mathbf R^{2m}$, so this finishes the proof.

\section{Gersten--Holt--Riley's Theorem for simply connected nilpotent Lie groups}

\label{sec:groups-without-lattices}\label{sec:GHR}
In this section we explain how Gersten--Holt--Riley's proof that a finitely generated nilpotent group of class $c$ has Dehn function bounded above by $n^{c+1}$ adapts to simply connected nilpotent Lie groups by using compact presentations. 
In \cite[{$5.A'_5$}]{AsInv} Gromov asserts that the Dehn function of a simply connected nilpotent group of nilpotency class $c$ is bounded above by $n^{c+1}$. This was proved by Gersten, Holt and Riley for finitely generated nilpotent groups using combinatorial arguments in \cite{GerstenRileyHolt} (see also \cite[\S 5.1]{Tessera-LSG-of-LCS-groups}). Their proof adapts to the situation of simply connected nilpotent groups $G$. When $G$ has a lattice, this is an immediate consequence of \cite[Theorem B]{GerstenRileyHolt}, as lattices in nilpotent Lie groups are uniform. In general, one can adapt Gersten--Holt--Riley's proof to compact presentations. Here we explain this by sketching their proof and explaining the main adaptations required. For details we refer to \cite{GerstenRileyHolt}.
\begin{theorem}\label{th:GHR-lie}
    Let $G$ be a simply connected nilpotent Lie group of nilpotency class $c$. Then $G$ admits an area filling length pair of the form $(n^{c+1},n)$.\footnote{We refer to \cite{GerstenRileyHolt} for the definition of an area filling length pair.}
\end{theorem}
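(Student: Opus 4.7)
The plan is to adapt the combinatorial argument of Gersten--Holt--Riley to the compact-presentation setting through three stages: first, fix a suitable adapted compact presentation; second, establish a letter-shuffling lemma by descending induction on the weight; third, iterate to put arbitrary words into Mal'cev normal form while controlling area and filling length simultaneously.

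For the first stage, I would fix a strong Mal'cev basis $X_1,\ldots, X_d$ of $\mathfrak g = \operatorname{Lie}(G)$ adapted to the lower central series, meaning that for each $i$ there is an index $d_i$ such that $X_{d_i+1},\ldots,X_d$ spans $C^{i+1}\mathfrak g$. Assign weight $w(X_j)=i$ whenever $X_j \in C^i\mathfrak g \setminus C^{i+1}\mathfrak g$, and take as compact generating set $S$ the union of the compact arcs $\{\exp(tX_j) : \vert t\vert \leqslant 1\}$. The relators are the bounded-length identities $x_j^{a}x_j^{b}=x_j^{a+b}$ for $\vert a\vert, \vert b\vert, \vert a+b\vert \leqslant 1$ and the BCH-corrected commutator identities $[x_i^{a},x_j^{b}] = p_{i,j}(a,b)$, where $p_{i,j}(a,b)$ is a bounded-length word in letters of weight strictly greater than $w(X_i)+w(X_j)$; nilpotency of class $c$ ensures this word is finite. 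As in Lemma~\ref{lem:central-prod-is-cglc} and Proposition~\ref{prop:compactpresentations}, this defines a compact presentation $\langle S \mid R \rangle$ of $G$.

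The second stage proves by descending induction on $j \in \{1,\ldots,c\}$ the following commuting lemma: for any word $u$ in $S$ of length at most $n$ and any letter $s \in S$ of weight $j$, one has $u\cdot s =_G s\cdot u'$ with area $\lesssim n^{c+1-j}$, filling length $\lesssim n$ along the transformation, and $\vert u'\vert \leqslant \vert u\vert + O(1)$. The base case $j=c$ is immediate since weight-$c$ letters are central. For the induction step, push $s$ leftward one letter at a time; each swap costs one commutator relator and produces a bounded-length correction word of weight strictly greater than $j$, which the induction hypothesis transports to the tail of $u$ at area cost $\lesssim n^{c-j}$ each. Summing over at most $n$ swaps yields the claimed area bound $\lesssim n^{c+1-j}$. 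Then iterating this commuting lemma at most $n$ times rewrites an arbitrary word $w$ of length $\leqslant n$ into Mal'cev normal form $x_{j_1}^{a_1}\cdots x_{j_N}^{a_N}$ with non-decreasing weights, at total area $\lesssim n^{c+1}$ and filling length $\lesssim n$; by uniqueness of Mal'cev coordinates a null-homotopic $w$ must have all $a_k=0$, so this normalization is a complete null-homotopy exhibiting the pair $(n^{c+1}, n)$.

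The main obstacle is the simultaneous control of area and filling length: the area bound alone follows easily from any reasonable implementation of the shuffling procedure, but maintaining intermediate words at length $O(n)$ requires that at every moment one either moves a single letter across a single other letter, or transports a bounded-length correction word through a subword of length $\lesssim n$. This bookkeeping is exactly what the inductive statement packages, and it is delicate but essentially identical to the one in \cite{GerstenRileyHolt}. The only genuine adaptation needed for the Lie-group setting is that exponents now vary over a compact interval rather than being integers, but the BCH formula depends polynomially on these parameters and the generating arcs are uniformly compact, so all combinatorial constants in the estimates hold uniformly; no new ideas beyond those of \cite{GerstenRileyHolt} are required.
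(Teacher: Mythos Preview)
Your approach has a genuine gap in the filling-length bound. The claim ``$\vert u'\vert \leqslant \vert u\vert + O(1)$'' in your commuting lemma is false when $j<c$: pushing a single weight-$j$ letter through a word of length $n$ produces one bounded correction word \emph{per swap}, and there are up to $n$ swaps, so the tail grows by $O(n)$, not $O(1)$. Iterating over all $\leqslant n$ letters of low weight, the accumulated corrections have total length up to $O(n^c)$; concretely, in the $3$-Heisenberg group the Mal'cev normal form of a word of length $n$ is $x^a y^b z^e$ with $\vert e\vert$ of order $n^2$, and in your compact generating set this requires $\Theta(n^2)$ letters. Your scheme therefore yields filling length $O(n^c)$, not $O(n)$, even though the area bound $n^{c+1}$ does go through.

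The missing device is exactly what the paper imports from \cite[Corollary~3.3]{GerstenRileyHolt}: \emph{compression words} $u_{z}(q)$ that represent $z^q$ for central $z$ and $q$ up to $n^c$ by a word of length $O(n)$, exploiting that $z$ lies in $C^c(G)$ and hence can be expressed as a $c$-fold commutator of length-$O(n)$ words. The paper does not run a descending induction on weight inside $G$; instead it runs an induction on the nilpotency class by passing to $\overline G = G/C^c(G)$, lifts a filling sequence for $\overline w$ in $\overline G$ back to $G$, and absorbs all central letters produced along the way into compression words flanking the current word. The only new wrinkle over \cite{GerstenRileyHolt} is that exponents are real rather than integral, handled by keeping a fractional part $s\in[0,1)$ alongside each compression word. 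If you want to salvage your direct-shuffling strategy you must build in the same compression mechanism for the top-weight tail; without it the filling-length half of the pair fails.
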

\begin{proof}
    The proof is by induction on the nilpotency class $c$ of $G$. If $c=1$, then $G$ is finite-dimensional abelian and thus $(n^2,n)$ defines an area filling length pair for it.
    
    It suffices to give an upper bound on the Dehn function for a suitably chosen compact presentation of $G$. We start by choosing this presentation.
    
    The lower central series quotients $C^i(G)/C^{i+1}(G)$ can naturally be viewed as finite dimensional real vector spaces. In particular, we can choose finite subsets $\mathcal{A}_i\subset C^i(G)\setminus C^{i+1}(G)$ for $1\leqslant i \leqslant c$ such that the projection of $\mathcal{A}_i$ to $C^{i}(G)/C^{i+1}(G)$ identifies it with a basis. Let $\mathcal{A}:=\cup_{1\leqslant i\leqslant c} \mathcal{A}_i$. Then the set 
    $$\mathcal{X}:=\left\{a^r\mid a\in \mathcal{A},~ r\in\left[0,1\right]\right\}$$ 
    defines a compact generating set for $G$. We can then choose a compact presentation $\left\langle \mathcal{X}\mid \mathcal{R}\right\rangle$ such that $\mathcal{R}$ contains all $(c+1)$-fold commutators of elements of $\mathcal{X}^{\pm 1}$, all relations of the form $x^r\cdot x^s=x^{r+s}$ for $r,~s,~r+s\in\left[0,1\right]$, and all commutators $\left[x,z^r\right]$ with $x\in \mathcal{X}^{\pm 1}$, $z\in \mathcal{A}_c^{\pm 1}$ and $a\in\left[0,1\right]$. In particular, for any subset $\mathcal{Y}\subseteq \mathcal{A}$ the canonical presentation for the free nilpotent group of class $c$ generated by $\mathcal{Y}$ is a subpresentation of $\left\langle \mathcal{X}\mid \mathcal{R}\right\rangle$, allowing us to define finitely many families of compression words $u_{z_i}(q)$, ${q}\in\mathbb{N}$ for the elements of the finite subset $\mathcal{A}_c=\left\{z_1,\dots,z_k\right\}\subset \mathcal{A}$, which satisfy the properties of \cite[Corollary 3.3]{GerstenRileyHolt}. This enables us to prove the induction step using essentially the same arguments as in \cite{GerstenRileyHolt}, with only minor modifications, that we shall now explain.

    Consider the quotient $\overline{G}:=G/C^c(G)$ of $G$ by its centre and equip it with the presentation $\left\langle \overline{\mathcal{X}}\mid\overline{\mathcal{R}}\right\rangle$, where $\overline{\mathcal{X}}:=\mathcal{X}\setminus \mathcal{X}_c$ and $\overline{\mathcal{R}}$ is obtained from $\mathcal{R}$ by deleting all occurences of elements from $\mathcal{X}_c:=\left\{a^r\mid a\in \mathcal{A}_c,~ r\in[0,1]\right\}$ from the words in $\mathcal{R}$. 
    
    Let $w=w(\mathcal{X})$ be a null-homotopic word of length $n\in \mathbb{N}$ in $G$ and let $\overline{w}=\overline{w}(\overline{\mathcal{X}})$ be the word in $\overline{G}$, which is obtained by removing letters from $\mathcal{X}_c^{\pm 1}$ from $w$. By induction $\overline{G}$ has an area filling length pair of the form $(\overline{\lambda} n^c,\overline{\lambda} n)$ for some $\overline{\lambda}>0$. We can thus find a sequence of transformations $\overline{w}=\overline{w}_0,~\overline{w}_1,~ \dots,~ \overline{w}_{\overline{m}}=_{F_{\overline{\mathcal{X}}}} 1$ reducing $\overline{w}$ to the trivial word with the following properties:
    \begin{itemize}
        \item $\overline{w}_i$ is obtained from $\overline{w}_{i-1}$ by either introducing or removing a pair $\overline{x}\cdot \overline{x}^{-1}$, with $\overline{x}\in\overline{X}^{\pm 1}$, or a relation $r\in \overline{\mathcal{R}}^{\pm 1}$ somewhere in $\overline{w}_{i-1}$,
        \item $\overline{m}\leqslant \overline{\lambda} n^c$ and the words $\overline{w}_i$ have length $\leqslant \overline{\lambda} n$.
    \end{itemize}

    The induction step consists of using this sequence to produce a new sequence $w=v_0,~v_1,~ \dots,~v_m=_{F_{\mathcal{X}}} 1$ reducing $w$ to the trivial word by $m\leqslant \lambda n^{c+1}$ applications of elementary expansions, elementary reductions and relations such that the $v_i$ have length bounded by $\lambda n$, where $\lambda>0$ is a constant that only depends on our chosen presentation for $G$. It closely follows the induction step in \cite{GerstenRileyHolt} with some minor modifications. In the case of finitely generated nilpotent groups, Gersten--Holt--Riley account for the fact that the centre of $G$ may contain torsion elements (see discussion on p.808 of \cite{GerstenRileyHolt}, as well as Lemmas 3.4 and 3.5 of their work), this issue can not arise for simply connected nilpotent Lie groups. This simplifies the proof in our setting. However, we do instead have to account for the fact that central elements are represented by a product of real powers of element of $\mathcal{A}_c$, while the compression words $u_{z_i}(q)$ defined in \cite{GerstenRileyHolt} can only absorb integer powers of such elements.

    As in \cite{GerstenRileyHolt} the filling sequence $\left\{v_i\right\}$ for $w$ is constructed from the filling sequence for $\overline{w}$ in $\overline{m}$ steps, where at  step $i$ we use the transformation from $\overline{w}_{i-1}$ to $\overline{w}_i$ to build a sequence of transformations $w_{i-1}= v_{j_{i-1}},~ \dots, v_{j_i}=w_i$ between words $w_i$ of the form
    \[
        w_i= \left(z_k^{s_{ki}^-}u_{z_k}(q_{ki}^-)\right)^{-1} \cdots \left(z_1 ^{s_{1i}^-}u_{z_1}(q_{1i}^-)\right)^{-1} \overline{w}_i \left( z_1 ^{s_{1i}^+}u_{z_1}(q_{1i}^+)\right) \cdots \left( z_k^{s_{ki}^+}u_{k_1}(q_{ki}^+)\right),
    \]
    with $q_{ji}^{\pm}\in \mathbb{N}$ and $s_{ji}^{\pm}\in \left[0,1\right]$. 

    If $\overline{w}_{i}$ is obtained from $\overline{w}_{i-1}$ by an elementary expansion or reduction of a letter from $\overline{X}^{\pm 1}$, then $j_{i}=j_{i-1}+1$ and we obtain $w_i$ from $w_{i-1}$ by performing the same elementary expansion or reduction to the subword $\overline{w}_{i-1}$. 

    If $\overline{w}_{i}$ is obtained from $\overline{w}_{i-1}$ by inserting a relation $\overline{r}\in \overline{R}^{\pm 1}$ somewhere in $\overline{w}_{i-1}$, then we pass from $v_{j_{i-1}}$ to $v_{j_{i-1}+1}$ by inserting the corresponding relation $r\in R^{\pm 1}$. We then move all positive powers of elements of $\mathcal{A}_c$ appearing in $r$ to the right until they are adjacent to their corresponding compression word $u_{z_{j}}(q_{j(i-1)})$. Similarly we move all negative powers of elements of $\mathcal{A}_c$ appearing in $r$ to the left until they are adjacent to their corresponding compression word. The properties of the compression words ensure that the cost of this is $\lesssim n$, using the same arguments as in \cite{GerstenRileyHolt}. At the end of this process we obtain a word of the form
    {\small
    \[
        u_i= \left(z_k^{t_{ki}^-}u_{z_k}(q_{k(i-1)}^-)\right)^{-1} \cdots \left(z_1 ^{t_{1i}^-}u_{z_1}(q_{1(i-1)}^-)\right)^{-1} \overline{w}_i \left( z_1 ^{t_{1i}^+}u_{z_1}(q_{1(i-1)}^+)\right) \cdots \left( z_k^{t_{ki}^+}u_{k_1}(q_{k(i-1)}^+)\right),
    \]}%
    for some real numbers $t_{ji}^{\pm}$ which are bounded by a uniform constant that only depends on the maximal length of a word in $\mathcal{R}$. We now ``compress'' the integer parts of the $t_{ji}^{\pm}$ into the $u_{z_j}(q_{j(i-1)})$, creating new compression words $u_{z_j}(q_{ji}^{\pm})$ as in \cite{GerstenRileyHolt}, and define $s_{ji}^{\pm}:= t_{ji}^{\pm}-\lfloor t_{ji}^{\pm}\rfloor$. 

    The total cost of the transformations required for passing from $w_0$ to $w_{\overline{m}}$ can be estimated using the same arguments as in the proof of Theorem B in \cite{GerstenRileyHolt}; they rely on the fact that the total compression process has cost $\lesssim n^{c+1}$ and that the length of the compression words is $\lesssim n$ in all steps by \cite[Corollary 3.3]{GerstenRileyHolt}. In particular, there is a constant $\lambda>0$ that only depends on our chosen presentation for $G$ such that $m\leqslant \lambda n^{c+1}$ and the length of all words $v_i$ with $i\leqslant j_{\overline{m}}$ is $\leqslant \lambda n$.

    After completing the $\overline{m}$ steps of this transformation, we obtain the word
    \[
    w_{\overline{m}}=\left(z_k^{s_{k\overline{m}}^-}u_{z_k}(q_{k\overline{m}}^-)\right)^{-1} \cdots \left(z_1 ^{s_{1\overline{m}}^-}u_{z_1}(q_{1\overline{m}}^-)\right)^{-1}\left( z_1 ^{s_{1\overline{m}}^+}u_{z_1}(q_{1\overline{m}}^+)\right) \cdots \left( z_k^{s_{k\overline{m}}^+}u_{k_1}(q_{k\overline{m}}^+)\right).
    \]
    Since $w_{\overline{m}}$ is null-homotopic and every element $z_j^t$, with $t\in\left[0,\infty\right)$ is uniquely represented by an expression of the form  $z_j^su_{z_j}(q)$ with $s\in\left[0,1\right)$ and $q\in \mathbb{N}$, the word $w_{\overline{m}}$ freely reduces to the trivial word.

    This completes the proof of the induction step. We refer to \cite{GerstenRileyHolt} for further details.
\end{proof}

\end{appendix}

\bibliographystyle{alpha}

\end{document}